\def\ball#1,#2.{B(#1,#2)} % ball: #1 is the centre, #2 is the radius
\def\biglip{\pounds} %local lipschitz constant
\def\epsi{\varepsilon}%symbol for \varepsilon
\def\metric{d} %symbol for the metric
\def\dist#1,#2.{\metric(#1,#2)} %distance between two points
\def\setdist#1,#2.{{\rm dist}(#1,#2)}%distance between two sets
\def\hdist#1,#2.{\metric_H(#1,#2)}%Hausdorff distance between two sets
\def\varlip{{\rm Var}} %variation over a ball 
\def\natural{{\mathbb N}} %natural numbers
\def\smllip{\ell} %minimal variation
\def\clball#1,#2.{\bar B(#1,#2)} % closed ball: #1 is the centre, #2 is the radius
\def\albrep#1.{{\mathcal A}_{#1}} %symbol for an Alberti representation
\def\mpush#1.{{#1}_{\sharp}} %pusch forward of a measure
\def\glip#1.{{\bf L}(#1)} %global Lipschitz constant
\def\elleone#1.{{ L}^1(#1)} %L^1 functions: #1 is the measure
\def\elleinfty#1.{{L}^\infty(#1)} %L^\infty functions: #1 is the
\def\bborel#1.{{\mathcal B}^{\infty}(#1)} %bounded borel functions
\def\lipalg#1.{{\rm Lip}_{\text{\normalfont b}}(#1)} %Lipschitz algebra of bounded functions
\def\lipfun#1.{{\rm Lip}(#1)} %Lipschitz functions
\def\lipalgb#1,#2.{{\rm Lip}_{\text{\normalfont b},#2}(#1)}% bounded
\def\onenorm#1.{\left\|#1\right\|_{\elleone\mu.}} %symbol for the L^1 norm
\def\real{{\mathbb{R}}}%real numbers
\def\mrest{{\,\evansgariepyrest}}%restriction of a measure
\def\arenseels#1.{{\rm AE}[#1]}%Arens-Eels space
\def\cone{{\mathcal C}} %symbol of a cone
\def\lebmeas{{\mathcal L}^1} %lebesgue measure
\def\pathfrag#1.{{\rm Frag}(#1)} %path fragments
\def\hmeas#1.{{\mathscr{H}^{#1}}} %hausdorff measure
\def\pullb#1.{{#1}^{\sharp}} %pullback
\def\cmass#1.{{\|#1\|}}%current mass
\def\mcurr#1,#2.{{\bf M}_{#1}(#2)} %currents with finite mass
\def\mcurr#1,#2.{{\bf M}_{#1}(#2)} %currents with finite mass                   
\def\ncurr#1,#2.{{\bf N}_{#1}(#2)} % normal currents                            
\def\pcurr#1,#2.{{\bf P}_{#1}(#2)} % precurrents                                
\def\mfunc#1,#2.{{\rm MF}_{#1}(#2)} %metric functionals of ambrosio-kirchheim   
\def\bana{{\mathscr{S}}}%banach space
\def\convgeo{{\mathscr{K}}}%the convex set used for constructing
\def\QG#1,#2.{{\rm QG}(#1,#2)}%space of quasigeodesic 
\def\tnorm#1.{{{\left\|#1\right\|_{TX}}}}%norm on the tangent bundle
\def\cotnorm#1.{{{\left\|#1\right\|_{T^*X}}}}%norm on the cotangent
\def\metuple#1,#2.{{\mathscr{D}}^{#1}({#2})}%(k+1)-tuple of Lipschitz
\newcommand{\chartfun}[1][j]{x_\alpha^j}%individual chart function
\DeclareMathOperator\spt{spt} %support of a function, measure, etc...
\DeclareMathOperator\dom{dom} %dominium of something, e.g. of a Lipschitz curve
\DeclareMathOperator\tang{Tan} %tangent cones
\DeclareMathOperator\metdiff{md} %metric differential
\DeclareMathOperator\frags{Frag} %fragments
\DeclareMathOperator\diam{diam}%diameter
\DeclareMathOperator\sgn{sgn}%signum function
\DeclareMathOperator\im{im}%image 
\DeclareMathOperator\graph{Graph}%graph of a function
\def\balt#1,#2.{{\mathcal{BA}\left(#1,#2\right)}}%bounded multilinear
\def\balta#1,#2,#3.{{\mathcal{BA}_{(#3)}\left(#1,#2\right)}}%bounded
\def\boundlin#1,#2.{{\mathcal{BL}\left(#1,#2\right)}}%bounded linear
\def\extpow#1.{\widehat\bigwedge^{#1}}%exterior projective
\def\extpowm#1,#2.{\widehat\bigwedge^{#1}_{#2}}%exterior projective
\def\albcond{{\normalfont (COND)}}%Conditions on alberti
\def\frest#1.{{\rm Red}_{#1}}%Restriction for fragments
\def\dualmod#1.{\hom\left({#1},L^\infty(\mu)\right)}%dual module for
\def\boundlin#1,#2.{{\mathcal{BL}\left(#1,#2\right)}}%bounded linear
\def\locdernorm#1,#2.{{\left|{#1}\right|_{{\rm  Der}({#2}),\text{loc}}}}%local
\def\grass#1,#2.{{\normalfont\rm Gr}({#1},{#2})}%Grassmannian
\newcommand{\evansgariepyrest}{\mathbin{\vrule height 1.3ex depth%
    0pt width 0.08ex\vrule height 0.08ex depth 0pt width 1.0ex}}
\DeclareMathOperator\haus{Haus}%space of compact subsets
\DeclareMathOperator\linspan{Span}%for linear span
\def\balt#1,#2.{{\mathcal{BA}\left(#1,#2\right)}}%bounded multilinear
\def\balta#1,#2,#3.{{\mathcal{BA}_{(#3)}\left(#1,#2\right)}}%bounded
\def\boundlin#1,#2.{{\mathcal{BL}\left(#1,#2\right)}}%bounded linear
\def\extpow#1.{{\widehat\bigwedge^{#1}}}%exterior projective power
\def\extpowm#1,#2.{\widehat\bigwedge^{#1}_{#2}}%exterior projective
\def\strip#1,#2.{{\rm St}\left({#1},{#2}\right)}%strip of width #2
\def\wder#1.{{\mathscr{X}}({#1})}%L^\infty module of derivations
\def\wform#1.{{\mathscr{E}}({#1})}%L^\infty module of forms
\def\locnorm#1,#2.{\left|{#1}\right|_{{#2},\text{\normalfont loc}}}%local norm of
\def\rational{{\mathbb Q}}%rational numbers
\def\zahlen{{\mathbb Z}}%signed integers
\def\ccgengroup#1.{{\mathbb {#1}}}%Carnot group general letter
\def\liegenalg#1.{{\mathfrak{#1}}}%lie algebra general letter
\DeclareMathOperator\Cyl{Cyl}%metric cylinder
\DeclareMathOperator\Gap{Gap}%class gap
\DeclareMathOperator\Alb{Alb}%collection of Alberti representations
\DeclareMathOperator\Der{Der}%map from Alberti representations to measures
\numberwithin{equation}{section}
\theoremstyle{plain} %to change the headings as suggested by Kleiner
\newtheorem{lem}[equation]{Lemma}
\newtheorem{thm}[equation]{Theorem}
\newtheorem{cor}[equation]{Corollary}
\theoremstyle{definition}
\newtheorem{defn}[equation]{Definition}
\newtheorem{assump}[equation]{Assumption}
\theoremstyle{remark}
\newtheorem{exa}[equation]{Example}
\newtheorem{rem}[equation]{Remark}
\begin{document}
\title{Derivations and Alberti representations}
\author{Andrea Schioppa}
\address{ETH, R\"amistrasse 101, 8092, Z\"urich, CH}
\email{andrea.schioppa@math.ethz.ch}
%\ead{andrea.schioppa@math.ethz.ch}
%\thanks{During the revision phase of this paper the author was supported by the ``ETH Zurich Postdoctoral Fellowship Program and the Marie Curie Actions
 % for People COFUND Program''}
%\keywords{derivation, Lipschitz algebra, measurable
 % differentiable structure}
%\subjclass[2010]{53C23, 46J15, 58C20}
%53C23=differential geometrical analysis on metric spaces
%46J15=banach algebras of differentiable or analytic functions
%58C20=differentiation theory
\begin{abstract}
  We relate generalized Lebesgue decompositions of measures in terms
  of curve fragments (``Alberti representations'') and Weaver
  derivations. This correspondence leads to a geometric
  characterization of the local norm on the Weaver cotangent bundle of
  a metric measure space $(X,\mu)$: the local norm of a form $df$
  ``sees'' how fast $f$ grows on curve fragments ``seen'' by
  $\mu$. This implies a new characterization of differentiability
  spaces in terms of the $\mu$-a.e.~equality of the local norm of $df$
  and the local Lipschitz constant of $f$. As a consequence, the
  ``Lip-lip'' inequality of Keith must be an equality. We also provide
  dimensional bounds for the module of derivations in terms of the
  Assouad dimension of $X$.
  \vskip10pt
  \par\noindent\emph{MSC:} 53C23, 46J15, 58C20
  \vskip10pt
  \par\noindent\emph{Keywords:} Derivation, Lipschitz Algebra, Measurable
  Differentiable Structure
\end{abstract}
\maketitle
\tableofcontents %table of contents
\section{Introduction}
\ifnum\sync=1{
  \subsection*{SyncTable}
  Theorem \textcolor{red}{alb\_glue}: \ref{alb_glue}\\
  Theorem \textcolor{red}{alberti\_rep\_prod}:
  \ref{alberti_rep_prod}\\
  Lemma \textcolor{red}{lem:meas\_fix}: \ref{lem:meas_fix}\\
  Theorem \textcolor{red}{thm:alb\_derivation}
  \ref{thm:alb_derivation}\\
  Lemma \textcolor{red}{partialderivatives} \ref{partialderivatives}\\
  Remark \textcolor{red}{rem:derivation\_extension}
  \ref{rem:derivation_extension}\\ 
  Corollary \textcolor{red}{derbound} \ref{derbound}\\
  Theorem \textcolor{red}{onedimapprox\_multi}
  \ref{onedimapprox_multi}\\
  Lemma \textcolor{red}{lem:loc\_estimate\_dist}
  \ref{lem:loc_estimate_dist}\\
  Lemma \textcolor{red}{lem:loc\_estimate\_fnull}
  \ref{lem:loc_estimate_fnull}\\
  Corollary \textcolor{red}{cor:mu\_arb\_cone} \ref{cor:mu_arb_cone}\\
  Lemma \textcolor{red}{biLip\_dis} \ref{biLip_dis}\\
  Corollary \textcolor{red}{cor:assouad\_bound}
  \ref{cor:assouad_bound}\\
  Theorem \textcolor{red}{thm:weak*density} \ref{thm:weak*density}\\
  Lemma \textcolor{red}{borel\_rest} \ref{borel_rest}\\
  Theorem \textcolor{red}{compact\_reduction}
  \ref{compact_reduction}\\
  Lemma \textcolor{red}{lem:vector\_alberti} \ref{lem:vector_alberti}\\
  
  }\fi
  \subsection*{Overview}
This paper studies the differentiability properties of real-valued Lipschitz
functions defined on separable metric measure spaces. Two seminal
works in this field are due to Cheeger \cite{cheeger99} and Weaver
\cite{weaver00}.
\par In \cite{cheeger99} Cheeger formulated a
generalization of Rademacher's differentiability Theorem for metric measure spaces
admitting a Poincar\'e inequality %\textcolor{red}
{(this is an analytic condition that
has been intoduced in \cite{heinonen98} and has proven useful to
generalize notions of 
calculus on metric measure spaces; knowing about the Poincar\'e
inequality is \emph{not} a prerequisite for understanding this paper)}; a metric measure space satisfying
the \emph{conclusion} of Cheeger's result is often called a
\emph{(Lipschitz) differentiability space} or is said to have a
\emph{(measurable / strong measurable) differentiable structure}. Applications of
differentiability spaces include the study of Sobolev and
quasiconformal maps in metric measure spaces~\cite{balogh_stepanov,keith04bis} and the study of metric
embeddings \cite{cheeger_kleiner_radon,cheeger99}. Recently Bate \cite{bate-diff}
approached this subject from a different angle by showing that 
differentiability spaces have a rich \emph{$1$-rectifiable structure}, which can be
described in terms of Fubini-like representations of the measure which are called
\emph{Alberti representations} or \emph{$1$-rectifiable
  representations} \cite{acp_proceedings}.
\par Even though there are many examples of differentiability spaces,
the notion of differentiable structure is rather restrictive. For
example, consider $\real^2$ with the metric:
\begin{equation}
  \label{eq:examet}
  d((x_1,y_1),(x_2,y_2))=|x_1-x_2|+|y_1-y_2|^{1/2};
\end{equation}
the existence of nowhere differentiable H\"older functions (for
example the classical Weierstrass function, see~\cite[Exa.~11.3]{falconer_fractal}) in the
$y$-direction can be used to show that $(\real^2,d,\mu)$ is never a
differentiability space for any choice of $\mu$.  
However, for many
measures, e.g.~for the Lebesgue measure, there is a good notion of
differentiation, or vector field, in the $x$-direction.
\par This example
can be better understood using Weaver's approach
\cite{weaver00}  to differentiability,
which is motivated by the study of Lipschitz algebras. This approach is,
roughly speaking, based on the idea of defining \emph{measurable
  vector fields} (called \emph{derivations}) as operators acting on
Lipschitz functions. Even though Weaver's
approach is more flexible than Cheeger's, there are fewer works on
this topic \cite{gong_rigidity,derivdiff} and, apart from specific
examples, it seemed unclear whether it would be possible to
obtain a geometric description of derivations for a general Radon
measure $\mu$ on a metric space $X$.
\par The main achievement of this paper is to provide a general
approach to differentiability that can be applied to \emph{any} Radon measure
defined on a complete separable metric space; this approach unifies
Weaver's theory with the study of Alberti representations and gives a
geometric description of measurable vector fields and $1$-forms on
metric measure spaces.
\par Even though we build on ideas introduced in
\cite{acp_proceedings,bate-diff}, we have to overcome significant obstacles, most notably the fact that we \emph{do not} assume the
existence of a differentiable structure. On the analytic side, this
prevents the use of the porosity techniques used in
\cite[Sec.~9]{bate-diff}, which can be applied only to
(asymptotically) doubling measures. On the algebraic side, we cannot
use finite-dimensionality arguments as in \cite{cheeger99} as the
measurable tangent and cotangent bundles might be infinite-dimensional.
\par Applications of this theory include a sharp bound on the number
of generators of the module of derivations (morally the ``dimension''
of the measurable tangent bundle) in terms of the Assouad dimension of
the support of the underlying measure, and the proof that
Keith's \emph{Lip-lip inequality} \cite{keith04} self-improves to an
\emph{equality}. This last application corresponds to one of the
deepest results in \cite[Sec.~5,6]{cheeger99}; as we do not assume a
Poincar\'e inequality, a conceptually new argument is
required.
\par This theory can also be applied to describe the structure of
\emph{metric currents} \cite{curr_alb}; in particular, metric currents
carry a measurable tangent bundle and their action on Lipschitz
functions can be described using vector fields on this tangent bundle
and $1$-forms on the dual bundle. The study of metric currents
illustrates also how one can use derivations to better understand
Alberti representations and, vice versa, how one can use Alberti
representations to better understand derivations. For example, in
\cite{curr_alb} it is shown that in quasiconvex metric spaces
$1$-dimensional metric currents are \emph{flat} in the sense that they
are limits of normal currents in the mass norm. This application is
based on associating a derivation to a $1$-dimensional current, and
then using Alberti representations to produce the approximating
sequence of normal currents. On the other hand, in \cite{curr_alb} it
is also shown on how to obtain Alberti representations in the
directions of measurable vector fields. The argument is based on a renorming
argument that shows that the Weaver norm on the tangent bundle can be
taken to be strictly convex.
\par A more recent application of this theory is the proof that at
generic points blow-ups / tangents of differentiability spaces are
still differentiability spaces, see~\cite{dim_blow}. Interestingly,
that proof goes by contrapositive, and this theory comes to rescue as
it works without the assumption of a differentiable structure.
\subsection*{Alberti representations}
In this subsection we give an informal account of Alberti representations.
 This tool has
played an important r\^ole in the proof of the rank-one property of BV
functions \cite{alberti_rank_one}, in understanding the structure of
measures which satisfy the conclusion of the classical Rademacher's
Theorem \cite{acp_proceedings,alberti_marchese}, and in describing the
structure of measures in differentiability spaces \cite{bate-diff}.
We start with a description in vague terms and refer the reader to
\cite{bate-diff} and Subsection \ref{subsec:alberti} for further
details. 
\par An \textbf{Alberti representation} of a Radon measure
$\mu$ is a generalized Lebesgue decomposition of $\mu$ in terms
of rectifiable measures supported on path fragments;
a \textbf{path fragment} in $X$ is a bi-Lipschitz
map $\gamma\colon K\to X$ where $K\subset\real$ is compact with positive
Lebesgue measure. Denoting the set of fragments in $X$ by
$\frags(X)$,%
\nomenclature[frags]{$\frags(X)$}{parametrized biLipchitz curve fragments in $X$}%
\ an Alberti representation of $\mu$ takes the form:
\begin{equation}
\mu=\int_{\frags(X)}\nu_\gamma\,dP(\gamma)
\end{equation}
where $P$ is a regular Borel probability measure on $\frags(X)$ and
$\nu_\gamma\ll\hmeas 1._\gamma$; here $\hmeas 1._\gamma$ denotes the
$1$-dimensional Hausdorff measure on the image of $\gamma$. Note that in general
it is necessary to work with path fragments instead of Lipschitz paths
because the space $X$ on which $\mu$ is defined might lack any
rectifiable curve.
\begin{exa}
A simple example of an Alberti representation is offered by Fubini's
Theorem. Let $\hmeas n.$ denote the Lebesgue measure in $\real^n$ and,
for $y\in[0,1]^{n-1}$, let 
\begin{equation}
    \begin{aligned}
  \psi(y)&:[0,1]\to[0,1]^n\\
  t&\mapsto y+te_{n},
\end{aligned}
\end{equation}
$e_n$ denoting the last vector in the standard orthonormal basis of
$\real^n$. Then $P=\mpush\psi.\hmeas n-1.\mrest[0,1]^{n-1}$ is a regular
Borel probability measure on $\frags([0,1]^n)$; if we let
$\nu_\gamma=\hmeas 1._\gamma$, by Fubini's Theorem
\begin{equation}
  \hmeas n.\mrest[0,1]^n=\int_{\frags([0,1]^n)}\nu_\gamma\,dP(\gamma)
\end{equation} and so $(P,\nu)$ is an Alberti representation of
$\hmeas n.\mrest[0,1]^n$.
\end{exa}
\subsection*{Weaver derivations}
In this subsection we give an informal account of Weaver derivations,
hereafter simply called derivations, and refer the reader to
\cite{weaver_book99,weaver00} and Subsection \ref{subsec:der_modules}
for more details. Note that we need a less general setting than that
of Weaver because we deal with Radon measures on separable metric
spaces.
\par We will denote by $\lipfun X.$%
\nomenclature[lipfun]{$\lipfun X.$}{real-valued Lipschitz functions defined on $X$}%
\ the space of real-valued Lipschitz
functions defined on $X$ and by $\lipalg X.$%
\nomenclature[lipfun]{$\lipalg X.$}{bounded real-valued Lipschitz functions defined on
$X$}%
\ the real algebra of
real-valued bounded Lipschitz functions. The algebra $\lipalg X.$ is a
dual Banach space\footnote{there are two definitions of the
  predual in use \cite{arens_eels56,deleeuw61}, but they coincide: see \cite[pg.~40]{weaver_book99}}
  and therefore has a
well-defined weak* topology: a sequence $f_n\to f$ in the weak*
topology if and only if the global Lipschitz constants of the $f_n$
are uniformly bounded and $f_n\to f$ pointwise.
\par \textbf{Derivations} can be regarded as \textbf{measurable vector fields} in
metric spaces allowing one to take \emph{partial derivatives} of Lipschitz
functions once a background measure $\mu$ has been established. More
precisely, a \textbf{derivation}
is a weak* continuous bounded linear map $D:\lipalg X.\to
L^\infty(\mu)$ which satisfies the
product rule $D(fg)=fDg+gDf$.
The set of derivations forms an $L^\infty(\mu)$-module $\wder\mu.$.
\begin{exa}
Many examples of derivations can be found in
\cite[Sec.~5]{weaver00}. Considering $\real^n$, the partial derivatives
$\frac{\partial}{\partial x^i}$ induce derivations with respect to
the Lebesgue measure $\hmeas n.$%
\nomenclature[measure]{$\hmeas m.$}{$m$-dimensional Hausdorff measure}%
\nomenclature[measure]{$\mu\mrest A$}{restriction of the measure $\mu$ on $A$}%
\nomenclature[derivation]{$\wder\mu.$}{module of derivations}%
\ because of Rademacher's Theorem; this example generalizes to
Lipschitz manifolds \cite[Subsec.~5.B]{weaver00}. Similarly, for an 
$m$-rectifiable set $M\subset\real^n$, $\wder{\hmeas m.}\mrest M.$ can
be identified with the module of bounded measurable sections of
approximate tangent spaces \cite[Thm.~38]{weaver00}.
\end{exa}
\subsection*{Matching Alberti representations and Derivations}
We now start to describe the results obtained in this paper. Since
most results require quite a bit of preparatory material to be
properly stated, we state them here in a less formal way and provide a reference
to the corresponding more precise formulation later in the paper. Note that
we will assume metric spaces to be \textbf{separable}.
\par The first step is to use
Alberti representations to construct derivations.  We introduce the
notion of \textbf{Lipschitz} and \textbf{bi-Lipschitz} Alberti
representations: an Alberti representation $\albrep.=(P,\nu)$ is
called \textbf{$C$-Lipschitz (resp.~$(C,D)$-bi-Lipschitz)} if $P$ gives
full-measure to the set of $C$-Lipschitz (resp.~$(C,D)$-bi-Lipschitz)
fragments. In Subsection \ref{subsec:derivations_alberti} we show that
(Theorem \ref{thm:alb_derivation}):
\begin{itemize}
\item To a Lipschitz Alberti representation $\albrep .$ of $\mu$, one
  can associate a derivation $D_{\albrep.}\in\wder\mu.$%
\nomenclature[alberti]{$D_{\albrep.}$}{derivation associated to the
  Alberti representation $\albrep.$}%
\ by using duality and by taking an average
of derivatives along fragments \eqref{eq:derivation_alberti}.
\item Denoting by $\Alb_{{\rm sub}}(\mu)$%
\nomenclature[alberti]{$\Alb_{{\rm sub}}(\mu)$}{family of Lipschitz Alberti
representations of measures of the form  $\mu\mrest S$}%
\ the set of Lipschitz Alberti
representations of some measure of the form  $\mu\mrest S$ for $S\subset X$ Borel,
we obtain a map \begin{equation}\Der:\Alb_{{\rm sub}}(\mu)\to\wder\mu..
\end{equation}%
\nomenclature[alberti]{$\Der$}{map associating to an Alberti representation $\albrep.$  a
 derivation $D_{\albrep.}$}%
\end{itemize}
\par Note that the previous construction produces a wealth of
derivations; in fact, Subsection \ref{subsec:alberti}
provides a standard criterion (Theorem \ref{alberti_rep_prod}) which
allows to produce Alberti representations which are
$(1,1+\epsi)$-bi-Lipschitz: this is an improvement on the treatment in
\cite{bate-diff} and will play an important r\^ole in the rest of the paper.
We also point out that the choice of $\Alb_{{\rm sub}}(\mu)$ reflects
the fact that $\wder\mu.$ depends only on the measure class of $\mu$:
i.e.~if $\mu_1\ll\mu_2$ and $\mu_2\ll\mu_1$, $\wder\mu_1.=\wder\mu_2.$.
\par We next relate the notion of algebraic independence in
$\wder\mu.$ to a notion of independence for Alberti representations
introduced in \cite{bate-diff}\footnote{We replace the constant cones
  in \cite{bate-diff} by Borel cones}: if $f:X\to\real^n$ is Lipschitz
and $\cone$%
\ is an \textbf{$n$-dimensional cone field}, i.e.~a Borel map on $X$ which
takes values in the set of cones in $\real^n$ (see Definition
\ref{defn:cone}), an Alberti representation $\albrep.=(P,\nu)$ is in the \textbf{$f$-direction of
  $\cone$} if, for $P$-a.e.~fragment $\gamma$ and
$\lebmeas\mrest\dom\gamma$\nobreakdash-\hspace{0pt}a.e.~point $t$, $(f\circ\gamma)'(t)\in\cone(\gamma(t))$; cone
fields $\{\cone_i\}_{i=1}^k$ are \textbf{independent} if for each $x\in
X$, choosing $v_i\in\cone_i(x)\setminus\{0\}$, the $\{v_i\}_{i=1}^k$
are linearly independent.  In Subsection~\ref{subsec:derivations_alberti} we show that (Theorem
\ref{thm:directional_cone}):
\begin{itemize}
\item If $\albrep.$ is in the $f$-direction of $\cone$, then
  $D_{\albrep.}f\in\cone$.
\item  Alberti representations $\{\albrep
  i.\}_{i=1}^k$ in the $f$-directions of independent cone fields $\{\cone_i\}_{i=1}^k$ generate
  independent derivations $\{D_{\albrep i.}\}_{i=1}^k$.
\end{itemize}
\par Furthermore, our construction relates to a notion of speed for
Alberti representations introduced in \cite{bate-diff}\footnote{We
  allow $\delta$ to be a Borel function, while in \cite{bate-diff} it
  is a constant}: if $f:X\to\real$ is
  Lipschitz we say that an Alberti representation
$\albrep.=(P,\nu)$ \textbf{has $f$-speed $\ge\delta$} if, for $P$-a.e.~$\gamma$ and
$\lebmeas\mrest\dom\gamma$-a.e.~point $t$,
$(f\circ\gamma)'(t)\ge\delta\metdiff\gamma(t)$, where $\metdiff\gamma$%
\nomenclature[frags]{$\metdiff$}{metric differential}%
\ denotes the the metric differential of $\gamma$ (Definition
\ref{def:met_diff}). Using an averaging process
\eqref{eq:alberti_speed}, we associate an \textbf{effective speed}
$\sigma_{\albrep.}$ to ${\albrep.}$ and show that:
\begin{itemize}
\item If $\albrep.$ has $f$-speed $\ge\delta$, then
  $D_{\albrep.}f\ge\sigma_{\albrep.}\delta$ (Theorem \ref{thm:directional_speed}).
\end{itemize}
\par We then address questions related to the injectivity and surjectivity of
$\Der$. The map $\Der$ is very far from being injective: as an
illustration of this fact, we show that (Lemma \ref{lem:domain_reduction}):
\begin{itemize}
\item Given a nondegenerate compact interval $I\subset\real$, an
  Alberti representation $\albrep.$ can be
  replaced by a new representation $\albrep.'$, whose probability is
  concentrated on fragments with domain inside $I$, and such that
  $D_{\albrep.}=D_{\albrep.'}$.
\item Properties like the Lipschitz/bi-Lipschitz constant, the speed
  and the direction are preserved by replacing $\albrep.$ by $\albrep.'$.
\end{itemize}
\par To study the surjectivity of $\Der$, we use derivations to
produce Alberti representations, the intuition being that independent
derivations can be used to produce Alberti representations in the directions of
independent cone fields. The starting point is the observation that the
independence of derivations $\{D_i\}_{i=1}^k\subset\wder\mu\mrest U.$, up to
taking a Borel partition of $U$ and linear combinations of the $D_i$,
is detected by \textbf{pseudodual} Lipschitz functions
$\{g_i\}_{i=1}^k\subset\lipalg X.$ such that
$D_ig_j=\delta_{i,j}\chi_U$%
\nomenclature[measure]{$\chi_U$}{indicator function of $U$}%
\ (by Corollary \ref{cor:pseudoduality}). 
 To deal with Borel
partitions, one is led to introduce the \textbf{restriction of
  $\albrep.=(P,\nu)$ to a Borel set $U$}: $\albrep.\mrest
U=(P,\nu\mrest U)$ \cite[Lem.~2.4]{bate-diff}.%
\nomenclature[alberti]{$\albrep.\mrest U$}{restriction of $\albrep.$ to $U$}%
\ In Subsection \ref{subsec:derivations_to_alberti}
we show:
\begin{itemize}
\item If the $\{D_i\}_{i=1}^k\subset\wder\mu\mrest U.$ have
  pseudodual Lipschitz functions $\{g_i\}_{i=1}^k\subset\lipalg
  X.$, letting $g=(g_i)_{i=1}^k$, for any constant $k$-dimensional cone field $\cone$, it is possible
  to obtain a $(1,1+\epsi)$-bi-Lipschitz Alberti representation of
  $\mu\mrest U$ in the $g$-direction of $\cone(w,\alpha)$ with \emph{almost
    optimal} \eqref{eq:speed_almost_optimal} $\langle
  w,g\rangle$-speed (Theorem \ref{derivation_alberti}).
\item If the $\{D_i\}_{i=1}^k\subset\wder\mu\mrest U.$ are
  independent, passing to a Borel partition $U=\bigcup_\alpha U_\alpha$,
  there are Lipschitz functions $f_\alpha$ such that, for each
  $k$-dimensional cone field
  $\cone$, there is an Alberti representation $\albrep.$ of $\mu$ with
  $\albrep.\mrest U_\alpha$ in the $f_\alpha$-direction of $\cone$ (Corollary
  \ref{der-alb}).
\item If $f:X\to\real^k$ and $\mu$ admits an Alberti representation in
  the $f$-direction of $k$ independent cone fields, then for each
  $k$-dimensional cone field
  $\cone$, the measure $\mu$ admits an Alberti representation in the $f$-direction
  of $\cone$ (Corollary \ref{cor:mu_arb_cone}).
\end{itemize}
Corollary \ref{cor:mu_arb_cone} is saying that there cannot be
\emph{gaps} in the directions accessible by curve
fragments. Surprisingly, in the proof we manage to avoid ``harder''
arguments (e.g.~involving porosity techniques) and  rely on some ``soft'' functional
analysis.
\par In Subsection \ref{subsec:derivations_to_alberti} we finally show that:
\begin{itemize}
\item If $\wder\mu.$ is finitely generated, then $\Der$ is surjective
  (Theorem \ref{thm:fin_gen_surjectivity}).
\item In general, $\Der(\Alb_{{\rm sub}}(\mu))$ is weak* dense
  in $\wder\mu.$ (Theorem \ref{thm:weak*density}).
\end{itemize}
\par 
The \textbf{dual module} $\wform\mu.$%
\nomenclature[derivations]{$\wform\mu.$}{module of forms}%
\ of $\wder\mu.$ can be regarded as
the $L^\infty(\mu)$-module of differential forms because each
$f\in\lipalg X.$ gives rise to a form $df\in\wform\mu.$. The modules
$\wder\mu.$ and $\wform\mu.$ admit \textbf{local norms}
$\locnorm\,\cdot\,,{\wder\mu.}.$ and
$\locnorm\,\cdot\,,{\wform\mu.}.$,%
\nomenclature[derivations]{$\locnorm\,\cdot\,,{\wder\mu.}.$}{local norm on
  $\wder\mu.$}%
\nomenclature[derivations2]{$\locnorm\,\cdot\,,{\wform\mu.}.$}{local norm on $\wform\mu.$}%
\ which can be thought of as families
$\{\|\cdot\|_x\}_{x\in X}$ of pointwise norms that one can use
reconstruct the global norms by taking the essential supremum.  In
Subsection \ref{subsec:weaver_norm} we obtain a geometric
characterization of $\locnorm\,\cdot\,,{\wform\mu.}.$, which plays a
central r\^ole in our characterization of differentiability spaces:
\begin{itemize}
\item For $U$ Borel, $f\in\lipalg X.$ and $\alpha>0$, if $\locnorm
  df,{\wform\mu\mrest U.}.\approx\alpha$, then there is an Alberti
  representation $\albrep.=(P,\nu)$ of $\mu\mrest U$ with $P$
  concentrated on the fragments where $(f\circ\gamma)'\approx
  \alpha\metdiff\gamma$ (Theorem
  \ref{thm:weaver_fnorm_char}).
\end{itemize}
\par The correspondence that we have illustrated between derivations and Alberti
representations applies to any Radon measure defined on a separable
metric space. Here are some relevant examples:
\begin{itemize}
\item Spaces $(X_{\rm lack},\mu_{\rm lack})$ which either lack any rectifiable curve
or where $\mu_{\rm lack}$ does not admit any Alberti representation:
in this case $\wder\mu_{\rm lack}.=\{0\}$.
\item Spaces which are $k$-rectifiable $(X_{\text{$k$-rect}},\hmeas k.)$.
\item Products $(X_{\rm lack}\times X_{\text{$k$-rect}}, \mu_{\rm
    lack}\times \hmeas k.)$ and quotients of such products, for
  example Laakso spaces and the non-doubling Laakso-like spaces of
  \cite{weaver00} and \cite{bate_speight}.
\item Differentiability spaces. In this case we obtain a quantitative
  characterization, see Theorem~\ref{thm:diff_char2}.
\item Carnot groups equipped with a Radon measure $\mu$: in this case $\wder\mu.$ is always finitely
  generated and the number of generators is at most the dimension of
  the horizontal distribution.
\item Spaces which have rectifiable fragments in infinitely many
  directions, for example the Hilbert cubes in $l^p$ and $c_0$
  considered in \cite{weaver00}.
\item The supports of metric currents (see \cite{curr_alb}).
\end{itemize}
\subsection*{Structure of differentiability spaces}
We now describe an application of the correspondence between
derivation and Alberti representation to the theory of
differentiability spaces. A differentiability space is a metric
measure space where a generalized version of Rademacher's Theorem on
differentiability of Lipschitz functions holds; this generalization
relies on the idea that the space of Lipschitz functions looks, in a
suitable sense, finite-dimensional at small scales, see Subsection
\ref{subsec:diff_spaces} for more details; the least upper bound on
the dimension is called the \textbf{differentiability dimension}.
\par We now recall notions of \textbf{finite dimensionality} for
measures on metric spaces:
\begin{itemize}
\item A measure $\mu$ on $X$ \textbf{is doubling (with constant $C$)} if,
for all pairs $(x,r)\in X\times (0,\diam X]$ ($\diam X=\infty$ is
allowed, but then we require $r\in(0,\infty)$),
\begin{equation}\label{eq:meas_doubling} \mu\left(\ball
  x,r/2.\right)\ge C\mu\left(\ball x,r.\right).
\end{equation}
Note that it does not matter if balls are open or closed; however, in this
paper the notation $\ball x,r.$ denotes an open ball.
\item If \eqref{eq:meas_doubling} holds for $\mu$-a.e.~$x$ for
$r\in(0,R_x]$, where the real number $R_x>0$ is allowed to depend on $x$, the measure $\mu$ is called \textbf{asymptotically
  doubling (with constant $C$)}.
\item If there are disjoint Borel sets $X_\alpha$ with $\mu(X\setminus
  \bigcup_\alpha X_\alpha)=0$ and the measure $\mu\mrest X_\alpha$ is doubling on $X_\alpha$,
 $\mu$ is called \textbf{$\sigma$-asymptotically doubling}.
\end{itemize}
\par We now recall the definitions of \textbf{infinitesimal Lipschitz
  constants} and differentiability for Lipschitz functions.
For a real-valued Lipschitz function $f$, the variation of $f$ at $x$ at scale $r$ is
$\sup_{y\in\ball x,r.}|f(x)-f(y)|/{r}$; the lower and upper
limits of the variation as $r\searrow0$ are denoted by $\smllip f(x)$ and $\biglip
f(x)$.%\footnote{Sometimes called the ``small Lip'' lip and the ``big
 % Lip'' Lip in the literature. Other terms used are pointwise lower
  %and upper Lipschitz constants.}.%
\nomenclature[loclip]{$\smllip f(x)$}{pointwise lower Lipschitz constant}%
\nomenclature[loclip]{$\biglip f(x)$}{pointwise upper Lipschitz constant}%
\par We will now recall two sufficient conditions for a metric
measure space to be a differentiability space. A $(C,\tau, p)$-PI
space is a doubling metric measure space with constant $C$ supporting
a $p$-Poincar\'e inequality (\cite{heinonen_analysis}) with constant
$\tau$. Note that knowledge of the Poincar\'e inequality is
\textbf{not} required for understanding this paper. For the reader
already familiar with the Poincar\'e inequality we point out that
sometimes the ball on which the
upper gradient is integrated is allowed to be enlarged by a constant
factor $\Lambda$ (compare~\cite[(4.3)]{cheeger99}): in this case the constant
$\tau$ takes into account also the effect of $\Lambda$, i.e.~with the
notation of \cite[(4.3)]{cheeger99}, we would take $\tau=\max(C,\Lambda)$.
In \cite[Thm.~4.38]{cheeger99} Cheeger showed that:
  \begin{thm}\label{thm:cheeger}
    If $(X,\mu)$ is a $(C,\tau,p)$-PI-space, then it is a differentiability space
    with differentiability dimension $\le N(C,\tau)$. Moreover, for each
    Lipschitz function $f$, the equality $\smllip f = \biglip f$ holds $\mu$-a.e.
  \end{thm}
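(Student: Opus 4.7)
The plan is to proceed in two steps: first, establish the equality $\smllip f = \biglip f$ $\mu$-a.e., and then use it to construct charts of uniformly bounded dimension.

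For the Lip-lip equality, I would combine the doubling condition with the Poincar\'e inequality. Doubling implies a Vitali covering theorem and hence a Lebesgue differentiation theorem for $\mu$. The function $\biglip f$ is an upper gradient of $f$ wherever it is finite, so the $p$-Poincar\'e inequality controls the mean oscillation of $f$ on any ball by an $L^p$-average of $\biglip f$ on a dilated ball. To upgrade this so that $\smllip f$ appears on the right-hand side, I would argue at a Lebesgue point $x$ of $(\smllip f)^p$: cover a ball $\ball x,r.$ by much smaller balls of radius $\rho\ll r$ on most of which $\biglip f\le (1+\epsi)\smllip f$ (by the definition of $\smllip f$ together with a maximal-function/Egorov argument), apply Poincar\'e on each small ball, and sum. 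This replaces $\biglip f$ by a constant multiple of $\smllip f$ in the oscillation estimate; iterating the improvement and optimising the constant yields $\biglip f(x)\le\smllip f(x)$ $\mu$-a.e., hence equality.

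For the charts, call a tuple $(g_1,\ldots,g_k)$ of Lipschitz functions \emph{independent on a Borel set $U$} if no non-trivial linear combination $\sum a_i g_i$ has $\biglip(\sum a_i g_i)=0$ $\mu$-a.e.~on any positive-measure subset of $U$. A maximal such family on $U$ yields a chart: given any $f\in\lipfun X.$, maximality together with the Lip-lip equality forces measurable coefficients $a_i(x)$ satisfying $\biglip\bigl(f-\sum a_i(x)g_i\bigr)(x)=0$ at $\mu$-a.e.~$x\in U$, which is the required differentiability. Exhausting $X$ by applying this construction on a countable disjoint family of Borel sets produces the charts. The regularity bound \eqref{eq:regchart} on the partial derivative operators follows from a closed-graph argument exploiting the uniqueness of the coefficients $a_i(x)$.

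The main obstacle is the uniform bound $k\le N(C,\tau)$ on the chart dimension, which must depend only on the PI-space constants. I would attack it via a blowup argument: at a density point $x$ where the $g_i$ are independent, pass to a measured Gromov--Hausdorff tangent $(Y,\nu,y)$ of $(X,\mu,x)$. Since the $(C,\tau,p)$-PI property is preserved under blowups, the rescaled functions $\hat g_i$ become ``generalized linear'' on $Y$: they saturate the Poincar\'e inequality at $y$. A packing argument, embedding disjoint balls in $(Y,\nu)$ into their images in $\real^k$ under $\hat g=(\hat g_1,\ldots,\hat g_k)$ and comparing the doubling exponent of $\nu$ with the Euclidean volume growth in $\real^k$, bounds $k$ in terms of $(C,\tau)$ alone. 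The delicate point is to control how the map $\hat g$ distorts $\nu$-volumes in terms of the local Lipschitz constants, which in turn requires the full strength of the Lip-lip equality already established in the first step.
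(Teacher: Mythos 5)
This theorem is not proved in the paper: it is stated as a recalled background result and attributed directly to \cite{cheeger99}, so there is no proof in the paper to compare your attempt against. One of the paper's own contributions (Theorem \ref{thm:diff_char2}) is that $\smllip f=\biglip f$ $\mu$-a.e.~in any $\sigma$-differentiability space; combined with Keith's Theorem \ref{thm:keith}, which only needs the \emph{inequality} $\biglip f\le\tau\smllip f$, this gives a route to the Lip-lip equality in PI spaces that bypasses the step where your sketch is weakest.

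That step has a genuine gap. Applying the $p$-Poincar\'e inequality with $\biglip f$ as upper gradient, restricting to small balls on which $\biglip f\le(1+\epsi)\smllip f$, and chaining, only yields Keith's inequality $\biglip f\le C(C,\tau,p)\,\smllip f$: the doubling, Poincar\'e and quasiconvexity constants enter the chaining estimate and do not disappear under iteration, so rerunning the same covering/Poincar\'e argument cannot ``optimise the constant'' down to $1$. Cheeger's exact equality is obtained by a different mechanism, namely by identifying both $\biglip f$ and $\smllip f$ with the minimal generalized upper gradient $g_f$ of $f$: one shows $\smllip f$ is a generalized upper gradient (so $g_f\le\smllip f$), and one obtains $\biglip f\le g_f$ from an asymptotic Poincar\'e argument at Lebesgue points rather than from a fixed-scale oscillation estimate. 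Your second step (a maximal infinitesimally independent family gives a chart, with the regularity bound \eqref{eq:regchart} from uniqueness of coefficients and a closed-graph argument) and third step (blow-up plus a packing estimate bound the chart dimension by $N(C,\tau)$) are in the right spirit of \cite{cheeger99,keith04}, though the volume-distortion control for $\hat g$ on the tangent cone that your packing argument needs does not simply follow from the Lip-lip equality and is where the real work in the dimension bound actually sits.
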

In \cite[Thm.~2.3.1]{keith04} Keith was able to relax the assumptions of Cheeger:
\begin{thm}\label{thm:keith}
  If $(X,\mu)$ is a metric measure space with a $C$-doubling measure
  $\mu$ and there is a constant
  $\tau>0$ such that, for each real-valued Lipschitz function $f$,
  \begin{equation}\label{eq_Lip_lip_ineq}
    \tau\smllip f(x)\ge\biglip f(x)\quad\text{\normalfont(for $\mu$-a.e.~$x$),}
  \end{equation}
then $(X,\mu)$ is a differentiability space
    with differentiability dimension $\le N(C,\tau)$.
\end{thm}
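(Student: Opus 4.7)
The plan is to apply the correspondence between Alberti representations and Weaver derivations developed in the rest of the paper to produce charts, then use the Lip-lip hypothesis together with the geometric characterization of the local cotangent norm to upgrade an $L^\infty$-vanishing statement to pointwise differentiability. Since $\mu$ is $C$-doubling, $X$ has finite Assouad dimension (bounded by a function of $C$), so Corollary \ref{cor:assouad_bound} gives that $\wder\mu.$ is finitely generated of rank at most some $N_0 = N_0(C)$, and Theorem \ref{thm:fin_gen_surjectivity} then ensures that every derivation on $\mu$ arises from an Alberti representation.

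Using Corollary \ref{der-alb} together with Corollary \ref{cor:pseudoduality}, I would choose a Borel partition $X = \bigsqcup_\alpha U_\alpha$ on which $\wder\mu\mrest U_\alpha.$ has a basis of independent derivations $\{D_i^\alpha\}_{i=1}^{k_\alpha}$ with $k_\alpha \leq N_0$, and such that there exist pseudodual Lipschitz maps $\psi_\alpha : X \to \real^{k_\alpha}$ satisfying $D_i^\alpha \psi_\alpha^j = \delta_{ij}\chi_{U_\alpha}$. These will be the candidate charts; the regularity condition \eqref{eq:regchart} follows from the boundedness of the $D_i^\alpha$ as operators into $L^\infty(\mu)$.

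For each Lipschitz $f : X \to \real$ and each $\alpha$, set $a_i := D_i^\alpha f \in L^\infty(\mu\mrest U_\alpha)$. At a $\mu$-Lebesgue density point $x_0 \in U_\alpha$ of $(a_1,\dots,a_{k_\alpha})$, define $g(y) := f(y) - \sum_i a_i(x_0)\,\psi_\alpha^i(y)$. Pseudoduality together with the basis property forces every derivation in $\wder\mu\mrest U_\alpha.$ to kill $g$ at $x_0$ (modulo a localization step using a countable dense family of derivations and the density-point hypothesis), so $|dg|_{x_0} = 0$ in the Weaver cotangent norm $\locnorm\,\cdot\,,{\wform\mu.}.$. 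The geometric characterization of Theorem \ref{thm:weaver_fnorm_char}, combined with the abundance of $(1,1+\epsi)$-biLipschitz Alberti representations produced from the doubling of $\mu$ via Theorem \ref{alberti_rep_prod}, then forces $\smllip g(x_0) = 0$: otherwise one could construct an Alberti representation on which $(g\circ\gamma)'/\metdiff\gamma$ is bounded away from $0$, contradicting the vanishing of $|dg|_{x_0}$. The Lip-lip inequality \eqref{eq_Lip_lip_ineq} now gives $\biglip g(x_0) \leq \tau \smllip g(x_0) = 0$, which is precisely differentiability of $f$ at $x_0$ with respect to $\psi_\alpha$, with derivative $(a_1(x_0),\dots,a_{k_\alpha}(x_0))$.

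The main obstacle I anticipate is the passage from the essentially a.e. statement ``$|dg| = 0$ on some neighborhood of $x_0$'' to pointwise vanishing of $\smllip g$ at the single point $x_0$: the geometric characterization of the cotangent norm is inherently a $\mu$-a.e. statement, while differentiability is pointwise. Bridging this requires producing, at $\mu$-a.e.\ $x_0$, fragments through $x_0$ along which one can actually read off $\smllip g(x_0)$, and the doubling hypothesis enters here to ensure that such fragments exist in sufficiently many directions. A secondary issue is tracking the $\tau$-dependence of the dimension bound $N(C,\tau)$, since the rank bound from Corollary \ref{cor:assouad_bound} depends only on $C$; the $\tau$-dependence will have to be absorbed into how finely one partitions in order to invoke Theorems \ref{alberti_rep_prod} and \ref{thm:weaver_fnorm_char} with the prescribed tolerances.
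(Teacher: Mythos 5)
Note first that Theorem \ref{thm:keith} is cited from \cite{keith04} rather than proved in the paper, so there is no internal proof to compare against; the comments below evaluate your argument on its own terms. The gap you flag at the end is real, and more serious than you seem to realize. Your key step asserts that vanishing of $\locnorm dg,{\wform\mu.}.$ near $x_0$ forces $\smllip g(x_0)=0$, because otherwise ``one could construct an Alberti representation on which $(g\circ\gamma)'/\metdiff\gamma$ is bounded away from $0$.'' But there is no such implication: $\smllip g(x_0)>0$ is a pointwise, ball-based statement, whereas Alberti representations are built from positive-measure families of fragments, and a single point carries no mass. Worse, your supporting claim that doubling supplies ``an abundance of $(1,1+\epsi)$-biLipschitz Alberti representations via Theorem \ref{alberti_rep_prod}'' is false: Theorem \ref{alberti_rep_prod} is a \emph{criterion} requiring a nontrivial nullity hypothesis which doubling alone does not give. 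The standard Cantor measure on the middle-thirds set is Ahlfors regular hence doubling, yet its support contains no rectifiable fragment, so $\wder\mu.=\{0\}$ and $\locnorm df,{\wform\mu.}.=0$ for every $f$, while $\smllip(\mathrm{id})\equiv 1$. Thus the inequality $\locnorm df,{\wform\mu.}.\ge\smllip f$ which your argument implicitly needs is not a consequence of doubling. It \emph{does} hold once one knows the space is a differentiability space (this is part of Theorem \ref{thm:diff_char2}), but that is precisely the conclusion of Keith's theorem, so the argument as written is circular. To bridge the gap you would need to show first that the Lip-lip inequality forces $\mu$ to admit Alberti representations in sufficiently many independent directions; that is essentially the theorem itself (cf.\ Bate's characterization, Theorem \ref{thm:bate-diff_char}), and requires a separate mechanism such as Keith's blow-up/compactness argument or Bate's porosity argument.

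On smaller points: your dimension bound should invoke Corollary \ref{derbound} (index of $\wder\mu.$ locally bounded by the Assouad dimension, for any Radon $\mu$) rather than Corollary \ref{cor:assouad_bound}, which already \emph{assumes} that $(X,\mu)$ is a $\sigma$-differentiability space and is therefore unavailable here. Also observe that what you are attempting is in fact strictly stronger than Keith's theorem: a direct proof that the Lip-lip inequality with any constant $\tau$ forces $\biglip f=\smllip f=\locnorm df,{\wform\mu.}.$ $\mu$-a.e. The paper obtains that strengthening (Theorem \ref{thm:diff_char2}) only \emph{after} differentiability is known, not from the Lip-lip hypothesis alone, so one should expect this route to be at least as hard as the original.
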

The inequality \eqref{eq_Lip_lip_ineq} is sometimes called the
\textbf{Lip-lip inequality}. In the literature the quantity $\smllip f$ is sometimes
denoted by ${\rm lip}f$ and the quantity $\biglip f$ is sometimes
denoted by ${\rm Lip}f$.
% \par Note that Theorems \ref{thm:cheeger} and \ref{thm:keith}
% can be proved under the relaxed assumption that
% $\mu$ is asymptotically doubling. Moreover, in
\par The structure of differentiability spaces is currently not very
well-understood. In particular, there is no simple geometric
characterization. However, there are recent results providing necessary
conditions for the existence of a differentiable structure. These
results, however, do not require a uniform bound on the
differentiability dimension and we thus introduce the term
\textbf{$\sigma$-differentiability space} to denote a complete separable metric
measure space $(X,\mu)$  which is a countable union of
differentiability spaces  $(X_\alpha,
\mu\mrest X_\alpha)$ where each $X_\alpha$ is Borel in $X$, and such
that for any Lipschitz function $f:X\to\real$ and for $\mu$-a.e.~$x\in X_\alpha$
the infinitesimal Lipschitz constant at $x$, $\biglip f(x)$, computed in $X$ and that computed in
$X_\alpha$, $\biglip_{X_\alpha} f(x)$, agree. This last additional technical
requirement about the equality of $\biglip f(x)$ and
$\biglip_{X_\alpha} f(x)$ might seem unnatural, but is actually needed
to have a well-defined notion of derivative at $x$. For the moment,
the reader might think intuitively that $\biglip_{X_\alpha} f(x)$ measures the size
of the gradient of $f$ in $X_\alpha$ and $\biglip f(x)$ measures the
size of the gradient of $f$ in $X$. For details we refer the reader to
Section~\ref{sec:app_diff_spaces}, in particular to Remark~\ref{porosity_remark}.
In \cite{bate_speight} Bate and Speight showed:
\begin{thm}\label{thm:bate_speight}
  If $(X,\mu)$ is a differentiability space, then $\mu$ is
  $\sigma$-asymptotically doubling.
\end{thm}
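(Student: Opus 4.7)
The plan is to argue the contrapositive: if $\mu$ is not $\sigma$-asymptotically doubling then $(X,\mu)$ cannot satisfy the differentiability hypothesis. Since $X$ is covered by countably many chart sets $\{(U_\alpha,\psi_\alpha)\}$, it suffices to derive a contradiction from the assumption that on some chart $(U,\psi)$ with $\psi:X\to\real^N$ there is a Borel set $E\subset U$ with $\mu(E)>0$ such that, for every $K>0$ and $\mu$-a.e.~$x\in E$, the inequality $\mu(\ball x,r.)>K\mu(\ball x,r/2.)$ holds at arbitrarily small $r$. After passing to a subset, one may also assume $\mu\mrest E$ has positive lower density everywhere on $E$ and that the chart map $\psi$ has a uniform control on partial derivatives as in \eqref{eq:regchart}.

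The heart of the argument is to construct a single Lipschitz function $f:X\to\real$ which fails to be differentiable with respect to $\psi$ on a positive-measure subset of $E$. At each bad scale $r$ around such an $x$, the annulus $\ball x,r.\setminus\ball x,r/2.$ carries the overwhelming majority of $\mu(\ball x,r.)$. A Vitali-type selection across a sequence of scales $r_n\searrow 0$ produces, for each generation, a disjoint family of such annuli on which one places Lipschitz "spike" functions of uniformly bounded slope and amplitude comparable to $r_n$; summing across generations yields a globally Lipschitz $f$ whose oscillation at scale $r_n$ is forced to be $\gtrsim r_n$ on a nontrivial $\mu$-fraction of the selected annuli at every generation.

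To convert this into genuine non-differentiability, one fixes a countable dense family of candidate derivatives $(a_l)\in\rational^N$ and partitions $E$ according to which candidate is being tested. For each fixed $(a_l)$ the affine combination $L=\sum_l a_l\psi^l$ has oscillation at most $|a|\glip\psi. r$ on a ball of radius $r$, which matches the order of the oscillation of $f$; the spikes, however, can be positioned generically with respect to the level sets of $\psi$ so that $f-L$ still oscillates $\gtrsim r_n$ on a positive $\mu$-fraction of the bad annuli. A measurable-selection/diagonal argument then assembles a single $f$ that defeats every candidate on a positive-measure subset of $E$, contradicting the definition of differentiability space.

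The main obstacle is precisely the step that couples a purely measure-theoretic failure (lack of doubling) to a linear-algebraic one (failure of differentiability with respect to $\psi$): one must arrange the spikes so that their oscillation \emph{cannot} be cancelled by any affine function of the chart coordinates on a set of positive $\mu$-measure, and simultaneously keep the global Lipschitz constant of $f$ bounded while iterating the construction over infinitely many generations of scales. The geometric genericity of the bump placement relative to the level sets of $\psi$, together with the countable reduction of candidate derivatives, is where the technical weight of the argument lies.
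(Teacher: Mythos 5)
The paper does not reprove this theorem --- it is cited from \cite{bate_speight} --- but the route it relies on is visible in the proof of Theorem~\ref{thm:diff_char1}: (i) show that a measure on a differentiability space annihilates porous sets, and (ii) invoke the theorem of Preiss (\cite[Thm.~3.6]{preiss_porosity}) that a measure annihilating porous sets is asymptotically doubling, then decompose. Your proposal omits the analogue of step (ii), and that is where the real difficulty sits. You try to pass directly from failure of doubling to a spike construction, but the fact that $\mu(\ball x,r.) > K\mu(\ball x,r/2.)$ at a sequence of scales gives you \emph{no} geometric control near $x$. In particular it does not give you ``holes'' of definite relative size near $x$ in which to localize Lipschitz bumps of amplitude $\sim r$ and uniformly bounded slope across infinitely many generations --- the annular mass concentration is perfectly compatible with, say, $X$ being a rectifiable curve through $x$, where no such bump can be supported. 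Extracting such holes is precisely porosity, and the implication ``failure of asymptotic doubling somewhere $\Rightarrow$ a porous set of positive measure'' is the deep multi-scale content of Preiss' theorem; a Vitali-type selection across annuli does not reproduce it.

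There is a second gap in the step you describe as ``geometric genericity'' of the spikes relative to the level sets of $\psi$. When the non-differentiability on a porous set is actually established (Bate--Speight via structured charts, or here via Lemma~\ref{lem:por_gap} combined with Theorem~\ref{lip_ind}), a substantive argument is required to guarantee that no affine combination of the chart functions can absorb the oscillation of the constructed function. The chart map $\psi$ is an arbitrary Lipschitz map with no a priori relation to the annuli you select, so ``position the spikes generically'' together with a countable reduction of candidate derivatives does not discharge this; one must either control the restriction of $\psi$ to the bumps (structured charts) or, as this paper does, produce arbitrarily many \emph{infinitesimally independent} Lipschitz functions on the porous set and contradict the dimension bound directly. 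As written, your argument asserts the conclusion of both missing pieces rather than proving them.
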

\begin{exa}
  Note that one cannot conclude that $\mu$ is asymptotically doubling,
  i.e.~that the local doubling constant is uniformly bounded.  For
  example, consider a compact metric measure space with $\mu$ finite
  which is obtained by gluing together, along some geodesics,
  countably many (rescalings of) Laakso spaces with Hausdorff
  dimensions tending to $\infty$; this construction produces a
  differentiability space but the constant $C$ in
  \eqref{eq:meas_doubling} is not uniformly bounded.
\end{exa}
Later
\cite{bate-diff} Bate provided the following characterization:
\begin{thm}\label{thm:bate-diff_char}
    A metric measure space $(X,\mu)$ is a
  $\sigma$-differentiability space if and only if:
\begin{enumerate} 
\item The measure $\mu$ is $\sigma$-aymptotically doubling.
\item There is a Borel map $\tau:X\to(0,\infty)$ such that, for each
  real-valued Lipschitz function $f$, the measure $\mu$ admits an
  Alberti representation with \hbox{$f$-speed} $\ge\biglip f/\tau$.
\end{enumerate}
\end{thm}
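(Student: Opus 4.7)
The plan is to prove the two implications separately; I expect the necessity direction to contain the real difficulty.

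\emph{Sufficiency.} Assume (1) and (2). The strategy is to reduce to a pointwise Lip--lip inequality and then apply the asymptotically doubling version of Theorem \ref{thm:keith} on a Borel partition. Using (1), partition $X$ up to a $\mu$-null set into Borel pieces $X_\beta$ on which $\mu\mrest X_\beta$ is asymptotically doubling with a fixed constant and on which the Borel function $\tau$ is bounded by some $\tau_\beta<\infty$. Fix $\beta$ and a Lipschitz $f$. By (2) there is an Alberti representation $\albrep.=(P,\nu)$ of $\mu$ with $f$-speed $\geq\biglip f/\tau$. Then $\nu_\gamma$-a.e.\ point on $P$-a.e.\ fragment $\gamma$ landing in $X_\beta$ satisfies $(f\circ\gamma)'(t)\geq(\biglip f(\gamma(t))/\tau_\beta)\metdiff\gamma(t)$. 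A Fubini-type argument using the fact that $\int\nu_\gamma\,dP$ dominates $\mu\mrest X_\beta$ forces, for $\mu\mrest X_\beta$-a.e.\ $x$, the existence of arbitrarily short fragments through $x$ with positive metric speed along which $f$ grows at rate at least $\biglip f(x)/\tau_\beta$; this yields $\tau_\beta\smllip f(x)\geq\biglip f(x)$. Keith's theorem, adapted to the asymptotically doubling setting, then gives that $(X_\beta,\mu\mrest X_\beta)$ is a differentiability space, and the union furnishes the $\sigma$-differentiability structure.

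\emph{Necessity.} Assume $(X,\mu)$ is a $\sigma$-differentiability space. Condition (1) follows from Theorem \ref{thm:bate_speight} applied on each piece of the decomposition. For (2) I would work chart by chart. Fix a chart $(U_\alpha,\psi_\alpha)$ with $\psi_\alpha:X\to\real^{N_\alpha}$ and a Lipschitz $f$. At $\mu$-a.e.\ $x\in U_\alpha$, differentiability gives
\begin{equation*}
  f(y)=f(x)+\langle a(x),\psi_\alpha(y)-\psi_\alpha(x)\rangle+o(\dist x,y.),
\end{equation*}
with $a:U_\alpha\to\real^{N_\alpha}$ Borel and bounded via \eqref{eq:regchart}. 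The linear-algebraic observation is that, pointwise a.e., $\biglip f(x)$ equals $\|a(x)\|$ times the maximal asymptotic growth rate of $\langle a(x)/\|a(x)\|,\psi_\alpha\rangle$ at $x$. Hence it suffices, given any measurable direction $w:U_\alpha\to S^{N_\alpha-1}$, to produce an Alberti representation of $\mu\mrest U_\alpha$ whose fragments realize $(\langle w,\psi_\alpha\rangle\circ\gamma)'$ approximately equal to $\biglip(\langle w,\psi_\alpha\rangle)\,\metdiff\gamma$.

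To obtain such directional representations I would invoke the $w$-directional version of Theorem \ref{alberti_rep_prod}, producing $(1,1+\epsi)$-biLipschitz Alberti representations in the $\psi_\alpha$-direction of arbitrarily thin cones around $w$ with almost optimal $\langle w,\psi_\alpha\rangle$-speed. Applying this for finitely many model directions $w$, choosing $w(x)=a(x)/\|a(x)\|$ on each piece of a countable Borel partition of $U_\alpha$ where $a$ is essentially constant to a fixed tolerance, and gluing the resulting pieces through a standard combination lemma for Alberti representations, produces an Alberti representation of $\mu\mrest U_\alpha$ with $f$-speed at least $\biglip f/\tau_\alpha$ for some Borel $\tau_\alpha$. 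Summing over $\alpha$ and adjusting $\tau$ concludes (2).

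The principal obstacle is the step of producing, in a possibly non-PI differentiability space, directional Alberti representations in the chart variables with nearly optimal speed for each measurable direction field; this is precisely what makes Theorem \ref{alberti_rep_prod} and its directional refinements the technical heart of the argument. The remaining reduction from $\psi_\alpha$-speed to $f$-speed through the coefficient $a(x)$ is then a routine consequence of differentiability together with the regularity bound \eqref{eq:regchart}.
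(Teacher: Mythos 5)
The paper does not prove this theorem; it is quoted verbatim from Bate's work \cite{bate-diff} and used as a black box (the paper's own contribution, Theorem \ref{thm:diff_char2}, is a strictly stronger, quantitative version whose proof passes through the $\Gap(X)$ machinery and Theorems \ref{onedimapprox_multi}, \ref{lip_ind}). So there is no proof in the paper against which to compare your argument; I will review it on its merits.

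Your sufficiency direction is sound in outline: a speed bound of the form $f$-speed $\ge\biglip f/\tau$, read at density points of the fragments, yields $\smllip f\ge\biglip f/\tau$ $\mu$-a.e., and after partitioning so that $\tau$ is bounded and $\mu$ is asymptotically doubling on each piece, the Keith criterion (asymptotically doubling version) applies. Two details should be made explicit. First, the Fubini step should be phrased contrapositively: the Borel (or compact, by inner regularity) set $N$ of points not hit at a good density time by any $P$-generic fragment satisfies $\nu_\gamma(N)=0$ for $P$-a.e.\ $\gamma$, whence $\mu(N)=0$. Second, applying Keith's theorem to $(X_\beta,\mu\mrest X_\beta)$ as a metric measure space in its own right requires comparing the \emph{intrinsic} constants $\smllip_{X_\beta}f,\biglip_{X_\beta}f$ with $\smllip f,\biglip f$. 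To get $\smllip_{X_\beta}f(x)\ge\biglip f(x)/\tau_\beta$ you should restrict fragments to $\gamma^{-1}(X_\beta)$ (using that density points of $\gamma^{-1}(X_\beta)$ preserve $(f\circ\gamma)'$ and $\metdiff\gamma$), and to replace $\biglip f$ by $\biglip_{X_\beta}f$ on the right one needs either the trivial inequality $\biglip_{X_\beta}f\le\biglip f$ (which suffices here) or, if one were instead estimating from the $X_\beta$ side, the porosity identity \eqref{eq:npor_Lip_equal} as the paper does.

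Your necessity direction has a genuine gap. You reduce the problem, correctly, to producing Alberti representations of $\mu\mrest U_\alpha$ in the $\psi_\alpha$-directions of prescribed (measurable) cone fields with nearly optimal speed, and you propose to get these from Theorem \ref{alberti_rep_prod}. But Theorem \ref{alberti_rep_prod} is an \emph{equivalence}, not a construction: to invoke it one must \emph{verify} condition (2), that every compact $\Omega$-null set is $\mu$-null. Verifying that nullity condition in an arbitrary differentiability space \emph{is} Bate's theorem (and is where all the difficulty lies): one argues by contradiction that if some positive-measure compact admitted no fragments in the required chart-directions, one could build Lipschitz functions nondifferentiable, or infinitesimally independent, on a positive-measure subset, violating the uniform bound on the differentiability dimension. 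In this paper the relevant construction is Theorem \ref{lip_ind} (an independent-functions reformulation of Bate's structured-chart construction); in Bate's original paper it is the structured-chart argument of his Section 4. Your proposal names this step as ``the technical heart'' but offers no mechanism for it, so as written the necessity direction does not close. Additionally, the ``linear-algebraic observation'' identifying $\biglip f(x)$ with $\|a(x)\|$ times a growth rate of $\langle a(x)/\|a(x)\|,\psi_\alpha\rangle$ should be stated more carefully: what differentiability gives is $\biglip f(x)=\biglip\bigl(\langle a(x),\psi_\alpha\rangle\bigr)(x)$, i.e.\ $\biglip f(x)$ equals the value at $a(x)$ of the seminorm $v\mapsto\biglip(\langle v,\psi_\alpha\rangle)(x)$ from Definition \ref{def:infi_indep}; that seminorm is not the Euclidean norm times a scalar in general.
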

\par Even though the existence of Alberti representations provides new
information about the structure of differentiability spaces, it seems that using (2) in
Theorem~\ref{thm:bate-diff_char} to verify that a metric measure space is a
differentiability space is at least as impractical as verifying a
Lip-lip inequality.
Note, however, that Theorem \ref{thm:bate-diff_char} implies that a weaker form
of the Lip-lip inequality, where $\tau$ is allowed to be a Borel
function, must hold in a $\sigma$-differentiability
space. This leads to the natural question of whether there are
differentiability spaces where the Lip-lip inequality holds for some
constant $\tau>1$, but not for $\tau=1$. In particular, in \cite{keith04} Keith does not
provide any examples of differentiability spaces that cannot be
realized as positive measure subsets of a countable union of
PI-spaces. In the light of Theorem \ref{thm:cheeger}, such spaces
would easily arise if the Lip-lip inequality could hold as a strict
inequality. We make negative progress in this direction by showing
that the Lip-lip condition is more rigid than it
seems. In particular we show that in a 
$\sigma$-differentiability space, the equality $\biglip f=\smllip f$ holds $\mu$-a.e. Our
proof relies on the geometric characterization of
$\locnorm\,\cdot\,,{\wform\mu.}.$; another proof of $\biglip f=\smllip
f$ has appeared in a more recent work on metric differentiation
\cite{cks_metric_diff}. We provide the following characterization of
differentiability spaces:
\begin{thm}\label{thm:diff_char2}
%\textcolor{red}
{The metric measure space  $(X,\mu)$ is a $\sigma$-differentiability
  space if and only if one of the following
  equivalent conditions holds:}
  \begin{enumerate}
  \item For each $f\in\lipfun X.$
    \begin{equation}\label{eq:diff=Lip_gl}
      \locnorm df,{\wform\mu.}.(x)=\biglip f(x)\quad\text{\normalfont(for $\mu$-a.e.~$x$)}.
    \end{equation}
  \item For each $f\in\lipfun X.$, denoting by $S_f$ the Borel set
    \begin{equation}
      \left\{x\in X:\biglip f(x)>0\right\},
    \end{equation}
    for all $\epsi,\sigma\in(0,1)$ the measure $\mu\mrest S_f$ admits a
    $(1,1+\epsi)$-bi-Lipschitz Alberti representation with $f$-speed
    $\ge\sigma\biglip f$.
  \item For each $f\in\lipfun X.$,
    \begin{equation}
      \label{eq:diff=smllip=biglip}
      \biglip f(x)=\smllip f(x)\quad\text{\normalfont(for $\mu$-a.e.~$x$)}.
    \end{equation}
  \end{enumerate}
\end{thm}
It is worth noting that the condition that $(X,\mu)$ is
$\sigma$-asymptotically doubling is already contained
in~(\ref{eq:diff=Lip_gl}) or~(\ref{eq:diff=smllip=biglip}): this is
explained in the proof of Theorem~\ref{thm:diff_char2}.
\par There is also a connection between differentiability and
derivations. We show that:
\begin{thm}\label{thm:schioppa}
  A metric measure space $(X,\mu)$ is a differentiability space with
  dimension $\le N$ if and only if  $\mu$ is
    $\sigma$-asymptotically doubling and
  there are a conformal factor $\lambda\in L^\infty(\mu)$ and derivations $\{D_i\}_{i=1}^N\subset{\rm
    Der}(\mu)$ such that,
for each $f\in\lipalg X.$,
  \begin{equation}\label{revder}
    \lambda(x)\max_i|D_if(x)|\ge\biglip f(x)\quad\text{for $\mu$-a.e.~$x$.}
  \end{equation}
\end{thm}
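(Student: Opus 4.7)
The argument proceeds in two directions.

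\emph{Forward direction.} Assume $(X,\mu)$ is a differentiability space with dimension $\le N$. By Theorem \ref{thm:bate_speight}, $\mu$ is $\sigma$-asymptotically doubling. On each chart $(U_\alpha,\psi_\alpha)$ with $N_\alpha \le N$ components, the partial-derivative operators $\partial/\partial \psi_\alpha^i : \lipalg X. \to L^\infty(\mu\mrest U_\alpha)$ are linear, bounded by \eqref{eq:regchart}, weak*-continuous (a stability property of differentiation under pointwise convergence with uniformly bounded Lipschitz constants), and satisfy the Leibniz rule, and hence belong to $\wder\mu\mrest U_\alpha.$. After refining the charts so that $\sup_\alpha \glip \psi_\alpha.$ is finite, I glue via the $L^\infty$-module structure,
\begin{equation*}
  D_i := \sum_\alpha \chi_{U_\alpha}\,\partial/\partial\psi_\alpha^i \in \wder\mu.,\qquad i=1,\dots,N,
\end{equation*}
padding with zero where $i>N_\alpha$, and set $\lambda(x) := N\max_{i \le N_\alpha}\biglip\psi_\alpha^i(x) \in L^\infty(\mu)$ on $U_\alpha$. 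At a $\mu$-a.e.\ differentiability point $x \in U_\alpha$, writing $a_i := \partial f/\partial\psi_\alpha^i(x) = D_i f(x)$ and using that $f - \sum_i a_i\psi_\alpha^i$ has vanishing $\biglip$ at $x$,
\begin{equation*}
  \biglip f(x) = \biglip\Bigl(\sum_i a_i\psi_\alpha^i\Bigr)(x) \le \sum_i |a_i|\biglip\psi_\alpha^i(x) \le \lambda(x)\max_i |D_i f(x)|,
\end{equation*}
which is \eqref{revder}.

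\emph{Reverse direction.} Suppose $\mu$ is $\sigma$-asymptotically doubling and that derivations $\{D_i\}_{i=1}^N$ satisfy \eqref{revder}. The duality estimate $|D_i f| \le \locnorm{D_i},{\wder\mu.}. \cdot \locnorm df,{\wform\mu.}.$, combined with \eqref{revder}, gives
\begin{equation*}
  \biglip f \le \Lambda\,\locnorm df,{\wform\mu.}.,\qquad \Lambda := \lambda\max_i\locnorm{D_i},{\wder\mu.}. \in L^\infty(\mu).
\end{equation*}
Partition $X$ into countably many Borel sets where $\locnorm df,{\wform\mu.}.$ is essentially constant; on each set Theorem \ref{thm:weaver_fnorm_char} produces an Alberti representation with $f$-speed comparable to $\locnorm df,{\wform\mu.}.$; gluing via Theorem \ref{alb_glue} yields a single Alberti representation of $\mu$ with $f$-speed $\ge \biglip f/(2\Lambda)$. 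This verifies condition (2) of Theorem \ref{thm:bate-diff_char} with Borel $\tau := 2\Lambda$, so $(X,\mu)$ is a $\sigma$-differentiability space.

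\emph{Dimension bound and main obstacle.} It remains to show that every chart has $N_\alpha \le N$. Assume for contradiction $N_\alpha > N$ on some chart $(U_\alpha,\psi_\alpha)$: the $N\times N_\alpha$ matrix $M(x) := (D_i\psi_\alpha^j(x))$ then has nontrivial kernel $\mu$-a.e. By covering the unit sphere of $\real^{N_\alpha}$ by finitely many $\epsilon$-balls and pigeonholing, one finds a Borel $V\subset U_\alpha$ of positive measure and a fixed unit vector $a\in\real^{N_\alpha}$ with $|M(x)a|<\epsilon$ on $V$. Setting $g := \sum_i a_i\psi_\alpha^i$, \eqref{revder} forces $\biglip g \le \lambda\epsilon$ on $V$; iterating along a nested sequence of such sets with $\epsilon\to 0$ yields $\biglip g = 0$ on a positive-measure set, contradicting the uniqueness of the chart derivative of $g$ (which is the nonzero vector $a$). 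The main technical obstacles are (i) the weak*-continuity of the partial-derivative operators in the forward direction, which is a subtle stability property of differentiability under bounded-Lipschitz pointwise limits, and (ii) executing the measurable-selection step cleanly so that the dimension contradiction is obtained at a \emph{single} fixed direction $a$.
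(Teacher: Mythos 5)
Your forward direction has a genuine gap at exactly the spot you flag. You assert that the partial-derivative operators $\partial/\partial\psi_\alpha^j$ are weak*-continuous as ``a stability property of differentiation under pointwise convergence with uniformly bounded Lipschitz constants,'' but this is not a soft fact and is in effect what is being proved. Pointwise convergence of Lipschitz functions with uniformly bounded global Lipschitz constants does not by itself force convergence of the chart partial derivatives; this is precisely the content of Lemma \ref{partialderivatives}. The paper fills the gap indirectly: using Bate's existence of Alberti representations in independent directions and Corollary \ref{der-alb}, one first builds derivations $D_{\albrep i.}\in\wder\mu\mrest U_\alpha.$ via Theorem \ref{thm:alb_derivation}, which are weak*-continuous by construction; after a Borel partition on which the matrix $\bigl(D_{\albrep i.}x^j\bigr)$ is uniformly invertible, each $\chi_{U_{\alpha,\beta}}\,\partial/\partial x^k$ is expressed as a bounded $L^\infty$-linear combination of the $D_{\albrep i.}$, with the tail of the partition controlled by \eqref{eq:regchart}. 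Weak*-continuity of the partial derivatives is then inherited, not proved directly. Without some such input, the forward direction is incomplete.

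Your reverse direction takes a genuinely different route from the paper: the paper simply cites the sufficiency result of \cite{derivdiff}, while you pass through the pointwise duality bound $|D_if|\le\locnorm D_i,{\wder\mu.}.\locnorm df,{\wform\mu.}.$, the geometric characterization Theorem \ref{thm:weaver_fnorm_char}, and Bate's criterion Theorem \ref{thm:bate-diff_char}; this parallels the logic of Theorem \ref{thm:diff_char2} and is a legitimate alternative. The dimension bound, however, is not correct as written: the kernel vector $a$ depends on both $x$ and $\epsi$, so ``iterating along a nested sequence of such sets with $\epsi\to0$'' does not yield a single fixed $g=\sum_ia_i\psi_\alpha^i$ with $\biglip g=0$ on a positive-measure set. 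To repair it one needs a Lusin/Egorov step producing a compact $V\subset U_\alpha$ of positive measure on which $M(\cdot)$ is approximately a fixed $M_0$ together with a uniform lower bound $\Phi_{x,\{\psi_\alpha^j\}}(a)\ge c\|a\|_2$ on $V$ (from infinitesimal independence and Egorov); then a single $a_0\in\ker M_0$ gives $c\le\|\lambda\|_{L^\infty}\,\epsi$, contradicting $\epsi$ small. As stated, this step fails.
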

In \cite{derivdiff}, motivated by \cite{gong11-revised}, the author
showed that \eqref{revder} gives sufficient conditions for the
existence of a differentiable structure\footnote{In \cite{derivdiff}
  the measure $\mu$ was assumed doubling. However, taking a Borel
  partition, it suffices to assume that $\mu$ is
  $\sigma$-asymptotically doubling.}; in \cite{derivdiff} the author
also proved a partial converse: if $(X,\mu)$ is a differentiability
space with $\mu$ doubling and if the partial derivative
operators are derivations, then \eqref{revder} holds. Thus
Theorem~\ref{thm:schioppa} 
follows from \cite{derivdiff} because we provide
two different proofs that the partial derivative operators are
derivations: the first proof uses \cite{bate-diff} and can be found in
Subsection \ref{subsec:derivation_differentiability}; the second
proof uses Theorem \ref{thm:diff_char2} and can be found at the end of
Subsection  \ref{subsec:char_diff}. The possibility of providing
independent and different proofs is closely related to the topic of
metric differentiation discussed in \cite{cks_metric_diff}.
\par An important consequence of Theorem \ref{thm:schioppa} is that in
Theorems \ref{thm:cheeger} and \ref{thm:keith} one can replace the
bound $N(C,\tau)$ by the Assouad dimension, removing the dependence on
$\tau$ (Corollary \ref{cor:assouad_bound}).
\subsection*{Technical tools}\label{subsec:technical-tools}
In this subsection we give an overview of four technical tools used
in this paper.
\par The first tool is an approximation scheme for Lipschitz functions
in the weak* topology. % One can extend the notion of weak* sequential
% convergence from $\lipalg X.$ to $\lipfun X.$ (note: we are not
% defining a Banach space structure or a weak* topology on $\lipfun X.$)
% by saying that 
% $f_n\xrightarrow{\text{w*}} f$ if $f_n\to f$ pointwise there is an
% uniform bound on the Lipschitz constants of the $f_n$.
The intuition is that if a set $S$ is
$\frags(X,f,\delta)$-null (Definitions \ref{def:frag_nullity}
and \ref{def:classes_frags}),
 i.e.~does not contain fragments where
$(f\circ\gamma)'(t)\ge\delta\metdiff\gamma(t)$, then $f\in\lipalg X.$ can be
approximated by Lipschitz functions which
have Lipschitz constant at most $\delta$ in sufficiently small balls
centred on $S'$, where $S'\subset S$ has full measure in $S$ . We prove an
approximation scheme, Theorem \ref{onedimapprox_multi}, which takes
into account also the direction of the fragments. We state here a
simplified version which is sufficient for the results on
differentiability spaces.
\begin{thm}\label{onedimapprox}
  Let $X$ be a compact metric space, $f:X\to\real$ $L$-Lipschitz and
  $S\subset X$ compact. Let $\mu$ be a Radon measure on $X$. Assume that
  $S$ is \hbox{$\frags(X,f,\delta)$-null}. Then there are
  $\max(L,\delta)$-Lipschitz functions $g_k\xrightarrow{\text{w*}} f$
  with $g_k$ $\mu$-a.e.~locally
  $\delta$-Lipschitz on $S$.
\end{thm}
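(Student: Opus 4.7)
\medskip
\noindent\textbf{Proof proposal for Theorem \ref{onedimapprox}.}
The plan is to build the approximants $g_k$ by a McShane-type infimal convolution against a \emph{modified cost} which charges paths at rate $\delta$ once they enter a shrinking neighborhood of $S$ and at rate $L$ elsewhere. The full statement will then follow from three ingredients: (i) $\max(L,\delta)$-Lipschitzness and local $\delta$-Lipschitzness on $S$, both immediate from the triangle inequality for the cost; (ii) pointwise convergence $g_k\to f$, which is where the fragment-null hypothesis enters; and (iii) uniform boundedness in sup-norm, which is trivial since $g_k\le f$ and $g_k\ge f-2L\diam X$.

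First I would fix open sets $U_k\supset S$ with $\bigcap_k U_k=S$ and $\mu(S\setminus U_k)\to 0$ (possible since $\mu$ is Radon and $S$ is compact), and define, for $u,v\in X$,
\begin{equation*}
  \rho_k(u,v)=\begin{cases}\delta\,\dist u,v.&\text{if }u,v\in U_k,\\ \max(L,\delta)\,\dist u,v.&\text{otherwise.}\end{cases}
\end{equation*}
Then the infimum over finite chains,
$\tilde d_k(x,y)=\inf\{\sum_{i=0}^{n-1}\rho_k(x_i,x_{i+1}):x_0=x,\,x_n=y\}$,
defines a pseudometric dominated by $\max(L,\delta)\,\metric$, and I set
\begin{equation*}
  g_k(x)=\inf_{y\in X}\bigl(f(y)+\tilde d_k(x,y)\bigr).
\end{equation*}
Taking $y=x$ shows $g_k\le f$, and the triangle inequality for $\tilde d_k$ gives global Lipschitzness with constant $\max(L,\delta)$ with respect to $\metric$. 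If $x\in S$ and $r>0$ is so small that $\clball x,r.\subset U_k$, then $\tilde d_k(x,y)\le\delta\,\dist x,y.$ for $y\in\ball x,r.$, and this gives
$|g_k(x)-g_k(y)|\le\delta\,\dist x,y.$; since $\mu$-a.e.~point of $S$ lies in $U_k$ for all large $k$, this yields the local $\delta$-Lipschitzness on $S$.

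The main obstacle is pointwise convergence $g_k(x_0)\to f(x_0)$. I argue by contradiction: suppose that along some subsequence there exist $x_0\in X$, $\eta>0$, and $y_k\in X$ with $f(y_k)+\tilde d_k(x_0,y_k)\le f(x_0)-\eta$. Using near-minimizing chains and the fact that segments outside $U_k$ are charged at the full Lipschitz rate $\max(L,\delta)\ge L$ (so they cannot beat the trivial bound from $f$), essentially all of the savings must be realized by subchains whose endpoints lie in $U_k$. Reparametrize each near-minimizing chain by $\tilde d_k$-arclength to obtain $1$-Lipschitz (in the $\tilde d_k$-sense) discrete paths, and promote them to genuine biLipschitz fragments $\gamma_k:K_k\to X$ with $K_k\subset\real$ compact, with total $\metdiff$-length uniformly bounded, and whose images are contained in $U_k$ up to arbitrarily small error. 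Along each $\gamma_k$, the estimate $f(y_k)-f(x_0)\le -\eta-\tilde d_k(x_0,y_k)$ combined with $\tilde d_k\approx \delta\cdot(\text{length})$ on $U_k$ forces
\begin{equation*}
  (f\circ\gamma_k)'(t)\ge\delta\,\metdiff\gamma_k(t)\quad\text{on a subset of $\dom\gamma_k$ of uniformly positive measure.}
\end{equation*}
An Arzel\`a--Ascoli argument for fragments (uniform biLipschitz bound plus compactness of $X$, applying Blaschke to the domains $K_k\subset\real$) then extracts a limit fragment $\gamma_\infty$ with image in $\bigcap_k \overline{U_k}=S$ and with $(f\circ\gamma_\infty)'\ge\delta\,\metdiff\gamma_\infty$ on a set of positive measure, contradicting the hypothesis that $S$ is $\frags(X,f,\delta)$-null. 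The hard part is precisely this Arzel\`a--Ascoli passage: one must control the domains $K_k$ and ensure the differential inequality passes to the limit, which is where the precise definition of $\frags(X,f,\delta)$-nullity from Definitions~\ref{def:frag_nullity} and~\ref{def:classes_frags} is used.

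Finally, weak* convergence $g_k\to f$ follows from the uniform Lipschitz bound together with the pointwise convergence, by the characterization of the weak* topology recalled earlier in this introduction.
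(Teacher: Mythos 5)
Your infimal-convolution construction is a genuinely different route from the paper's, and the overall strategy is sound in outline: the approximants $g_k$ do inherit $\max(L,\delta)$-Lipschitzness and local $\delta$-Lipschitzness near $S$ directly from the cost $\rho_k$, and the contradiction argument for pointwise convergence correctly isolates that the ``savings'' $\eta$ must come from links of near-optimal chains whose endpoints lie inside $U_k$ and along which $f$ outstrips $\delta$ times the distance. (The paper instead reduces to a geodesic cylinder and builds the approximant by covering $S$ with ordered thin strips via Mirsky's antichain decomposition and then integrating an indicator along vertical fibers; your construction is more analytic, theirs more combinatorial-geometric.)

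However, there is a genuine gap at the step ``promote them to genuine biLipschitz fragments $\gamma_k:K_k\to X$.'' A near-minimizing chain $x_0=z_0,z_1,\dots,z_n=y_k$ is a finite list of points of $X$; it is \emph{not} a fragment, and in a general compact metric space there is no way to ``promote'' it to one. A fragment requires a biLipschitz map from a compact set $K\subset\real$ of \emph{positive Lebesgue measure} into $X$, and $X$ need not contain any rectifiable curve joining $z_i$ to $z_{i+1}$ (indeed $X$ could be a Cantor set, in which case $\frags(X)=\emptyset$ and $S$ is vacuously $\frags(X,f,\delta)$-null, yet your chains still exist and your argument would try to turn them into nonexistent fragments). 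So the contradiction you aim for -- a limit fragment $\gamma_\infty$ with image in $S$ violating the nullity -- cannot be reached from the raw chains.

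The fix is essentially the content of the paper's preparatory Lemmas \ref{nullembedd_multi} and \ref{con_red_multi}: first isometrically embed $X$ into a compact geodesic space (the closed convex hull of a Kuratowski embedding in $l^\infty$), extend $f$ by McShane, and transfer the $\frags$-nullity of $S$ to the ambient space via Lemma \ref{nullembedd_multi}. In a geodesic ambient space the infimum defining $\tilde d_k$ over chains coincides with an infimum over rectifiable curves, and those curves are genuine fragments; the resulting limit $\gamma_\infty$ has image in $S\subset X$, so after an arclength reparametrization and a restriction to a compact subset of positive measure where $(f\circ\gamma_\infty)'\ge\delta\,\metdiff\gamma_\infty$, it is a bona fide element of $\frags(X,f,\delta)$ with $\hmeas 1._{\gamma_\infty}(S)>0$, giving the contradiction. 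Without explicitly passing through a geodesic ambient space and transferring the nullity hypothesis, the proof does not go through.

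Two smaller points. The condition ``$\mu(S\setminus U_k)\to0$'' is vacuous since $U_k\supset S$, and it is not used anyway: because $S$ is compact and each $U_k$ is open, a uniform $r_k>0$ with $\ball x,r_k.\subset U_k$ for all $x\in S$ exists, so in fact you get local $\delta$-Lipschitzness at \emph{every} point of $S$, not merely $\mu$-a.e. Also, your Arzel\`a--Ascoli step needs a reparametrization to a uniform biLipschitz class (arclength in the ambient geodesic space suffices after the embedding), together with the observation that the total $\tilde d_k$-length of a near-minimizing chain is bounded by $\max(L,\delta)\diam X$, so the domains are uniformly bounded; you indicate awareness of this but it only becomes available once the ambient-space step is in place.
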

\par
The motivation to prove Theorem \ref{onedimapprox_multi} came
from reading the first version of \cite[Subsec.~6.1]{bate-diff}: the author observed that
Bate's construction can be used to produce an approximation scheme for
Lipschitz functions with $\biglip f=0$ (\emph{flat}) on $S$. In the
author's opinion, \cite[Subsec.~6.1]{bate-diff} is an adaptation to metric spaces
of a construction sketched in
\cite[Defn.~1.14]{acp_proceedings}. However, the original approximation scheme
based on \cite[Subsec.~6.1]{bate-diff} could be used only to prove
part of the results presented here: the major obstacle is that the
approximation works only if one is allowed to take the limit for
$\delta\to0$.  The problem stems from the presence of a potential term
$\delta\hmeas 1.$ in the $Q$-Lagrangian introduced by Bate. In a
subsequent version of \cite{bate-diff}, there is a refined Lagrangian 
that can also be used to prove Theorem \ref{onedimapprox}. In Subsection
\ref{subsec:an-appr-scheme} we sketch the original approach that we
used to prove 
Theorem \ref{onedimapprox}. This approach is based 
on a combinatorial result stated (without proof)
in \cite[Thm.~2.4]{acp_proceedings}; the
author found a proof in A.~Marchese's
PhD~thesis~\cite{alberti_marchese}.
\par It should be noted that the passage from Euclidean spaces to
general metric spaces does not require essentially new ideas: it is
based on an \emph{abstract nonsense construction} of a cylinder which is a
geodesic metric space containing the graph of the function to be
approximated. The starting
point, as in \cite[Subsec.~6.1]{bate-diff}, is a Kuratowski embedding
in $l^\infty$.
\par The second technical tool is a decomposition of $\wder\mu.$ into
free modules. The problem stems from the fact that $L^\infty(\mu)$ is
not an integral domain and thus the notion of linear independence of
derivations behaves quite differently than in a vector space. In
\cite{derivdiff} the author introduced the following concept of finite
dimensionality:
\begin{defn}
  The module $\wder\mu\mrest U.$ is said to have \textbf{index $\le
    N$} if any linearly independent (over $L^\infty(\mu\mrest U)$) set
  of derivations in it has cardinality at most $N$.  If, for any
  $\mu$-measurable $U$, the module $\wder\mu\mrest U.$ has index $\le N$, we say
  that $\wder\mu.$ has \textbf{index locally bounded by $N$}.
\end{defn}
\par Under this assumption the author \cite{derivdiff} obtained the
following decomposition result.
\begin{thm}\label{thm:free_dec}
  Suppose that $\wder\mu.$ has index locally bounded by $N$. Then
  there is a Borel partition $X=\bigcup_{i=0}^N X_i$ such that, if
  $\mu(X_i)>0$, then $\wder\mu\mrest X_i.$ is free of rank $i$. A
  basis of $\wder\mu\mrest X_i.$ will be called a \textbf{local basis of
  derivations}.
\end{thm}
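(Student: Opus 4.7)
My plan is to prove Theorem \ref{thm:free_dec} by a top-down exhaustion argument: construct $X_N$ first as the ``largest'' Borel set admitting $N$ linearly independent derivations, then iterate on the complement to obtain $X_{N-1}, X_{N-2}, \ldots, X_0$. The key algebraic input is that the index is globally bounded by $N$, which both terminates the induction and forces maximality to yield spanning.

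First I would construct $X_N$. Consider the family $\mathcal{F}_N$ of pairs $(U,(D_1,\ldots,D_N))$, where $U\subset X$ is Borel and $D_1\mrest U,\ldots,D_N\mrest U\in\wder\mu\mrest U.$ are linearly independent over $L^\infty(\mu\mrest U)$. Since $\mu$ is Radon on a separable metric space, the predual of $L^\infty(\mu)$ is separable enough that we may select a countable sequence $(U_n,(D_1^n,\ldots,D_N^n))\in\mathcal{F}_N$ with $\mu(U_n)$ approaching $\sup_{\mathcal{F}_N}\mu(U)$. Set $X_N=\bigcup_n U_n$, partition $X_N$ into the Borel sets $V_n=U_n\setminus\bigcup_{m<n}U_m$, and define global derivations $D_i=\sum_n \chi_{V_n}D_i^n$ (the sum is well-defined and lies in $\wder\mu.$ since multiplication by bounded Borel indicators preserves derivations). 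Independence on $X_N$ follows pointwise from independence on each $V_n$. By maximality, any further tuple of $N$ derivations cannot be linearly independent on a positive-measure subset of $X\setminus X_N$; in particular the index of $\wder\mu\mrest X\setminus X_N.$ is at most $N-1$.

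Then I would iterate: apply the same exhaustion with $N-1$ derivations on $X\setminus X_N$ to get $X_{N-1}$, and so on down to $X_0$, on which no nonzero derivation can exist (otherwise a singleton would be an independent tuple), so $\wder\mu\mrest X_0.=\{0\}$, which is free of rank $0$. This produces the required Borel partition.

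Finally, I would verify freeness of $\wder\mu\mrest X_i.$ with the basis $\{D_1,\ldots,D_i\}$ constructed on $X_i$. Linear independence is immediate from the construction. For spanning, take any $D\in\wder\mu\mrest X_i.$, extend it arbitrarily to a derivation on $X$, and consider the Borel set
\begin{equation*}
W=\bigl\{x\in X_i:(D_1,\ldots,D_i,D)\text{ is linearly independent on every Borel neighbourhood of }x\bigr\}.
\end{equation*}
If $\mu(W)>0$ we could enlarge $X_{i+1}$ (or $X_N$ if $i+1>N$, impossible), contradicting maximality. So on $X_i\setminus W$, a nontrivial relation $f_0D+\sum_{j=1}^i f_jD_j=0$ holds. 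I would then partition $X_i$ into the Borel sets $\{|f_0|>1/n\}$ and, on the complement $\{f_0=0\}$, invoke the linear independence of $D_1,\ldots,D_i$ to force all $f_j=0$ — an absurdity of the relation being nontrivial unless we iterate the argument. Repeating this on countably many Borel pieces yields an $L^\infty$-expression $D=\sum_{j=1}^i g_jD_j$ on $X_i$, establishing freeness.

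The main obstacle is precisely this last step: passing from the existence of a pointwise linear dependence relation among $D_1,\ldots,D_i,D$ to a genuine $L^\infty$-combination expressing $D$ in the $D_j$. Because $L^\infty(\mu\mrest X_i)$ is not an integral domain, dividing by $f_0$ is only legitimate where $|f_0|$ is bounded below; one must therefore refine the partition countably many times and use the index bound to ensure the process terminates and glues into a single global relation with $L^\infty$ coefficients.
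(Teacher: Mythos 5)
Your exhaustion construction of the partition $\{X_i\}$ is the natural one and is essentially correct: take $X_N$ to maximize the measure of a Borel set carrying $N$ linearly independent derivations, glue a derivation tuple over a disjoint Borel cover (one should normalize the $D_j^n$ so that $\sum_n\chi_{V_n}D_j^n$ is a bounded element of $\wder\mu.$), then iterate; maximality does force $\wder\mu\mrest X_i.$ to have index locally bounded by $i$. The genuine gap is the freeness step, which you flag as the ``main obstacle'' but do not close, and there is also an error in the strategy beyond the acknowledged handwaving: you try to show that the particular independent tuple $\{D_1,\ldots,D_i\}$ produced by the exhaustion is a basis, but an independent tuple in a free $L^\infty$-module need not span. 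Already in $\wder{\lebmeas\mrest[0,1]}.$, which is free of rank one, the derivation $x\frac{d}{dx}$ is linearly independent as a singleton yet is not a basis, since $1/x\notin L^\infty$. Correspondingly, when you divide by $f_0$ on $\{|f_0|>1/n\}$ the coefficients $f_j/f_0$ are bounded only by $n\|f_j\|_\infty$, and nothing in the index bound makes these bounds uniform in $n$; the glued Borel coefficients therefore need not lie in $L^\infty(\mu\mrest X_i)$, and the sentence ``use the index bound to ensure the process terminates and glues into a single global relation with $L^\infty$ coefficients'' restates the conclusion rather than proving it.

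What is missing is the pseudoduality mechanism of Corollary~\ref{cor:pseudoduality}: after a further Borel partition $X_i=\bigcup_\alpha V_\alpha$, one replaces the $D_j$ by derivations $D'_{\alpha,j}$ in their span, together with $1$-Lipschitz functions $g_{\alpha,j}$, satisfying $D'_{\alpha,j}g_{\alpha,k}=\delta_{jk}\chi_{V_\alpha}$. For any $D\in\wder\mu\mrest X_i.$ the candidate coefficients become $Dg_{\alpha,j}$, and these obey $\|Dg_{\alpha,j}\|_{L^\infty}\le\|D\|_{\wder\mu.}$ \emph{uniformly in $\alpha$} precisely because the $g_{\alpha,j}$ are uniformly $1$-Lipschitz; the index bound then forces the remainder $\chi_{V_\alpha}D-\sum_j(Dg_{\alpha,j})D'_{\alpha,j}$ to vanish, since a nonzero remainder would annihilate every $g_{\alpha,k}$ and therefore form, with the $D'_{\alpha,j}$, an $(i+1)$-tuple of independent derivations on part of $X_i$. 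A further normalization of the local bases is still needed to assemble a single global basis of $\wder\mu\mrest X_i.$, but the uniform $L^\infty$ control on coefficients supplied by the $1$-Lipschitz pseudodual functions is the essential input that the bare divide-and-glue argument lacks; without it one only obtains that $D$ is a local $L^\infty$-combination of $D_1,\ldots,D_i$ over an uncontrolled partition, which is strictly weaker than freeness.
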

\par The third technical tool is to relate Alberti representations and
\textbf{blow-ups} of metric spaces. A \textbf{blow-up\footnote{Sometimes
    called a tangent space / cone} of a metric space $X$ at a point
  $p$} is a (complete) pointed metric space $(Y,q)$ which is a pointed
Gromov-Hausdorff limit of a sequence $(\frac{1}{t_n}X,p)$ where $t_n\searrow0$:
the notation $\frac{1}{t_n}X$ means that the metric on $X$ is rescaled by
$1/t_n$; the class of blow-ups of $X$ at $p$ is denoted by
$\tang(X,p)$. A \textbf{blow-up of a Lipschitz function $f\colon X\to\real^Q$
  at a point $p$} is is a triple $(Y,q,g)$ with
$(\frac{1}{t_n}X,p)\to(Y,q)\in\tang(X,p)$, $g\colon X\to\real^Q$ Lipschitz and such
that the rescalings $(f-f(p))/t_n:\frac{1}{t_n}X\to\real^Q$ converge to $g$;
the class of blow-ups of $f$ at $p$ is denoted by $\tang(X,p,f)$.
The general existence of blow-ups requires the notion of
\textbf{ultralimits}: we simplified the treatment assuming that $X$ has
\textbf{finite Assouad dimension} (\cite{tyson_mackay_conf}) because
this assumption is not restrictive in the theory of differentiability
spaces.
\par In Subsection \ref{subsec:dimens-bounds-tang} we show that:
\begin{itemize}
\item If $f:X\to\real^N$ and $\mu$ admits Alberti representations in the $f$-directions of
  independent cone fields, for $\mu$-a.e.~$p$ all blow-ups
  $(Y,q,g)\in\tang(X,p,f)$ are such that $g:Y\to\real^N$ is surjective (Theorem \ref{alberti_blow_up}).
\item If $X$ has Assouad dimension $D$, then $\wder\mu.$ has index
  locally bounded by $D$ (Corollary \ref{derbound}).
\end{itemize}
\par Note that Corollary \ref{derbound} improves
\cite[Lem.~1.10]{gong11-revised} by giving an explicit bound \emph{equal} to
the Assouad dimension.  
\par The fourth tool is a construction of independent Lipschitz
functions, Theorem~\ref{lip_ind}. Lipschitz functions
$\{\psi_1,\ldots,\psi_n\}$ are \textbf{independent} on a set $S$ if,
for each $x\in S$, the map $\real^n\ni
(a_i)\mapsto\biglip(\sum_{i=1}^na_i\psi_i)(x)$ is a norm. The property
of being a differentiability space can be reformulated as a
\textbf{finite dimensionality statement}: there is a uniform upper bound on the
number of Lipschitz functions which are independent on a positive
measure set.
\par In the case of Euclidean spaces, instead of constructing
independent functions on a set $S$, it is more natural to construct
Lipschitz functions which are not differentiable on $S$.  In the case
of $\real$, there is a classical construction of Zahorski
(\cite{zahorski_line}); for $\real^n$, a generalized construction is
announced in \cite[Thm.~1.15]{acp_proceedings}; in the case in which
one relaxes the conclusion to nondifferentiability $\mu$-a.e.~on
$S$ (i.e.~``nondifferentiability for measures''),
the construction is simplified as it can %\textcolor{red}
{be handled}
independently on different parts of $S$ using a truncation principle
(Lemma~\ref{lip_trunc}). Recently, Alberti and Marchese
\cite{alberti_marchese} strengthened this approach showing that the
set of Lipschitz functions which are nondifferentiable on a large
part of $S$ is \textbf{comeagre} in a suitable metric space of Lipschitz
functions.
\par In \cite[Sec.~4]{bate-diff} Bate produces a construction of
nondifferentiable Lipschitz functions for measures on metric spaces; the
approach is similar to the Euclidean case but requires the tool of
\textbf{structured charts} introduced in \cite{bate_speight}. Theorem~\ref{lip_ind} is a reformulation of this result in the language of
independent functions; the author thinks this is useful because: [1]
it is technically simpler avoiding a discussion of structured charts;
[2] it fits more naturally with the approaches of Cheeger and Keith.
In the first version of this paper the author raised the question of
whether it is possible to do a construction of
independent functions \emph{for sets} in metric spaces. This question
is natural as in~\cite{acp_proceedings} the authors announce the
construction, for each set $S$ in a suitable class of null sets,
of a Lipschitz map nowhere differentiable on $S$. However, the answer in the metric setting turns out to be
negative as Cheeger's Differentiation Theorem~\cite{cheeger99} does \emph{not} single out a
canonical measure class, see~\cite{sing_poinc}.
\par Finally, when the first version of this paper appeared in
November 2013 all known examples of differentiability spaces were
covered by the theory in~\cite{cheeger99}, as all examples were countable unions
of positive measure subsets of spaces admitting Poincar\'e
inequalities. Therefore, some colleagues have criticized
Theorem~\ref{thm:diff_char2} for a lack of applicability. In a
forthcoming paper we respond to this criticism by constructing a new
class of differentiability spaces which are not covered by the theory
in~\cite{cheeger99} and for which, in some sense, the Lip-lip
equality~(\ref{eq:diff=smllip=biglip}) is an optimal characterization.
\subsection*{Acknowledgements} The author wants to thank
% \textcolor{red}
{his advisor,
Bruce Kleiner, }for reading this work and providing many stimulating
questions. In particular, some of these questions motivated the author
to prove Theorem \ref{thm:weak*density}.
\par The author  wishes to thank David Bate for comments on the first
version of this preprint and for pointing out the refined Lagrangian
in the final version of \cite{bate-diff}.
\par Finally, the author is grateful to the anonymous referee for
reading the paper very carefully and raising many good questions to
clarify the exposition.
\par During the revision phase of this paper the author was supported
by the ``ETH Zurich Postdoctoral Fellowship Program and the Marie
Curie Actions for People COFUND Program''.
\renewcommand{\nomname}{List of Symbols}
\printnomenclature[2cm]
\section{Preliminaries}\label{sec:prelim}
\subsection{Alberti representations}\label{subsec:alberti}
The goal of this subsection is to define Alberti representations
precisely and prove Theorem \ref{alberti_rep_prod}, which can be
regarded as a standard criterion to produce Alberti
representations. The treatment has been a expanded a bit to address what seem
to be a couple of little gaps in \cite{bate-diff}:
\begin{enumerate}
\item In \cite[Lem.~5.2]{bate-diff} Alberti representations are
  produced with $P$ a probability measure on $1$-rectifiable measures,
  instead of fragments (point addressed in Lemma \ref{biLip_dis}).
\item In \cite[Sec.~6]{bate-diff} Alberti representations are produced
  with $P$ defined on $\frags(\bana)$ where $\bana$ is a Banach space
  containing $X$ (point addressed in Theorem
  \ref{compact_reduction}).
\end{enumerate}
 Throughout this section,
unless otherwise specified, $X$
will denote a \textbf{%\textcolor{red}
{complete separable and locally
    compact metric space.}} %\textcolor{red}
{Note that we make the assumption on local
compactness to have a convenient definition of topology for Radon
measures on $X$. This assumption does not entail a significant loss of
generality; in fact,  if $X$ is a separable metric space
and $\mu$ is a Radon measure on $X$, as
$\mu$ is locally finite and inner regular, one might reduce the study
of Alberti representations of $\mu$ to studying those of $\mu\mrest T$,
where $T$ is a subset of $X$ which has full $\mu$-measure and is
$\sigma$-compact, i.e.~a countable union of compact sets.}
We start by defining (path) fragments.
\begin{defn}
Given a metric space $Y$,
let $\haus(Y)$ denote the topological space of
%\textcolor{red}
{nonempty compact subsets of $Y$} with the
Vietoris topology, which is induced by the Hausdorff distance. The
following properties are easy to see: if $Y$
is compact / separable / complete / locally compact, then $\haus(Y)$
is compact / separable / complete / locally compact. We introduce the set of (path) \textbf{fragments}:
  \begin{equation}
  \frags(X)=\left\{\gamma:K\to X:\text{$\gamma$ bi-Lipschitz,
    $K\subset\real$ compact, $\lebmeas(K)>0$}\right\};
\end{equation}
which is identified with a subspace of $\haus(\real\times X)$ via the
map $\gamma\mapsto\graph\gamma$.  Given a nondegenerate compact
interval $I\subset\real$, we denote by $\frags(I; X)$ the subset of
fragments $\gamma$ with $\dom\gamma\subset I$.
\end{defn}
\par In order to define Alberti representations precisely, we need to
recall some facts from measure theory. 
\begin{assump}\label{ass_mea}[Assumption on the topology on Radon measures]
Let $Z$ denote a locally
compact metric space, $M(Z)$%
\nomenclature[measure]{$M(Z)$}{finite Radon measures on $Z$}%
\nomenclature[measure]{$P(Z)$}{finite Borel probability measures on $Z$}%
\ the Banach space of finite (signed) Borel
measures on $Z$ and $P(Z)\subset M(Z)$ the subset of probability
measures.  It might be useful to recall that finite Borel measures on
metric spaces are regular and that a finite Borel measure on a
Polish space is Radon
\cite[Thm.~7.1.7]{bogachev_measure}. The Banach space $M(Z)$ is also a
dual Banach space; in the definition of $M(Z)$-valued Borel maps \textbf{we
will consider on $M(Z)$ the weak* topology}. In particular, given a
metric space $Y$, a map $\psi:Y\to M(Z)$ is Borel if and only
if for each $g\in C_c(Z)$ (set of real-valued continuous functions
  with compact support) the map $y\mapsto\int_Z g\,d\psi(y)$ is Borel (compare
\cite[Rem.~1.1]{alberti_rank_one}).
\end{assump}
\par We will use the following result
(\cite[Defn.~1.2]{alberti_rank_one}):
\begin{lem}\label{integration_meas}
  Let $Y$ be a separable locally compact topological space, and
  $\lambda$ a locally finite Borel measure on $Y$. Let $\psi\colon Y\to
  M(Z)$ be Borel and, denoting by $\|\psi(y)\|_{M(Z)}$ the norm of
  $\psi(y)$, assume that
  \begin{equation}
    \int_Y\|\psi(y)\|_{M(Z)}\,d\lambda(y)<\infty;
  \end{equation}
  then the integral
  \begin{equation}
    \mu=\int_Y\psi(y)\,d\lambda(y)
  \end{equation} exists and defines an element of $M(Z)$. More
  precisely, for a Borel set $A\subset Z$,
  \begin{equation}
    \mu(A)=\int_Y\psi(y)(A)\,d\lambda(y).
  \end{equation}
\end{lem}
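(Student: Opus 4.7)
\medskip

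The plan is to construct $\mu$ via duality on $C_c(Z)$ and then promote the resulting identity from continuous to Borel test sets by a monotone class argument.

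\textbf{Step 1 (the functional on $C_c(Z)$).} For each $g\in C_c(Z)$, the hypothesis that $\psi:Y\to M(Z)$ is Borel for the weak* topology means precisely that $y\mapsto \int_Z g\,d\psi(y)$ is Borel. The pointwise bound
\begin{equation*}
\left|\int_Z g\,d\psi(y)\right|\le \|g\|_\infty\,\|\psi(y)\|_{M(Z)}
\end{equation*}
combined with the integrability assumption $\int_Y\|\psi(y)\|_{M(Z)}\,d\lambda(y)<\infty$ shows that
\begin{equation*}
L(g)\;:=\;\int_Y\!\left(\int_Z g\,d\psi(y)\right)d\lambda(y)
\end{equation*}
defines a bounded linear functional on $C_c(Z)$ of norm at most $\int_Y\|\psi(y)\|_{M(Z)}\,d\lambda(y)$.

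\textbf{Step 2 (Riesz representation).} Since $Z$ is locally compact metric, the Riesz representation theorem furnishes a unique finite signed Radon measure $\mu\in M(Z)$ such that $L(g)=\int_Z g\,d\mu$ for every $g\in C_c(Z)$. By construction $\|\mu\|_{M(Z)}\le\int_Y\|\psi(y)\|_{M(Z)}\,d\lambda(y)$, so $\mu$ is the desired candidate for $\int_Y\psi(y)\,d\lambda(y)$.

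\textbf{Step 3 (extension to Borel sets via monotone class).} It remains to upgrade the identity $\int_Z g\,d\mu=\int_Y\int_Z g\,d\psi(y)\,d\lambda(y)$ from $g\in C_c(Z)$ to $g=\chi_A$ with $A\subset Z$ Borel. Consider the class $\mathcal H$ of bounded Borel functions $g:Z\to\real$ for which (i) $y\mapsto \int_Z g\,d\psi(y)$ is Borel, (ii) $\big|\int_Z g\,d\psi(y)\big|\le \|g\|_\infty\|\psi(y)\|_{M(Z)}$, and (iii) the identity above holds. Dominated convergence (applied in both the inner and outer integrals, using $\|\psi(y)\|_{M(Z)}\in L^1(\lambda)$ as the dominating envelope) shows that $\mathcal H$ is closed under uniformly bounded pointwise limits. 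Since $\mathcal H\supset C_c(Z)$, the monotone class theorem for functions yields $\mathcal H\supset$ all bounded Borel functions, in particular $\chi_A\in\mathcal H$ for every Borel $A\subset Z$. Evaluating at $g=\chi_A$ gives $\mu(A)=\int_Y\psi(y)(A)\,d\lambda(y)$, as required.

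\textbf{Expected difficulty.} The only delicate point is the measurability issue in Step~3: a priori Borelness of $\psi$ with respect to the weak* topology only tests against $C_c(Z)$, so one must justify Borelness of $y\mapsto \psi(y)(A)$ for arbitrary Borel $A$. The monotone class argument above handles this and is the real substance of the lemma; everything else (the $L^1$ domination, Riesz representation, and the uniqueness of $\mu$) is routine.
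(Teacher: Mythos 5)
The paper does not actually prove this lemma: it cites \cite[Defn.~1.2]{alberti_rank_one} as the source and moves on, so there is no intrinsic argument of the paper to compare your proposal against. Your proof is a correct and essentially canonical route: define the measure by duality on $C_c(Z)$ via Riesz representation, then upgrade the resulting identity to Borel indicators by a functional monotone class argument. The observation you flag as the real substance --- that Borelness of $\psi$ in the weak* sense only gives measurability of $y\mapsto\int g\,d\psi(y)$ for $g\in C_c(Z)$ a priori, so the measurability of $y\mapsto\psi(y)(A)$ has to be earned --- is exactly right.

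One small omission worth repairing. The functional monotone class theorem (Dynkin's multiplicative system lemma in the form you invoke) requires that the constant function $1$ belong to $\mathcal H$; since $1\notin C_c(Z)$ when $Z$ is non-compact, this is not automatic from Step 1. You should add that a separable locally compact metric space $Z$ is $\sigma$-compact, so there is an increasing sequence $\phi_n\in C_c(Z)$ with $\phi_n\uparrow 1$; the closure of $\mathcal H$ under bounded monotone pointwise limits, which you already establish by dominated convergence using $\|\psi(y)\|_{M(Z)}\in L^1(\lambda)$ as envelope, then places $1\in\mathcal H$. With that supplement, $\mathcal H$ is a vector space containing $1$, closed under uniformly bounded pointwise sequential limits, and contains the multiplicative class $C_c(Z)$, which generates the Borel $\sigma$-algebra on $Z$; the monotone class theorem delivers all bounded Borel functions, and evaluation at $\chi_A$ yields the stated formula.
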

We can now state precisely the definition of an Alberti
representation; note that condition (4) has been added for technical
convenience (compare the proof of Lemma \ref{lem:meas_fix}).
\begin{defn}\label{defn:Alberti_rep}
  Let $\mu$ be a Radon measure on the metric space $X$; an \textbf{Alberti
  representation} of $\mu$ is a pair $(P,\nu)$:
  \begin{enumerate}
  \item The measure $P$ is in $P(\frags(X))$.
  \item The map $\nu:\frags(X)\to M(X)$ is Borel and
  $\nu_\gamma\ll\hmeas 1._\gamma$, the one-dimensional Hausdorff
  measure associated to the image of $\gamma$.
\item  The measure $\mu$ can be represented as
  $\mu=\int_{\frags(X)}\nu_\gamma\,dP(\gamma)$.
\item For each Borel set $A\subset X$ and for all real numbers $a\le b$,
  the map
  $\gamma\mapsto\nu_\gamma\left(A\cap\gamma(\dom\gamma\cap[a,b])\right)$
  is Borel.
  \end{enumerate}
\end{defn}
\par We now define precisely the notions of Euclidean cones and
metric differential employed in the introduction
to present the notions of direction and speed for Alberti representations.
\begin{defn}\label{defn:cone}
  Let $\alpha\in(0,\pi/2)$, $w\in{\mathbb S}^{q-1}$; the \textbf{open
    cone} $\cone(w,\alpha)\subset\real^q$ with axis $w$ and opening
  angle $\alpha$ is:
\begin{equation}\label{eq:defcone}
    \cone(w,\alpha)=\{u\in\real^q: \tan\alpha\langle
    w,u\rangle>\|\pi_w^\perp u\|_2\},
  \end{equation}%
\nomenclature[frags]{$\cone(w,\alpha)$}{cone / cone field of direction $w$
  and angle $\alpha$}%
%\textcolor{red}
{where $\pi_w^\perp$ denotes the orthogonal projection on
  {%\textcolor{red}
the subspace}
of $\real^q$ orthogonal to $w$.}
The set of open cones is given the topology of ${\mathbb
  S}^{q-1}\times(0,\pi/2)$.
\end{defn}
\begin{rem}\label{rem:cone_diam}Note that if $u\in\cone(w,\alpha)\cap{\mathbb S}^{q-1}$,
\begin{equation}
  \|u-w\|_2\le(1-\cos\alpha)+\sin\alpha.
\end{equation}
\end{rem}
\par We recall the definition of \textbf{metric differential}, which is
an adaptation of \cite[Defn.~4.1.2]{ambrosio_metric} (compare
\cite[Sec.~3]{ambrosio-rectifiability}):
\begin{defn}\label{def:met_diff}
  Given $\gamma\in\frags(X)$, the metric
  differential $\metdiff\gamma(t)$ of $\gamma$ at $t\in\dom\gamma$ is the limit
  \begin{equation}
    \lim_{\dom\gamma\ni t'\to t}\frac{\dist\gamma(t'),\gamma(t).}{|t'-t|}
  \end{equation}
  whenever it exists\footnote{We convene that if $t$ is an isolated
    point of $\dom\gamma$, the limit does not exist}.
\end{defn}
\par For an Alberti representation $\albrep.$ will abbreviate a set of conditions on the
Lipschitz / bi-Lipschitz constant, speed and direction by \albcond. For
example, \albcond\ might amount to requiring that $\albrep.$ is in the
direction of a given cone field $\cone$ and has speed $\ge\delta$.
\par If $X$ is a metric space and $C\subset X$ with $\mu$ a Radon
measure on $C$, there are a priori two different notions of Alberti
representations $(P,\nu)$ depending on whether $P\in P(\frags(X))$ or
$P\in P(\frags(C))$; in the former case we will say that the Alberti
representation is defined on $\frags(X)$. We will prove Theorem
\ref{compact_reduction}, which produces an Alberti representation
defined on $\frags(C)$ given an Alberti representation defined on
$\frags(X)$ and preserves a set \albcond. Recall that, unless
otherwise specified, $X$ is assumed to be complete separable and
locally compact.
\begin{thm}\label{compact_reduction}
  Let $C$ be a compact subset of $X$. Suppose that the Radon measure
  $\mu$, with support contained in $C$, admits an Alberti
  representation $(P,\nu)$ defined on $\frags(X)$ and satisfying
  \albcond. Then $\mu$ admits an Alberti representation $(P',\nu')$
  defined on $\frags(C)$ and satisfying \albcond.
\end{thm}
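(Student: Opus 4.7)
The plan is to restrict each fragment $\gamma\in\frags(X)$ to the compact preimage $\gamma^{-1}(C)$, obtaining a fragment in $\frags(C)$, and then push forward $(P,\nu)$ accordingly. First, since $\spt\mu\subset C$,
\begin{equation*}
0 = \mu(X\setminus C) = \int_{\frags(X)}\nu_\gamma(X\setminus C)\,dP(\gamma),
\end{equation*}
so $\nu_\gamma$ is concentrated on $C$ for $P$-a.e.\ $\gamma$. Discarding this Borel null set, I assume this holds on a full-$P$-measure Borel set $F\subset\frags(X)$, and further restrict to $F' = \{\gamma\in F : \nu_\gamma\neq 0\}$, still of full $P$-measure.

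For $\gamma\in F'$ set $K_\gamma = \gamma^{-1}(C)$, a compact subset of $\dom\gamma$. Since $\nu_\gamma$ lives on $\gamma(K_\gamma)$ and $\nu_\gamma\ll\hmeas 1._\gamma$, we obtain $\hmeas 1.(\gamma(K_\gamma))>0$, whence biLipschitzness forces $\lebmeas(K_\gamma)>0$; thus $\tilde\gamma := \gamma|_{K_\gamma}\in\frags(C)$. Under the graph identification this is just $\graph\gamma\cap(\real\times C)$, and the resulting map $\Phi:F'\to\frags(C)$ is Borel (intersection of a varying compactum with a fixed closed set is Borel between the corresponding Hausdorff-topology spaces). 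Set $P' := \Phi_\sharp(P\mrest F')\in P(\frags(C))$. To construct $\nu'$, apply the standard Borel disintegration theorem to write $P\mrest F' = \int_{\frags(C)} P_{\tilde\gamma}\,dP'(\tilde\gamma)$ with $P_{\tilde\gamma}$ concentrated on $\Phi^{-1}(\tilde\gamma)$, and define
\begin{equation*}
\nu'_{\tilde\gamma} = \int_{F'}\nu_\gamma\,dP_{\tilde\gamma}(\gamma).
\end{equation*}
Any $\gamma\in\Phi^{-1}(\tilde\gamma)$ satisfies $\gamma|_{K_\gamma}=\tilde\gamma$, so $\hmeas 1._\gamma\mrest C = \hmeas 1._{\tilde\gamma}$ and hence $\nu'_{\tilde\gamma}\ll\hmeas 1._{\tilde\gamma}$. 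Fubini and $P'=\Phi_\sharp(P\mrest F')$ yield $\int\nu'_{\tilde\gamma}\,dP'(\tilde\gamma) = \mu$, and condition (4) of Definition \ref{defn:Alberti_rep} is inherited through the disintegration.

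Finally, I would verify that \albcond\ is preserved. The Lipschitz and biLipschitz constants of $\tilde\gamma$ are at most those of $\gamma$ since $\tilde\gamma$ is a restriction. For direction and speed: at each $t\in K_\gamma$ non-isolated in $K_\gamma$, both $\metdiff\tilde\gamma(t)$ and $(f\circ\tilde\gamma)'(t)$ agree with the corresponding quantities for $\gamma$ whenever the latter exist, since a limit along $\dom\gamma\ni t'\to t$ must match any subnet limit. The isolated points of $K_\gamma$ form an at-most-countable, hence $\lebmeas$-null, set, so the $\lebmeas$-a.e.\ direction and speed conditions transfer from $\gamma$ to $\tilde\gamma$.

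The main obstacle is the measure-theoretic bookkeeping in the second paragraph: checking that $\Phi$ is Borel with Borel image in $\frags(C)\subset\haus(\real\times C)$, applying the disintegration theorem correctly on these Polish spaces, and confirming that the resulting $\nu'$ is a genuine Borel $M(C)$-valued map satisfying all four clauses of Definition \ref{defn:Alberti_rep}. The preservation of \albcond\ itself is essentially automatic once the restriction is correctly set up.
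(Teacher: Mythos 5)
Your argument is correct in outline and reaches the same conclusion, but takes a genuinely different route from the paper's. Where you push forward $(P,\nu)$ directly and then reassemble the density map $\nu'$ by disintegrating $P\mrest F'$ over the fibers of $\Phi$, the paper first proves a preliminary normalization: by a Radon--Nikodym argument it replaces $\nu_\gamma$ with $g\,\Psi_\gamma$, where $\Psi_\gamma=\mpush\gamma.\lebmeas\mrest\dom\gamma$ and $g$ is a single fixed Borel function on $X$ not depending on $\gamma$. With that in hand, $\Psi_\gamma\mrest C=\Psi_{\frest C.(\gamma)}$ and the restricted density is simply $\nu'_{\tilde\gamma}=g\,\Psi_{\tilde\gamma}$ --- the same $g$ --- so there is no fiber-averaging to do and no appeal to the Disintegration Theorem. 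The paper's route buys you an explicit, canonically-defined $\nu'$ and sidesteps the question of whether the disintegration $P_{\tilde\gamma}$ is a genuinely Borel family; your route avoids the preliminary lemma but shifts that work to verifying Borel measurability of $\tilde\gamma\mapsto\nu'_{\tilde\gamma}$ and clause (4) of Definition \ref{defn:Alberti_rep} through the disintegration.

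One place you are more cavalier than the paper warrants: the claim that $K\mapsto K\cap(\real\times C)$ is Borel ``between the corresponding Hausdorff-topology spaces'' is exactly the content of the paper's Lemma \ref{borel_rest}, and the paper does not treat it as standard --- it decomposes $\frags(X,C)$ into the closed pieces $\frags_n(X,C)$, uses Stone--Weierstra\ss\ on the algebra generated by distance-to-point functions, and proves lower semicontinuity of $d_{(t,x)}\circ\frest C,n.$ by a compactness/subsequence argument. The intersection map with a closed set is only upper semicontinuous in the Vietoris topology (and can even be empty-valued), so the Borel measurability you need is a real lemma, not a one-liner. You correctly flag the measure-theoretic bookkeeping as the main obstacle; this Borel-selection issue, together with checking that the disintegration produces a Borel $M(C)$-valued map verifying condition (4), is precisely where that obstacle lives. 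Your treatment of \albcond\ preservation (restriction to a Lebesgue-full subset of the domain does not change metric differentials or directional derivatives at density points) matches the paper's implicit reasoning and is fine.
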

\par The proof of Theorem \ref{compact_reduction} requires some
preparation. We start introducing some subsets of fragments.
\begin{defn}
Let $C\subset X$ and $n\in\natural$. We define:
\begin{align}
  \frags(X,C)&=\left\{\gamma\in\frags(X): \text{$\gamma^{-1}(C)$ has
    positive Lebesgue measure}\right\}.\\
\begin{split}
    \frags_n(X)&=\bigl\{\gamma\in\frags(X):\text{$\dom\gamma\subset[-n,n]$,
      $\gamma$ is
      $\left(\frac{1}{n},n\right)$-bi-Lipschitz,}\\ &\phantom{=\bigl\{\gamma\in\frags(X):\text{$\dom\gamma\subset[-n,n]$}}
    \text{and $\lebmeas(\dom\gamma)\ge\frac{1}{n}$}\bigr\};
\end{split}\\
\begin{split}
    \frags_n(X,C)&=\bigl\{\gamma\in\frags_n(X): \text{
      $\lebmeas(\gamma^{-1}(C))\ge\frac{1}{n}$}\bigr\}.
\end{split}
\end{align}
\end{defn}
\begin{lem}\label{lem:frag_compact} Let $X$ be a metric space and $C$
  a closed subset of $X$.
  The sets $\frags_n(X)$ and $\frags_n(X,C)$ are closed in
  $\haus(\real\times X)$. If $C\subset X$ is compact, then $\frags_n(C)$  is
 compact.
\end{lem}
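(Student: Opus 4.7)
The plan is to verify closedness by pushing all the defining conditions of $\frags_n$ (the domain bound, the two biLipschitz inequalities, and the measure lower bounds) through a Hausdorff limit in $\haus(\real\times X)$, and then to obtain compactness of $\frags_n(C)$ from closedness together with the compactness of $\haus([-n,n]\times C)$ when $C$ is compact.

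First I would take $(\gamma_k)\subset\frags_n(X)$ with $\graph\gamma_k\to K$ in $\haus(\real\times X)$. Since each graph lies in the closed set $[-n,n]\times X$, so does $K$. For any two points $(t_i,x_i)\in K$ I would choose approximating $(t_i^k,x_i^k)\in\graph\gamma_k$; passing to the limit in the $(1/n,n)$-biLipschitz inequalities yields the same inequalities on $K$. The Lipschitz side forces $x_1=x_2$ whenever $t_1=t_2$, so $K=\graph\gamma$ for a $(1/n,n)$-biLipschitz $\gamma$ on $\dom\gamma=\pi_\real(K)\subset[-n,n]$. Since $\pi_\real$ is $1$-Lipschitz, $\dom\gamma_k\to\dom\gamma$ in $\haus(\real)$, and Lebesgue measure is upper semicontinuous there: from $A_k\subset N_\epsilon(A)$ eventually one obtains $\limsup\lebmeas(A_k)\le\lebmeas(N_\epsilon(A))\to\lebmeas(A)$ as $\epsilon\searrow 0$. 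Hence $\lebmeas(\dom\gamma)\ge 1/n$, so $\gamma\in\frags_n(X)$.

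For $\frags_n(X,C)$, which I treat under the natural assumption that $C$ is closed, set $A_k=\graph\gamma_k\cap(\real\times C)$, each compact with $\lebmeas(\pi_\real(A_k))\ge 1/n$. Since $\graph\gamma_k\to K$ in Hausdorff distance and $K$ is compact, the $A_k$ eventually sit in a uniformly totally bounded neighborhood of $K$, so there is a subsequence $A_{k_j}\to A$ in $\haus(\real\times X)$. As $\real\times C$ is closed, $A\subset K\cap(\real\times C)=\graph\gamma\cap(\real\times C)$, hence $\pi_\real(A)\subset\gamma^{-1}(C)$. Applying upper semicontinuity again yields $\lebmeas(\gamma^{-1}(C))\ge\lebmeas(\pi_\real(A))\ge\limsup\lebmeas(\pi_\real(A_{k_j}))\ge 1/n$. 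Finally, for the compactness statement, applying the first part with $C$ in place of $X$ gives that $\frags_n(C)$ is closed in $\haus(\real\times C)$; since the graphs of its elements lie in $[-n,n]\times C$, $\frags_n(C)$ sits inside the compact space $\haus([-n,n]\times C)$ and is therefore compact.

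The main delicate point is the direction of semicontinuity: Lebesgue measure on compact subsets of $\real$ is upper, not lower, semicontinuous under Hausdorff convergence, and it is precisely the inequality $\lebmeas(\lim A_k)\ge\limsup\lebmeas(A_k)$ that is needed to carry the bound $\ge 1/n$ across the limit. A secondary technicality is extracting a convergent subsequence of $(A_k)$ despite $\real\times X$ being noncompact, which is handled by the total boundedness of a small Hausdorff neighborhood of the compact set $K$.
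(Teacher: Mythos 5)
Your proof is correct and follows essentially the same route as the paper's: pass biLipschitz inequalities and the domain constraint through the Hausdorff limit, invoke upper semicontinuity of Lebesgue measure under Hausdorff convergence of compacts (you identify the direction of semicontinuity correctly, and this is the same fact the paper uses silently), and handle $\frags_n(X,C)$ by extracting a convergent subsequence of the sets $\graph\gamma_k\cap(\real\times C)$. The only cosmetic difference is in the compactness step: the paper cites Ascoli--Arzel\`a, whereas you observe that $\frags_n(C)$ is a closed subset of the compact space $\haus([-n,n]\times C)$; these are essentially the same observation, since the Vietoris compactness of $\haus$ of a compact space is how one runs Ascoli--Arzel\`a with varying domains here.
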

\begin{proof}
  The subset of $\haus(\real\times X)$ consisting of all the graphs of
  $\left(\frac{1}{n},n\right)$-bi-Lipschitz maps
  $\gamma:K\subset\real\to X$ is closed. Also the set of those compact
  sets $K\subset\real\times X$, whose projection $\pi_\real(K)$ on
  $\real$ satisfies $\pi_\real(K)\subset[-n,n]$, is closed. Consider a
  sequence of compact sets $\{K_k\}\subset\haus(\real)$ and assume
  that for each $k$ one has $\lebmeas(K_k)\ge\frac{1}{n}$; then, if the
  $K_k$ converge to $K$, we have $\lebmeas(K)\ge\frac{1}{n}$. We thus
  conclude that the set $\frags_n(X)$ is closed in $\haus(\real\times
  X)$. The compactness of $\frags_n(C)$ follows from
  Ascoli-Arzel\`a. We now show that $\frags_n(X,C)$ is a closed subset
  of $\frags_n(X)$; suppose that the sequence of fragments
  $\{\gamma_k\}\subset\frags_n(X,C)$ converges to $\gamma$; by passing
  to a subsequence we can also assume that the compact sets
  $\gamma_k^{-1}(C)$ converge to a compact set
  $K\subset\dom\gamma$. We then have $\lebmeas(K)\ge\frac{1}{n}$ and
  $\gamma(K)\subset C$, showing that $\frags_n(X,C)$ is closed.
\end{proof}
An immediate consequence of Lemma \ref{lem:frag_compact} is:
\begin{lem}\label{frag_meas} Let $X$ be a metric space and $C$ a
  compact subset of $X$.
  Then the sets $\frags(X)$ and $\frags(X,C)$ are of class $F_\sigma$ in
  $\haus(\real\times X)$.  If $C\subset X$ is compact, then $\frags(C)$
  is of class $K_\sigma$ in $\haus(\real\times X)$.
\end{lem}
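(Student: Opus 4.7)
The plan is to deduce the statement directly from Lemma \ref{lem:frag_compact} by writing each of the three sets as a countable union of the closed/compact pieces $\frags_n(\cdot)$ already treated there.

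First I would verify the decompositions
\begin{equation*}
\frags(X)=\bigcup_{n\in\natural}\frags_n(X),\qquad \frags(X,C)=\bigcup_{n\in\natural}\frags_n(X,C),\qquad \frags(C)=\bigcup_{n\in\natural}\frags_n(C).
\end{equation*}
The nontrivial inclusion in each case is ``$\subset$''. Given $\gamma:K\to X$ in $\frags(X)$, the set $K$ is compact and hence bounded, $\gamma$ is $(c,L)$-biLipschitz for some $0<c\le L<\infty$, and $\lebmeas(K)>0$. Choosing
\begin{equation*}
n\ge\max\Bigl(\sup|K|,\;L,\;\tfrac{1}{c},\;\tfrac{1}{\lebmeas(K)}\Bigr)
\end{equation*}
places $\gamma$ in $\frags_n(X)$. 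The same $n$, together with the requirement $\lebmeas(\gamma^{-1}(C))\ge 1/n$ (which can be arranged since this measure is strictly positive by definition of $\frags(X,C)$), takes care of the second decomposition; the third is identical, using that $\gamma(\dom\gamma)\subset C$ in that case.

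Given the decompositions, the conclusion is immediate from Lemma \ref{lem:frag_compact}: $\frags(X)$ and $\frags(X,C)$ are countable unions of closed subsets of $\haus(\real\times X)$, hence $F_\sigma$, and if $C$ is compact then $\frags(C)$ is a countable union of compact subsets of $\haus(\real\times X)$, hence $K_\sigma$.

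There is no real obstacle here; the only point requiring a moment of care is to be sure the parameter $n$ can be chosen uniformly to satisfy all of the defining inequalities of $\frags_n$ at once, which the max above handles. The lemma is really just a bookkeeping repackaging of Lemma \ref{lem:frag_compact} into the descriptive set-theoretic language ($F_\sigma$, $K_\sigma$) that will be convenient for the measure-theoretic constructions in the proof of Theorem \ref{compact_reduction}.
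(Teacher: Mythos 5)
Your proof is correct and takes exactly the route the paper intends: the paper dismisses the lemma as an ``immediate consequence of Lemma \ref{lem:frag_compact},'' and your argument simply spells out the countable decompositions $\frags(X)=\bigcup_n\frags_n(X)$, $\frags(X,C)=\bigcup_n\frags_n(X,C)$, $\frags(C)=\bigcup_n\frags_n(C)$ and the choice of $n$ that makes them work.
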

\par The key tool to prove Theorem \ref{compact_reduction} is the
following {%\textcolor{red}
{lemma}}.
\begin{lem}\label{borel_rest}
  If $X$ is a complete separable metric space and if $C\subset X$ is compact,
  the restriction map
  \begin{equation}
    \begin{aligned}
      \frest C.:\frags(X,C)&\to\frags(C)\\
      \gamma&\mapsto\gamma|\gamma^{-1}(C),
    \end{aligned}
  \end{equation}
is Borel.
\end{lem}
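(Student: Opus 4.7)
Under the graph identification $\gamma\mapsto\graph(\gamma)$, both $\frags(X,C)$ and $\frags(C)$ sit inside $\haus(\real\times X)$, and the restriction map corresponds to the set-theoretic intersection
\begin{equation*}
\Phi:\haus(\real\times X)\to\haus(\real\times X),\qquad K\mapsto K\cap(\real\times C).
\end{equation*}
Indeed, for $\gamma\in\frags(X,C)$ the intersection $\graph(\gamma)\cap(\real\times C)$ equals $\graph(\gamma|\gamma^{-1}(C))$, and since $\gamma^{-1}(C)$ has positive Lebesgue measure this is a nonempty compact set, so $\Phi$ does send $\frags(X,C)$ into $\frags(C)$. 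By Lemma \ref{frag_meas} the set $\frags(X,C)$ is Borel in $\haus(\real\times X)$, so it suffices to show that $\Phi$ itself is Borel measurable.

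The plan is to verify this directly against the Vietoris subbasis. Recall that the topology of $\haus(\real\times X)$ is generated by the two families of open sets
\begin{equation*}
\langle U\rangle_1=\{K:K\subset U\},\qquad \langle U\rangle_2=\{K:K\cap U\neq\emptyset\},\qquad U\subset\real\times X\text{ open}.
\end{equation*}
Because $\real\times X$ is second countable, it is enough to check that $\Phi^{-1}(\langle U\rangle_i)$ is Borel for every open $U$ and $i\in\{1,2\}$. For $i=1$, one has $\Phi^{-1}(\langle U\rangle_1)=\{K:K\cap F=\emptyset\}=\langle F^c\rangle_1$, where $F=(\real\times C)\setminus U$ is closed; this is open. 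For $i=2$, write $A=(\real\times C)\cap U$. Since $\real\times C$ is closed and $U$ is open, $A$ is the intersection of a closed and an open set in a metric space, hence $F_\sigma$: write $A=\bigcup_nA_n$ with $A_n$ closed. Then
\begin{equation*}
\Phi^{-1}(\langle U\rangle_2)=\{K:K\cap A\neq\emptyset\}=\bigcup_n\{K:K\cap A_n\neq\emptyset\},
\end{equation*}
and each $\{K:K\cap A_n\neq\emptyset\}$ is the complement of the subbasic open set $\langle A_n^c\rangle_1$, hence closed. So the preimage is $F_\sigma$ and therefore Borel.

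There is no real obstacle here beyond keeping the semi-continuity directions of the two subbasic families straight: intersection with a fixed closed set is (Vietoris) upper semi-continuous, which takes care of the $\langle U\rangle_1$ preimages immediately, while the $\langle U\rangle_2$ preimages require the mild observation that an open set in a metric space is $F_\sigma$. Combining these two cases shows $\Phi$ is Borel, and restricting to the Borel set $\frags(X,C)$ yields the Borel measurability of $\frest C.$.
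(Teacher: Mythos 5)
Your proof is correct and takes a genuinely different route from the paper's. The paper first localizes to the compact pieces $\frags_n(X,C)\to\frags_n(C)$; there $\frags_n(C)$ is compact, so by Stone--Weierstrass it suffices to check Borel measurability of the distance evaluations $d_{(t,x)}\circ\frest C,n.$, which are then shown to be lower semicontinuous. You avoid all of that by observing that under the graph identification the restriction map is simply intersection with the closed slab $\real\times C$, and you check Borel measurability directly against the Vietoris subbase: preimages of the $\langle U\rangle_1$ sets are open (intersection with a fixed closed set is Vietoris upper semicontinuous), while preimages of the $\langle U\rangle_2$ sets are $F_\sigma$ because $(\real\times C)\cap U$ is $F_\sigma$ in a metric space. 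This is shorter, more elementary, and dispenses entirely with the decomposition into the $\frags_n$. Two small points worth tightening in a final write-up: the map $K\mapsto K\cap(\real\times C)$ is not defined on all of $\haus(\real\times X)$ because the intersection may be empty, so one should regard it as a map from the closed set $\{K:K\cap(\real\times C)\neq\emptyset\}$, which contains $\frags(X,C)$ (the preimage computations go through unchanged on this subspace); and the step ``it is enough to check subbasic preimages'' uses that $\haus(\real\times X)$ is second countable, which in turn uses compactness of the elements $K$ to reduce an arbitrary open cover to a finite subcover drawn from a fixed countable base of $\real\times X$---standard, but worth a word.
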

\begin{proof}
  Note that $\frest C.$ maps $\frags_n(X,C)$ to $\frags_n(C)$ so by
   Lemma \ref{frag_meas} it suffices to show that the restriction $\frest
  C,n.=\frest C.|\frags_n(X,C)$ is Borel. By
  \cite[Lem.~6.2.5]{bogachev_measure} it suffices to show that
  for each $\psi\in C\left(\frags_n(C)\right)$ (the set of real-valued
    continuous functions), the map $\psi\circ\frest C,n.$ is Borel.
  \par If
  $(t,x)\in\real\times C$, let $d_{(t,x)}(K)$ denote the distance from
  the compact set $K\subset\real\times C$ to the point $(t,x)$.  Note that $d_{(t,x)}$ is
  Lipschitz: %\textcolor{red}
{given another compact set $K'\subset\real\times C$}, for each $\epsi>0$ if $(t_1,x_1)\in K$ is a closest point
  to $(x,t)$, there is a point $(t_2,x_2)\in K'$ with
  \begin{equation}
    \dist {(t_1,x_1)},{(t_2,x_2)}.\le\hdist K,K'.+\epsi,
  \end{equation}
  which implies
  \begin{equation}
    d_{(t,x)}(K')\le d_{(t,x)}(K)+\hdist K,K'..
  \end{equation}
  Note that these functions separate points in $\haus(\real\times
  C)$. As $\frags_n(C)$ is compact by Lemma~\ref{lem:frag_compact}, by the Stone-Weierstrass Theorem
  \cite[Thm.~7.32]{rudin-principles} the unital subalgebra of
  $C\left(\frags_n(C)\right)$ generated by the functions
  $\{d_{(t,x)}\}$ is dense. In particular, any ${\psi\in
  C\left(\frags_n(C)\right)}$ is a uniform limit of polynomials in the
  $\{d_{(t,x)}\}$. Therefore, it suffices to show that
  $d_{(t,x)}\circ\frest C,n.$ is Borel. 
  \par We show that $d_{(t,x)}\circ\frest C,n.$ is lower
  semicontinuous: assume that $\gamma_m\to\gamma$ in $\frags_n(X,C)$ and that
  along a subsequence $m_k$ one has
  \begin{equation}
    d_{(t,x)}(\frest C,n.(\gamma_{m_k}))\le\epsi.
  \end{equation}
  It is then possible to find points $t_{m_k}\in\dom\gamma_{m_k}$ with
  ${x_{m_k}:=\gamma_{m_k}(t_{m_k})\in C}$ and $\dist {(t_{m_k},x_{m_k})},
  {(t,x)}.\le\epsi$. By passing to a subsequence we can assume that
  $t_{m_k}\to\tilde t\in\dom\gamma$, $x_{m_k}\to\tilde x\in C$ and $\tilde
  x=\gamma(\tilde t)$. As $\dist {(\tilde t,\tilde
    x)},{(t,x)}.\le\epsi$,
  \begin{equation}
    d_{(t,x)}(\frest
    C,n.(\gamma))\le\liminf_{m\to\infty}d_{(t,x)}(\frest C,n.(\gamma_m)).
  \end{equation}
\end{proof} 
\par The second step in the proof of Theorem \ref{compact_reduction}
is a simplification of the description of Alberti representations
which relies on the following map $\Psi$.
\begin{lem}
  Let $X$ be a complete separable and locally compact metric space; consider the map
  \begin{equation}
    \begin{aligned}
      \Psi:\frags(X)&\to M(X)\\
      \gamma&\mapsto\mpush\gamma.\left(\lebmeas\mrest\dom\gamma\right)=\Psi_\gamma;
    \end{aligned}
  \end{equation}
then $\Psi$ is Borel and, if $A\subset X$ is Borel and
$[a,b]\subset\real$, the map
\begin{equation}\label{bairexx}
  \gamma\mapsto\Psi_\gamma\left(A\cap\gamma(\dom\gamma\cap[a,b])\right)
\end{equation}
is Borel.
\end{lem}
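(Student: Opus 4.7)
The plan is to decompose $\frags(X)$ into the $F_\sigma$-pieces $\frags_n(X)$ from Lemma \ref{lem:frag_compact} and, on each piece, establish Borel measurability via upper semicontinuity (reverse Fatou), and then bootstrap to the second statement using Dynkin's $\pi$-$\lambda$ theorem.

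The key pointwise bound is this: suppose $\gamma_k\to\gamma$ in $\frags_n(X)$, so the graphs converge in Hausdorff distance with some $d_k\to 0$. For each $t\in\real$ with $t\in\dom\gamma_k$, the point $(t,\gamma_k(t))\in\graph\gamma_k$ lies within $d_k$ of some $(t'_k,\gamma(t'_k))\in\graph\gamma$, so $|t-t'_k|\le d_k$ and, using that $\gamma$ is $n$-Lipschitz, $\dist \gamma_k(t),\gamma(t).\le (1+n)d_k$. Two consequences follow: (i) if $t\not\in\dom\gamma$, then eventually $t\not\in\dom\gamma_k$ (otherwise $t'_k\to t$ with $t'_k\in\dom\gamma$ closed forces $t\in\dom\gamma$, a contradiction); (ii) if $t\in\dom\gamma\cap\dom\gamma_k$, then $\gamma_k(t)\to\gamma(t)$. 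In particular, for any $g\in C_c(X)$ with $g\ge 0$ the functions $f_k(t)=g(\gamma_k(t))\chi_{\dom\gamma_k}(t)$ (extended by $0$) satisfy $\limsup_k f_k(t)\le g(\gamma(t))\chi_{\dom\gamma}(t)$ for every $t$.

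By the criterion recalled just before Lemma \ref{integration_meas}, it suffices for the first statement to show that $\gamma\mapsto\int g\,d\Psi_\gamma=\int_{\dom\gamma}g(\gamma(t))\,dt$ is Borel for every $g\in C_c(X)$; decomposing $g=g^+-g^-$ reduces to $g\ge 0$. Since $|f_k|\le\|g\|_\infty\chi_{[-n,n]}$, reverse Fatou gives $\limsup_k\int f_k\le\int\limsup_k f_k\le\int_{\dom\gamma}g(\gamma(t))\,dt$, so the map is upper semicontinuous on each $\frags_n(X)$, hence Borel on $\frags(X)$. For the second statement, let $\Psi_\gamma^{[a,b]}$ denote the pushforward of $\lebmeas\mrest(\dom\gamma\cap[a,b])$ under $\gamma$; by injectivity of $\gamma$, $\Psi_\gamma(A\cap\gamma(\dom\gamma\cap[a,b]))=\Psi_\gamma^{[a,b]}(A)$. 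The same reverse Fatou argument (now integrating $g(\gamma(t))\chi_{[a,b]\cap\dom\gamma}(t)$) shows $\gamma\mapsto\Psi_\gamma^{[a,b]}$ is Borel into $M(X)$, and in particular the total mass $\Psi_\gamma^{[a,b]}(X)=\lebmeas(\dom\gamma\cap[a,b])$ is Borel in $\gamma$. For open $U\subset X$, inner regularity of the Radon measure $\Psi_\gamma^{[a,b]}$ combined with separability produces a countable family $\{g_m\}\subset C_c(X)$ with $\Psi_\gamma^{[a,b]}(U)=\sup_m\int g_m\,d\Psi_\gamma^{[a,b]}$, so $\gamma\mapsto\Psi_\gamma^{[a,b]}(U)$ is a countable supremum of Borel functions, hence Borel. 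The collection of Borel sets $A$ for which $\gamma\mapsto\Psi_\gamma^{[a,b]}(A)$ is Borel is a Dynkin system (closed under complements via the Borel total mass, and under countable disjoint unions) containing the $\pi$-system of open sets, so Dynkin's theorem delivers all Borel $A$.

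The main obstacle is the failure of continuity of $\gamma\mapsto\lebmeas(\dom\gamma)$ under Hausdorff convergence of graphs (fat Cantor set examples show that only upper semicontinuity holds), which forces the use of reverse Fatou rather than dominated convergence; the pointwise analysis exploiting the uniform $n$-biLipschitz bound is what keeps the argument working on the noncompact Polish space $X$, where $\gamma_k(t)$ is not a priori confined to any compact subset.
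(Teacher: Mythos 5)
Your argument is essentially the paper's: the core step in both is that the pointwise inequality $\limsup_k g\circ\gamma_k\,\chi_{\dom\gamma_k}\le g\circ\gamma\,\chi_{\dom\gamma}$ combined with the reverse Fatou lemma shows $\gamma\mapsto\int g\,d\Psi_\gamma$ is upper semicontinuous; restricting to $\frags_n(X)$ in your write-up and restricting the integral to the fixed interval $[a,b]$ in the paper's $\Psi_{g,a,b}$ play the same role of producing an $L^1$-dominating function. Where you usefully add detail is the explicit Dynkin ($\pi$-$\lambda$) bootstrap from continuous test functions to arbitrary Borel sets $A$, which the paper leaves implicit behind ``it suffices to show that $\Psi_{g,a,b}$ is Borel.'' One small caveat in that step: if $X$ is Polish but not locally compact, compactly supported functions cannot in general increase to $\chi_U$ for an open $U$, so the countable family should be drawn from bounded continuous functions instead, e.g.\ $g_m(x)=\min\bigl(1,\,m\,\mathrm{dist}(x,X\setminus U)\bigr)$; the reverse-Fatou computation is unaffected by this substitution, since the dominating function is still $\|g_m\|_\infty\chi_{[-n,n]}$ (or $\|g_m\|_\infty\chi_{[a,b]}$).
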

\begin{proof}
  %\textcolor{red}
{From Assumption~\ref{ass_mea}, in order to prove
    that $\Psi$ is Borel, it suffices to show that if $g\in
  C_c(X)$, the map
    \begin{equation}
    \Psi_{g}:\gamma\mapsto\int_Xg\,d\Psi_\gamma=\int_{\real}g\circ\gamma(t)\chi_{\dom\gamma}(t)\,dt
  \end{equation}
  is Borel. Having established that $\Psi_g$ is Borel, the fact that
  the map defined in~(\ref{bairexx}) is Borel follows by choosing a
  sequence $\{g_k\}\subset C_c(X)$ which converges pointwise to
  $\chi_A$.} 
\par %\textcolor{red}
{We now turn to the proof that $\Psi_g$ is Borel by reducing it to
showing that other maps, easier to analyze, are Borel.
For each fragment $\gamma_0$, as $\dom\gamma_0$ is compact, one can find an open
  neighbourhood $O_{\gamma_0}$ of $\gamma_0$ and $a,b\in\real$ such that,
  for each $\gamma\in O_{\gamma_0}$, one has
  $\dom\gamma\subset[a,b]$. Note that $\Psi_g$, when
  restricted to $O_{\gamma_0}$, agrees with the map
  \begin{equation}
    \Psi_{g,a,b}:\gamma\mapsto\int_a^bg\circ\gamma(t)\chi_{\dom\gamma}(t)\,dt.
  \end{equation}
On the other hand, as $X$ is assumed separable, so is $\frags(X)$,
and thus we can cover $\frags(X)$ with countably many open sets
$\{O_{\gamma}\}$. It thus suffices to show that the maps $\{\Psi_{g,a,b}\}_{g,a,b}$, as
$g$ varies in $C_c(X)$ and $a$ and $b$ vary in $\real$ (with the restriction
$a<b$), are Borel.} Without loss of generality we can assume that $g$ is
 nonnegative, in which case we will show that $\Psi_{g,a,b}$
is upper semicontinuous. Assuming that $\gamma_n\to\gamma$ in
$\frags(X)$, we have:
\begin{equation}\label{eq:uppersem}
  \limsup_{n\to\infty}\left(g\circ\gamma_n\,\chi_{\dom\gamma_n}\right)\le
  g\circ\gamma\, \chi_{\dom\gamma};
\end{equation}
in fact, either a point $t$ belongs to infinitely many of the
$\dom\gamma_n$, in which case $t\in\dom\gamma$ and
$\gamma_n(t)\to\gamma(t)$, or eventually the point $t$ does not belong
to $\dom\gamma_n$, in which case the left hand side of
\eqref{eq:uppersem} is $0$.
By the reverse Fatou Lemma:
\begin{equation}
  \limsup_{n\to\infty}\int_a^bg\circ\gamma_n\,\chi_{\dom\gamma_n}\,dt
  \le \int_a^bg\circ\gamma\, \chi_{\dom\gamma}\,dt,
\end{equation}
which is the upper semicontinuity of $\Psi_{g,a,b}$.
\end{proof}
\par We now obtain a simplification of the description of Alberti
representations: one can assume that $\nu_\gamma=g\,\Psi_\gamma$,
where $g$ is a Borel function on $X$.
\begin{lem}
  Let $X$ be a complete separable and locally compact metric space with a Radon measure $\mu$
  admitting an Alberti representation $(P,\nu)$ satisfying
  \albcond. Then there is an Alberti representation
  $\albrep.'=(P',\nu')$ which satifies \albcond\ and with
  $\nu'_\gamma=g\,\Psi_\gamma$, where $g$ is Borel on $X$.
\end{lem}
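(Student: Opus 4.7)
The plan is to renormalize $P$ so that the curve lengths $\gamma\mapsto\lebmeas(\dom\gamma)$ become $P$-integrable, then realize $g$ as the Radon--Nikodym derivative of $\mu$ with respect to a suitable finite reference measure on $X$. Set $\omega(\gamma) := (1+\lebmeas(\dom\gamma))^{-1}$, which is strictly positive and Borel on $\frags(X)$ (the function $\gamma\mapsto\lebmeas(\dom\gamma) = \Psi_\gamma(X)$ is Borel by the preceding Lemma). Let $Z := \int\omega\,dP \in (0,1]$ and $P' := Z^{-1}\omega P$, an equivalent Borel probability measure on $\frags(X)$. The pair $(P',\,(Z/\omega)\nu)$ is again an Alberti representation of $\mu$ satisfying \albcond, since \albcond depends only on the support of $P$, which coincides with that of $P'$.

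Next, introduce the finite Borel measure
\begin{equation}
\tilde\mu := \int_{\frags(X)}\Psi_\gamma\,dP'(\gamma),\qquad \tilde\mu(X)= Z^{-1}\int\omega(\gamma)\lebmeas(\dom\gamma)\,dP(\gamma)\le Z^{-1}.
\end{equation}
Because every $\gamma$ is biLipschitz, $\hmeas 1._\gamma$ and $\Psi_\gamma$ are mutually absolutely continuous on $\gamma(\dom\gamma)$, hence $\nu_\gamma\ll\Psi_\gamma$ for $P$-a.e.\ (equivalently $P'$-a.e.) $\gamma$; integrating against $P'$ yields $\mu\ll\tilde\mu$. The Radon--Nikodym theorem therefore furnishes a Borel $g:X\to[0,\infty)$ with $d\mu = g\,d\tilde\mu$.

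Define $\nu'_\gamma := g\,\Psi_\gamma$. Then $\nu'_\gamma \ll \Psi_\gamma \ll \hmeas 1._\gamma$, confirming condition (2) of Definition \ref{defn:Alberti_rep}. For condition (3), for every Borel $A\subset X$,
\begin{equation}
\int_{\frags(X)}\nu'_\gamma(A)\,dP'(\gamma) = \int_{\frags(X)}\int_A g\,d\Psi_\gamma\,dP'(\gamma) = \int_A g\,d\tilde\mu = \mu(A);
\end{equation}
the middle equality holds for $g=\chi_B$ by the definition of $\tilde\mu$ and extends to nonnegative Borel $g$ by linearity and monotone convergence. The Borel measurability of $\gamma\mapsto\nu'_\gamma$ and condition (4) follow from the preceding Lemma by a functional monotone class argument: the class of bounded Borel $h:X\to\real$ for which both $\gamma\mapsto\int h\,d\Psi_\gamma$ and $\gamma\mapsto\int_{A\cap\gamma(\dom\gamma\cap[a,b])} h\,d\Psi_\gamma$ are Borel contains $C(X)$, is a vector space closed under bounded monotone limits, and hence contains every bounded Borel function; the unbounded case is handled by truncating $g$ to $g\wedge k$ and sending $k\to\infty$ by monotone convergence. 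Since $P'$ is equivalent to $P$, \albcond, which depends only on fragment-level properties (biLipschitz constants, $(f\circ\gamma)'(t)\in\cone(\gamma(t))$, and $(f\circ\gamma)'(t)\ge\delta\metdiff\gamma(t)$) independent of $\nu$, is preserved.

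The main obstacle is the possible non-$\sigma$-finiteness of the naive reference measure $\int\Psi_\gamma\,dP$, caused by fragments of unboundedly long domains; the initial reweighting $P\mapsto P'$ by $\omega$ is precisely what sidesteps this issue and enables a single Radon--Nikodym application to deliver the common density $g$.
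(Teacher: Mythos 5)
Your proof is correct and uses essentially the same strategy as the paper: reweight $P$ so that the reference measure $\int \Psi_\gamma\,dP'(\gamma)$ is finite, then apply Radon--Nikodym and set $\nu'_\gamma = g\,\Psi_\gamma$. The paper uses a discrete weight (constant $\tfrac{1}{2n}$ on the pieces $F_n\subset\frags_n(X)$ where $\|\Psi_\gamma\|\le 2n$) rather than your continuous $\omega(\gamma)=(1+\lebmeas(\dom\gamma))^{-1}$, but the idea is identical; your verification of condition (4) by a monotone class argument is slightly more explicit than the paper's.
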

\begin{proof}
  By Lemma \ref{frag_meas} we can find pairwise disjoint Borel sets
  $F_n\subset\frags_n(X)$ with $\frags(X)=\bigcup_nF_n$. Note that if
  $\gamma\in F_n$, then we have the bound $\|\Psi_\gamma\|\le
  2n$. Note that $\frags_n(X)$ is locally compact because $X$ is
  assumed to be locally compact; therefore, by Lemma~\ref{integration_meas} the integral
  \begin{equation}
    \tilde\mu=\sum_n\int_{F_n}\frac{1}{2n}\Psi_\gamma\,dP(\gamma)
  \end{equation}
  defines a finite Radon measure on $X$ with total mass at most $1$. If $\tilde\mu(A)=0$, then for
  $P$-a.e.~$\gamma$, $\Psi_\gamma(A)=0$ which implies
  $\nu_\gamma(A)=0$ as $\hmeas 1._\gamma\ll\Psi_\gamma$. Introducing
  the measure
  \begin{equation}
    \tilde P = \sum_n\frac{1}{2n}P\mrest F_n,
  \end{equation}
we obtain a finite Borel measure on $\frags(X)$ and we have
\begin{equation}
  \tilde\mu=\int_{\frags(X)}\Psi_\gamma\,d\tilde P(\gamma).
\end{equation} As $\tilde\mu\gg\mu$, by the
Radon-Nikodym Theorem \cite[Thm.~6.10]{rudin-real}, there is a Borel
function $\tilde g$ on $X$ with $d\mu=\tilde g\, d\tilde\mu$. Then
\begin{equation}
  \mu=\int_{\frags(X)}\tilde g\,\Psi_\gamma\,d\tilde P
\end{equation}
and the result follows letting
\begin{align}
  g&=\tilde g \tilde P (\frags(X));\\
  \nu'_\gamma&=g\Psi_\gamma;\\
  P'&=\frac{1}{\tilde P(\frags(X))}\tilde P.
\end{align}
From the way in which $\tilde P$ was obtained from $P$, we conclude
that if the original Alberti representation $(P,\nu)$ satisfied
\albcond, so does the new one $(P',\nu')$.
\end{proof}
\par We can now prove Theorem \ref{compact_reduction}.
\begin{proof}[Proof of Theorem \ref{compact_reduction}]
  By the previous lemma we can assume that
  $\nu_\gamma=g\,\Psi_\gamma$. Note that as
  \begin{equation}\label{mu_on_C}
    \mu\left(X\setminus C\right)=0,
  \end{equation}
for $P$-a.e.~$\gamma\in\frags(X)\setminus\frags(X,C)$, 
\begin{equation}\label{gnull}
  g\,\Psi_\gamma=0.
\end{equation}
%\textcolor{red}
{Note also that as $\mu(X\setminus C)=0$, we may assume that $g$ vanishes on
$X\setminus C$.}
In particular, replacing $P$ by 
\begin{equation}
  \frac{P\mrest\frags(X,C)}{P\left(\frags(X,C)\right)},
\end{equation}
and $g$ by
\begin{equation}
gP\left(\frags(X,C)\right),
\end{equation}
we can assume that 
\begin{equation}
  P\left(\frags(X)\setminus\frags(X,C)\right)=0, 
\end{equation}
and obtain for a Borel set $A$ the equations:
\begin{equation}
  \begin{aligned}
    \mu(A)&=\int_{\frags(X,C)}dP(\gamma)\int_Xg\chi_A\,d\Psi_\gamma &
    &\\
    &=\int_{\frags(X,C)}dP(\gamma)\int_Xg\chi_A\chi_C\,d\Psi_\gamma
    &&\text{(by \eqref{mu_on_C})}\\
    &=\int_{\frags(X,C)}dP(\gamma)\int_Xg\chi_A\,d\Psi(\frest
    C.(\gamma)) & &\\
    &=\int_{\frags(C)}d\mpush{\frest
      C.}.P(\gamma)\int_Xg\chi_A\,d\Psi_\gamma &&\text{(by pushing forward)}
  \end{aligned}
\end{equation}
where in the last step we used that $\frest C.$ is Borel by Lemma
\ref{borel_rest}. Letting $P'=\mpush{\frest
      C.}f.$ and $\nu'_\gamma=g\,\Psi_\gamma$, we get that $(P',\nu')$
    is an
Alberti representation of $\mu$ defined on $\frags(C)$ (using again
Lemma \ref{integration_meas}). From the way
in which $P'$ was obtained from $P$, if $(P,\nu)$ satisfies \albcond,
so does $(P',\nu')$.
\end{proof}
\par The following is a gluing principle for Alberti representations.
\begin{thm}\label{alb_glue}
  Let $\mu$ be a Radon measure on $Z$ and $U\subset Z$ Borel. Assume
  that there are disjoint Borel sets $U_n\subset U$ isometrically
  embedded in some complete separable and locally compact metric spaces $Y_n$ ($U_n\hookrightarrow
  Y_n$) with $\mu\mrest U_n$ admitting an Alberti representation
  defined on $\frags(Y_n)$ and satisfying \albcond. Then $\mu\mrest U$
  admits an Alberti representation defined on $\frags(Z)$ and
  satisfying \albcond.
\end{thm}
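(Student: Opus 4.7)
The strategy is to first reduce each given Alberti representation to one living on fragments with image in a compact subset of $Z$, using Theorem \ref{compact_reduction}, and then to assemble a single Alberti representation of $\mu \mrest U$ as a countable weighted convex combination.

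By inner regularity of $\mu$, for each $n$ I would choose disjoint compact sets $\{C_{n,k}\}_{k\in\natural} \subset U_n$ with $\mu\bigl(U_n \setminus \bigcup_k C_{n,k}\bigr) = 0$. From the given Alberti representation $(P_n, \nu_n)$ of $\mu \mrest U_n$ on $\frags(Y_n)$ form the restricted pair $(P_n,\, \gamma \mapsto \nu_{n,\gamma} \mrest C_{n,k})$; by countable additivity this is an Alberti representation of $\mu \mrest C_{n,k}$ still defined on $\frags(Y_n)$, and it satisfies \albcond\ because the underlying fragments (and hence their Lipschitz constants, metric differentials, and derivatives $(f\circ\gamma)'$) are unchanged. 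Since $C_{n,k}$ is compact in the Polish space $Y_n$ and $\mu \mrest C_{n,k}$ is supported in $C_{n,k}$, Theorem \ref{compact_reduction} produces an Alberti representation $(P_{n,k}, \nu_{n,k})$ of $\mu \mrest C_{n,k}$ defined on $\frags(C_{n,k})$ and satisfying \albcond.

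Because $U_n \hookrightarrow Y_n$ is isometric and $C_{n,k} \subset U_n \subset Z$, the metric that $C_{n,k}$ inherits from $Y_n$ agrees with the one from $Z$; hence $\frags(C_{n,k})$ is canonically identified with the Borel subset of $\frags(Z)$ consisting of fragments with image in $C_{n,k}$ (Borel by Lemma \ref{frag_meas}), and the identification $i_{n,k}:\frags(C_{n,k}) \hookrightarrow \frags(Z)$ is itself Borel. Fixing weights $c_{n,k} > 0$ with $\sum_{n,k} c_{n,k} = 1$ (e.g.\ $c_{n,k} = 2^{-n-k}$), set
\[
P = \sum_{n,k} c_{n,k}\, (i_{n,k})_\sharp P_{n,k}, \qquad \nu_\gamma = \begin{cases} c_{n,k}^{-1}\, \nu_{n,k,\gamma'} & \text{if } \gamma = i_{n,k}(\gamma'), \\ 0 & \text{otherwise.} \end{cases}
\]
Disjointness of the $C_{n,k}$ makes the images $i_{n,k}(\frags(C_{n,k}))$ pairwise disjoint in $\frags(Z)$, so $\nu_\gamma$ is unambiguous and $P$ is a Borel probability measure on $\frags(Z)$; unwinding via Lemma \ref{integration_meas} then yields $\int \nu_\gamma\, dP(\gamma) = \sum_{n,k} \mu \mrest C_{n,k} = \mu \mrest U$, giving condition (3) of Definition \ref{defn:Alberti_rep}.

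The main obstacle is verifying the Borel measurability conditions (2) and (4) of Definition \ref{defn:Alberti_rep} for the glued $\nu$. Both reduce to the corresponding properties of each $(P_{n,k}, \nu_{n,k})$ pushed along the Borel embeddings $i_{n,k}$, combined with the observation that the sets $i_{n,k}(\frags(C_{n,k}))$ form a countable Borel partition of the support of $P$ (off a null set), so the glued map is a countable sum of Borel functions on disjoint Borel pieces. Finally, \albcond\ is invariant both under restriction of the $\nu_\gamma$ to subsets and under the isometric identification $C_{n,k}\subset Y_n \leftrightarrow C_{n,k}\subset Z$, so it transfers intact from the $(P_{n,k},\nu_{n,k})$ to $(P,\nu)$.
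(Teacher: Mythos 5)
Your proof is correct and follows essentially the same route as the paper: reduce via inner regularity to disjoint compacts inside the $U_n$, invoke Theorem~\ref{compact_reduction} to move each representation onto $\frags(C_{n,k})$, identify the resulting $\frags(C_{n,k})$ with disjoint Borel pieces of $\frags(Z)$, and glue with a weighted sum (the paper's $P = \sum_\alpha 2^{-\alpha} P_\alpha$, $\nu = \sum_\alpha 2^\alpha \nu_\alpha$ is exactly your $c_{n,k}$-scheme under a flattened index). Your additional remarks on verifying conditions (2) and (4) of Definition~\ref{defn:Alberti_rep} spell out a measurability point the paper passes over silently, but the underlying argument is the same.
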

\begin{proof}
  Using that $\mu$ is Radon we can find disjoint compact subsets
  $\{C_\alpha\}$ such that for each $\alpha$ there is an $n_\alpha$ with
  $C_\alpha\subset U_{n_\alpha}$ and 
  \begin{equation}
    \mu\left(U\setminus\bigcup_\alpha C_\alpha\right)=0.
  \end{equation}
As $\mu\mrest U_{n_\alpha}$ admits an Alberti representation
$(P_{n_\alpha},\nu_{n_\alpha})$ defined on $\frags(Y_{n_\alpha})$ and
satisfiying \albcond, so does $\mu\mrest C_\alpha$ by letting
$P'_\alpha=P_{n_\alpha}$ and $\nu'_\alpha=\nu_{n_\alpha}\mrest
C_\alpha$. Then by Theorem \ref{compact_reduction}, $\mu\mrest
C_\alpha$ admits an Alberti representation $(P_\alpha,\nu_\alpha)$
defined on $\frags(C_\alpha)$ and satisfying \albcond. Observing that the
$\frags(C_\alpha)$ are disjoint Bo\-rel sub\-sets of $\frags(Z)$, we obtain
an Alberti representation $(P,\nu)$ of $\mu\mrest U$ defined on
$\frags(Z)$ and satisfing \albcond\ by letting
\begin{align}\label{axx1}
  P&=\sum_{\alpha=1}^\infty 2^{-\alpha}P_\alpha,\\
  \nu&=\sum_{\alpha=1}^\infty 2^{\alpha}\nu_\alpha.
\end{align}
%\textcolor{red}
{Note that in~(\ref{axx1}) we used that each $P_\alpha$ is a measure on
$\frags(Z)$ because it is a measure on $\frags(C_\alpha)\subset\frags(Z)$. }
\end{proof}
\par%\textcolor{red}
{We now introduce the key concept to build Alberti
representations: this is the notion of a set $S$ intersecting the image of each fragment $\gamma$ belonging to a
family of fragments ${\mathcal G}$ in a set of vanishing $1$-dimensional
Hausdorff measure. }
\begin{defn}\label{def:frag_nullity}
  If ${\mathcal G}\subset\frags(X)$, a Borel $S\subset X$ is said to
  be \textbf{${\mathcal G}$-null} if for each $\gamma\in{\mathcal G}$, $\hmeas
  1._\gamma(S)=0$. In case ${\mathcal G}$ is the family of fragments
  satisfying a set of conditions \albcond, we say that $S$ is \textbf{\albcond-null}.
\end{defn}
\par We now prove Lemma~\ref{biLip_dis}, which produces bi-Lipschitz
Alberti representations in Banach spaces. This is a step towards
Theorem~\ref{alberti_rep_prod}, which is the main tool we use to
produce Alberti representations.  Throughout the remainder of
this subsection the symbol $\convgeo$ denotes a \textbf{closed convex compact} subset of
$\bana^*$, where $\bana$ is a \textbf{separable Banach
  space}. %\textcolor{red}
{Working
inside the Banach space $\bana^*$ is
justified by the fact that any separable metric space $X$ can be
isometrically embedded in $l^\infty$ via a Kuratowski embedding, and
then one might take $\bana=l^1$ and $\bana^*=l^\infty$}. The
closed convex hull $\convgeo$ of $X$ in $\bana^*$ is compact by
\cite[Thm.~3.25]{rudin-functional}.
\begin{defn}
  For positive $\epsi$ and $\tau$, let $\QG\epsi,\tau.\subset\frags(\bana^*)$
  denote the compact subset
\begin{multline}
    \QG\epsi,\tau.=\left\{\gamma:[0,\tau]\to\convgeo:\forall
    t,s\in[0,\tau],\right. \\\left. |t-s|\le
    \|\gamma(t)-\gamma(s)\|_{\bana^*}\right.\left.\le(1+\epsi) |t-s|\right\}.
\end{multline}
\end{defn}
\begin{lem}
  Let $\Psi:\QG\epsi,\tau.\to P(\convgeo)$ be defined by 
  \begin{equation}
    \Psi_\gamma=\frac{1}{\tau}\mpush\gamma.\lebmeas\mrest[0,\tau];
  \end{equation} then $\Psi$ is continuous.
\end{lem}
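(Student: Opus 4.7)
The plan is to reduce continuity of $\Psi$, defined via weak* topology on $P(\convgeo)$, to uniform convergence of the fragments, then use uniform continuity of test functions on the compact set $\convgeo$.

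First, I would observe that every $\gamma \in \QG\epsi,\tau.$ has the common compact domain $[0,\tau]$ and is $(1+\epsi)$-Lipschitz. So the family $\QG\epsi,\tau.$ is equi-Lipschitz. The key reduction is that on $\QG\epsi,\tau.$, Hausdorff convergence of graphs is equivalent to uniform convergence of fragments. Concretely, if $\graph \gamma_n \to \graph \gamma$ in $\haus(\real \times \bana^*)$, I fix $t \in [0,\tau]$ and pick $s_n \in [0,\tau]$ with
\begin{equation}
|s_n - t| + \|\gamma(s_n) - \gamma_n(t)\|_{\bana^*} \le 2 \, \hdist \graph\gamma_n, \graph\gamma. + 1/n.
\end{equation}
Using the $(1+\epsi)$-Lipschitz bound on $\gamma$ then yields $\|\gamma_n(t) - \gamma(t)\|_{\bana^*} \le (2+\epsi)\bigl(\hdist \graph\gamma_n, \graph\gamma. + 1/n\bigr)$, which is uniform in $t$. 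Hence $\gamma_n \to \gamma$ uniformly on $[0,\tau]$.

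Next, I would verify weak* convergence. By definition of the weak* topology on $M(\convgeo)$, $\Psi_{\gamma_n} \to \Psi_\gamma$ iff for every $g \in C(\convgeo)$,
\begin{equation}
\frac{1}{\tau}\int_0^\tau g(\gamma_n(t))\, dt \longrightarrow \frac{1}{\tau}\int_0^\tau g(\gamma(t))\, dt.
\end{equation}
Since $\convgeo$ is compact, any such $g$ is uniformly continuous, so uniform convergence $\gamma_n \to \gamma$ implies $g \circ \gamma_n \to g \circ \gamma$ uniformly on $[0,\tau]$, and the integrals converge. Also, $\Psi_\gamma$ is clearly a probability measure since $\lebmeas[0,\tau]/\tau = 1$ and the pushforward preserves mass, so $\Psi$ indeed lands in $P(\convgeo)$. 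This establishes continuity.

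The only mildly nontrivial step is the reduction from Hausdorff convergence of graphs to uniform convergence of the parametrizations; once this is done the rest is a routine application of uniform continuity of test functions on the compact convex hull $\convgeo$. There is no real obstacle here, because the common compact domain $[0,\tau]$ and the $(1+\epsi)$-Lipschitz bound on both sides rule out the usual pathologies (re-parametrization ambiguity, escape of domain) that make Hausdorff and uniform convergence of fragments inequivalent in the general space $\frags(\bana^*)$.
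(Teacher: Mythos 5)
Your proof is correct and follows the same route as the paper, which also reduces to checking that $g\circ\gamma_n\to g\circ\gamma$ uniformly for a test function $g$ and then passes to the limit in the integral. The only difference is that the paper simply asserts this uniform convergence, while you supply the missing justification — that on $\QG\epsi,\tau.$ Hausdorff convergence of graphs implies uniform convergence of the parametrizations, thanks to the common domain $[0,\tau]$ and the uniform biLipschitz bounds — which is a genuine (if small) gap in the paper's terse proof.
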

\begin{proof}
  As the topological spaces $\QG\epsi,\tau.$ and $P(\convgeo)$ are metric spaces, it suffices to
  check sequential continuity. Assume that $\gamma_n\to\gamma$ in
  $\QG\epsi,\tau.$ and that $f$ is a real-valued continuous function
  with domain
  $\convgeo$. Then, as $f\circ\gamma_n\to f\circ\gamma$ uniformly,
  \begin{equation}
    \lim_{n\to\infty}\frac{1}{\tau}\int_0^\tau
    f\circ\gamma_n\,d\lebmeas= \frac{1}{\tau}\int_0^\tau f\circ\gamma\,d\lebmeas.
  \end{equation}
\end{proof}
\begin{lem}\label{biLip_dis}
Let $\mathcal{G}\subset\QG\epsi,\tau.$ be compact and
$\bar{H}\mathcal{G}$ denote the closed convex hull of
$\Psi(\mathcal{G})$ in $M(\convgeo)$. Let $\mu$ be a Radon measure on
$\convgeo$.  Then we can write
\begin{equation}
  \mu=\mu_{\mathcal{G}}+\mu\mrest F_{\mathcal{G}},
\end{equation} where $\rho_{\mathcal{G}}\in\bar{H}\mathcal{G}$ and
$\mu_{\mathcal{G}}\ll\rho_{\mathcal{G}}$; 
furthermore, there are a
Borel regular probability measure $P_{\mathcal{G}}$ on $\mathcal{G}$
and a Borel function $f_{\mathcal{G}}$ on $\convgeo$ such that
\begin{align}
  \rho_{\mathcal{G}}&=\int_{\mathcal{G}}\Psi_\gamma\,dP_{\mathcal{G}}\\
\label{gksrep}  \mu_{\mathcal{G}}&=\int_{\mathcal{G}}f_{\mathcal{G}}\Psi_\gamma\,dP_{\mathcal{G}}.
\end{align}
The set $F_{\mathcal{G}}$ is a $\mathcal{G}$-null $F_\sigma$.
\end{lem}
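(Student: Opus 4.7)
The plan is to select $\rho_{\mathcal G}\in\bar H\mathcal G$ by maximising the mass of the $\mu$-absolutely-continuous part it captures, read off the resulting Lebesgue decomposition of $\mu$, and then concentrate the singular part on a $\mathcal G$-null $F_\sigma$.

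\textbf{Step 1 (barycentric structure of $\bar H\mathcal G$).} By the previous lemma $\Psi\colon \QG\epsi,\tau.\to P(\convgeo)$ is continuous, and $\mathcal G$ is compact, so $\Psi(\mathcal G)$ is weak*-compact in $P(\convgeo)$ and its closed convex hull $\bar H\mathcal G$ is weak*-compact. The barycentre map $\mathcal B\colon Q\mapsto\int\Psi_\gamma\,dQ(\gamma)$ from $P(\mathcal G)$ into $M(\convgeo)$ is weak*-continuous with compact, convex image; since the image contains each $\Psi_\gamma=\mathcal B(\delta_\gamma)$, it contains $\bar H\mathcal G$. Thus every $\rho\in\bar H\mathcal G$ can be written as $\rho=\int\Psi_\gamma\,dP(\gamma)$ for some $P\in P(\mathcal G)$.

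\textbf{Step 2 (maximising).} Let $m(\rho):=\mu_{ac}(\rho)(\convgeo)$, where $\mu_{ac}(\rho)$ is the absolutely continuous part of $\mu$ with respect to $\rho$. The monotonicity $\rho_1\ll\rho_2\Rightarrow m(\rho_1)\le m(\rho_2)$ holds because $\mu_{ac}(\rho_1)$ is then a sub-measure of $\mu$ absolutely continuous w.r.t.\ $\rho_2$. Pick a sequence $\rho_n\in\bar H\mathcal G$ with $m(\rho_n)\to\sup m$ and set $\rho_{\mathcal G}:=\sum_n 2^{-n}\rho_n$; this lies in $\bar H\mathcal G$ and dominates each $\rho_n$, so it achieves the supremum. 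By Step 1 write $\rho_{\mathcal G}=\int\Psi_\gamma\,dP_{\mathcal G}(\gamma)$.

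\textbf{Step 3 (singularity and integral representation).} Write the Lebesgue decomposition $\mu=f_{\mathcal G}\rho_{\mathcal G}+\mu_s$, with $f_{\mathcal G}$ the Borel Radon--Nikodym density. If $\mu_s$ were not singular to $\Psi_\gamma$ for some $\gamma\in\mathcal G$, then for small $\delta>0$ the measure $(1-\delta)\rho_{\mathcal G}+\delta\Psi_\gamma\in\bar H\mathcal G$ would yield $m(\cdot)>m(\rho_{\mathcal G})$, a contradiction; so $\mu_s\perp\Psi_\gamma$ for every $\gamma\in\mathcal G$. Fubini applied to $\rho_{\mathcal G}=\int\Psi_\gamma\,dP_{\mathcal G}$ then gives $\mu_{\mathcal G}:=f_{\mathcal G}\rho_{\mathcal G}=\int f_{\mathcal G}\Psi_\gamma\,dP_{\mathcal G}$, which is \eqref{gksrep}.

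\textbf{Step 4 (the $\mathcal G$-null $F_\sigma$; main obstacle).} It remains to concentrate $\mu_s$ on a $\mathcal G$-null $F_\sigma$ set $F_{\mathcal G}$. By the inner regularity of $\mu_s$, its mass sits on a $K_\sigma$ contained in any $\rho_{\mathcal G}$-null Borel support. The difficulty is passing from the \emph{pointwise} singularity $\mu_s\perp\Psi_\gamma$, $\gamma\in\mathcal G$ (which for each $\gamma$ individually yields compact $\Psi_\gamma$-null subsets carrying nearly all of $\mu_s$), to a single compact subset that is $\Psi_\gamma$-null for \emph{every} $\gamma\in\mathcal G$ simultaneously. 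This uses the upper semicontinuity of $\gamma\mapsto\Psi_\gamma(K)$ for compact $K$, the compactness of $\mathcal G$, and the uniform biLipschitz bounds encoded in $\QG\epsi,\tau.$, together with a diagonal inner-approximation argument; countably many such refined compacta are then united to form $F_{\mathcal G}$. This is the main technical point of the proof.
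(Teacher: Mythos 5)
Your Steps 1--3 are correct, and Step 1 is a genuinely nice observation: because the barycentre map $P(\mathcal G)\to M(\convgeo)$ is weak*-continuous with compact convex image containing $\Psi(\mathcal G)$, its image contains all of $\bar H\mathcal G$, so the representing measure $P_{\mathcal G}$ is produced directly. The paper instead obtains a representing measure on $\Psi(\mathcal G)$ via \cite[Thm.~3.28]{rudin-functional} and then pulls it back to $\mathcal G$ with a Borel section of $\Psi$ from \cite[Thm.~6.9.7]{bogachev_measure}; your route avoids the measurable-selection step entirely.

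The gap is in Step 4, and it is not a small one. You correctly identify that the hard part is to upgrade the family of \emph{individual} singularity statements $\mu_s\perp\Psi_\gamma$ ($\gamma\in\mathcal G$) to a single Borel set carrying $\mu_s$ that is $\Psi_\gamma$-null for \emph{every} $\gamma$ simultaneously, but you then only name ingredients --- upper semicontinuity of $\gamma\mapsto\Psi_\gamma(K)$, compactness of $\mathcal G$, a ``diagonal inner-approximation argument'' --- without showing they close the gap. They do not obviously do so: for $K$ compact, $\gamma\mapsto\Psi_\gamma(K)$ is upper semicontinuous, which means nullity at nearby $\gamma$ cannot be propagated to a limit $\gamma$; you would need lower semicontinuity, which fails for compact sets. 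Likewise, the naive intersection $\bigcap_{\gamma}B_\gamma$ of $\mu_s$-full $\Psi_\gamma$-null Borel sets is an uncountable intersection and cannot simply be replaced by a countable one over a dense set of $\gamma$'s for the same semicontinuity reason. This simultaneous-nullity statement is precisely the content of Rainwater's Lemma (\cite[Thm.~9.4.4]{rudin-complex-ball}, \cite{rainwater-note}), which the paper invokes as a black box at exactly this point. Your proposal therefore amounts to re-deriving Rainwater's Lemma by the maximization method but stopping short of its essential step; as written the proof is incomplete. Either cite Rainwater's Lemma here, as the paper does, or supply the actual argument for Step 4 rather than a list of tools.
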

\begin{proof}
  The space $M(\convgeo)$ in the weak* topology is a Fr\'echet space
  and by \cite[Thm.~3.25]{rudin-functional} the closed convex hull of a
  compact subset is compact, so $\bar{H}\mathcal{G}$ is compact. By
  Rainwater's Lemma (\cite[Thm.~9.4.4]{rudin-complex-ball},
  \cite{rainwater-note})
  %\textcolor{red}
{we can find a set  $F_{\mathcal{G}}$ and  measures $\rho_{\mathcal{G}}$,
  $\mu_{\mathcal{G}}$ such that $\rho_{\mathcal{G}}\in
  \bar{H}\mathcal{G}$ and the following hold:}
  \begin{enumerate}
  \item The measure $\mu$ decomposes as $\mu=\mu_{\mathcal{G}}+\mu\mrest
    F_{\mathcal{G}}$.
    \item The measure $\mu_{\mathcal{G}}$ satisfies
      $\mu_{\mathcal{G}}\ll\rho_{\mathcal{G}}$.
      \item The set $F_{\mathcal{G}}$ is of class $F_\sigma$ and is $\Psi(\mathcal{G})$-null in the sense
        that, for each $\gamma\in \mathcal{G}$, $\Psi_\gamma(F_{\mathcal{G}})=0$.
  \end{enumerate}
  Note that (3) implies that $F_{\mathcal{G}}$ is ${\mathcal{G}}$-null
  because for $\gamma\in{\mathcal{G}}$, $\Psi_\gamma$ and $\hmeas
  1._\gamma$ are absolutely continuous one with respect to the other.
     By \cite[Thm.~3.28]{rudin-functional}
  there is a regular Borel probability measure $\pi_{\mathcal{G}}$
  supported on $\Psi(\mathcal{G})$ and such that
  \begin{equation}
    \rho_{\mathcal{G}}=\int_{\Psi({\mathcal{G}})}\sigma\,d\pi_{\mathcal{G}}(\sigma).
  \end{equation}
As $\Psi$ is continuous and ${\mathcal{G}}$ is compact, by
\cite[Thm.~6.9.7]{bogachev_measure}
there is a Borel $\tilde{\mathcal{G}}\subset{\mathcal{G}}$ with
$\Psi(\tilde{\mathcal{G}})=\Psi({\mathcal{G}})$ and such that $\Psi$ is
injective on $\tilde {\mathcal{G}}$ with a Borel inverse 
\begin{equation}
\tilde\Psi^{-1}:\Psi(\tilde{\mathcal{G}})\to{\mathcal{G}};
\end{equation}
in particular, 
\begin{equation}
  \begin{split}
    \rho_{\mathcal{G}}&=\int_{\Psi({\mathcal{G}})}\sigma\,d\pi_{\mathcal{G}}(\sigma)\\
    &=\int_{\Psi(\tilde{\mathcal{G}})}\Psi(\tilde\Psi^{-1}(\sigma))\,d\pi_{\mathcal{G}}(\sigma)\\
    &=\int_{{\mathcal{G}}}\Psi_\gamma\,d\underbrace{\left(\mpush\tilde\Psi^{-1}.\pi_{\mathcal{G}}\right)}_{P_{\mathcal{G}}}(\gamma).
  \end{split}
\end{equation}
If $f_{\mathcal{G}}$ is a Borel
representative of the Radon-Nikodym derivative of $\mu_{\mathcal{G}}$ with
respect to $\rho_{\mathcal{G}}$,
\begin{equation}
  \mu_{\mathcal{G}}=\int_{{\mathcal{G}}}f_{\mathcal{G}}\Psi_\gamma\,dP_{\mathcal{G}}.
\end{equation}
\end{proof}
The following theorem is the main tool we will use to produce Alberti
representations.
\begin{thm}\label{alberti_rep_prod}
  Let $X$ be a complete separable and locally compact metric space and $\mu$ a Radon measure on
  $X$. %\textcolor{red}
{Let $f \colon X\to\real^q$ and $g\colon X\to\real$ be Lipschitz functions,
  and let $v \colon X\to{\mathbb S}^{q-1}$, $\alpha \colon X\to(0,\pi/2)$ and
  $\delta \colon X\to(0,\infty)$ be Borel functions.}
  Let $\Omega$ denote the set of fragments in the $f$-direction
  of $\cone(v,\alpha)$ with \hbox{$g$-speed $>\delta$} (note that we
    require a strict inequality). Then the following are equivalent:
  \begin{enumerate}
  \item The measure $\mu$ admits an Alberti representation in the $f$-direction of
    $\cone(v,\alpha)$ with $g$-speed $>\delta$.
    \item  For each $K\subset X$ compact and $\Omega$-null, $\mu(K)=0$.
      \item For each $\epsi>0$, $\mu$ admits a $(1,1+\epsi)$-bi-Lipschitz
        Alberti representation in the $f$-direction of
    $\cone(v,\alpha)$ with \hbox{$g$-speed $>\delta$}.
  \end{enumerate}
\end{thm}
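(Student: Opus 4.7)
The plan is to prove the equivalence via the cycle $(3) \Rightarrow (1) \Rightarrow (2) \Rightarrow (3)$. The implication $(3) \Rightarrow (1)$ is immediate since a $(1,1+\epsi)$-biLipschitz Alberti representation is in particular Lipschitz, and both the direction and speed conditions transfer verbatim. For $(1) \Rightarrow (2)$, let $(P,\nu)$ be an Alberti representation of $\mu$ with $P$ concentrated on $\Omega$ and let $K$ be compact and $\Omega$-null: since $\nu_\gamma \ll \hmeas 1._\gamma$ and $\hmeas 1._\gamma(K) = 0$ for each $\gamma \in \Omega$, we have $\nu_\gamma(K) = 0$ for $P$-a.e.\ $\gamma$, so $\mu(K) = \int \nu_\gamma(K)\,dP(\gamma) = 0$.

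For $(2) \Rightarrow (3)$, use that $\mu$ is Radon to decompose $\mu = \sum_n \mu\mrest C_n$ with disjoint compact $C_n \subset X$ of full measure; by Theorem \ref{alb_glue} it suffices to treat each $\mu\mrest C$ separately. For such a fixed $C$, use a Kuratowski embedding to view $C$ inside $\bana^* = l^\infty$, let $\convgeo$ be its compact closed convex hull, and extend $f, g$ to Lipschitz maps on $\convgeo$ via McShane. Fix $\epsi > 0$ and let $\Omega'$ denote the subclass of $(1,1+\epsi)$-biLipschitz fragments in $\Omega$. A Lusin-Egorov argument reduces $\Omega$-nullity to $\Omega'$-nullity: any $\gamma \in \Omega$ is covered $\hmeas 1._\gamma$-almost everywhere by sub-fragments obtained by restricting to compact subsets of $\dom\gamma$ on which $\metdiff\gamma$ is nearly constant (say, close to some $L$) and then rescaling the time parameter by $L$; this renders them $(1,1+\epsi)$-biLipschitz while preserving both the direction condition and the strict speed bound, since both are homogeneous under such reparametrization.

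Next, cover $\Omega'$ by a countable family of compact sets: for rational $\tau_j > 0$, $\alpha_j \in (0,\alpha)$, and $\delta_j > \delta$, let $\mathcal{G}_j \subset \QG\epsi,\tau_j.$ consist of those $\gamma$ with image in $\convgeo$ satisfying the chord conditions $f(\gamma(t)) - f(\gamma(s)) \in \overline{\cone(v,\alpha_j)}$ and $g(\gamma(t)) - g(\gamma(s)) \ge \delta_j(t-s)$ for all $0 \le s \le t \le \tau_j$. These conditions are preserved under uniform convergence, so each $\mathcal{G}_j$ is compact, and after affine reparametrization every $\gamma \in \Omega'$ is a concatenation of sub-fragments each lying in some $\mathcal{G}_j$. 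Iterate Lemma \ref{biLip_dis}: setting $\mu^{(0)} = \mu \mrest C$, at stage $j$ decompose $\mu^{(j-1)} = \sigma_j + \mu^{(j-1)}\mrest F_j$ with $\sigma_j = \int_{\mathcal{G}_j} f_j \Psi_\gamma\,dP_j$ and $F_j$ a $\mathcal{G}_j$-null $F_\sigma$, and set $\mu^{(j)} = \mu^{(j-1)}\mrest F_j$. The residual is concentrated on $\bigcap_j F_j$, which is $\bigcup_j \mathcal{G}_j$-null, hence $\Omega'$-null, hence $\Omega$-null, hence $\mu$-null by (2); thus $\mu\mrest C = \sum_j \sigma_j$. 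Combining the $\sigma_j$ with $2^{-j}$ weights on the probability measures yields a single $(1,1+\epsi)$-biLipschitz Alberti representation of $\mu\mrest C$ concentrated on $\bigcup_j \mathcal{G}_j \subset \Omega'$ and defined on $\frags(\convgeo)$; Theorem \ref{compact_reduction} transfers it to $\frags(C)$, and Theorem \ref{alb_glue} assembles the pieces into the desired representation on $\frags(X)$. The main obstacle is the reduction of $\Omega$-nullity to $\Omega'$-nullity via the Lusin-Egorov reparametrization; the remainder is careful bookkeeping with Lemma \ref{biLip_dis}.
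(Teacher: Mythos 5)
Your overall architecture matches the paper's: iterate the Rainwater disintegration of Lemma \ref{biLip_dis} over countably many compact families $\mathcal{G}_j$, argue the residual is $\Omega$-null so that hypothesis (2) kills it, and glue via Theorems \ref{compact_reduction} and \ref{alb_glue}. The implications $(1)\Rightarrow(2)$ and $(3)\Rightarrow(1)$ are fine, but $(2)\Rightarrow(3)$ has a genuine gap. Members of $\mathcal{G}_j\subset\QG\epsi,\tau_j.$ are paths with interval domain $[0,\tau_j]$, whereas sub-fragments of a $\gamma\in\Omega'$ have Cantor-like compact domains; so the claim that ``every $\gamma\in\Omega'$ is a concatenation of sub-fragments each lying in some $\mathcal{G}_j$'' is not literally true and requires first extending each sub-fragment across the gaps of its domain to an honest path, and then verifying that the three chord conditions --- the $(1,1+\epsi)$-biLipschitz bound, $f\circ\gamma(t)-f\circ\gamma(s)\in\bar\cone(v,\alpha_j)$, and the $g$-speed $\ge\delta_j$ --- degrade only controllably under the extension. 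That verification is the bulk of the paper's proof (the estimates culminating in \eqref{adj_conical_inclusion}, \eqref{adj_bound_metdiff}, \eqref{adj_speed_bound}, hinging on a Lebesgue-density parameter $\rho$ chosen small); your two-step split ($\Omega$ to $\Omega'$, then $\Omega'$ to $\bigcup_j\mathcal{G}_j$) does not sidestep it, since $\Omega'$-fragments still have Cantor-like domains and the gap-filling never appears in your sketch.

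Your framework in fact makes this gap-filling unworkable as stated. You extend $f,g$ to $\convgeo$ by McShane, which leaves them merely Lipschitz there; so when a gap $(u_n,v_n)$ of $\dom\gamma$ is filled by linear interpolation inside the convex set $\convgeo$, you retain no control on $f\circ\gamma$ and $g\circ\gamma$ on the filled segment, and the chord conditions at interior pairs $s,t\in(u_n,v_n)$ are not forced by the conditions at the endpoints. The paper instead embeds $\graph(f,g)$ isometrically into $X\times\real^q\times\real$ and takes the convex hull, so that $f$ and $g$ become \emph{linear} coordinate projections on $\convgeo$; linear interpolation of $\gamma$ then gives exact linear interpolation of $f\circ\gamma$ and $g\circ\gamma$ on filled segments, and the chord estimates propagate. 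Finally, you treat only constant $v,\alpha,\delta$; the theorem allows Borel cone fields and Borel speed functions, which need the additional Egorov/Lusin partition carried out in the paper's last paragraph.
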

\begin{proof}
  We first show that (1) implies (2). Let $\albrep.=(P,\nu)$ be an
  Alberti representation with $P$ giving
  full measure to a Borel set $\Xi$ of fragments in the $f$-direction of
  $\cone(v,\alpha)$ with $g$-speed $>\delta$. If $K\subset X$ is
  $\Omega$-null, it is $\Xi$-null, so $\nu_\gamma(K)=0$ for
  $P$-a.e.~$\gamma$ and hence
  \begin{equation}
    \mu(K)=\int_\Xi\nu_\gamma(K)\,dP(\gamma)=0.
  \end{equation}
\par We now show that (2) implies (3) first assuming that
$X$ is compact and $v$, $\alpha$ and $\delta$ are constant.
%\textcolor{red}
{We first rescale $f \colon X\to\real^q$ and $g$ to be $1$-Lipschitz and
  embed $X$ isometrically in $l^\infty=l^\infty(\natural)$ (e.g.~using a Kuratowski
  embedding). Having thus identified $X$ with a subset of $l^\infty$
  we then  embed $\graph
(f,g)$ isometrically in $l^\infty\times\real^q\times\real=\bana^*$ with norm 
\begin{equation}
  \|(\xi,v,t)\|_{l^\infty\times\real^q\times\real}=\max(\|\xi\|_{l^\infty},\|v\|_2,|t|),
\end{equation}
and take the convex hull $\convgeo$ of the resulting set. We point out
that we use the notation $\bana^*$ as $\bana^*$ is the dual of a
separable Banach space $\bana$, where $\bana$ is the $l^1$-direct sum of $l^1(\natural)$, $\real^q$ (with the
Euclidean norm) and $\real$.
Note 
now that $f$ extends to a linear function $f \colon \bana^*\to\real^q$. In
fact, the function $f$, regarded as a function defined on $\graph(f,g)\subset\bana^*$, becomes just the canonical
projection $\pi_{\real^q} \colon \bana^*\to\real^q$ and thus the linear extension of
$f$ can be taken to be $\pi_{\real^q}$.}
\par Having chosen strictly decreasing sequences $\{\tau_n\}$, $\{\eta_n\}$
converging to $0$, we let
\begin{multline}
  {\mathcal{G}}_n=\bigl\{\gamma\in\QG\epsi,\tau_n. \colon \text{$\forall
    t,s\in[0,\tau_n]$,}\\\text{$\sgn(t-s)\left(f\circ\gamma(t)-f\circ\gamma(s)\right)\in\bar\cone(v,\alpha-\eta_n)$}\\
\text{and $\sgn(t-s)\left(
  g\circ\gamma(t)-g\circ\gamma(s)\right)\ge(\delta+\eta_n)|t-s|$}\bigr\},
\end{multline}
which is a compact subset of $\QG\epsi,\tau_n.$. We then apply Lemma
\ref{biLip_dis} repeatedly; having obtained
\begin{equation}
  \mu=\mu_{{\mathcal{G}}_1}+\mu\mrest
    F_{{\mathcal{G}}_1},
\end{equation}
we apply Lemma \ref{biLip_dis} to $\mu\mrest
    F_{{\mathcal{G}}_1}$ to obtain a decomposition
    \begin{equation}
      \mu=\mu_{{\mathcal{G}}_1}+\mu_{{\mathcal{G}}_2}+\mu\mrest F_{{{\mathcal{G}}_{1}\cup\mathcal{G}_2}}
    \end{equation}
where $\mu_{{\mathcal{G}}_1}$ and $\mu_{{\mathcal{G}}_2}$ are
concentrated on disjoint $G_\delta$ sets, and where $F_{{{\mathcal{G}}_{1}\cup\mathcal{G}_2}}$ is an intersection of
two $F_\sigma$ sets and is ${({{\mathcal{G}}_1}\cup{{\mathcal{G}}_2)}}$-null. Iterating,
\begin{equation}\label{iterxx}
  \mu=\sum_{n=1}^\infty\mu_{{\mathcal{G}}_n}+\mu\mrest F,
\end{equation}
where $F$ is an $F_{\sigma\delta}$ which is ${{\mathcal{G}}_n}$-null
for every $n$. 
\par We now show that $\mu(F)=0$ using condition (2). The
following observation \textbf{(Ob1)} will be used repeatedly: if
$\gamma\in\frags(\convgeo)$ and the $\{K_\alpha\}\subset\dom\gamma$ are
(countably many) compact sets with
\begin{equation}
  \lebmeas\left(\dom\gamma\setminus\bigcup_\alpha K_\alpha\right)=0
\end{equation}
and $\hmeas 1._{\gamma|K_\alpha}(F)=0$, then $\hmeas
1._\gamma(F)=0$. 
\par In the following we are going to uniformize the properties of the
fragment $\gamma$ by subdividing it; concretely, if $\gamma$ is a fragment in the $f$-direction of
$\cone(v,\alpha)$ with \hbox{$g$-speed $>\delta$}  as $(f\circ\gamma)'$,
$\metdiff\gamma$ and $g$ are measurable, using \textbf{(Ob1)} and Egorov
and Lusin's Theorems \cite[Thms.~7.1.12 and 7.1.13]{bogachev_measure}
and subdividing $\gamma$ into smaller subfragments %\textcolor{red}
{(i.e. we write
$\dom\gamma = \bigcup_iT_i\cup T_{\infty}$ where the $\{T_i\}$ are
pairwise disjoint, for $i<\infty$ each $T_i$ is compact, and $T_\infty$ has zero Lebesgue measure, and then focus on
each $\gamma_i=\gamma|T_i$ ($i\ne\infty$) separately, but keep writing
$\gamma$ for $\gamma_i$ to avoid burdening the notation),} we can assume that for all $t,s\in\dom\gamma$:
\begin{align}
  \|f\circ\gamma(t)-f\circ\gamma(s)-w(t-s)\|_2&\le\rho|t-s|;\label{conical_inclusion}\\
  (l-\rho)|t-s|\le\|\gamma(t)-\gamma(s)\|_{\bana^*}&\le(L+\rho)|t-s|;\label{bound_metdiff}\\
  \sgn(t-s)\left(g\circ\gamma(t)-g\circ\gamma(s)\right)&\ge(\delta+\eta-\rho)\|\gamma(t)-\gamma(s)\|_{\bana^*},\label{speed_bound}
\end{align}
where $\eta>0$, $w\in\cone(v,\alpha-\eta)$, $\frac{L}{l}<1+\epsi$ and $\rho$
is small enough and to be chosen later. Let
$I_\gamma=[\min\dom\gamma,\max\dom\gamma]$ denote the smallest
interval containing the domain of $\gamma$.
%\textcolor{red}
{Moreover, after further subdividing $\gamma$, and by applying Lebesgue's Differentiation Theorem
we can also assume that for all $t,s\in I_\gamma$ with at least one of
them in $\dom\gamma$,}
\begin{equation}\label{dom_dens}
  \lebmeas\left(\dom\gamma\cap[t,s]\right)\ge(1-\rho)|t-s|.
\end{equation}
Let $\{(u_n,v_n)\}$ be the components of
$I_\gamma\setminus\dom\gamma$. On each component $(u_n,v_n)$ we extend
$\gamma$ using the fact that $\convgeo$ is convex
\begin{equation}
  \gamma(t)=\frac{t-u_n}{v_n-u_n}\gamma(v_n)+\frac{v_n-t}{v_n-u_n}\gamma(u_n);
\end{equation}
if $(s,t)\subset (u_n,v_n)$,
\begin{equation}
  \gamma(t)-\gamma(s)=\frac{t-s}{v_n-u_n}(\gamma(v_n)-\gamma(u_n))
\end{equation}
so that \eqref{bound_metdiff} remains true; in particular, the bound
on the metric derivative implies that the extension is
$(L+\rho)$-Lipschitz. Similarly, as %\textcolor{red}
{$f$ is linear
(after embedding its graph in $\bana^*$)}:
\begin{align}
  f\circ\gamma(t)-f\circ\gamma(s)&=\frac{t-s}{v_n-u_n}(f\circ\gamma(v_n)-f\circ\gamma(u_n))\\
    g\circ\gamma(t)-g\circ\gamma(s)&=\frac{t-s}{v_n-u_n}(g\circ\gamma(v_n)-g\circ\gamma(u_n))
\end{align} so that \eqref{conical_inclusion} and
\eqref{speed_bound} remain true. If $t,s\in
I_\gamma\setminus\dom\gamma$ but in different components, there are
$s',t'\in\dom\gamma$:
\begin{equation}
  s\le s'\le t'\le t
\end{equation}
and each of  $(s,s')$ and $(t',t)$ is contained in a component of
$I_\gamma\setminus\dom\gamma$. %\textcolor{red}
{In particular as $t'\in\dom\gamma$ we
have from~(\ref{dom_dens}) that:
\begin{equation}
  \label{eq:fmfx1}
  (1-\rho)|s-t'|\le\lebmeas(\dom\gamma\cap[s,t']);
\end{equation}
but we also have $(s,s')\cap\dom\gamma=\emptyset$, and so get
\begin{equation}
  \label{eq:fmfx2}
  \lebmeas(\dom\gamma\cap[s,t']) \le |s-t'|-|s-s'|,
\end{equation}
from which we infer:
\begin{equation}
  \label{eq:fmfx3}
  |s-s'|\le |s-t'|-(1-\rho)|s-t'|\le \rho|s-t'|.
\end{equation}
This argument establishes the first equation~(\ref{fffr1}) of the
following inequalities, and
the second one follows by a similar argument:
\begin{align}
  |s-s'|&\le\rho|s-t|;\label{fffr1}\\
  |t'-t|&\le\rho|s-t|.
\end{align}}
Therefore, \eqref{conical_inclusion}, \eqref{bound_metdiff} and
\eqref{speed_bound} generalize to
  \begin{equation}
  \|f\circ\gamma(t)-f\circ\gamma(s)-w(t-s)\|_2\le2\rho(L+\rho+\|v\|)|t-s|;\label{adj_conical_inclusion}
  \end{equation}
  \begin{multline}\label{adj_bound_metdiff}
  \left[(l-\rho)(1-2\rho)-2(L+\rho)\rho\right]|t-s|\le\|\gamma(t)-\gamma(s)\|_{\bana^*}\le\\
  \left[L+\rho(1+2(L+\rho))\right]|t-s|;
  \end{multline}
  \begin{multline}\label{adj_speed_bound}
\sgn(t-s)\left(g\circ\gamma(t)-g\circ\gamma(s)\right)\ge\Biggl(\delta+\eta-\rho
  \\ -{}2\rho
\frac{L+\delta+\eta}{(l-\rho)(1-2\rho)-2(L+\rho)\rho}
\Biggr)\|\gamma(t)-\gamma(s)\|_{\bana^*}.
  \end{multline}
If $\eta'\in(0,\eta)$, for $\rho$ sufficiently small we have:
\begin{align}
  \ball w,2\rho(L+\rho+\|v\|).&\in\cone(v,\alpha-\eta');\\
  \frac{L+\rho(1+2(L+\rho))}{(l-\rho)(1-2\rho)-2(L+\rho)\rho}&\le1+\epsi;\\
\left(\delta+\eta-\rho-2\rho
\frac{L+\delta+\eta}{(l-\rho)(1-2\rho)-2(L+\rho)\rho}
\right)&\ge\delta+\eta'.
\end{align}
We can now reparametrize $\gamma$ to be $(1,1+\epsi)$-bi-Lipschitz,
subdivide the domain and choose $n$ sufficiently large so that
$\gamma\in{{\mathcal{G}}_n}$. Thus,
$\hmeas 1._\gamma(F)=0$. This implies that
\begin{equation}
    \mu=\sum_{n=1}^\infty\mu_{{\mathcal{G}}_n}\quad\text{(on account
      of condition (2))}.
\end{equation}
\par As the measures $\mu_{{\mathcal{G}}_n}$ are concentrated on
pairwise disjoint sets, by Theorem \ref{alb_glue} we obtain a
representation $\albrep.'=(P',\nu')$ of $\mu$ in the $f$-direction of
$\cone(v,\alpha)$ with $g$\nobreakdash-\hspace{0pt}speed $>\delta$ with $P'\in
P(\frags(\convgeo))$; Theorem \ref{compact_reduction} gives a
representation $\albrep.=(P,\nu)$ with $P\in P(\frags(X))$.
\par The case in which $X$ is not compact and $v$, $\alpha$ and
$\delta$ are not constant is treated by using Egorov and Lusin's
Theorems to find Borel functions $(v_n,\alpha_n,\delta_n)$ such that:
\begin{itemize}
\item There are disjoint compacts $\{K_{n,j}\}_j$ with
  $\mu\left(X\setminus\bigcup_jK_{n,j}\right)=0$ and the
  $(v_n,\alpha_n,\delta_n)$ are constant on each $K_{n,j}$.
\item The functions $\delta_n\searrow\delta$ pointwise.
\item %\textcolor{red}
{%\color{red} 
For each $x\in X$ one has
  $\cone(v_n(x),\alpha_n(x))\subset\cone(v(x),\alpha(x))$ and as
  $n\nearrow\infty$ the cones $\{\cone(v_n(x),\alpha_n(x))\}_n$ converge to
  $\cone(v(x),\alpha(x))$ (i.e.~the Hausdorff distance between the
  intersections %\textcolor{red}
{$\cone(v_n(x),\alpha_n(x))\cap{\mathbb
      S}^{q-1}$ and $\cone(v(x),\alpha(x))\cap{\mathbb S}^{q-1}$
    converges to $0$ as $n\nearrow\infty$}).}
\end{itemize}
\par One then applies the construction for constant cone fields and speeds on the
$K_{n,j}$ recursively to obtain $\mu=\sum_{n,j}\mu_{n,j}$ where the
measures $\{\mu_{n,j}\}_{n,j}$ have pairwise disjoint supports and each
$\mu_{n,j}$ has an Alberti representation in the $f$-direction of
$\cone(v_n,\alpha_n)$ with $g$-speed $>\delta_n$.  These
representations are then glued via Theorem \ref{alb_glue} to give a
representation of $\mu$. 
\par That (3) implies (1) is immediate from the definitions.
\end{proof}
\subsection{Derivations and $L^\infty$-modules}\label{subsec:der_modules}
In this subsection we recall some facts about derivations and $L^\infty$-modules.
 Recall that the Banach space $L^\infty(\mu)$ is a real Banach
algebra, and, in particular, a ring. We will denote by
$L^\infty_+(\mu)$ the set of nonnegative elements, i.e.~those $f$
which satisfy:
\begin{equation}
  \mu\left\{x\in X:f(x)<0\right\}=0.
\end{equation}
The map absolute value 
\begin{align}
  L^\infty(\mu)&\xrightarrow{|\cdot|}L^\infty_+(\mu)\\
  f&\mapsto|f|\notag
\end{align}
is sort of an $L^\infty$-valued seminorm:
\begin{enumerate}
\item For all $c_1,c_2\in\real$, and for all $f_1,f_2\in L^\infty(\mu)$,
  \begin{equation}
    |c_1f_1+c_2f_2|\le|c_1||f_1|+|c_2||f_2|.
  \end{equation}
\item For all $f_1,f_2\in L^\infty(\mu)$,
  \begin{equation}
    |f_1f_2|=|f_1||f_2|.
  \end{equation}
\end{enumerate}
note also that $L^\infty_+(\mu)=\left\{f:f=|f|\right\}$. In particular,
$\|f\|_{L^\infty(\mu)} =\|\,|f|\,\|_{L^\infty(\mu)}$. 
\par %\textcolor{red}
{An \textbf{$L^\infty(\mu)$-module} $M$ is a Banach space $M$ which
is algebraically an \hbox{$L^\infty(\mu)$-mo}\-dule with the
additional requirement that the $L^\infty(\mu)$-action is bounded;
i.e., if $\lambda\in L^\infty(\mu)$ and $m\in M$, then $\|\lambda
m\|_M\le \|\lambda\|_{L^\infty(\mu)}\|m\|_M$.} An algebraic submodule
$M'\subset M$ is called a \textbf{submodule} if it is closed. Algebraic
concepts like direct sum, linear independence, free module and basis
of a free module extend immediately. Given $S\subset M$ the linear
span of $S$ will be denoted by $\linspan_{L^\infty(\mu)}(S)$.  The
\textbf{dual $\dualmod M.$} of an $L^\infty(\mu)$-module $M$ is the
$L^\infty(\mu)$-module of bounded module homomorphisms
\begin{equation}
  f\colon M\to L^\infty(\mu).
\end{equation}
\par Among $L^\infty(\mu)$-modules a special r\^ole is played by
\textbf{$L^\infty(\mu)$-normed modules}.
\begin{defn}\label{defn:local_norm}
  An $L^\infty(\mu)$-module $M$ is said to be an
  \textbf{$L^\infty(\mu)$-normed module} if there is a map
  \begin{equation}
    |\cdot|_{M,{\text{loc}}}:M\to L^\infty_+(\mu) 
  \end{equation}
such that:
\begin{enumerate}
\item The function $|\cdot|_{M,{\text{loc}}}$ is a ``seminorm'': for
  all $c_1,c_2\in\real$, and all $m_1,m_2\in M$,
  \begin{equation}
        |c_1m_1+c_2m_2|_{M,{\text{loc}}}\le|c_1||m_1|_{M,{\text{loc}}}+|c_2||m_2|_{M,{\text{loc}}}.
  \end{equation}
\item For each $\lambda\in L^\infty(\mu)$ and each $m\in M$,
  \begin{equation}
    |\lambda m|_{M,{\text{loc}}}=|\lambda|\,|m|_{M,{\text{loc}}}.
  \end{equation}
\item The local seminorm $|\cdot|_{M,{\text{loc}}}$ can be used to
  reconstruct the norm of any $m\in M$:
  \begin{equation}
    \|m\|_M=\|\,|m|_{M,{\text{loc}}}\,\|_{L^\infty(\mu)}.
  \end{equation}
\end{enumerate}
\end{defn}
\begin{rem}
Note that a submodule of an $L^\infty(\mu)$-normed module is an
\hbox{$L^\infty(\mu)$-nor}\-med module. The ring $L^\infty(\mu)$ has many
idempotents $\chi_U\in L^\infty_+(\mu)$ (characteristic function of the $\mu$-measurable
set $U$), which we will call projections as they give direct sum
decompositions
\begin{equation}
  L^\infty(\mu)=\chi_U\,L^\infty(\mu)\oplus (1-\chi_U)\,L^\infty(\mu),
\end{equation}
$\lambda\,L^\infty(\mu)$ denoting the ideal / submodule generated by
$\lambda$.
\end{rem}
\par A simple and important criterion
(\cite[Thm.~9]{phillips_cstarnorm}) for an $L^\infty(\mu)$-module $M$ to be
an $L^\infty(\mu)$-normed module is the following:
\begin{lem}
  An $L^\infty(\mu)$-module $M$ is an $L^\infty(\mu)$-normed module if
  and only if for each $U$ $\mu$-measurable and each $m\in M$:
  \begin{equation}
    \|m\|_M=\max\left(\|\chi_Um\|_M,\|(1-\chi_U)m\|_M\right);
  \end{equation}
in particular if the $U_\alpha$ are disjoint and
$\mu\left(X\setminus\bigcup U_\alpha\right)=0$, 
\begin{equation}
  \|m\|_M=\sup_\alpha\|\chi_{U_\alpha}m\|_M.
\end{equation}
\end{lem}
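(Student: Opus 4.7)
\par The plan is to prove each direction separately. The forward direction is immediate: by properties~(2) and~(3) of Definition~\ref{defn:local_norm}, together with $|\chi_U|_{L^\infty(\mu)} = \chi_U$, we have
\begin{equation*}
  \|\chi_U m\|_M = \bigl\|\chi_U\,|m|_{M,\mathrm{loc}}\bigr\|_{L^\infty(\mu)}.
\end{equation*}
Since $|m|_{M,\mathrm{loc}}\ge 0$, its essential supremum over $X$ equals the maximum of its essential suprema over $U$ and $X\setminus U$, giving the pairwise identity; the ``in particular'' countable version is the analogous statement that the essential supremum of a nonnegative function over a countable a.e.-disjoint cover is the supremum of the essential suprema over the pieces.

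\par For the backward direction, the task is to manufacture the local norm from the hypothesis. For each $m \in M$, I would introduce
\begin{equation*}
  T_m := \bigl\{\lambda\in L^\infty_+(\mu) : \|\chi_U m\|_M \le \|\chi_U\lambda\|_{L^\infty(\mu)} \text{ for every measurable }U\bigr\}.
\end{equation*}
This set is nonempty ($\|m\|_M\cdot 1\in T_m$), and the hypothesis yields closure under pairwise $\min$: for $\lambda_1,\lambda_2\in T_m$ and $A=\{\lambda_1\le\lambda_2\}$,
\begin{equation*}
  \|\chi_U m\|_M = \max\bigl(\|\chi_{U\cap A}m\|_M,\ \|\chi_{U\cap A^c}m\|_M\bigr) \le \|\chi_U\min(\lambda_1,\lambda_2)\|_{L^\infty(\mu)}.
\end{equation*}
I would set $|m|_{M,\mathrm{loc}} := \mathrm{ess\,inf}\,T_m$. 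Granting (see below) that this infimum lies in $T_m$ and realizes the equality $\|\chi_U m\|_M = \|\chi_U|m|_{M,\mathrm{loc}}\|_{L^\infty(\mu)}$ for every measurable $U$, the three axioms of Definition~\ref{defn:local_norm} follow routinely: subadditivity descends from that of $U\mapsto\|\chi_U m\|_M$; the $L^\infty$-homogeneity $|\lambda m|_{M,\mathrm{loc}} = |\lambda|\,|m|_{M,\mathrm{loc}}$ is verified first for simple $\lambda = \sum c_i\chi_{U_i}$ using the max identity on each level set $U_i$, then extended to all $\lambda\in L^\infty(\mu)$ by norm density; the global-norm reconstruction is the $U=X$ case.

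\par The main obstacle is showing that $|m|_{M,\mathrm{loc}}$ actually lies in $T_m$ and produces the equality. The naive approach of extracting a decreasing sequence $\lambda_n\downarrow|m|_{M,\mathrm{loc}}$ in $T_m$ and passing to the limit fails because essential suprema are \emph{not} continuous from above: for instance $\chi_{[0,1/n]}$ on $[0,1]$ decreases a.e.\ to $0$ yet has $L^\infty$-norm identically $1$. To circumvent this I would construct $|m|_{M,\mathrm{loc}}$ level-by-level: for each $t\ge 0$, choose a maximal measurable set $V_t$ (via Zorn's lemma in the lattice of measurable sets modulo null sets) enjoying the property $\|\chi_W m\|_M\le t$ for every measurable $W\subset V_t$, and set $\{|m|_{M,\mathrm{loc}}\le t\}:=V_t$. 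The pairwise-union closure of this admissible family (a direct reformulation of the hypothesis, analogous to the $\min$-closure of $T_m$) makes chains directed and allows Zorn to apply; the nested $\{V_t\}_{t\ge 0}$ then assembles into a Borel function in $L^\infty_+(\mu)$, and the hypothesis is used once more to verify the required equality on every $U$ and that the result coincides with $\mathrm{ess\,inf}\,T_m$.
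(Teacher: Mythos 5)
The paper does not prove this Lemma; it cites \cite[Thm.~9]{phillips_cstarnorm}, so there is no in-text argument of the paper's to compare against. Your forward direction and the ``in particular'' observation are both correct and standard. For the converse, the $T_m$/min-closure observation is sound, and you rightly anticipate the obstruction with $\operatorname{ess\,inf}$. But your Zorn-based level-set fix does not close the gap, it only relocates it: to apply Zorn you must show that a chain of admissible sets $W_1\subset W_2\subset\cdots$ with $\|\chi_{W_n}m\|_M\le t$ has an upper bound \emph{in the family}, i.e.\ that $\|\chi_{\bigcup_n W_n}m\|_M\le t$. Pairwise-union closure handles the finite stages, but the passage to the countable union is exactly the countable sup identity, which you have not assumed.

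This is not merely a technicality. The pairwise max identity alone is insufficient: take $X=\natural$ with any fully supported probability measure, so that $L^\infty(\mu)\cong l^\infty$, and equip $M=l^\infty$ (with pointwise module action) with the equivalent complete norm
\begin{equation*}
\|f\|_M=\max\left(\sup_n|f(n)|,\ 2\limsup_{n\to\infty}|f(n)|\right).
\end{equation*}
For any $U\subset\natural$ at least one of $U,U^c$ captures $\limsup|f|$ along a subsequence, so the pairwise identity holds for every $U$ and every $f$; yet for the constant function $1$ one has $\sup_n\|\chi_{\{n\}}1\|_M=1<2=\|1\|_M$, so the countable clause fails, and in particular $M$ admits no local norm. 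The upshot is that the converse direction of the Lemma must take the countable condition $\|m\|_M=\sup_\alpha\|\chi_{U_\alpha}m\|_M$ as a hypothesis, not merely a consequence (the pairwise identity then follows trivially by taking the cover $\{U,U^c\}$). With the countable version in hand, the essential union of $\{W:\|\chi_Wm\|_M\le t\}$ is a countable increasing union and genuinely lies in the family, your nested level sets assemble into a bona fide $|m|_{M,\mathrm{loc}}\in L^\infty_+(\mu)$, and the verification of the axioms of Definition~\ref{defn:local_norm} proceeds as you outline.
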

\begin{exa}
While $L^\infty(\mu)$ is an $L^\infty(\mu)$-normed module, the
previous lemma implies that for $p\in[1,\infty)$, the $L^p(\mu)$ are
  not $L^\infty(\mu)$-normed modules.
\end{exa}
\begin{lem}
    If $M$ is an $L^\infty(\mu)$-module, $\dualmod M.$ is an $L^\infty(\mu)$-normed module.
  \end{lem}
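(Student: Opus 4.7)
My plan is to apply the criterion stated in the preceding Lemma: an $L^\infty(\mu)$-module is $L^\infty(\mu)$-normed if and only if, for every $\mu$-measurable $U$ and every element $m$, the norm splits as $\|m\| = \max(\|\chi_U m\|, \|(1-\chi_U)m\|)$. So I do not need to write down the local norm on $\dualmod M.$ explicitly; I only need to verify this splitting property for the operator norm on $\dualmod M.$.

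First I would fix the $L^\infty(\mu)$-module structure on $\dualmod M.$: given $\lambda\in L^\infty(\mu)$ and $f\in\dualmod M.$, define $(\lambda f)(m):=\lambda\cdot f(m)$. This is a bounded module homomorphism, because $L^\infty(\mu)$ is commutative, so $(\lambda f)(\alpha m)=\lambda(\alpha f(m))=\alpha(\lambda f(m))$, and $\|\lambda f(m)\|_{L^\infty(\mu)}\le\|\lambda\|_{L^\infty(\mu)}\|f(m)\|_{L^\infty(\mu)}$. In particular $\chi_U f\in\dualmod M.$ and $\|\chi_U f\|=\sup_{\|m\|_M\le 1}\|\chi_U f(m)\|_{L^\infty(\mu)}$.

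Now the key observation is that $L^\infty(\mu)$ is itself $L^\infty(\mu)$-normed with local norm $|\cdot|$, so for every $h\in L^\infty(\mu)$ the splitting $\|h\|_{L^\infty(\mu)}=\max(\|\chi_U h\|_{L^\infty(\mu)},\|(1-\chi_U)h\|_{L^\infty(\mu)})$ holds. Applying this to $h=f(m)$ and taking the supremum over $\|m\|_M\le 1$ yields
\begin{equation*}
\|f\|=\sup_{\|m\|_M\le 1}\max\bigl(\|\chi_U f(m)\|_{L^\infty(\mu)},\,\|(1-\chi_U)f(m)\|_{L^\infty(\mu)}\bigr),
\end{equation*}
and since $\sup_m\max(a(m),b(m))=\max(\sup_m a(m),\sup_m b(m))$, this equals $\max(\|\chi_U f\|,\|(1-\chi_U)f\|)$. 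That is exactly the criterion from the preceding Lemma, so $\dualmod M.$ is an $L^\infty(\mu)$-normed module.

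There is no real obstacle here; the argument is just a transposition of the normed-module property of the target $L^\infty(\mu)$ through the operator-norm supremum. The only point to be slightly careful about is that $\chi_U f$ is again a bounded module homomorphism, which I dispatch by invoking the commutativity of $L^\infty(\mu)$ at the start.
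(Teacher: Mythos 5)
Your proof is correct and takes essentially the same route as the paper: both verify the splitting criterion of the preceding Lemma by transferring the normed-module property of the codomain $L^\infty(\mu)$ through the operator-norm supremum. The paper writes this with an $\epsi$-approximation of the supremum while you use the clean identity $\sup_m\max(a(m),b(m))=\max(\sup_m a,\sup_m b)$, but this is a cosmetic difference.
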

  \begin{proof}
    Given $\xi\in\dualmod M.$ and $\epsi>0$, choose $m\in M$ with
    $\|m\|_M=1$ and
    \begin{equation}
      \|\xi\|_{\dualmod M.}\le \|\xi(m)\|_{L^\infty(\mu)}+\epsi.
    \end{equation}
As $L^\infty(\mu)$ is an $L^\infty(\mu)$-normed module, for each $U$
$\mu$-measurable,
\begin{equation}
\begin{split}
  \|\xi(m)\|_{L^\infty(\mu)}&=\max\left(\|\chi_U\xi(m)\|_{L^\infty(\mu)},\|(1-\chi_U)\xi(m)\|_{L^\infty(\mu)}\right)\\
  &\le\max\left(\|\chi_U \xi\|_{\dualmod
    M.},\|(1-\chi_U)\xi\|_{\dualmod M.}\right).
\end{split}
\end{equation}
  \end{proof}
\begin{rem}Actually, the previous argument shows that $\boundlin X,N.$ (bounded
linear maps from the Banach space $X$ to $N$) is an
$L^\infty(\mu)$-normed module whenever $N$ is too.
\end{rem}
\par For $L^\infty(\mu)$-normed modules there is an analogue of the
Hahn-Banach Theorem \cite[Thm.~5]{weaver00}:
\begin{lem}\label{lem:hahn-banach}
  Let $M'$ be an algebraic submodule of the $L^\infty(\mu)$-normed
  module $M$ and $\Phi'\in\dualmod M'.$ with norm $\le c$. Then there
  is a $\Phi\in\dualmod M.$ extending $\Phi'$ with norm $\le c$.
\end{lem}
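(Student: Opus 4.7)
The plan is to adapt the classical Hahn--Banach proof: apply Zorn's lemma to reduce to a one-dimensional extension, and then replace the usual ordering of $\real$ by the order structure of the Dedekind-complete Banach lattice $L^\infty(\mu)$.

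First I would order pairs $(N,\Psi)$ where $M'\subset N\subset M$ is a submodule and $\Psi\in\dualmod N.$ extends $\Phi'$ with norm $\le c$, and apply Zorn's lemma to produce a maximal $(N,\Psi)$. The task is then to show that $N=M$: supposing $m_0\in M\setminus N$, I seek $a\in L^\infty(\mu)$ so that
\begin{equation}
\tilde\Psi(n+\lambda m_0):=\Psi(n)+\lambda a \qquad (n\in N,\ \lambda\in L^\infty(\mu))
\end{equation}
defines a module homomorphism on $N+L^\infty(\mu)\,m_0$ of norm $\le c$. Well-definedness is enforced by observing that the ideal $I_{m_0}=\{\lambda\in L^\infty(\mu):\lambda m_0\in N\}$ has the form $\chi_U L^\infty(\mu)$ for some $\mu$-measurable $U$; on $U$ the value of $a$ is forced to be $\Psi(\chi_U m_0)$, while off $U$ it is free to be chosen.

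The bound $\|\tilde\Psi\|\le c$ translates into
\begin{equation}
|\Psi(n)+\lambda a|\le c\,\locnorm n+\lambda m_0,M. \qquad \mu\text{-a.e., for all } n\in N,\ \lambda\in L^\infty(\mu).
\end{equation}
Using the homogeneity $\locnorm \lambda m,M.=|\lambda|\locnorm m,M.$ and splitting by the sign projections $\chi_{\{\lambda>0\}}$, $\chi_{\{\lambda<0\}}$, this reduces to the pair of one-sided inequalities
\begin{equation}
\Psi(n)-c\locnorm n-m_0,M. \le a \le c\locnorm n+m_0,M. -\Psi(n)
\end{equation}
holding $\mu$-a.e.\ for every $n\in N$. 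Since $L^\infty(\mu)$ is a Dedekind-complete Banach lattice, the essential supremum $a_-$ of the left-hand sides and essential infimum $a_+$ of the right-hand sides exist in $L^\infty(\mu)$, and an admissible $a$ exists precisely when $a_-\le a_+$ $\mu$-a.e. By the standard characterization of essential sup/inf, this reduces to the pairwise inequality
\begin{equation}
\Psi(n)+\Psi(n') \le c\locnorm n-m_0,M. + c\locnorm n'+m_0,M. \qquad \mu\text{-a.e.}
\end{equation}
for every $n,n'\in N$, which follows from $\Psi(n+n')\le c\locnorm n+n',M.$ and the triangle inequality for $\locnorm\cdot,M.$.

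The main obstacle is the lattice-theoretic step: one must appeal to Dedekind completeness of $L^\infty(\mu)$ so that the essential supremum and infimum over the (potentially uncountable) family indexed by $n\in N$ are realized as elements of $L^\infty(\mu)$, and then verify that a pointwise pairwise inequality implies the global one. This is where the $L^\infty$-valued Hahn--Banach genuinely diverges from the scalar version; the remaining algebra mirrors the classical proof.
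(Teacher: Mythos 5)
The paper does not prove this lemma itself; it cites \cite[Thm.~5]{weaver00}, and your plan (Zorn's lemma to reduce to a one-step extension, with the scalar order replaced by the lattice order of $L^\infty(\mu)$ and Dedekind completeness supplying the requisite essential sup/inf) is indeed the standard argument, so the overall scheme is sound. One assertion in your write-up is, however, false and should be dropped: the ideal $I_{m_0}=\{\lambda\in L^\infty(\mu):\lambda m_0\in N\}$ need not be of the form $\chi_U L^\infty(\mu)$. Here $N$ is only required to be an algebraic (not closed) submodule, so $I_{m_0}$ need not even be norm-closed; and even norm-closed ideals of $L^\infty(\mu)$ are not all of that form (e.g.\ $\{f\in L^\infty([0,1]):\lim_n\|f\chi_{(0,1/n)}\|_\infty=0\}$). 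Fortunately the claim is also unnecessary: well-definedness of $\tilde\Psi$ is a \emph{consequence} of the bound you impose on $a$, not a separate constraint. Indeed, once $a$ satisfies $|\Psi(n)+\lambda a|\le c\locnorm n+\lambda m_0,M.$ $\mu$-a.e.\ for every $n\in N$ and $\lambda\in L^\infty(\mu)$, then for any $\lambda\in I_{m_0}$ the choice $n=\lambda m_0\in N$ with coefficient $-\lambda$ yields $|\Psi(\lambda m_0)-\lambda a|\le c\locnorm 0,M.=0$, which is exactly what descent of $\tilde\Psi$ to $N+L^\infty(\mu)m_0$ requires. You should also make explicit the localization used in passing from the inequality with general $\lambda$ to the $\lambda$-free two-sided bounds on $a$: one cannot simply divide by $\lambda$ since $1/\lambda$ need not lie in $L^\infty(\mu)$; instead exhaust $\{\lambda\ne0\}$ by the sets $\{1/k\le|\lambda|\}$, note that $\chi_{\{1/k\le|\lambda|\}}/\lambda\in L^\infty(\mu)$, apply module homogeneity of $\Psi$ and of $\locnorm\cdot,M.$ on each piece, and glue. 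Finally, the Dedekind completeness you invoke is available because the measures in the paper are Radon on separable metric spaces, hence $\sigma$-finite.
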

  We now introduce derivations, but first recall some facts about
Lipschitz functions.  We will denote by $\lipfun X.$ the space of
real-valued Lipschitz functions defined on $X$ and by $\lipalg X.$ the
real algebra of real-valued bounded Lipschitz functions defined on
$X$. For $f\in\lipfun X.$ its global Lipschitz constant will be
denoted by $\glip f.$.  For $f\in\lipalg X.$ we define the norm
  \begin{equation}
    \|f\|_{\lipalg X.} = \max(\|f\|_\infty,\glip f.).
  \end{equation}
  This gives $(\lipalg X.,\|\cdot\|_{\lipalg X.})$ the structure of a
  Banach algebra, compare \cite[Sec.~4.1]{weaver_book99}. An important
  property of $\lipalg X.$ is that it is a dual Banach. 
  For more information we refer the reader to
  \cite[Chap.~2]{weaver_book99}. For the scope of the present work,
  the most useful topology on $\lipalg X.$ is the weak*
  topology. %\textcolor{red}
{As
  we are assuming $X$ separable, the weak* topology will be metrizable on bounded
  subsets of $\lipalg X.$: one way to see this is to use the
  description of the predual in~\cite{deleeuw61} and apply the
  Krein--\v Smulian Theorem (see the argument on~\cite[pg.~34]{weaver_book99}), or one can use
  the description of the predual in~\cite{arens_eels56} (see
  \cite[Sec.~2.2]{weaver_book99} for details);} moreover, it
  turns out that a sequence $f_n\xrightarrow{\text{w*}}f$ if and only
  if $f_n\to f$ pointwise and if $\sup_n\glip
  f_n. <\infty$. {Note also that this kind of convergence implies uniform convergence on compact
  subsets}.
\par In the sequel, especially in Section
\ref{sec:derivations_alberti}, we will sometimes need to consider
functions which are Lipschitz with respect to a different (pseudo)-distance
$d'$. If $f\in\lipfun X.$ is $L$-Lipschitz with respect to $d'$, we will say that
$f$ is $(1,d')$-Lipschitz.

  \begin{defn}
    A \textbf{derivation $D\colon\lipalg X.\to L^\infty(\mu)$} is a weak*
    continuous, bounded linear map satisfying the product rule:
    \begin{equation}
      D(fg)=fDg+gDf.
    \end{equation}
  \end{defn}
Note that the notion of derivation depends on the measure $\mu$ and
that the product rule implies that $Df=0$ if $f$ is constant. The
collection of all derivations $\wder\mu.$ is an $L^\infty(\mu)$-normed
module. 
 Moreover, any $D\in
\wder\mu\mrest U.$ gives rise to an element of $\wder\mu.$ by
extending $Df$ to be $0$ on the complement of $U$. In this way,
$\wder\mu\mrest U.$ can be naturally identified with the submodule
$\chi_U\, \wder\mu.$ of $\wder\mu.$. Derivations are local
in the following sense \cite[Lem.~27]{weaver00}:
\begin{lem}\label{lem:locality_derivations}
  If $U$ is $\mu$-measurable and if $f,g\in\lipalg X.$ agree on $U$,
  then for each derivation $D\in\wder\mu.$, $\chi_UDf=\chi_UDg$.
\end{lem}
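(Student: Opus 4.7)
By linearity of $D$, it suffices to show that if $h=f-g\in\lipalg X.$ vanishes on $U$, then $\chi_U Dh=0$ in $L^\infty(\mu)$. The plan is to argue in three steps, moving from open sets to closed sets to general measurable sets.

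First, I would handle the case where $V$ is open and $h|_V=0$. For any compact $K\subset V$ (so that $d(K,V^c)>0$), define the cutoff $\phi(x)=\min\bigl(1,\,d(x,K)/d(K,V^c)\bigr)\in\lipalg X.$, which satisfies $\phi\equiv 0$ on $K$ and $\phi\equiv 1$ on $V^c$. Then $h\phi=h$ everywhere: on $V$ both sides are zero, and on $V^c$ one has $\phi=1$. The product rule gives $Dh=D(h\phi)=\phi\,Dh+h\,D\phi$, and multiplying by $\chi_K$ kills both terms on the right since $\phi|_K=h|_K=0$. Hence $\chi_K Dh=0$ for every compact $K\subset V$, and inner regularity of the Radon measure $\mu$ yields $\chi_V Dh=0$.

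Second, I would treat the case where $F$ is closed and $h|_F=0$ by a weak$^*$ approximation. Let $\tau_\epsi:\real\to\real$ be the $2$-Lipschitz truncation with $\tau_\epsi(t)=0$ for $|t|\le\epsi/2$ and $\tau_\epsi(t)=t$ for $|t|\ge\epsi$, and set $h_n=\tau_{1/n}\circ h\in\lipalg X.$. Then $\glip{h_n}.\le 2\glip h.$, $\|h_n\|_\infty\le\|h\|_\infty$, and crucially $h_n$ vanishes on the \emph{open} set $V_n=\{|h|<1/(2n)\}$, which contains $F$. By Step~1, $\chi_{V_n}Dh_n=0$, so in particular $\chi_F Dh_n=0$ for every $n$. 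Since $h_n\to h$ pointwise with uniformly bounded Lipschitz and supremum norms, $h_n\xrightarrow{\text{w*}}h$ in $\lipalg X.$; weak$^*$-continuity of $D$ then gives $Dh_n\to Dh$ weak$^*$ in $L^\infty(\mu)$. Multiplication by $\chi_F\in L^\infty(\mu)$ is weak$^*$-continuous (its predual action on $L^1(\mu)$ is again multiplication by $\chi_F$), so passing to the limit yields $\chi_F Dh=0$.

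Finally, for a general $\mu$-measurable $U$, inner regularity of $\mu$ provides closed sets $F_n\subset U$ with $\mu(U\setminus F_n)\to 0$; since $h|_{F_n}=0$, Step~2 gives $\chi_{F_n}Dh=0$ for every $n$. Taking $F_\infty=\bigcup_n F_n\subset U$, we have $\mu(U\setminus F_\infty)=0$, whence $\chi_U Dh=\chi_{F_\infty}Dh=0$ $\mu$-a.e., i.e., in $L^\infty(\mu)$. The only subtle point is Step~2: the truncation $\tau_{1/n}$ is engineered so that each $h_n$ vanishes on an \emph{open} neighborhood of $F$ (enabling Step~1 to apply) while remaining uniformly Lipschitz so that weak$^*$ convergence $h_n\to h$ survives passage through $D$. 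Steps~1 and 3 are essentially bookkeeping with the Radon-regularity of $\mu$.
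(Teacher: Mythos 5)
Your proof is correct. Note that the paper does not give its own argument for this lemma—it simply cites Weaver's Lem.~27 from \cite{weaver00}—so your write-up is a self-contained reconstruction rather than a variant of something in the text. The strategy you use (cutoff functions plus the Leibniz rule for open sets, then a truncation $\tau_\epsi$ combined with weak* sequential continuity to pass to closed sets, then inner regularity of the Radon measure to reach general $\mu$-measurable sets) is exactly the standard approach that Weaver's proof is built on, so you are taking essentially the same route. All the small checks go through: the cutoff $\phi=\min(1,d(\cdot,K)/d(K,V^c))$ is in $\lipalg X.$ because $K$ compact and $V^c$ closed disjoint forces $d(K,V^c)>0$; $\chi_K\phi$ and $\chi_K h$ vanish in $L^\infty(\mu)$ since both are continuous and identically zero on $K$; $\tau_{1/n}\circ h$ stays in a bounded ball of $\lipalg X.$ and converges pointwise to $h$, which is exactly weak* convergence in the paper's formulation; multiplication by $\chi_F$ on $L^\infty(\mu)$ is weak*-to-weak* continuous because its predual is multiplication by $\chi_F$ on $L^1(\mu)$. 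One could streamline Step 3 slightly by arguing via contrapositive—if $\mu(U\cap\{Dh\neq0\})>0$, inner regularity gives a positive-measure compact inside it, contradicting Step 2—which avoids any worry about whether $\mu(U)$ is finite, but your version is fine as written for the measures the paper actually uses.
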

\begin{rem}
  \label{rem:derivation_extension}
This locality property allows to extend derivations to the space of Lipschitz
functions. 
Considering countably many pairwise disjoint compact sets $K_\alpha$ with
uniformly bounded diameter and points
$c_\alpha\in K_\alpha$,
with $\mu\left(X\setminus\bigcup_\alpha K_\alpha\right)=0$, we define
%\textcolor{red}
{for $f\in\lipfun X.$
\begin{equation}
  Df=\sum_\alpha\chi_{K_\alpha}D\left(\min\left(\max\left(f-f(c_\alpha),-\max_{K_\alpha}|f-f(c_\alpha)|\right),
\max_{K_\alpha}|f
-f(c_\alpha)|\right)\right),
\end{equation} which agrees} with the previous definition for
$f\in\lipalg X.$, using locality (Lemma~\ref{lem:locality_derivations}). Observe that we used that the norm of
$f-f(c_\alpha)$ in $\lipalg K_\alpha.$ is bounded in terms of $\glip
f.$ and the diameter of $K_\alpha$. Again using locality, it is
possible to show that the extension does not depend on the choice of
the $K_\alpha$ or the point $c_\alpha$. 
\end{rem}
\begin{defn}
  The dual $L^\infty(\mu)$-normed module of $\wder\mu.$ will be
  denoted by $\wform\mu.$ and called \textbf{module of forms}. For
$f\in\lipalg X.$ (or $\lipfun X.$) let $df\in\wform\mu.$ be defined by
\begin{equation}
  \forall D\in\wder\mu.,\quad \langle df,D\rangle=Df;
\end{equation}
the map 
\begin{equation}
  \begin{aligned}
    d:\lipalg X.&\to\wform \mu.\\
    f&\mapsto df
  \end{aligned}
\end{equation}
is linear, satisfies the product rule
\begin{equation}
  d(fg)=fdg+gdf
\end{equation}
and is weak* continuous.
\end{defn}
The following result is also used repeatedly in the following. Proofs
can be found in \cite{gong11-revised,derivdiff}.
\begin{cor}\label{cor:pseudoduality}
  Let $\{D_i\}_{i=1}^N\subset\wder\mu\mrest U.$ be linearly
  independent, where $U$ is Borel. Then there are disjoint Borel
  subsets $V_\alpha\subset U$ with $\mu(U\setminus\bigcup_\alpha
  V_\alpha)=0$ such that, for each $\alpha$, there are $1$-Lipschitz
  functions $\{g_{i,\alpha}\}_{i=1}^N$ and derivations
  $\{D_{\alpha,i}\}_{i=1}^N$ in the linear span of
  $\{\chi_{V_\alpha}D_i\}_{i=1}^N$, satisfying
%\textcolor{red}
{\begin{equation}\label{eq:pseudoduality}
D_{\alpha,i}g_{\alpha,j}=\delta_{ij}\chi_{V_\alpha}.
\end{equation}}
\end{cor}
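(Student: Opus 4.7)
The plan is to reduce the corollary to the following linear-algebra claim: for every Borel $W\subset U$ with $\mu(W)>0$ there exist $g_1,\dots,g_N\in\lipalg X.$ such that the matrix $M(x)=(D_i g_j(x))_{i,j}$ is invertible on a sub-Borel set of positive measure. Once this is available, an exhaustion (one may Zornify inside the Radon measure $\mu\mrest U$) yields a countable Borel partition $U=\bigsqcup_\alpha V_\alpha$, up to a $\mu$-null set, with $N$-tuples $(g_{1,\alpha},\dots,g_{N,\alpha})$ making $\det M\ne0$ $\mu$-a.e. on $V_\alpha$. Subdividing each $V_\alpha$ further by Lusin/Egorov, I can arrange $|\det M|\ge c_\alpha>0$ on $V_\alpha$, so $M^{-1}$ has entries in $L^\infty(\mu\mrest V_\alpha)$, and I define
\[
D_{\alpha,i}=\sum_{j=1}^N (M^{-1})_{ij}\,\chi_{V_\alpha}D_j\in\linspan_{L^\infty(\mu)}\{\chi_{V_\alpha}D_i\}_{i=1}^N,
\]
which gives $D_{\alpha,i}g_{j,\alpha}=\delta_{ij}\chi_{V_\alpha}$ by construction. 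Rescaling each $g_{j,\alpha}$ by $1/\glip g_{j,\alpha}.$ (and absorbing the scalars into the $j$-th column of $M^{-1}$) makes them $1$-Lipschitz without affecting pseudoduality.

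The core claim is proved by contradiction. Suppose that for every $N$-tuple $(g_1,\dots,g_N)\in\lipalg X.^N$ one has $\det(D_i g_j)=0$ $\mu$-a.e.\ on $W$. Because $X$ is separable, bounded balls in $\lipalg X.$ are weak*-metrizable, so I can pick a countable family $\{f_k\}\subset\lipalg X.$ weak*-dense in each such ball. Consider the Borel set
\[
E=\Bigl\{(x,\lambda)\in W\times \mathbb{S}^{N-1}:\sum_{i=1}^{N}\lambda_i D_if_k(x)=0\text{ for every }k\Bigr\}
\]
(using Borel representatives of the $D_i f_k$). Vanishing of all the determinants $\det(D_i f_{k_j})$ for every $N$-tuple $(k_1,\dots,k_N)$ forces the $x$-section $E_x$ to be non-empty for $\mu$-a.e.\ $x\in W$. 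By Kuratowski--Ryll-Nardzewski applied to the compact-valued selection problem $x\mapsto E_x$, there is a Borel map $x\mapsto\lambda(x)$ with $\|\lambda(x)\|=1$ and $\sum_i\lambda_i(x)D_if_k(x)=0$ for all $k$. Truncating an arbitrary $f\in\lipalg X.$ into a fixed bounded ball (so that locality, Lemma \ref{lem:locality_derivations}, lets me replace $f$ by something pointwise-approximable by the $f_k$) and invoking weak* continuity of each $D_i$, the relation extends from $\{f_k\}$ to all of $\lipalg X.$. Extending $\lambda$ by zero outside $W$ then produces a non-trivial element of $L^\infty(\mu\mrest U)^N$ annihilating $D_1,\dots,D_N$, contradicting their $L^\infty(\mu\mrest U)$-linear independence.

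The main obstacle is precisely the measurable-selection/weak*-density step: one must make sure that the countable family $\{f_k\}$ really witnesses the kernel relation \emph{for every} Lipschitz $f$, not only for those in the dense family. This is what forces the truncation to a weak*-metrizable bounded subset of $\lipalg X.$ and the use of weak* continuity of derivations; without it the pointwise-$\real^N$ linear dependence would not glue to a genuine $L^\infty$-linear relation among the $D_i$. The exhaustion, the Lusin-type subdivision that bounds $|\det M|$ away from $0$, the matrix inversion producing the $D_{\alpha,i}$, and the final rescaling to $1$-Lipschitz representatives are then routine manipulations within the $L^\infty(\mu)$-module framework recalled earlier in Subsection \ref{subsec:der_modules}.
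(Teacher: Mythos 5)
Your argument is sound, and the paper itself does not give a proof of this corollary (it defers to the references \cite{gong11-revised,derivdiff}), so there is no in-text proof to compare against; the route you describe is the standard one used in those sources. The reduction to the single claim — that on every positive-measure Borel $W\subset U$ there is an $N$-tuple $g_1,\dots,g_N$ with $\det(D_ig_j)\neq 0$ on a sub-Borel set of positive measure — is exactly right, and the subsequent exhaustion, the Lusin-type subdivision giving a uniform lower bound on $|\det M|$, the matrix inversion to produce $D_{\alpha,i}$, and the rescaling to $1$-Lipschitz representatives are all routine and correct (the rescaling multiplies column $j$ of $M$ by $1/\glip g_{j,\alpha}.$ and hence row $j$ of $M^{-1}$ by $\glip g_{j,\alpha}.$, so pseudoduality is preserved). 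The contradiction argument for the key claim is also correct: vanishing of all determinants for every $N$-tuple drawn from a countable weak*-dense family $\{f_k\}$ forces, for $\mu$-a.e.~$x\in W$, the vectors $(D_1f_k(x),\dots,D_Nf_k(x))_k$ to lie in a proper subspace; the closed-valued Borel multifunction $x\mapsto E_x\subset\mathbb{S}^{N-1}$ then admits a measurable selection $\lambda(x)$; and, since each $\chi_W\lambda_i\in L^\infty(\mu\mrest U)$, the combination $D=\sum_i\chi_W\lambda_iD_i$ is a weak*-continuous derivation annihilating every $f_k$ and therefore, by weak*-metrizability of bounded balls of $\lipalg X.$ and weak*-continuity, all of $\lipalg X.$. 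This contradicts linear independence.

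One small remark: the ``truncation plus locality'' step you interject before invoking weak*-continuity is not actually needed and is slightly confusing as phrased — truncating $f$ in value does not reduce its $\lipalg X.$-norm in the relevant sense. The clean statement is simply that a countable family $\{f_k\}$ chosen weak*-dense in each closed ball of integer radius in $\lipalg X.$ suffices: any $f$ lies in some such ball, which is weak*-compact and metrizable (the predual of $\lipalg X.$ is separable because $X$ is), so a subsequence of $\{f_k\}$ converges weak* to $f$ and $Df=\lim Df_{k_j}=0$. This replaces the truncation/locality sentence and removes the only wrinkle in your write-up; everything else goes through.
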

\begin{rem}
  The space $\boundlin\lipalg X.,L^\infty(\mu).$ of bounded linear
  maps \begin{equation}\lipalg X.\to L^\infty(\mu)
  \end{equation}
  is a dual Banach space. In fact, let
  $B_1$ denote the
  unit ball in $\lipalg X.$ and consider the Banach space
\begin{equation}
  l^1\left(B_1; L^1(\mu)\right)=\left\{f:B_1\to L^1(\mu):\sum_{s\in B_1}\left\|f(s)\right\|_{L^1(\mu)}<\infty\right\};
\end{equation}
then $\boundlin\lipalg X.,L^\infty(\mu).$ can be identified with a
subspace of the dual of the generalized $l^1$-space:
$l^1\left(B_1;L^1(\mu)\right)$; on $\wder\mu.\subset\boundlin\lipalg
X.,L^\infty(\mu).$ we can consider the corresponding weak* topology;
in concrete terms, this is the coarsest topology making continuous all
the seminorms $\{\nu_\xi\}_{l^1\left(B_1; L^1(\mu)\right)}$:
  \begin{equation}
    \nu_{\xi}(D)=\left|\sum_{f\in B_1}\int \xi(f)\,Df\,d\mu\right|;
  \end{equation}
a basis for the weak* topology consists of the sets
\begin{equation}\label{omegafxx}
  \Omega(\xi_1,\ldots,\xi_k; D_0; \epsi)=\left\{D\in\wder\mu.:
    \nu_{\xi_i}(D-D_0)<\epsi\,(\forall i\in\left\{1,\ldots,k\right\})\right\}
\end{equation}
where $\left\{\xi_1,\ldots,\xi_k\right\}\subset l^1\left(B_1;
  L^1(\mu)\right)$, $D_0\in\wder\mu.$, $\epsi>0$.
\end{rem}
\subsection{Differentiability spaces}\label{subsec:diff_spaces}
In this subsection we recall some facts about differentiability
spaces.  We first recall the definition of the infinitesimal Lipschitz
constants of $f\in\lipfun X.$.
\begin{defn}
  For $f\in\lipfun X.$ 
we define the \textbf{lower and upper variations of $f$ 
at $x$ below scale $r$}  by
\begin{align}
 \biglip f(x,r)&=\sup_{s\le r}\sup_{y\in B(x,s)}\frac{|f(x)-f(y)|}{s}\\
 \smllip f(x,r)&=\inf_{s\le r}\sup_{y\in B(x,s)}\frac{|f(x)-f(y)|}{s}.
\end{align}
The \textbf{(``big'' and ``small'') infinitesimal Lipschitz constants of $f$ at
$x$} are defined by
\begin{align}
  \biglip f(x)&=\inf_{r\ge0}\biglip f(x,r)\\
  \smllip f(x)&=\sup_{r\ge0}\smllip f(x,r).
\end{align}
The functions $\biglip f(\cdot,r)$, $\smllip f(\cdot, r)$, 
$\biglip f$ and $\smllip f$ are Borel. Other notations in use for
$\biglip f$ (resp.~$\smllip f$) are ${\rm Lip}f$ (resp.~${\rm lip}f$).
\end{defn}
\par The key notion for generalizing the classical Rademacher's Theorem to
metric spaces is that of infinitesimal independence.
\begin{defn}\label{def:infi_indep}
  Let $\{f_i\}_{i=1}^n\subset\lipfun X.$, then
  \begin{equation}
    \begin{aligned}
      \Phi_{x,\{f_i\}_{i=1}^n}:\real^n&\to[0,\infty)\\
        (a_i)_{i=1}^n&\mapsto\biglip\left(\sum_{i=1}^na_if_i\right)(x)
    \end{aligned}
  \end{equation}
defines a seminorm on $\real^n$; the functions $\{f_i\}_{i=1}^n$ are
called \textbf{infinitesimally independent on a $\mu$-measurable $A\subset X$} if
$\Phi_{x,\{f_i\}_{i=1}^n}$ is a norm for $\mu$-a.e.~$x\in A$.  
\end{defn}
\par We can now define differentiability spaces.
\begin{defn}
  A metric measure space $(X,\mu)$ is called a \textbf{differentiability space}
  if there is a uniform bound $N$ on the number of Lipschitz functions
  that can be infinitesimally independent on a positive measure
  set. In this case
  there are (countably many) $\mu$-measurable $\{X_\alpha\}$ with $X=\bigcup_\alpha X_\alpha$
  such that:
  \begin{enumerate}
  \item For each $\alpha$ there is
    $\{x_\alpha^j\}_{j=1}^{N_\alpha}\subset\lipfun X.$ such that for each
    $f\in\lipfun X.$ there are unique $\left\{\frac{\partial
      f}{\partial x_\alpha^j}\right\}_{j=1}^{N_\alpha} \subset
    L^\infty(\mu\mrest X_\alpha)$ such that
    \begin{equation}\label{diff_linearization}
      \biglip\left(f-\sum_{j=1}^n\frac{\partial
      f}{\partial x_\alpha^j}(x)x_\alpha^j\right)(x)=0\quad\text{for
        $\mu$-a.e.~$x\in X_\alpha$}.
    \end{equation}
\item The $N_\alpha$ are uniformly bounded by $N$, and the minimal
  value of $N$ is
  called \textbf{the differentiability dimension}.
  \end{enumerate}
\par Each $(X_\alpha,\{x_\alpha^j\}_{j=1}^{N_\alpha})$ is called a
\textbf{chart}, $N_\alpha$ is called the \textbf{dimension} of the
chart, the $\{x_\alpha^j\}_{j=1}^{N_\alpha}$ are called \textbf{chart functions}, and
the maps:
\begin{equation}
  \begin{aligned}
    \frac{\partial
      }{\partial x_\alpha^j}:\lipfun X.&\to L^\infty(\mu\mrest
    X_\alpha)\\
    f&\mapsto \frac{\partial
      f}{\partial x_\alpha^j}
  \end{aligned}
\end{equation}
are called \textbf{partial derivative operators}; the representatives $\frac{\partial
      f}{\partial x_\alpha^j}$ can be taken to be Borel if $X_\alpha$
is Borel.
\par %\textcolor{red}
{Note that differentiability spaces are usually assumed to be
Polish or even complete and separable. Separability is quite a natural
requirement, where completeness is usually assumed for convenience:
see \cite[Subsec.~2, pg.~12]{cks_metric_diff} for more details. 
}
\par%\textcolor{red}
{Another notion that we will use is that of
  $\sigma$-differentiability spaces, which have been defined between
  Theorem~\ref{thm:keith} and Theorem~\ref{thm:bate_speight}.}
\end{defn}
% We now recall the definitions of doubling and asymptotically doubling measures.
% \begin{defn}
%   A measure $\mu$ is called doubling at $x$ if $\exists r_0(x)>0,
%   C_\mu(x)<\infty$ such that for $r_0(x)\ge r>0$
%   \begin{equation}
% 0<    \mu\left(\ball x,2r.\right)\le C_\mu(x)\mu\left(\ball x,r.\right);
%   \end{equation}
% $\mu$ is called asymptotically doubling at $x$ if $\exists
%   C_\mu(x)<\infty$:
%   \begin{equation}\label{eq:asym_doubling}
%     \limsup_{r\to0}\frac{\mu\left(\ball x,2r.\right)}{\mu\left(\ball
%       x,r.\right)}\le C_\mu(x);
%   \end{equation}
% $\mu$ is called doubling if $r_0(x)=\diam X$ and $C_\mu(x)=C_\mu$,
%   independent of the point (also called a doubling constant of $\mu$);
%   similarly, if \eqref{eq:asym_doubling} holds for a uniform
%   $C_\mu(x)=C_\mu$, $\mu$ is called asymptotically doubling. Finally
%   $\mu$ is called $\sigma$-asymptotically doubling if there are
%   disjoint Borel sets $\{X_\alpha\}$ with $\mu(X_\alpha)>0$,
%   $\mu\left(X\setminus\bigcup_\alpha X_\alpha\right)=0$ and $\mu\mrest
%   X_\alpha$ doubling on $X_\alpha$.
% \end{defn}
\section{Derivations and Alberti representations}\label{sec:derivations_alberti}
\subsection{From Alberti representations to Derivations}\label{subsec:derivations_alberti}
In this subsection we associate derivations to Alberti
representations; the fundamental construction is provided by
Theorem \ref{thm:alb_derivation}; its proof uses the following lemma
to check the measurability of certain functions. %\textcolor{red}
{In the following
$\bborel X.$ will denote the set of real-valued bounded Borel functions defined on
$X$; we will often use without explicit mention the
elementary fact that for each $f\in L^\infty(\mu)$ one can find a
representative in $\bborel X.$.}
\begin{lem}\label{lem:meas_fix}
  Suppose that $X$ is Polish and $\albrep.=(P,\nu)$ is an Alberti
  representation. Then for each $(f,g)\in\lipfun X.\times\bborel X.$
  the map:
  \begin{equation}
    \label{eq:meas_fix_s1}
    \begin{aligned}
      H_{f,g}:\frags(X)&\to\real\\
      \gamma&\mapsto\int_{\dom\gamma}(f\circ\gamma)'(t)\,(g\circ\gamma)(t)\,d(\mpush\gamma^{-1}.\nu_\gamma)(t)
    \end{aligned}
  \end{equation}
  is Borel.
\end{lem}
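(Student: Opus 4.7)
The plan is to approximate $(f\circ\gamma)'(t)$ by Lipschitz difference quotients, to use condition~(4) in the definition of Alberti representation to extract Borel measurability of the resulting simpler integrals, and to conclude by dominated convergence.

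\textbf{Reduction and setup.} By Lemma \ref{frag_meas}, $\frags(X)=\bigcup_n\frags_n(X)$ with each $\frags_n(X)$ a Borel (in fact closed) subset on which $\gamma$ and $\gamma^{-1}$ have uniformly bounded Lipschitz constants and $\dom\gamma\subset[-n,n]$. It suffices to show $H_{f,g}|_{\frags_n(X)}$ is Borel for each fixed $n$. Writing $\lambda_\gamma=\mpush\gamma^{-1}.\nu_\gamma$, the biLipschitz assumption forces $\lambda_\gamma\ll\lebmeas\mrest\dom\gamma$, so $\lambda_\gamma$-a.e.~$t\in\dom\gamma$ is a Lebesgue density point of $\dom\gamma$ and the derivative $(f\circ\gamma)'(t)$ exists there.

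\textbf{Using condition~(4).} By condition~(4) the map
\[
\gamma\mapsto\lambda_\gamma([a,b]\cap\gamma^{-1}(A))=\nu_\gamma(A\cap\gamma(\dom\gamma\cap[a,b]))
\]
is Borel for every Borel $A\subset X$ and $a\le b$. A standard monotone-class argument (first in the $X$-variable, then in the $\real$-variable) upgrades this to: for every bounded Borel $\Phi:\real\times X\to\real$, the map
\[
\gamma\mapsto\int\Phi(t,\gamma(t))\,d\lambda_\gamma(t)
\]
is Borel. This handles the ``$g$-factor'' of the integrand.

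\textbf{Approximating the derivative.} Let $F_\gamma:\real\to\real$ denote the canonical piecewise-linear extension of $f\circ\gamma$: linear on each gap of $\dom\gamma$ inside $I_\gamma:=[\min\dom\gamma,\max\dom\gamma]$, and constant outside $I_\gamma$. Then $F_\gamma$ is Lipschitz with constant bounded uniformly on $\frags_n(X)$ in terms of $n$ and $\glip f.$, and $F_\gamma'(t)=(f\circ\gamma)'(t)$ at every Lebesgue density point of $\dom\gamma$, hence $\lambda_\gamma$-a.e. Setting
\[
H^{(k)}_{f,g}(\gamma):=k\int\bigl(F_\gamma(t+1/k)-F_\gamma(t)\bigr)\,g(\gamma(t))\,d\lambda_\gamma(t),
\]
dominated convergence (with dominating constant $\glip{F_\gamma}.\,\|g\|_\infty$, uniformly bounded on $\frags_n(X)$) gives $H_{f,g}(\gamma)=\lim_kH^{(k)}_{f,g}(\gamma)$. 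It therefore suffices to show each $H^{(k)}_{f,g}$ is Borel.

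\textbf{Main obstacle.} This last step is where the work lies: $F_\gamma(t+1/k)$ is a nonlocal functional of $\gamma$, depending on the gap of $\dom\gamma$ containing $t+1/k$ and on $\gamma$ at that gap's endpoints. I would split the integration region according to which of three cases applies to $t+1/k$, namely (i) $t+1/k\in\dom\gamma$, (ii) $t+1/k$ lies in a bounded gap of $\dom\gamma$ within $I_\gamma$, or (iii) $t+1/k>\max\dom\gamma$, and express $F_\gamma(t+1/k)$ as an explicit Borel function of $(\gamma,t)$ involving $\gamma$ evaluated at the nearest-point quantities $\inf(\dom\gamma\cap[t+1/k,\infty))$ and $\sup(\dom\gamma\cap(-\infty,t+1/k])$. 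These nearest-point functionals are Borel on $\haus(\real\times X)$ (they are semicontinuous in the Hausdorff topology), so the resulting integrands fall under the monotone-class upgrade of condition~(4), yielding Borel measurability of $H^{(k)}_{f,g}$ and completing the plan. Verifying this case analysis carefully is the technical heart of the lemma, and is precisely what condition~(4) was introduced to make possible.
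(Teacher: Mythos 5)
Your overall strategy matches the paper's: reduce to $\frags_n(X)$, replace $(f\circ\gamma)'$ by difference quotients of a Lipschitz extension, invoke condition (4) of Definition \ref{defn:Alberti_rep} to get Borel measurability of the approximants, and pass to the limit. Where you differ is in the extension device: the paper applies the Lusin--Novikov uniformization theorem to obtain a \emph{Borel selector} $\gamma\mapsto F_{n,\gamma}\in\lipalgb\real,n.$ extending $f\circ\gamma$, so that $\gamma\mapsto F_{n,\gamma}(s)$ is automatically Borel for each fixed $s$, and dyadic Riemann sums then reduce each approximating integral to countable sums of products $F_{n,\gamma}(a_I)\cdot\nu_\gamma(\gamma(\dom\gamma\cap I))$ of manifestly Borel functions. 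You instead choose the \emph{canonical piecewise-linear} extension $F_\gamma$, which avoids the selection theorem but forces you to establish joint Borel measurability of $(\gamma,t)\mapsto F_\gamma(t)$ by hand.

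The concrete gap is in how you invoke condition (4). The monotone-class upgrade you state is for integrands of the form $\Phi(t,\gamma(t))$ with $\Phi$ a fixed bounded Borel function on $\real\times X$; this does handle the first term $F_\gamma(t)g(\gamma(t))=f(\gamma(t))g(\gamma(t))$ (since $F_\gamma=f\circ\gamma$ on $\dom\gamma$, which carries $\mpush\gamma^{-1}.\nu_\gamma$). But $F_\gamma(t+1/k)$ is \emph{not} a function of $(t,\gamma(t))$: it depends on $\gamma$ evaluated at the nearest points of $\dom\gamma$ to $t+1/k$, hence on the whole fragment, so it does not ``fall under the monotone-class upgrade'' as you state it. What you need, and should state, is the stronger claim: for every bounded \emph{jointly} Borel $\Psi:\frags_n(X)\times\real\to\real$, the map $\gamma\mapsto\int\Psi(\gamma,t)\,d(\mpush\gamma^{-1}.\nu_\gamma)(t)$ is Borel. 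This is true and follows from a monotone-class argument starting from rectangles $\chi_B(\gamma)\chi_{[a,b]}(t)$, using condition (4) with $A=X$. You must then verify that your $(\gamma,t)\mapsto F_\gamma(t+1/k)$ is jointly Borel, which requires, beyond the semicontinuity of the nearest-point maps, the joint Borel measurability of the evaluation $(\gamma,s)\mapsto\gamma(s)$ on the Borel set $\{(\gamma,s):s\in\dom\gamma\}$. These are exactly the verifications the paper's selector-plus-dyadic-sums route is designed to bypass, so while your plan can be completed, it is not yet a proof as written.
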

\begin{proof}
  %\textcolor{red}
{We start by showing that $H_{f,1}$ is Borel. Without loss of
  generality we can assume that $f$ is $1$-Lipschitz and by
  Lemma~\ref{lem:frag_compact}
we have just to show}
  that the restriction $H_{f,1}|\frags_n(X)$ is Borel. The first step
  is to use a measure-theoretic argument to extend ${f\circ\gamma}$ to
  have domain over an inverval in such a way that the extension
  is ``measurable'' in $\gamma$: this will be used to define a Borel
  map in~(\ref{eq:meas_fix_p6}).
Recall that the
  set
  \begin{equation}
    \label{eq:meas_fix_p0}
    \lipalgb \real,n.=\left\{\psi\in\lipalg\real.: \glip\psi.\le n\right\}
  \end{equation}
  is closed in $\lipalg \real.$ and is a Polish space if we consider
  the weak* topology. Note that the set
  \begin{equation}
    \label{eq:meas_fix_p1}
    \text{\normalfont
      Ext}_n=\left\{(\gamma,\psi)\in\frags_n(X)\times\lipalgb
      \real,n.:\text{$\psi$ extends $f\circ\gamma$}\right\}
  \end{equation}
  is closed in $\frags_n(X)\times\lipalgb \real,n.$. For each
  $\gamma$, by taking a McShane extension of ${f\circ\gamma}$, one
  concludes that the section $(\text{\normalfont Ext}_n)_\gamma$ is
  non-empty. Moreover, by Ascoli-Arzel\`a each section
  $(\text{\normalfont Ext}_n)_\gamma$ is compact. By the Lusin-Novikov
  Uniformization Theorem \cite[Thm.~18.10]{kechris_desc}, the set
  $\text{\normalfont Ext}_n$ admits a Borel uniformization and thus
  there is a Borel uniformizing function:
  \begin{equation}
    \label{eq:meas_fix_p2}
    F_n:\frags_n(X)\to\lipalgb \real,n.
  \end{equation}
  with $(\gamma,F_n(\gamma))\in\text{\normalfont Ext}_n$. To avoid a
  cumbersome notation for the value of $F_n(\gamma)$ at a point $t$, we will
  also write $F_{n,\gamma}$ to denote $F_n(\gamma)$.
  We now show that the function $G_{f,1}:\frags_n(X)\to\real$ defined by:
  \begin{equation}
    \label{eq:meas_fix_p3}
    G_{f,1}(\gamma)=\int_\real F_{n,\gamma}(t)\,d(\mpush\gamma^{-1}.\nu_\gamma)(t)
  \end{equation} is Borel.
  For $k\in\natural$ we let $\Delta^k$ denote the collection of closed
  dyadic intervals in $\real$ of the form
  $\left[\frac{m}{2^k},\frac{m+1}{2^k}\right]$ for
  $m\in\zahlen$. Given an interval $I\in\Delta^k$ we denote by
  $a_I,b_I$ its extremes so that $[a_I,b_I]=I$. Consider the maps
  $G_{f,1;k}:\frags_n(X)\to\real$ defined by:
  \begin{equation}
    \label{eq:meas_fix_p4}
    G_{f,1;k}(\gamma)=\sum_{I\in\Delta^k}\int_I F_{n,\gamma}(a_I)\,d(\mpush\gamma^{-1}.\nu_\gamma)(t);
  \end{equation}
  as the measure $\mpush\gamma^{-1}.\nu_\gamma$ is finite and
  $F_{n,\gamma}$ is Lipschitz,
  $\lim_{k\to\infty}G_{f,1;k}(\gamma)=G_{f,1}(\gamma)$. Note also that
  \begin{equation}
    \label{eq:meas_fix_p5}
    \int_IF_{n,\gamma}(a_I)\,d(\mpush\gamma^{-1}.\nu_\gamma)(t)=F_{n,\gamma}(a_I)\,\nu_\gamma\left(\gamma(\dom\gamma\cap[a_I,b_I])\right);
  \end{equation} as the map $\gamma\mapsto F_{n,\gamma}(a_I)$ is Borel
  because it is the composition of $F_{n,\gamma}$ with the evaluation at the
  point $a_I$, and as the map
  $\gamma\mapsto\nu_\gamma\left(\gamma(\dom\gamma\cap[a_I,b_I])\right)$
  is Borel by condition (4) in the Definition \ref{defn:Alberti_rep}
  of Alberti representations, we conclude that the maps $G_{f,1; k}$
  and $G_{f,1}$ are Borel. We now fix $k\in\natural$ and note that
  also the map
  \begin{equation}
    \label{eq:meas_fix_p6}
    \gamma\mapsto\int_\real F_{n,\gamma}\left(t+\frac{1}{k}\right)\,d(\mpush\gamma^{-1}.\nu_\gamma)(t)
  \end{equation}
  is Borel. We thus conclude that the function $H_{f,1;
    k}:\frags_n(X)\to\real$ defined by:
  \begin{equation}
    \label{eq:meas_fix_p7}
    H_{f,1;k}(\gamma)=\int_\real \frac{F_{n,\gamma}\left(t+\frac{1}{k}\right)-F_{n,\gamma}(t)}{1/k}\,d(\mpush\gamma^{-1}.\nu_\gamma)(t),
  \end{equation}
  is Borel. Note that for $\lebmeas$-a.e.~$t$ we have
  $\lim_{k\to\infty}\frac{F_{n,\gamma}\left(t+\frac{1}{k}\right)-F_{n,\gamma}(t)}{1/k}=F'_{n,\gamma}(t)$,
  which agrees with $(f\circ\gamma)'(t)$ for
  $\mpush\gamma^{-1}.\nu_\gamma$-a.e.~$t$. As
  $H_{f,1}(\gamma)=\lim_{k\to\infty}H_{f,1;k}(\gamma)$, we conclude
  that $H_{f,1}$ is Borel. For a Borel $A\subset X$, the previous
  argument can be applied to the Alberti representation $(P,\nu\mrest
  A)$ of $\mu\mrest A$ to conclude that $H_{f,\chi_A}$ is Borel. As each
  $g\in\bborel X.$ is a pointwise limit of uniformly bounded simple
  functions, we conclude that $H_{f,g}$ is Borel.
\end{proof}
\begin{thm}\label{thm:alb_derivation}
  If $\albrep.=(P,\nu)$ is a $C$-Lipschitz Alberti representation of
  the Radon measure $\mu$, the
  formula
  \begin{equation}\label{eq:derivation_alberti}
    \begin{split}
      \int_X gD_{\albrep.}f\,d\mu&=\int_{\frags(X)}dP(\gamma)\int_{\dom\gamma}
      (f\circ
      \gamma)'(t)g\circ\gamma(t)\,d(\mpush\gamma^{-1}.\nu_\gamma)(t)\\
      &=\int_{\frags(X)}dP(\gamma)\int_X
      g\partial_\gamma f\,d\nu_\gamma\quad(g\in L^1(\mu)\cap\bborel X.),
    \end{split}
  \end{equation}
where
\begin{equation}
    \partial_\gamma f(x)=
    \begin{cases}
      (f\circ\gamma)'(\gamma^{-1}(x))&\text{if $x\in\im\gamma$ and the
        derivative exists}\\
      0&\text{otherwise},
    \end{cases}
\end{equation}
  defines a derivation $D_{\albrep.}\in\wder\mu.$ with
  $\|D_{\albrep.}\|_{\wder\mu.}\le C$.
  \par Denoting by $\Alb(\mu)$ the set of Alberti representations of $\mu$
  which are Lipschitz and letting \begin{equation}
    \Alb_{{\text{\normalfont
          sub}}}(\mu)=\bigcup\left\{\Alb(\mu\mrest S):\text{\normalfont $S\subset X$ Borel}\right\},
  \end{equation} we obtain a map:
  \begin{equation}
    \begin{aligned}
      \Der:\Alb_{\text{\normalfont sub}}(\mu)&\to\wder\mu.\\
      \albrep.&\mapsto D_{\albrep.}.
    \end{aligned}
  \end{equation}
\end{thm}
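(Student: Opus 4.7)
The plan is to define $D_{\albrep.}f$ by duality: regard the right-hand side of \eqref{eq:derivation_alberti} as a bounded linear functional of $g$ on the dense subspace $L^1(\mu)\cap\bborel X.\subset L^1(\mu)$, produce a unique $D_{\albrep.}f\in L^\infty(\mu)$ representing it via the $L^1$–$L^\infty$ duality, and then verify the three axioms of a derivation (linearity with the norm bound, Leibniz rule, weak* continuity). The elementary input is that for every $\gamma\in\frags(X)$ the composition $f\circ\gamma$ is Lipschitz on the compact set $\dom\gamma\subset\real$, so by the one-dimensional Rademacher theorem $(f\circ\gamma)'$ exists $\lebmeas$-a.e.~on $\dom\gamma$; since $\gamma$ is biLipschitz and $\nu_\gamma\ll\hmeas 1._\gamma$, the pointwise value $\partial_\gamma f$ is therefore defined $\nu_\gamma$-a.e.

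First I would check measurability and the norm bound. The change of variable $t=\gamma^{-1}(x)$ rewrites $\int_X g\,\partial_\gamma f\,d\nu_\gamma$ as $H_{f,g}(\gamma)$ in the notation of Lemma \ref{lem:meas_fix}, so that lemma makes $\gamma\mapsto\int_X g\,\partial_\gamma f\,d\nu_\gamma$ Borel. Because $\gamma$ is $C$-Lipschitz we have $|\partial_\gamma f|\le C\glip f.$ pointwise, and combined with $\mu=\int\nu_\gamma\,dP$ this yields
\begin{equation}
  \left|\int_{\frags(X)}dP(\gamma)\int_X g\,\partial_\gamma f\,d\nu_\gamma\right|\le C\glip f.\int_X|g|\,d\mu.
\end{equation}
The duality with $L^1(\mu)$ then produces a unique $D_{\albrep.}f\in L^\infty(\mu)$ satisfying $\|D_{\albrep.}f\|_\infty\le C\glip f.\le C\|f\|_{\lipalg X.}$, giving linearity of $D_{\albrep.}$ in $f$ and the operator bound $\|D_{\albrep.}\|_{\wder\mu.}\le C$. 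The Leibniz rule then reduces to the one-dimensional product rule $((fg)\circ\gamma)'=(f\circ\gamma)'\,(g\circ\gamma)+(f\circ\gamma)\,(g\circ\gamma)'$, valid $\lebmeas$-a.e.~on $\dom\gamma$, whence $\partial_\gamma(fg)=g\,\partial_\gamma f+f\,\partial_\gamma g$ $\nu_\gamma$-a.e.; integrating against any testing $h\in L^1(\mu)\cap\bborel X.$ and invoking uniqueness of the $L^\infty$ representative yields $D_{\albrep.}(fg)=f\,D_{\albrep.}g+g\,D_{\albrep.}f$.

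The step I expect to be the main obstacle is weak* continuity, because pointwise convergence of Lipschitz functions does not force pointwise convergence of their derivatives. Assume $f_n\xrightarrow{w^*}f$ in $\lipalg X.$; then $f_n\to f$ pointwise with $\glip f_n.\le L$ uniformly, so $f_n\circ\gamma\to f\circ\gamma$ pointwise on $\dom\gamma$ with the derivatives $(f_n\circ\gamma)'$ uniformly bounded in $L^\infty(\dom\gamma,\lebmeas)$ by $LC$. This sequence is therefore weak* relatively compact in $L^\infty$, and every weak* limit point $h$ must satisfy
\begin{equation}
  (f\circ\gamma)(t)-(f\circ\gamma)(t_0)=\int_{t_0}^t h(s)\,ds
\end{equation}
(by testing against $\chi_{[t_0,t]}$ and using the pointwise convergence of $f_n\circ\gamma$), forcing $h=(f\circ\gamma)'$ $\lebmeas$-a.e., so the whole sequence $(f_n\circ\gamma)'$ converges weak* to $(f\circ\gamma)'$. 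After the change of variables this gives $\int_X g\,\partial_\gamma f_n\,d\nu_\gamma\to\int_X g\,\partial_\gamma f\,d\nu_\gamma$ for each $\gamma$ and each bounded Borel $g$, and the $P$-integrable dominating function $LC\int_X|g|\,d\nu_\gamma$ (integrable by $\int\int|g|\,d\nu_\gamma\,dP=\int|g|\,d\mu<\infty$) lets me pass to the limit in the outer $P$-integral by dominated convergence. Hence $\int g\,D_{\albrep.}f_n\,d\mu\to\int g\,D_{\albrep.}f\,d\mu$ for every $g\in L^1(\mu)\cap\bborel X.$, and density extends this convergence to all of $L^1(\mu)$, i.e.~$D_{\albrep.}f_n\xrightarrow{w^*}D_{\albrep.}f$ in $L^\infty(\mu)$. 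The map $\Der:\Alb_{\text{sub}}(\mu)\to\wder\mu.$ is finally defined by applying the construction to $\mu\mrest S$ for each Borel $S\subset X$ and viewing the resulting derivation as an element of $\wder\mu.$ via the natural identification $\wder\mu\mrest S.=\chi_S\,\wder\mu.$.
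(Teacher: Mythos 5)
Your overall strategy (define $D_{\albrep.}f$ via $L^1$--$L^\infty$ duality using Lemma \ref{lem:meas_fix} for measurability, get the norm bound from $|\partial_\gamma f|\le C\glip f.$, deduce Leibniz from the one-dimensional product rule along $\gamma$, and pass the limit through the outer $P$-integral by dominated convergence) matches the paper's proof. The one substantive divergence is the weak* continuity step: the paper invokes, tersely, the fact that $\frac{d}{dt}$ is a weak*-continuous derivation with respect to Lebesgue on $\real$, applied (implicitly) to McShane extensions of $f_n\circ\gamma$; you instead argue by Banach--Alaoglu plus uniqueness of the weak* limit.

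There is a gap in that step as you wrote it. You characterize a weak* limit point $h\in L^\infty(\dom\gamma,\lebmeas)$ by claiming
\begin{equation}
(f\circ\gamma)(t)-(f\circ\gamma)(t_0)=\int_{t_0}^t h(s)\,ds ,
\end{equation}
but this identity is false because $\dom\gamma$ is merely a compact set of positive measure, not an interval. Testing $(f_n\circ\gamma)'$ against $\chi_{[t_0,t]}$ only yields $\int_{[t_0,t]\cap\dom\gamma}(f_n\circ\gamma)'\,ds\to\int_{[t_0,t]\cap\dom\gamma}h\,ds$, and the left side is not $(f_n\circ\gamma)(t)-(f_n\circ\gamma)(t_0)$: the increment of $f_n\circ\gamma$ across each gap $(a,b)\subset[t_0,t]\setminus\dom\gamma$ is unaccounted for. (Concretely, take $\dom\gamma=\{0\}\cup[1,2]$ and a function that jumps by $1$ across the gap; the right-hand side misses that $1$.) There are two standard repairs. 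Either (i) replace $f_n\circ\gamma$ by its McShane extension $F_n$ to a compact interval $[a,b]\supset\dom\gamma$ with Lipschitz constant $C\sup_n\glip f_n.$: then $F_n\to F$ weak* in $\lipalg[a,b].$ (uniform convergence on the compact $\dom\gamma$ propagates to the extension formula), the FTC characterization you wrote does hold on $[a,b]$ and identifies the weak* limit of $F_n'$ as $F'$, and finally $F_n'=(f_n\circ\gamma)'$, $F'=(f\circ\gamma)'$ a.e.\ on $\dom\gamma$, so restricting the $L^\infty$-weak* convergence to $\dom\gamma$ gives the inner convergence you need; this is essentially the paper's route. Or (ii) keep working on $\dom\gamma$ but write the correct identity
\begin{equation}
(f_n\circ\gamma)(t)-(f_n\circ\gamma)(t_0)=\int_{[t_0,t]\cap\dom\gamma}(f_n\circ\gamma)'\,ds+\sum_{(a,b)}\bigl[(f_n\circ\gamma)(b)-(f_n\circ\gamma)(a)\bigr],
\end{equation}
the sum ranging over the gaps of $\dom\gamma$ in $(t_0,t)$; the gap series is dominated by $LC\sum(b-a)<\infty$ and each term converges by uniform convergence of $f_n\circ\gamma$, so the sum converges and you recover $\int_{[t_0,t]\cap\dom\gamma}h=\int_{[t_0,t]\cap\dom\gamma}(f\circ\gamma)'$, hence $h=(f\circ\gamma)'$ a.e.\ on $\dom\gamma$. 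With either fix your argument goes through; the rest of the proposal is correct and essentially identical to the paper's proof.
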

\begin{proof}
%\textcolor{red}
{We first explain why $D_{\albrep.}$ exists;
  in the first line of~(\ref{eq:derivation_alberti}) we are using the 
  right hand side to define the left hand side. Concretely,
  let $\albrep.=(P,\nu)$ be a \hbox{$C$-Lipschitz} Alberti representation of
  $\mu$; considering a \hbox{$C$-Lipschitz} fragment $\gamma$, from the estimate:
  \begin{equation}
    \left|(f\circ\gamma)'(t)\right|\le \glip f.\metdiff\gamma(t)
    \le C\glip f.,
  \end{equation} 
  we obtain that the absolute value of the right hand side
  of~(\ref{eq:derivation_alberti}) is at most:
  \begin{equation}
   C\glip f.\onenorm g.;
  \end{equation} in particular, fixing $f$ and letting $g$ vary,  the
  right hand side of~(\ref{eq:derivation_alberti}) defines a linear
  functional on $L^1(\mu)$. By the duality between $L^1(\mu)$ and
  $L^\infty(\mu)$ we conclude that $D_{\albrep.}f$ is uniquely determined by
  requiring~(\ref{eq:derivation_alberti}) to hold. Moreover, we conclude
  that the map $D_{\albrep.}:\lipalg X.\to\elleinfty\mu.$ is a linear operator with
  norm bounded by $C$. }
  \par That $D_{\albrep.}$ satisfies the product rule follows from a
  direct computation.
  \par We show that $D_{\albrep.}$ is weak* continuous; let
  $f_n\xrightarrow{\text{w*}} f$ in $\lipalg X.$, and consider a bounded
  Borel function $g\in
  L^1(\mu)$. Now, $\frac{d}{dt}$ is a
  derivation of $\real$ with respect to the Lebesgue measure; as
  $\mpush\gamma^{-1}.\nu_\gamma$ is absolutely continuous with respect
  to the Lebesgue measure, the function
  \begin{equation}
    \frac{d\mpush\gamma^{-1}.\nu_\gamma}{d{\rm Leb}}g
  \end{equation}
  is integrable, and so
  \begin{equation}
    \lim_{n\to\infty}  \int_{\dom\gamma}
    (f_n\circ \gamma)'(t)(g\circ\gamma(t))\,d(\mpush\gamma^{-1}.\nu_\gamma)(t)
    = \int_{\dom\gamma}
    (f\circ \gamma)'(t)(g\circ\gamma(t))\,d(\mpush\gamma^{-1}.\nu_\gamma)(t).
  \end{equation}
  Assume that $\glip f_n.,\glip f.\le L$ and let
  \begin{align}
    h_n(\gamma)&=\int_{\dom\gamma}
    (f_n\circ \gamma)'(t)(g\circ\gamma(t))\,d(\mpush\gamma^{-1}.\nu_\gamma)(t),\\
    h(\gamma)&=\int_{\dom\gamma}
    (f\circ \gamma)'(t)(g\circ\gamma(t))\,d(\mpush\gamma^{-1}.\nu_\gamma)(t),\\
    H(\gamma)&=CL\int_{\dom\gamma}(|g|\circ \gamma(t))\,d(\mpush\gamma^{-1}.\nu_\gamma)(t);
  \end{align}
  note that $h_n\to h$ pointwise, $|h_n|,|h|\le H$. The map $H$ is
  Borel by Definition \ref{defn:Alberti_rep} of Alberti
  representations, and the maps $h_n, h$ are Borel by Lemma
  \ref{lem:meas_fix}; we thus conclude by the Lebesgue's Dominated
  Convergence Theorem that
  \begin{multline}
    \lim_{n\to\infty}\int_{\frags(X)}dP(\gamma)\int_{\dom\gamma}
    (f_n\circ \gamma)'(t)g\circ\gamma(t)\,d(\mpush\gamma^{-1}.\nu_\gamma)(t)\\
    =\int_{\frags(X)}dP(\gamma)\int_{\dom\gamma}
    (f\circ \gamma)'(t)g\circ\gamma(t)\,d(\mpush\gamma^{-1}.\nu_\gamma)(t),
  \end{multline}
  showing weak* continuity.
  \par The definition of $\Der$ is well-posed because $\wder\mu\mrest S.$
  can be canonically identified with $\chi_S\wder\mu.$.
\end{proof}
\par We now describe what happens if $\albrep.$ is in the $f$-direction of
some cone field.
\begin{thm}\label{thm:directional_cone}
  Suppose $\albrep.=(P,\nu)\in\Alb(\mu)$ is in the $f$-direction of
  $\cone(v,\alpha)$; then
  \begin{equation}
    \label{eq:directional_cone}
    D_{\albrep.}f(x)\in\cone\left(v(x),\alpha(x)\right)\quad\text{(for $\mu$-a.e.~$x$).}
  \end{equation}
  In particular, if the Alberti representations $\{\albrep
  i.\}_{i=1}^k$ of $\mu$ are in the $f$-directions of independent
  cone fields $\{\cone(v_i,\alpha_i)\}_{i=1}^k$, the derivations
  $\{D_{\albrep i.}\}_{i=1}^k$ are independent.
\end{thm}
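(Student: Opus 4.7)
The plan is to derive the first statement via a reduction to constant cone data followed by a half-space/duality argument, and then to obtain the independence statement as an almost immediate consequence.

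First, I would partition $X$ into countably many Borel pieces $\{A_n\}$ on each of which the cone field is sandwiched by a fixed closed cone: by Lusin applied to the Borel maps $v\colon X\to\mathbb{S}^{q-1}$ and $\alpha\colon X\to(0,\pi/2)$, together with the separability of the parameter space, I can arrange that for every $n$ there exist a unit vector $v_n$ and $\alpha_n\in(0,\pi/2)$ with
\begin{equation*}
\bar\cone(v_n,\alpha_n)\subset\cone(v(x),\alpha(x))\qquad\text{for every $x\in A_n$.}
\end{equation*}
So it suffices to check $D_{\albrep.}f(x)\in\cone(v_n,\alpha_n)$ for $\mu$-a.e.~$x\in A_n$, which has the added benefit that the closed-cone conclusion I actually obtain from the averaging argument still lands inside the open target cone $\cone(v(x),\alpha(x))$.

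Next, I would use the description of $\bar\cone(v_n,\alpha_n)$ as an intersection of closed half-spaces through the origin parametrised by the dual cone $\cone(v_n,\alpha_n)^*$. Fix $\lambda\in\cone(v_n,\alpha_n)^*$, so $\lambda(u)\ge 0$ for all $u\in\bar\cone(v_n,\alpha_n)$. Because $\albrep.$ is in the $f$-direction of $\cone(v,\alpha)$, for $P$-a.e.~$\gamma$ and $\lebmeas$-a.e.~$t$ with $\gamma(t)\in A_n$ we have $(f\circ\gamma)'(t)\in\bar\cone(v_n,\alpha_n)$, hence $\lambda((f\circ\gamma)'(t))\ge 0$. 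Applying the defining identity \eqref{eq:derivation_alberti} to the scalar Lipschitz function $\lambda\circ f$ with the nonnegative weight $g=\chi_E$ for an arbitrary Borel $E\subset A_n$ yields
\begin{equation*}
\int_E \lambda(D_{\albrep.}f)\,d\mu
=\int_{\frags(X)}dP(\gamma)\int_{\gamma^{-1}(E)}\lambda((f\circ\gamma)'(t))\,d(\mpush\gamma^{-1}.\nu_\gamma)(t)\ge 0,
\end{equation*}
so $\lambda(D_{\albrep.}f)\ge 0$ holds $\mu$-a.e.~on $A_n$. Choosing a countable set $\{\lambda_k\}\subset\cone(v_n,\alpha_n)^*$ that is dense on the unit sphere of the dual, I conclude $D_{\albrep.}f(x)\in\bar\cone(v_n,\alpha_n)\subset\cone(v(x),\alpha(x))$ for $\mu$-a.e.~$x\in A_n$, and summing over $n$ finishes the first claim.

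For the independence part, I would argue directly: suppose $\sum_{i=1}^k a_i D_{\albrep i.}=0$ in $\wder\mu.$ with $a_i\in L^\infty(\mu)$. Applying this identity componentwise to $f$ gives $\sum_{i=1}^k a_i(x)\,D_{\albrep i.}f(x)=0$ in $\real^Q$ for $\mu$-a.e.~$x$. By the first part, $D_{\albrep i.}f(x)\in\cone_i(x)$ for $\mu$-a.e.~$x$, and in particular each $D_{\albrep i.}f(x)$ is a nonzero vector of $\cone_i(x)$ (the open cones exclude $0$). The assumed independence of the cone fields $\{\cone_i\}_{i=1}^k$ then forces $\{D_{\albrep i.}f(x)\}_{i=1}^k$ to be linearly independent in $\real^Q$, so $a_i(x)=0$ for every $i$ at $\mu$-a.e.~$x$, i.e.~$a_i=0$ in $L^\infty(\mu)$.

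The main obstacle is the open-versus-closed cone issue: the pairing with $\lambda$ only gives a non-strict inequality and hence membership in the closed cone. I plan to absorb this by using the Egorov/Lusin partition in the first step to build strict room between the local cone field and a reference closed cone on each $A_n$; once that room is in place, the rest is a routine half-space separation argument combined with the formula \eqref{eq:derivation_alberti}.
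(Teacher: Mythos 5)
The first part of your argument has a genuine gap, and it unravels the whole proof. You take a reference closed cone with $\bar\cone(v_n,\alpha_n)\subset\cone(v(x),\alpha(x))$ on $A_n$ and then assert that for $\gamma(t)\in A_n$ one has $(f\circ\gamma)'(t)\in\bar\cone(v_n,\alpha_n)$. But the Alberti representation only gives $(f\circ\gamma)'(t)\in\cone(v(\gamma(t)),\alpha(\gamma(t)))$, which is the \emph{larger} cone; nothing forces the derivative into the smaller reference cone. The inclusion you would actually need for the dual pairing, namely $\cone(v(x),\alpha(x))\subset\bar\cone(v_n,\alpha_n)$, runs the other way. (Also note that the inclusion $\bar\cone(v_n,\alpha_n)\subset\cone(v(x),\alpha(x))$ as written is literally impossible, since $0\in\bar\cone(v_n,\alpha_n)$ but $0\not\in\cone(v(x),\alpha(x))$.) And if you repair the direction — sandwich the cone field from the \emph{outside} by closed cones $\bar\cone(v_n,\alpha_n)\supset\cone(v(x),\alpha(x))$, which Lusin does give you — the half-space argument delivers $D_{\albrep.}f(x)\in\bar\cone(v_n,\alpha_n)$ with $\alpha_n$ slightly bigger than $\alpha(x)$, and shrinking the margin only gets you to the \emph{closed} cone $\bar\cone(v(x),\alpha(x))$. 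The dual-cone pairing yields a non-strict inequality $\lambda(D_{\albrep.}f)\ge0$ for each $\lambda$, and since the family of $\lambda$'s is uncountable you can only upgrade to strict positivity on a countable dense subfamily; taking limits brings you back to $\ge$. So at best you prove membership in the closed cone, which in particular does not rule out $D_{\albrep.}f(x)=0$; but nonvanishing is precisely what your independence argument needs.

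The paper avoids all of this by working with a single scalar quantity rather than a family of half-spaces: the open cone is the strict superlevel set of the \emph{concave} functional $\psi(u)=\tan\alpha\langle v,u\rangle-\lVert\pi_v^\perp u\rVert_2$. Because $\lVert\pi_v^\perp\cdot\rVert_2$ is convex, the Alberti averaging \eqref{eq:derivation_alberti} together with a Jensen-type estimate gives, for every nonnegative Borel $\varphi$ with $\int\varphi\,d\mu>0$,
\begin{equation*}
\int\varphi\,\psi\bigl(D_{\albrep.}f\bigr)\,d\mu\ \ge\ \int\varphi\tan\alpha\langle v,D_{\albrep.}f\rangle\,d\mu-\int_{\frags(X)}dP(\gamma)\int\varphi\,\lVert\pi_v^\perp\partial_\gamma f\rVert_2\,d\nu_\gamma\ >\ 0,
\end{equation*}
where the strict inequality on the right uses that $\psi(\partial_\gamma f)>0$ pointwise (since fragment derivatives lie in the open cone) integrated against a positive measure. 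Taking $\varphi$ to be the indicator of the set $\{\psi(D_{\albrep.}f)\le0\}$ then forces that set to be null, i.e.\ $D_{\albrep.}f(x)\in\cone(v(x),\alpha(x))$ for $\mu$-a.e.\ $x$. This is the key move your proposal is missing, and it is what makes the open-cone (hence nonzero) conclusion available to the independence argument.
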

\begin{proof}
  Note that if $U\subset X$ is Borel, $D_{\albrep.\mrest
    U}=\chi_UD_{\albrep.}$; therefore we can assume that $\mu$ is
  finite with $\mu(X)>0$ and $\tan\alpha\in L^1(\mu)$. If
  $\varphi\in\bborel X.$%
\nomenclature[Borel]{$\bborel X.$}{bounded real-valued Borel functions
  defined on $X$}%
  \ is nonnegative with $\int_X\varphi\,d\mu>0$,
  \begin{equation}
    \label{eq:directional_cone1}
    \begin{split}
      \int_X\varphi\langle
      v,D_{\albrep.}f\rangle\tan\alpha\,d\mu&=\int_{\frags(X)}dP(\gamma)\int
      \varphi \langle v, \partial_\gamma f\rangle\tan\alpha\,d\nu_\gamma\\
      &>\int_{\frags(X)}dP(\gamma)\int \varphi\|\pi_v^\perp \partial_\gamma f\|_2\,d\nu_\gamma,
    \end{split}
  \end{equation}
  %\textcolor{red}
{where we used that $\mu(X)>0$, that $\int_X\varphi\,d\mu>0$ and~(\ref{eq:defcone})}. On the
  other hand, for each $\epsi>0$
  there are countably many $(K_j,w_j)$ such that
  \begin{enumerate}
  \item The $K_j\subset X$ are disjoint compact sets and $\mu\left(X\setminus\bigcup_jK_j\right)=0$.
  \item Each $w_j$ is a unit vector field orthogonal to $v$ on $K_j$.
  \item The inequality $\|\pi_v^\perp D_{\albrep.}f\|_2\le \langle
    w_j,D_{\albrep.}f\rangle+\epsi$ holds on $K_j$.
  \end{enumerate}
  Thus
  \begin{equation}
    \label{eq:directional_cone2}
    \begin{split}
      \int_X \varphi \|\pi_v^\perp D_{\albrep.}f\|_2\,d\mu &\le\sum_j\int_{K_j}
      \varphi\langle w_j,D_{\albrep.}f\rangle\,d\mu+\max\varphi\,\epsi\,\mu(X)\\
      &=\sum_j\int_{\frags(X)}dP(\gamma)\int_{K_j}\varphi\langle
      w_j,\partial_\gamma f\rangle\,d\nu_\gamma+\max\varphi\,\epsi\,\mu(X)\\
      &\le\int_{\frags(X)}dP(\gamma)\int_X\varphi \|\pi_v^\perp \partial_\gamma
      f\|_2\,d\nu_\gamma +\max\varphi\,\epsi\,\mu(X);
    \end{split}
  \end{equation}
  letting $\epsi\searrow0$ and combining \eqref{eq:directional_cone1}
  and \eqref{eq:directional_cone2}
  \begin{equation}\label{eq:directional_cone3}
    \int_X \varphi\left(\tan\alpha\langle v,D_{\albrep.}f\rangle-\|\pi_v^\perp D_{\albrep.}f\|_2\right)\,d\mu>0,
  \end{equation} which implies the result.
\end{proof}
We now introduce the notion of \textbf{effective speed} of an Alberti
representation.
\begin{defn}\label{defn:alberti_speed}
  Let $\albrep.=(P,\nu)\in\Alb(\mu)$ be an Alberti representation and
  define the measure
  \begin{equation}\label{eq:alberti_speed}
    \Sigma_{\albrep.}=\int_{\frags(X)} \metdiff\gamma\circ\gamma^{-1}\,\nu_\gamma\,dP(\gamma);
  \end{equation} an \textbf{effective speed of $\albrep.$} is a
  Borel representative $\sigma_{\albrep.}$ of the Radon-Nikodym derivative of
  $\Sigma_{\albrep.}$ with respect to $\mu$.
\end{defn}
We now describe what happens if one knows a lower bound of the
$f$-speed of $\albrep.$.
\begin{thm}\label{thm:directional_speed}
  If $\albrep.\in\Alb(\mu)$ has $f$-speed $\ge\delta$, then
  \begin{equation}
    \label{eq:directional_speed}
    D_{\albrep.}f(x)\ge\delta(x)\sigma_{\albrep.}(x)\quad\text{(for $\mu$-a.e.~$x$).}
  \end{equation}
\end{thm}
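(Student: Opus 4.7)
The plan is to mimic the ``test against a nonnegative bounded Borel function'' strategy used in the proof of Theorem \ref{thm:directional_cone}, but now with the pointwise speed inequality in place of the cone condition. After a standard Borel partition, I may assume $\mu$ is finite, $\albrep.$ is $C$-Lipschitz for some fixed $C$, and $\delta$ is bounded; this guarantees that $\Sigma_{\albrep.}$ is a finite measure (indeed $\Sigma_{\albrep.}\le C\mu$ since $\metdiff\gamma\le C$ for $P$-a.e.~$\gamma$), and in particular that $\sigma_{\albrep.}\in L^\infty(\mu)$ exists.

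Fix a nonnegative $\varphi\in\bborel X.$. Using the defining formula \eqref{eq:derivation_alberti} for $D_{\albrep.}$,
\begin{equation*}
  \int_X\varphi D_{\albrep.}f\,d\mu=\int_{\frags(X)}dP(\gamma)\int_X\varphi\,\partial_\gamma f\,d\nu_\gamma.
\end{equation*}
The $f$-speed hypothesis says that, for $P$-a.e.~$\gamma$ and $\lebmeas\mrest\dom\gamma$-a.e.~$t$,
$(f\circ\gamma)'(t)\ge\delta(\gamma(t))\metdiff\gamma(t)$; since $\mpush\gamma^{-1}.\nu_\gamma\ll\lebmeas$, this translates into the pointwise bound $\partial_\gamma f(x)\ge\delta(x)\metdiff\gamma(\gamma^{-1}(x))$ for $\nu_\gamma$-a.e.~$x\in\im\gamma$. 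Plugging this in and using the definition \eqref{eq:alberti_speed} of $\Sigma_{\albrep.}$, together with $\Sigma_{\albrep.}=\sigma_{\albrep.}\mu$, one gets
\begin{equation*}
  \int_X\varphi D_{\albrep.}f\,d\mu\ge\int_{\frags(X)}dP(\gamma)\int_X\varphi\,\delta\,\metdiff\gamma\circ\gamma^{-1}\,d\nu_\gamma=\int_X\varphi\,\delta\,d\Sigma_{\albrep.}=\int_X\varphi\,\delta\,\sigma_{\albrep.}\,d\mu.
\end{equation*}
Taking $\varphi=\chi_A$ for an arbitrary Borel $A\subset X$ yields the inequality $D_{\albrep.}f\ge\delta\sigma_{\albrep.}$ $\mu$-a.e., which is \eqref{eq:directional_speed}.

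The only nonroutine point is to verify that the double integral defining $\Sigma_{\albrep.}$, and the one in the middle of the displayed inequality above, are well-posed: one needs the map $\gamma\mapsto\int_X\varphi\,\delta\,\metdiff\gamma\circ\gamma^{-1}\,d\nu_\gamma$ to be $P$-measurable. This is an immediate variant of Lemma \ref{lem:meas_fix}: after a Borel uniformization one replaces $F_{n,\gamma}$ there by a Borel selection of the metric differential of $\gamma$, which (being a $\limsup$ over rational scales of continuous functions of $\gamma$) is jointly Borel, and then the same Borel-measurability argument applies verbatim. This is the only technical obstacle; the rest of the argument is a straightforward substitution.
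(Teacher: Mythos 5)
Your proof is correct and follows essentially the same route as the paper's: test against a nonnegative integrable (or bounded Borel) function $\varphi$, plug in the pointwise speed inequality under the double integral, and recognize the resulting expression as $\int\varphi\,\delta\,d\Sigma_{\albrep.}=\int\varphi\,\delta\,\sigma_{\albrep.}\,d\mu$. Your extra remarks on reducing to the finite-measure case and on the measurability of $\gamma\mapsto\int\varphi\,\delta\,\metdiff\gamma\circ\gamma^{-1}\,d\nu_\gamma$ are sensible but not part of the paper's (very short) argument, which simply takes these for granted.
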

\begin{proof}
  Let $\varphi\in L^1(\mu)$ nonnegative; then
  \begin{equation}
    \begin{split}
      \int \varphi D_{\albrep.}f\,d\mu &= \int dP(\gamma) \int
      \varphi \partial_\gamma f\,d\nu_\gamma\\
      &\ge\int dP(\gamma) \int
      \varphi\delta\,d\left(\metdiff\gamma\circ\gamma^{-1}\,\nu_\gamma\right)\\
      &=\int \varphi\delta\sigma_{\albrep.}\,d\mu,
    \end{split}
  \end{equation} from which the result follows.
\end{proof}
As we deal with parametrized fragments, it is useful to know how
Alberti representations are affected by affine reparametrizations of
the fragments.
\begin{lem}\label{lem:alberti_affine_action}
  For $a\in\real\setminus\{0\}$ and $b\in\real$ let
  $\tau_{a,b}:\real\to\real$ denote the homeomorphism
  \begin{equation}
\tau_{a,b} (x)=ax+b;
\end{equation}
then 
\begin{equation}
  \begin{aligned}
    \pullb\tau_{a,b} .:\frags(X) &\to\frags(X)\\
    \gamma&\mapsto\gamma\circ\tau_{a,b},
  \end{aligned}
\end{equation}
is a homeomorphism.\par 
If  $\mu\mrest A$ admits an Alberti representation $\albrep .=(P,\nu)$ 
then \begin{equation}\pullb\tau_{a,b}.\albrep.=\left(\mpush{(\pullb\tau_{a,b}.)}.
  P,\nu\circ(\pullb\tau_{a,b} .)^{-1}\right)
\end{equation}
is an Alberti representation and, moreover, if
\begin{enumerate}
\item The Alberti representation $\albrep.$ is $C$-Lipschitz ($(C,D)$-bi-Lipschitz) then
  $\pullb\tau_{a,b}.\albrep.$ is $C|a|$-Lipschitz
  ($(C|a|,\allowbreak D|a|\-)$-bi-Lipschitz).
  \item If $\albrep.$ has $f$-speed $\ge\delta$ then
    $\pullb\tau_{a,b}.\albrep.$ has $\sgn a\cdot f$-speed $\ge\delta$.
    \item If $\albrep.$ is in the $f$-direction of $\cone(v,\alpha)$,
      then $\pullb\tau_{a,b}.\albrep.$ is in the $f$-direction of
      $\cone(\sgn a\cdot v,\alpha)$.
\end{enumerate}
Finally, if $\albrep.$ is Lipschitz,
  $D_{\pullb\tau_{a,b} .\albrep.}=aD_{\albrep.}$.
\end{lem}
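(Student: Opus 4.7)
The plan is to verify the claims in the order they are stated, relying on routine bookkeeping with pushforward measures, the change of variables, and the chain rule.

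First I would check that $\pullb\tau_{a,b}.$ is a homeomorphism. Since $\tau_{a,b}$ is itself a homeomorphism of $\real$ with inverse $\tau_{1/a,-b/a}$, the assignment $K \mapsto (\tau_{a,b}^{-1}\times\mathrm{id})(K)$ is a homeomorphism of $\haus(\real\times X)$; under the identification of $\frags(X)$ with a subset of $\haus(\real\times X)$ via graphs, this restricts to $\pullb\tau_{a,b}.$, and its inverse is $\pullb\tau_{1/a,-b/a}.$. BiLipschitz and domain conditions are preserved since $|\tau_{a,b}(t)-\tau_{a,b}(s)|=|a|\,|t-s|$.

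Next I would confirm that $\pullb\tau_{a,b}.\albrep.=(P',\nu')$ with $P'=\mpush{\pullb\tau_{a,b}.}.P$ and $\nu'_{\tilde\gamma}=\nu_{\tilde\gamma\circ\tau_{a,b}^{-1}}$ satisfies the four conditions of Definition \ref{defn:Alberti_rep}. Condition (1) is immediate. For (2), Borel measurability of $\nu'$ follows because $\tilde\gamma\mapsto\tilde\gamma\circ\tau_{a,b}^{-1}$ is a homeomorphism and $\gamma\mapsto\nu_\gamma$ is Borel; absolute continuity with respect to $\hmeas 1._{\tilde\gamma}=\hmeas 1._{\tilde\gamma\circ\tau_{a,b}^{-1}}$ (same image) is automatic. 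Condition (3) is a change of variables: for any Borel set $A\subset X$,
\begin{equation*}
\int_{\frags(X)}\nu'_{\tilde\gamma}(A)\,dP'(\tilde\gamma)
=\int_{\frags(X)}\nu_\gamma(A)\,dP(\gamma)=\mu(A).
\end{equation*}
For (4), the map $\tilde\gamma\mapsto\nu'_{\tilde\gamma}(A\cap\tilde\gamma(\dom\tilde\gamma\cap[a',b']))$ is, by the definitions, the composition of the Borel map from (4) for $(P,\nu)$ with a suitable affine change of the interval $[a',b']$, hence Borel.

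Then I would handle the three scaling properties together. Writing $\tilde\gamma=\gamma\circ\tau_{a,b}$, one has pointwise
\begin{align*}
  \metdiff\tilde\gamma(t)&=|a|\,\metdiff\gamma(\tau_{a,b}(t)),\\
  (f\circ\tilde\gamma)'(t)&=a\,(f\circ\gamma)'(\tau_{a,b}(t)),
\end{align*}
wherever the right-hand sides exist, which occurs on a full-measure set in $\dom\tilde\gamma$ by a change of variables. The Lipschitz/biLipschitz statement is immediate from the first identity. For the speed: if $(f\circ\gamma)'\ge\delta\,\metdiff\gamma$ along $\gamma$, then $(\sgn a\cdot f\circ\tilde\gamma)'=|a|(f\circ\gamma)'\circ\tau_{a,b}\ge\delta\,\metdiff\tilde\gamma$ along $\tilde\gamma$. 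For the direction: since $\cone(w,\alpha)$ is invariant under multiplication by positive scalars, $a\,(f\circ\gamma)'(\tau_{a,b}(t))\in\cone(\sgn a\cdot v,\alpha)$ whenever $(f\circ\gamma)'(\tau_{a,b}(t))\in\cone(v,\alpha)$, using that $\cone(-v,\alpha)=-\cone(v,\alpha)$ from Definition \ref{defn:cone}.

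Finally, for the derivation identity, for $g\in L^1(\mu)\cap\bborel X.$ I would compute
\begin{equation*}
\int gD_{\pullb\tau_{a,b}.\albrep.}f\,d\mu
=\int dP'(\tilde\gamma)\int g\,\partial_{\tilde\gamma}f\,d\nu'_{\tilde\gamma}
=\int dP(\gamma)\int g\,\partial_{\gamma\circ\tau_{a,b}}f\,d\nu_\gamma,
\end{equation*}
where the second equality uses the definition of $P'$ as a pushforward and of $\nu'$ as precomposition. Since $\im(\gamma\circ\tau_{a,b})=\im\gamma$, the chain rule identity $(f\circ\gamma\circ\tau_{a,b})'(s)=a(f\circ\gamma)'(\tau_{a,b}(s))$ gives $\partial_{\gamma\circ\tau_{a,b}}f=a\,\partial_\gamma f$ on $\im\gamma$, yielding $\int gD_{\pullb\tau_{a,b}.\albrep.}f\,d\mu=a\int gD_{\albrep.}f\,d\mu$, and hence $D_{\pullb\tau_{a,b}.\albrep.}=aD_{\albrep.}$ in $L^\infty(\mu)$.

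The main potential obstacle is the bookkeeping around condition (4) of Definition \ref{defn:Alberti_rep} and the Borel dependence of $\nu'$ on $\tilde\gamma$, since we must ensure nothing subtle goes wrong with the identification of $\gamma$-parametrized data after the affine reparametrization; however this reduces to the fact that $\pullb\tau_{a,b}.$ is a homeomorphism, so no real analytic difficulty arises.
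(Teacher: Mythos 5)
Your proposal is correct and takes essentially the same approach as the paper: verify the homeomorphism by identifying $\pullb\tau_{a,b}.$ with the restriction of the Hausdorff-space homeomorphism induced by $(\tau_{a,b}^{-1}\times\mathrm{id})$, check the Alberti representation conditions by change of variables and pushforward, derive the three \albcond\ statements from the pointwise chain-rule identities for $\metdiff$ and $(f\circ\gamma)'$ under affine reparametrization, and obtain the derivation identity $D_{\pullb\tau_{a,b}.\albrep.}=aD_{\albrep.}$ by the same change of variables in the defining integral. You are somewhat more explicit than the paper (which handles points (1)--(3) with a one-line remark about the support of $\mpush{(\pullb\tau_{a,b}.)}.P$ and omits the verification of condition (4) of Definition \ref{defn:Alberti_rep}), but the substance is identical.
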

\begin{proof}
  Note that $\pullb\tau_{a,b}.$ is $\max(|a|,\frac{1}{|a|})$-Lipschitz
  and with inverse %\textcolor{red}
{$\pullb\tau_{1/a,-b/a}.$}; if $B$ is Borel, from
  $\gamma\mapsto\nu_\gamma(B)$ being Borel, it follows that
  $\gamma\mapsto\nu({(\pullb\tau_{a,b}.)^{-1}\gamma})(B)$ is Borel.
  Then
\begin{equation}
  \mu\mrest A(B)=\int_{\frags(X)}\nu(\gamma)(B)\,dP(\gamma)
  =\int_{\frags(X)}\nu({(\pullb\tau_{a,b}.)^{-1}\gamma})(B)\,d\mpush{(\pullb\tau_{a,b}.)}.P(\gamma),
\end{equation} which shows that $\pullb\tau_{a,b}.\albrep.$ is an
Alberti representation of $\mu$. Points (1)--(3) %\textcolor{red}
{in
  the statement of this lemma} follow observing that
$\mpush{(\pullb\tau_{a,b}.)}.P$ is supported on
$\pullb\tau_{a,b}.(\spt P)$.
Finally,
\begin{equation}
\begin{split}
  \int_X D_{\pullb\tau_{a,b} .\albrep.}f\,g\,d\mu &= \int_{\frags(X)}
  d\mpush{(\pullb\tau_{a,b} .)}.P(\gamma) \int_{\dom\gamma}(f\circ\gamma)'(t)
  \,(g\circ\gamma)(t)\\ &\times d\left(\mpush\gamma^{-1}.\nu\left({(\pullb\tau_{a,b} .)^{-1}\gamma}\right)\right)(t)\\ &=\int_{\frags(X)}
  dP(\tilde\gamma) \int_{\dom\gamma}(f\circ\tilde\gamma\circ\tau_{a,b} )'(t)
  \,(g\circ\tilde\gamma\circ\tau_{a,b} )(t)\\ &\times d\left(\mpush(\tilde\gamma\circ\tau_{a,b} )^{-1}.\nu({
    \tilde\gamma})\right)(t),
\end{split}
\end{equation}
where $\gamma=\pullb\tau_{a,b}.\tilde\gamma$; then
\begin{equation}
  \begin{split}
    \int D_{\pullb\tau_{a,b} .\albrep.}f\,g\,d\mu &= \int_{\frags(X)}
    dP(\tilde\gamma)
    \int_{\dom\gamma}a(f\circ\tilde\gamma)'\circ\tau_{a,b} (t)
    \,(g\circ\tilde\gamma\circ\tau_{a,b} )(t)\\ &\times\,d\left(\mpush\tau_{a,b} ^{-1}.\mpush\tilde\gamma^{-1}.\nu({
      \tilde\gamma})\right)(t)\\ &=\int_{\frags(X)} dP(\tilde\gamma)
    \int_{\dom\tilde\gamma}a(f\circ\tilde\gamma)'(t)
    \,(g\circ\tilde\gamma)(t)d\left(\mpush\tilde\gamma^{-1}.\nu({\tilde\gamma})\right)(t)\\ &=\int aD_{\albrep.}f\,g\,d\mu.
  \end{split}
\end{equation}
\end{proof}
To illustrate the lack of injectivity of $\Der$ we provide a useful
reparametrization result which allows to use fragments with domain
contained in a prescribed interval. Recall that given a nondegenerate compact
interval $I\subset\real$, we denote by $\frags(I; X)$ the subset of
fragments $\gamma$ with $\dom\gamma\subset I$.
\begin{lem}\label{lem:domain_reduction}
  Let $\albrep.\in\Alb(\mu\mrest S)$ where $S$ is a Borel subset of
  $X$, and let $I$ be a nonempty compact interval; then there is $
  \albrep.'=(P',\nu')\in\Alb(\mu\mrest S)$ with
  \begin{equation}P'\left(\frags(X)\setminus\frags(I; X)\right)=0
  \end{equation}
  and such that
  \begin{enumerate}
  \item We have the identity $D_{\albrep.'}=D_{\albrep.}$.
    \item If $\albrep.$ satisfies a set of conditions \albcond, so
      does $\albrep.'$.
  \end{enumerate}
\end{lem}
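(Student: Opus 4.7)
The core strategy is to chop each fragment of $\albrep.$ along integer translates of $I$ and reparametrize each piece by a pure translation. Lemma \ref{lem:alberti_affine_action} shows that translations (i.e., $\tau_{a,b}$ with $a=1$) preserve the Lipschitz and biLipschitz constants, speed, and direction of an Alberti representation, and moreover satisfy $D_{\pullb\tau_{1,b}.\albrep.}=D_{\albrep.}$; so with the right bookkeeping the desired $\albrep.'$ falls out automatically.

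Let $\ell=|I|$ and $I_n=I+n\ell$ for $n\in\zahlen$, which tile $\real$. For each $n$ the set $\frags^n=\{\gamma\in\frags(X):\lebmeas(\dom\gamma\cap I_n)>0\}$ is Borel in $\frags(X)$, and the restriction-and-translation map
\begin{equation*}
R_n:\frags^n\to\frags(X,I),\qquad \gamma\mapsto \pullb\tau_{1,n\ell}.\bigl(\gamma|_{\dom\gamma\cap I_n}\bigr),
\end{equation*}
is Borel by an argument analogous to Lemma \ref{borel_rest}, combining the continuity of the $\pullb\tau_{1,n\ell}.$-action with the Borel character of restriction to the closed set $I_n$. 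Using the stratification $\frags(X)=\bigcup_k\frags_k(X)$, and after further partitioning $S$ into countably many Borel pieces accordingly, I may assume a uniform bound on the cardinality of $\{n:\gamma\in\frags^n\}$. Setting $\nu^{(n)}_\gamma=\nu_\gamma\mrest\gamma(\dom\gamma\cap I_n)$ --- Borel in $\gamma$ by condition (4) of Definition \ref{defn:Alberti_rep} --- the pair $(P\mrest\frags^n,\nu^{(n)})$, after normalization, is an Alberti representation of a submeasure $\mu_n$ of $\mu\mrest S$, and Lemma \ref{lem:alberti_affine_action} with $a=1$ produces a representation of $\mu_n$ concentrated on $\frags(X,I)$ and still satisfying \albcond.

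Since $\sum_n\nu^{(n)}_\gamma=\nu_\gamma$ for $P$-a.e.~$\gamma$, summing gives $\sum_n\mu_n=\mu\mrest S$; weighting the pushed pieces appropriately and rescaling $\nu'$ inversely yields $(P',\nu')$ with $P'\in P(\frags(X,I))$ representing $\mu\mrest S$. The identity $D_{\albrep.'}=D_{\albrep.}$ then follows by summing the formula \eqref{eq:derivation_alberti} over the decomposition $\nu_\gamma=\sum_n\nu^{(n)}_\gamma$ and invoking the translation-invariance of $D$ from Lemma \ref{lem:alberti_affine_action}.

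The main technical obstacle is the gluing step: the submeasures $\mu_n$ are generally not mutually singular, since distinct translated pieces of distinct fragments can have images meeting in $X$, so Theorem \ref{alb_glue} does not apply verbatim. This can be handled either by refining first via a Borel partition of $X$ and pulling it back through the fragments to force disjoint supports before applying Theorem \ref{alb_glue}, or, more efficiently, by assembling $P'$ directly as a weighted sum of pushforwards $(R_n)_\sharp(P\mrest\frags^n)$ and recovering $\nu'$ via Radon--Nikodym against a reference measure built from the pushed $\nu^{(n)}$. The remaining checks that \albcond\ and the derivation identity survive are local in $n$ and reduce to Lemma \ref{lem:alberti_affine_action}.
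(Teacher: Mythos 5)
Your strategic skeleton --- chop each fragment into pieces of length $\lebmeas(I)$, translate each piece into $I$, reassemble --- is the same as the paper's, and your decision to chop along a fixed grid $I_n=I+n\ell$ (rather than the paper's per-fragment chopping from $\min(\dom\gamma)$, which forces a $\gamma$-dependent translation) is a harmless variant. The genuine issue, which you correctly flag but do not resolve, is the reassembly step, and neither of your two proposed fixes is adequate as stated.

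The obstruction is not that the $\mu_n$ fail to be mutually singular (that is true but secondary); it is that the restriction-and-translation maps $R_n$ are not injective, so the measure-valued map $\gamma\mapsto\nu^{(n)}_\gamma$ does not descend to a well-defined Borel map on $\frags(X,I)$ merely by pushing forward. Your option 1 --- taking a Borel partition of $X$ and pulling it back through the fragments --- does not fix this: two distinct $n\ne n'$ will still produce measures $\mu_{n,\alpha}$ and $\mu_{n',\alpha}$ both supported in the same piece $X_\alpha$, so they remain non-singular, and in any case the partition does nothing about the fibers of $R_n$. Your option 2 --- "Radon--Nikodym against a reference measure built from the pushed $\nu^{(n)}$" --- is where the actual content lives, but it is left unspecified: before you can speak of a "pushed $\nu^{(n)}$" you need to know that $\nu^{(n)}_\gamma$ depends on $\gamma$ only through $R_n(\gamma)$ (which it generally does not), or else you must aggregate $\nu^{(n)}_\gamma$ over the fiber $R_n^{-1}(\{\gamma'\})$ in a $P$-measurable way. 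The paper does exactly this aggregation via the Disintegration Theorem (Fremlin 452O), producing fiber probabilities $\pi_{\gamma'}$ and setting $\nu_i(\gamma')=\int\tilde\nu_i(\tilde\gamma)\,d\pi_{\gamma'}(\tilde\gamma)$. An alternative route --- one compatible with your framework --- would be to first reduce to the compact case and invoke the paper's earlier reduction to $\nu_\gamma=g\,\Psi_\gamma$; with $\nu$ in that form, $\nu^{(n)}_\gamma=g\,\Psi_{R_n(\gamma)}$ does factor through $R_n$ and the pushforward is unambiguous. You invoke neither device, so the heart of the argument --- establishing conditions (2) and (4) of Definition \ref{defn:Alberti_rep} for $(P',\nu')$ --- is missing.

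Two smaller points. First, Lemma \ref{lem:alberti_affine_action} only reparametrizes fragments by a single global affine map; it does not restrict their domains, so it cannot by itself "produce a representation of $\mu_n$ concentrated on $\frags(X,I)$" --- you need the restriction step handled measurably first, and that is where the difficulty sits. Second, "partitioning $S$ into countably many Borel pieces accordingly" to bound $|\{n:\gamma\in\frags^n\}|$ is confused: the stratification $\frags(X)=\bigcup_k\frags_k(X)$ lives on the fragment side, and restricting $P$ to $\frags_k(X)$ gives a sub-probability whose associated measure is a portion of $\mu\mrest S$ that need not be of the form $\mu\mrest S'$ for any Borel $S'\subset S$; the domain restriction to $[-k,k]$ has nothing to do with a partition of $S$.
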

\begin{proof}
  We can assume that $\albrep.=(P,\nu)\in\Alb(\mu)$ and that $X$ is
  compact. %\textcolor{red}
{The overall strategy of the proof is to decompose the
    fragments used in the Alberti representation $\albrep.$ into
    smaller pieces whose domains can be translated to be subsets of
    $I$.}
  \par Recall that $\haus(\real)$ denotes the set of nonempty compact subsets of
  $\real$ with the Vietoris topology. The maps:
  \begin{equation}
    \begin{aligned}
      \max,\min:\haus(\real)&\to\real\\
      K&\mapsto\max_{x\in K} x,\min_{x\in K} x
    \end{aligned}
  \end{equation}
are continuous. Therefore, the map
\begin{equation}
  \begin{aligned}
  \Delta_1:\haus(\real)&\to\haus(\real)\\
  K&\mapsto K\cap[\min(K), \min(K)+\lebmeas(I)]
\end{aligned}
\end{equation}
is continuous. For $n>1$ let $O_n$ denote the open set
\begin{equation}
O_n=\left\{K\in\haus(\real): \max(K)-\min(K)>(n-1)\lebmeas(I)\right\},
\end{equation}
and $\Delta_n$ the continuous map
\begin{equation}
  \begin{aligned}
    \Delta_n:O_n&\to\haus(\real)\\
    K&\mapsto\Delta_1\left(\overline{K\setminus\bigcup_{k=1}^{n-1}\Delta_k(K)}\right).
  \end{aligned}
\end{equation}
Note that the set%\textcolor{red}
{
\begin{equation}
  F_i=\left\{\gamma\in\frags(X):\text{$\dom\gamma\in O_i$ and $\lebmeas\left(\Delta_i(\dom\gamma)\right)>0$}\right\}
\end{equation}
is Borel and the map $R_i:F_i\to\frags(I; X)$ 
\begin{equation}
  \gamma\mapsto\pullb\tau_{1,-\min(I)+\min(\Delta_i(\dom\gamma))}.\gamma|_{\Delta_i(\dom\gamma)}
\end{equation}
is Borel. }
\par The following discussion applies only for those $i$ for
which $P(F_i)>0$.
 We 
apply the %\textcolor{red}
{Disintegration Theorem \cite[452O]{fremlin4} (and also 
\cite[452G(c)]{fremlin4})} for
\begin{equation}
  R_i:\left(F_i,\frac{P\mrest F_i}{P(F_i)}\right)\to\left(\frags(I; X),\underbrace{\frac{\mpush R_i.P\mrest F_i}{P(F_i)}}_{P_i}\right),
\end{equation} which is, in the terminology of \cite{fremlin4}, an
\textbf{inverse-measure-preserving} function. By \cite[434K(b)]{fremlin4}
$\haus(\real\times X)$, being Polish, is a \textbf{Radon measure space} and, as
$F_i$ is a Borel subset of $\haus(\real\times X)$, it is also a Radon
measure space by \cite[434F(c)]{fremlin4}. On the other hand,
$\left(\frags(I; X), P_i\right)$ is \textbf{strictly
localizable} in the terminology of \cite{fremlin4} because the measure
$P_i$ is finite. The Disintegration Theorem yields Radon probability
measures $\{\pi_{\gamma}\}_{\gamma\in \frags(I; X)}$ on $\frags(X)$ such that
\begin{align}
  \frac{P\mrest F_i}{P(F_i)}&=\int_{\frags(I; X)}\pi_{\gamma}\,dP_i(\gamma)\\
  \pi_\gamma\left(R_i^{-1}\left(\{\gamma\}\right)\right)&=1\quad\text{(for $P_i$-a.e.~$\gamma$)}.
\end{align}
Consider the weakly measurable maps $\tilde\nu_i:\frags(X)\to M(X)$
\begin{equation}
  \tilde\nu_i(\gamma)=
  \begin{cases}
    0&\text{if $\gamma\not\in F_i$,}\\
    {P(F_i)}\nu_\gamma\chi_{\gamma\left(\Delta_i(\dom\gamma)\right)}&\text{otherwise;}
  \end{cases}
\end{equation}
and the measures
\begin{equation}
  \mu_i=\frac{1}{P(F_i)}\int_{F_i}\tilde\nu_i(\gamma)\,dP(\gamma);
\end{equation}
if we let $\nu_i:\frags(I; X)\to M(X)$ be given by
\begin{equation}
  \nu_i(\gamma)=\int_{\frags(X)}\tilde\nu_i(\tilde\gamma)\,d\pi_\gamma(\tilde\gamma),
\end{equation} $\albrep i.=(P_i,\nu_i)$ is an Alberti representation
of $\mu_i$ supported on the closure of  $\im R_i$. Then, if $\albrep.$
satisfies \albcond, so does $\albrep i.$. The derivation
$D_i\in\wder\mu_i.$ given by
\begin{equation}
  \int
  gD_if\,d\mu_i=\frac{1}{P(F_i)}\int_{F_i}dP(\gamma)\int g\partial_\gamma f\,\tilde\nu_i(\gamma)
\end{equation} is naturally identified with an element of $\wder\mu.$
because $\mu_i\ll\mu$; in particular, $D_{\albrep.}=\sum_iD_i$. The
calculation
\begin{multline}
\frac{1}{P(F_i)}  \int_{F_i}dP(\tilde\gamma)\int_{\dom\tilde\gamma}\left(f\circ\tilde\gamma\right)'(t)\,
  \left(g\circ\tilde\gamma\right)(t)\,d\left(\mpush\tilde\gamma^{-1}.\tilde\nu_i(\tilde\gamma)\right)(t)\\
=
\int_{\frags(I; X)}dP_i(\gamma)\int_{\frags(X)}d\pi_\gamma(\tilde\gamma)
\int_{\dom\tilde\gamma}\left(f\circ\tilde\gamma\right)'(t)\,
\left(g\circ\tilde\gamma\right)(t)\,d\left(\mpush\tilde\gamma^{-1}.\tilde\nu_i(\tilde\gamma)\right)(t)\\
=
\int_{\frags(I; X)}dP_i(\gamma)\int_{\frags(X)}d\pi_\gamma(\tilde\gamma)
\int_{\dom\gamma}\left(f\circ\gamma\right)'(t)
\left(g\circ\gamma\right)(t)\,d\left(\mpush\gamma^{-1}.\tilde\nu_i(\tilde\gamma)\right)(t)\\
=
\int_{\frags(I; X)}dP_i(\gamma)
\int_{\dom\gamma}\left(f\circ\gamma\right)'(t)
\left(g\circ\gamma\right)(t)\,d\left(\mpush\gamma^{-1}.\nu_i(\gamma)\right)(t)\\
=\int gD_{\albrep i.}f\,d\mu_i
\end{multline} shows that $D_i=D_{\albrep i.}$. Let $P'$ the
probability measure on $\frags(I; X)$ given by
\begin{equation}
  P'=\sum_i2^{-i}P_i
\end{equation} and $\varphi_i$ a Borel representative of the
Radon-Nikodym derivative of $P_i$ with respect to $P'$; let 
\begin{equation}
  \begin{aligned}
    \nu':\frags(I; X)&\to M(X)\\
    \gamma&\mapsto\sum_i\nu_i(\gamma)\varphi_i(\gamma);
  \end{aligned}
\end{equation}
then $\albrep.'=(P',\nu')$ is an Alberti representation of
$\sum_i\mu_i=\mu$ and, if $\albrep.$ satisfies \albcond, so does
$\albrep.'$. Moreover, 
\begin{equation}
  D_{\albrep.'}=\sum_iD_{\albrep i.}=\sum_iD_i=D_{\albrep.}.
\end{equation}
\end{proof}
\subsection{From Derivations to Alberti
  representations}\label{subsec:derivations_to_alberti}
%\textcolor{red}
{The goal of this subsection is to show how information about
derivations translates into information about Alberti
representations. The first results, Theorem
\ref{derivation_alberti} and Corollaries \ref{der-alb} and
\ref{cor:mu_arb_cone}, give conditions under which the existence of
independent derivations translates into the existence of Alberti representations.
Theorem \ref{thm:weak*density} proves the weak* density of the
derivations associated to Alberti representations in the space of all
derivations. Finally, Theorem
\ref{thm:fin_gen_surjectivity} shows that if the module of derivations
is finitely generated any derivation must arise from an Alberti
representation.} Recall that we deal with 
\textbf{separable} metric spaces.
\begin{thm}\label{derivation_alberti}
  Let $X$ be a metric space and $\mu$ a Radon measure on $X$. Consider
  a Borel set $V\subset X$, derivations
  $\{D_1,\ldots,D_k\}\subset\wder\mu.$ and a Lipschitz function
  ${g\colon X\to\real^k}$ such that $D_ig_j=\delta_{i,j}\chi_V$. Then for each
  $\epsi>0$, unit vector $w\in{\mathbb S}^{k-1}$, angle $\alpha\in(0,\pi/2)$ and 
 speed parameter $\sigma\in(0,1)$, the measure $\mu\mrest V$ admits a
  ${(1,1+\epsi)}$\nobreakdash-\hspace{0pt}bi-Lipschitz Alberti representation in the
  $g$-direction of $\cone(w,\alpha)$ with 
  \begin{equation}\label{eq:speed_almost_optimal}
    {\text{$\langle w,g\rangle$\nobreakdash-\hspace{0pt}speed}}
    \ge\frac{\sigma}{\locnorm D_w,{\wder\mu\mrest V.}.+(1-\sigma)},
  \end{equation}
%\textcolor{red}
{where $D_w=\sum_{i=1}^kw_iD_i$.}
\end{thm}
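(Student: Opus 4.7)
I set $D_w := \sum_{i=1}^k w_i D_i$, $h := \langle w, g \rangle$, and $L := \locnorm D_w,{\wder\mu\mrest V.}.\in L^\infty(\mu\mrest V)$. The pseudoduality $D_i g_j = \delta_{ij}\chi_V$ together with $\|w\|_2 = 1$ gives the identity
\begin{equation*}
  D_w h \;=\; \sum_{i,j} w_i w_j\, D_i g_j \;=\; \|w\|_2^2\,\chi_V \;=\; \chi_V;
\end{equation*}
writing $\delta := \sigma/(L + 1 - \sigma)$, one has $L\delta = \sigma L/(L+1-\sigma) < 1$ $\mu$-a.e.~on $V$ (equivalently, $\sigma < 1$). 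This strict inequality is the quantitative input that will close the final estimate. The plan is to invoke Theorem \ref{alberti_rep_prod}: it suffices to verify that every compact $K \subset V$ null for the family $\Omega$ of fragments in the $g$-direction of $\cone(w,\alpha)$ with $h$-speed $>\delta$ has $\mu(K) = 0$; the implication $(2)\Rightarrow(3)$ then produces the required $(1, 1+\epsi)$-biLipschitz representation at $h$-speed $\ge \delta$. A standard Egorov/Lusin reduction, modeled on the constant/Borel passage at the end of the proof of Theorem \ref{alberti_rep_prod}, lets me assume $L$ and $\delta$ are constant on $K$ with $L\delta < 1$.

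Given such a $K$, I invoke the directional, vector-valued approximation Theorem \ref{onedimapprox_multi}: the $\Omega$-nullity of $K$ should provide Lipschitz maps $g_n : X \to \real^k$ converging weak* componentwise to $g$, with $\sup_n \glip{g_n}. < \infty$, and whose scalar projection $h_n := \langle w, g_n\rangle$ is $\mu$-a.e.~locally $\delta$-Lipschitz on $K$. The cone data is precisely what permits this: the approximation is allowed to retain unrestricted growth transverse to $w$ and to flatten only the $w$-projection on $K$. A purely scalar scheme applied to $h$ alone would be insufficient, since the fragments through $K$ are forbidden only when their $g$-derivative additionally lies in $\cone(w,\alpha)$.

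By weak*-continuity of $D_w$, $D_w h_n \to D_w h = \chi_V$ in $L^\infty(\mu)$, and pairing against $\chi_K \in L^1(\mu)$ gives
\begin{equation*}
  \mu(K) \;=\; \int_K \chi_V\,d\mu \;=\; \lim_n \int_K D_w h_n\,d\mu.
\end{equation*}
The standard pointwise locality bound $|D_w f(x)| \le \locnorm D_w,{\wder\mu.}.(x)\,\biglip f(x)$ applied with $f = h_n$ yields $|D_w h_n| \le L\delta$ on $K$ $\mu$-a.e., whence $\mu(K) \le L\delta\,\mu(K)$. Since $L\delta < 1$, this forces $\mu(K) = 0$, verifying the hypothesis of Theorem \ref{alberti_rep_prod} and supplying the sought representation.

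The principal obstacle is extracting from Theorem \ref{onedimapprox_multi} an approximation with all three properties simultaneously: (i) weak* componentwise convergence to $g$, (ii) a uniform global Lipschitz bound on $h_n$ so the weak* limit passes against $\chi_K \in L^1$, and (iii) a sharp local-Lipschitz bound of exactly $\delta$, not some larger multiple, for the scalar $h_n$ on $K$. The concrete form $\sigma/(L+1-\sigma)$ of the target speed is then the largest $\delta$ for which $L\delta < 1$ admits a single-parameter description via $\sigma \in (0,1)$, and it drops out of the book-keeping above.
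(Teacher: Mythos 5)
Your overall architecture matches the paper's: reduce to a compact piece of $V$ where the local norm is nearly constant, show that any compact set $K$ null for $\Omega$ has $\mu(K)=0$, then invoke Theorem \ref{alberti_rep_prod}(2)$\Rightarrow$(3). The arithmetic with $\delta = \sigma/(L+1-\sigma)$ and $L\delta<1$ is correct and the weak*-pairing $\mu(K)=\lim_n\int_K D_w h_n\,d\mu$ is also fine. The gap lies in the key step that bounds $\chi_K\,|D_w h_n|$.

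You write that Theorem \ref{onedimapprox_multi} supplies Lipschitz maps $g_n:X\to\real^k$ converging weak* componentwise to $g$ with $h_n:=\langle w,g_n\rangle$ $\mu$-a.e.~locally $\delta$-Lipschitz on $K$, and you then apply $|D_w h_n|\le \locnorm D_w,{\wder\mu.}.\biglip h_n\le L\delta$ on $K$. Neither half of this is what the approximation scheme yields. Theorem \ref{onedimapprox_multi} does not produce a vector-valued approximation; it produces scalar functions converging weak* to $\langle w,g\rangle$ which are locally Lipschitz with constant $1$ only with respect to the \emph{modified} pseudodistance
\begin{equation*}
  d_{\delta,\alpha}(x,y)=\delta\,\dist x,y.+\cot\alpha\,\|\pi_w^\perp g(x)-\pi_w^\perp g(y)\|_2.
\end{equation*}
Their ordinary pointwise Lipschitz constant on $K$ is \emph{not} $\le\delta$: the $\cot\alpha$ transverse term survives in $\biglip h_n$, so the locality bound $|D_w h_n|\le L\,\biglip h_n$ gives no useful estimate. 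The transverse contribution is killed not at the level of $\biglip h_n$ but at the level of the derivation: one needs to feed the $(1,d_{\delta,\alpha})$-local Lipschitz property through McShane extensions in the modified metric and then use pseudoduality $D_w\langle u,g\rangle=0$ for $u\perp w$ so that $D_w d_{\delta,\alpha}(\cdot,p)$ sees only the $\delta\,\dist\cdot,p.$ part. That is exactly the content of Lemmas \ref{lem:loc_estimate_dist} and \ref{lem:loc_estimate_fnull}, which you paraphrase the conclusion of but do not actually derive. As written, the inequality $\chi_K|D_w h_n|\le L\delta$ is asserted, not established, and this is where the argument breaks. The remedy is to replace the step ``$h_n$ locally $\delta$-Lipschitz on $K$, hence $|D_wh_n|\le L\delta$'' with an invocation of Lemma \ref{lem:loc_estimate_fnull} (or a self-contained reconstruction of the McShane--pseudoduality argument it encodes).
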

The proof of Theorem \ref{derivation_alberti} relies on an
approximation scheme for Lipschitz functions, Theorem \ref{onedimapprox_multi}. We state the
relevant definitions and the approximation scheme here and defer the
proof to Subsection \ref{subsec:an-appr-scheme}. We define the
following classes of fragments:
\begin{defn}\label{def:classes_frags}
  For $\delta>0$ and $f:X\to\real$ Lipschitz we define:
  \begin{equation}
    \frags(X,f,\delta)=\left\{\gamma\in\frags(X):
    \text{$(f\circ\gamma)'(t)\ge\delta\metdiff\gamma(t)$ for $\lebmeas$-a.e.~$t\in\dom\gamma$}\right\};
  \end{equation}
  For $\delta>0$, $f:X\to\real^q$ Lipschitz,
  $w\in{\mathbb{S}}^{q-1}$ and $\alpha\in(0,\pi/2)$, we
  define:
  \begin{multline}
    \frags(X,f,\delta,w,\alpha)=\bigl\{\gamma\in\frags(X,\langle w,f\rangle,\delta):\text{$(f\circ\gamma)'(t)\in\cone(w,\alpha)$}
         \\ \text{for $\lebmeas$-a.e.~$t\in\dom\gamma$}\bigr\}.
       \end{multline}
     \end{defn}
\begin{defn}\label{def:locally_lipschitz}
  Let $f:X\to\real$ be a Lipschitz function, $S\subset X$ Borel and
  $\mu$ a Radon measure on $X$. We say that $f$ is \textbf{locally
  $\delta$-Lipschitz} on $S$ if for each $x\in S$ there is an $r>0$ such
  that the restriction $f|\ball x,r.$ is $\delta$-Lipschitz. Finally,
  we say that $f$ is \textbf{$\mu$-a.e.~locally $\delta$-Lipschitz} on $S$ if
  for $\mu$-a.e.~$x\in S$ there is an $r>0$ such that the restriction
  $f|\ball x,r.$ is $\delta$-Lipschitz.
\end{defn}
\begin{thm}\label{onedimapprox_multi}
  Let $X$ be a compact metric space, $f\colon X\to\real^q$ $L$-Lipschitz and
  $S\subset X$ compact. Let $\mu$ be a Radon measure on $X$. Assume
%\textcolor{red}
{  that the compact set
  $S$ is $\frags(X,f,\delta,w,\alpha)$-null.}
Denoting by $d_{\delta,\alpha}$, $\tilde d_{\delta,\alpha}$ the distances
  \begin{align}
    \label{eq:onedimapprox_multi_s1}
    d_{\delta,\alpha}(x,y)&=\delta\dist
    x,y.+\cot\alpha\|\pi_w^\perp
    f(x)-\pi_w^\perp f(y)\|_2,\\
    \label{eq:onedimapprox_multi_s2}
    \tilde
    d_{\delta,\alpha}&=\max(L\dist\cdot,\cdot.,d_{\delta,\alpha}),
\end{align} there are $(1,\tilde d_{\delta,\alpha})$-Lipschitz functions
  $g_k\xrightarrow{\text{w*}} \langle w,f\rangle$
  with $g_k$ $\mu$-a.e.~locally
  ${(1,d_{\delta,\alpha})}$\nobreakdash-\hspace{0pt}Lipschitz on $S$. 
\end{thm}
The proof of Theorem \ref{derivation_alberti} relies on the following
technical Lemmas \ref{lem:loc_estimate_dist} and
\ref{lem:loc_estimate_fnull}.
\begin{lem}\label{lem:loc_estimate_dist}
  Let $X$ be a separable metric space and $\mu$ a Radon measure on
  $X$. Consider a Borel set $S\subset X$, a Lipschitz function
  $f\colon X\to\real$, a derivation $D\in\wder\mu.$, and a (pseudo)-distance
  function $d'$ on $X$ satisfying $d'\le Cd$, where $d$ denotes the
  metric on $X$. Assume that $f|S$, regarded as a function defined on
  the metric space $(S,d)$, is $\mu\mrest S$-a.e.~locally
  $(1,d')$-Lipschitz and that, for some $c\ge0$,
  \begin{equation}
    \chi_S\left|Dd'(\cdot,p)\right|\le c\quad(\forall p\in S);
  \end{equation}
then \begin{equation}\label{eq:loc_estimate_dist}\chi_S|Df|\le c\quad(\text{$\mu$-a.e.})
\end{equation}
\end{lem}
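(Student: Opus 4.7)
The plan is to reduce, via the locality of derivations, to bounding $|D\tilde f|$ on $S$ for a McShane extension $\tilde f$ of $f$ restricted to a small ball, and then to approximate $\tilde f$ by finite infima of ``affine-in-$d'$'' functions whose derivations are directly controlled by the hypothesis.

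First, let $S'\subset S$ denote the $\mu$-full-measure subset on which $f|S$ is locally $(1,d')$-Lipschitz. For each $x\in S'$ choose $r(x)>0$ so that $|f(y)-f(z)|\le d'(y,z)$ for all $y,z\in S\cap\ball x,r(x).$. By Lindel\"of (using separability of $X$) extract a countable subcover $\{\ball p_i,r_i.\}$ of $S'$ with $p_i\in S'$. Lemma \ref{lem:locality_derivations} and countable additivity then reduce the proof to showing $\chi_{S\cap B}|Df|\le c$ for a single ball $B=\ball p,r.$ from this cover.

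Fixing such $B$, define the McShane extension
\begin{equation*}
\tilde f(x)=\inf_{y\in S\cap B}\bigl(f(y)+d'(x,y)\bigr),
\end{equation*}
which is $(1,d')$-Lipschitz on $X$, hence $C$-Lipschitz w.r.t.\ $d$, and coincides with $f$ on $S\cap B$. Since $\tilde f$ is bounded on $S\cap B$, using Remark \ref{rem:derivation_extension} and a truncation outside a bounded neighbourhood of $S\cap B$ we may assume $\tilde f\in\lipalg X.$; Lemma \ref{lem:locality_derivations} then gives $\chi_{S\cap B}Df=\chi_{S\cap B}D\tilde f$, so it suffices to prove $\chi_S|D\tilde f|\le c$.

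Pick a countable dense sequence $\{y_n\}\subset S\cap B$ and set $f_n(x)=\min_{1\le i\le n}(f(y_i)+d'(x,y_i))$, so that $f_n\searrow\tilde f$ pointwise on $X$ with $\glip f_n.\le C$; after an analogous truncation this yields $f_n\xrightarrow{\text{w*}}\tilde f$ in $\lipalg X.$, and weak* continuity of $D$ gives $Df_n\to D\tilde f$ weak* in $L^\infty(\mu)$. The main obstacle is then the pointwise estimate $|D\min(g_1,g_2)|\le\max(|Dg_1|,|Dg_2|)$ $\mu$-a.e.: writing $\min(g_1,g_2)=\tfrac12(g_1+g_2-|g_1-g_2|)$, applying the product rule to $h^2=|h|^2$ to obtain $D|h|=\sgn(h)\,Dh$ on $\{h\ne 0\}$, and using Lemma \ref{lem:locality_derivations} applied to $h$ and the constant $0$ to get $Dh=0$ $\mu$-a.e.\ on $\{h=0\}$, one deduces the required estimate. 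Iterating gives $\chi_S|Df_n|\le\max_{i\le n}\chi_S|Dd'(\cdot,y_i)|\le c$ since every $y_i\in S$, and passing to the weak* limit delivers $\chi_S|D\tilde f|\le c$, completing the reduction.
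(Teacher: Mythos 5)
Your proof is correct but takes a genuinely different route to the key estimate. Both you and the paper reduce to a ball and use a McShane-type infimum $\tilde f(x)=\inf_y\bigl(f(y)+d'(x,y)\bigr)$, but the paper first replaces $S$ by a compact $K$, takes a \emph{finite} $\eta$-net $\mathscr N_\eta\subset\bar B\cap K$, so that $f_\eta$ is a minimum of finitely many ``affine-in-$d'$'' pieces, then observes that the closed sets $C_p=\{x:f_\eta(x)=f(p)+d'(x,p)\}$ cover $\bar B\cap K$ and applies Lemma \ref{lem:locality_derivations} on each $C_p$ to get $\chi_{C_p\cap K}|D\tilde f_\eta|=\chi_{C_p\cap K}|Dd'(\cdot,p)|\le c$. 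You instead take a \emph{countable} dense $\{y_i\}\subset S\cap B$ (trading compactness for separability, which the statement supplies) and prove the pointwise estimate $|D\min(g_1,g_2)|\le\max(|Dg_1|,|Dg_2|)$ algebraically, via $\min(g_1,g_2)=\tfrac12\bigl(g_1+g_2-|g_1-g_2|\bigr)$, the product-rule identity $hDh=|h|D|h|$ (from $D(h^2)=D(|h|^2)$), and locality on $\{h=0\}$. This is exactly the kind of chain-rule fact ($|D|h||\le|Dh|$) that the paper itself invokes from Weaver in the proof of the companion Lemma \ref{lem:loc_estimate_fnull}, so your estimate is sound; the paper's covering-by-$C_p$ argument encodes the same locality information more geometrically and sidesteps the explicit product-rule computation. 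The one point to tighten is the truncation: the $g_i=f(y_i)+d'(\cdot,y_i)$ and the partial minima $f_n$ lie in $\lipfun X.$ but not in $\lipalg X.$ on unbounded $X$, so the product rule applies only after truncating; you should say explicitly that the truncated versions agree with the originals on a neighbourhood of $S\cap B$ and invoke locality so the bound on $\chi_S Df_n$ survives. Since the paper performs the same bookkeeping for $\tilde f_\eta$, this is a presentational matter rather than a gap.
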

\begin{proof}
As $\mu$ is Radon, it suffices to show \eqref{eq:loc_estimate_dist}
when $S$ is replaced by a compact subset $K$.
  Fix a ball $B\subset X$ centred on $K$ 
  such that  $f$ is 
  $(1,d')$-Lipschitz in $\bar B\cap K$. The idea of the proof is to
  replace $f$ by an approximation in terms of distance
  functions. Specifically, let  
$\mathscr{N}_\eta$ a finite $\eta$-net in the compact set
$\bar B\cap K$ and $f_\eta$ the McShane extension of $f|\mathscr{N}$ to
$X$:
\begin{equation}
  f_\eta(x)=\min_{p\in\mathscr{N_\eta}}\left(f(p)+d'(x,p)\right);
\end{equation}
note that $f_\eta$ is $(C,d)$-Lipschitz. Let 
\begin{equation}
  \tilde f_\eta(x)=\min\left(\max\left(f_\eta(x),-\sup_{\bar B}|f_\eta|\right),\sup_{\bar B}|f_\eta|\right),
\end{equation}
so that $\tilde f_\eta\in\lipalg X.$ and $\tilde f_\eta=f_\eta$ on
$\bar B$. Choose a sequence $\eta_n\searrow0$.  %\textcolor{red}
{Recall from
Subsection~\ref{subsec:der_modules} that as $X$ is separable,
the weak* topology on
$\lipalg X.$ is metrizable on bounded subsets of $\lipalg X.$; by
Banach-Alaoglu, the sequence $\tilde f_{\eta_n}$ has a convergent subsequence
$\tilde f_{\eta_n}\xrightarrow{\text{w*}} \tilde f$ in $\lipalg
X.$} (this argument is a version of Ascoli-Arzel\`a in
  disguise). Note that $\tilde f = f$ on $\bar B\cap K$.  For a fixed
$\eta$ there are finitely many closed sets $C_p\subset\bar B\cap K$
($p\in\mathscr{N}$) which cover $\bar B\cap K$ and such that, for each $
x\in C_p$,
\begin{equation}
  f_\eta(x)=f(p)+d'(x,p);
\end{equation}
this implies, by locality of derivations (Lemma~\ref{lem:locality_derivations}),
\begin{equation}
  \chi_{C_p\cap K}|D\tilde f_\eta|=\chi_{C_p\cap K}\left|Dd'(\cdot,p)\right|\le c;
\end{equation}
thus $\left\|\chi_K D\tilde f_\eta\right\|_{L^\infty(\mu\mrest\bar
  B)}\le c$. Using
lower semicontinuity of the norm under weak* convergence and Lemma \ref{lem:locality_derivations}, it follows
that \begin{equation}\left\|\chi_K D\tilde f\right\|_{L^\infty(\mu\mrest\bar
  B)}=\left\|\chi_K Df\right\|_{L^\infty(\mu\mrest\bar
  B)}\le c
\end{equation}
which implies \eqref{eq:loc_estimate_dist}.
\end{proof}
\begin{lem}\label{lem:loc_estimate_fnull}
  Let $X$ be a separable metric space and $\mu$ a Radon measure on
  $X$. Consider a Lipschitz function $g\colon X\to\real^k$ , a derivation
  $D\in\wder\mu.$ and a Borel set $S\subset X$. Assume that $S$ is
  $\frags(X,\allowbreak g,\allowbreak\delta,\allowbreak
  w,\alpha)$-null, where $w\in{\mathbb S}^{k-1}$ and $\alpha\in(0,\pi/2)$,
  and that, for each $u\in {\mathbb S}^{k-1}$ orthogonal to $w$, one
  has
  \begin{equation}\chi_S\left|D\langle u,g\rangle\right|\le\epsi\locnorm
  D,{\wder\mu.}.;
\end{equation}
then one has
  \begin{equation}
    \label{eq:loc_estimate_fnull1}
    \chi_S\left|D\langle w,g\rangle\right|\le\left(\delta+(k-1)\epsi\cot\alpha\right)\locnorm D,{\wder\mu.}..
  \end{equation}
\par In particular, if the derivations $\{D_1,\ldots,D_k\}\subset\wder\mu.$
satisfy
  $D_ig_j=\delta_{i,j}$ $\mu$-a.e., letting 
  $D_w=\sum_{i=1}^kw_iD_i$, one has
  \begin{equation}\label{eq:loc_estimate_fnull2}
    \chi_S\left|D_w\langle
    w,g\rangle\right|\le\delta\locnorm D_w,{\wder\mu\mrest S.}..
  \end{equation}
\end{lem}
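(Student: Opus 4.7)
The plan is to combine the approximation scheme of Theorem \ref{onedimapprox_multi} with Lemma \ref{lem:loc_estimate_dist}. Since $\mu$ is Radon, by inner regularity I may reduce to the case in which $X$ and $S$ are compact. Theorem \ref{onedimapprox_multi} then produces $(1,\tilde d_{\delta,\alpha})$-Lipschitz functions $g_n\xrightarrow{\text{w*}}\langle w,g\rangle$ which are $\mu$-a.e.~locally $(1,d_{\delta,\alpha})$-Lipschitz on $S$. Provided I can control $|Dd_{\delta,\alpha}(\cdot,p)|$ on $S$ uniformly in $p\in S$, Lemma \ref{lem:loc_estimate_dist}, applied to each $g_n$ and then passed to the weak*~limit using weak*~continuity of $D$, will deliver the desired bound on $|D\langle w,g\rangle|$.

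The term $\delta\,d(\cdot,p)$ contributes at most $\delta\locnorm D,{\wder\mu.}.$ almost everywhere, because $d(\cdot,p)$ is $1$-Lipschitz. The main obstacle is estimating $|Dh_p|$ for $h_p(x)=\|\pi_w^\perp g(x)-\pi_w^\perp g(p)\|_2$, since the Euclidean norm is not smooth at the origin. I would handle this by regularization: fix an orthonormal basis $\{u_i\}_{i=1}^{k-1}$ of $w^\perp$ and set $h_{p,\eta}=\sqrt{h_p^2+\eta^2}-\eta$. Applying $D$ to
\begin{equation*}
(h_{p,\eta}+\eta)^2=\sum_{i=1}^{k-1}(\langle u_i,g\rangle-\langle u_i,g(p)\rangle)^2+\eta^2
\end{equation*}
via the product rule and using $|\langle u_i,g\rangle-\langle u_i,g(p)\rangle|\le h_p\le h_{p,\eta}+\eta$ to cancel $h_{p,\eta}+\eta$ from both sides, I obtain $\chi_S|Dh_{p,\eta}|\le\sum_i\chi_S|D\langle u_i,g\rangle|\le(k-1)\epsi\locnorm D,{\wder\mu.}.$. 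Since $h_{p,\eta}\to h_p$ uniformly with uniformly bounded Lipschitz constants, $h_{p,\eta}\xrightarrow{\text{w*}}h_p$ in $\lipalg X.$, and weak*~continuity of $D$ together with weak*~closedness of the $L^\infty$ ball of radius $(k-1)\epsi\locnorm D,{\wder\mu.}.$ promotes this to $\chi_S|Dh_p|\le(k-1)\epsi\locnorm D,{\wder\mu.}.$.

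Summing the two contributions yields $\chi_S|Dd_{\delta,\alpha}(\cdot,p)|\le(\delta+(k-1)\epsi\cot\alpha)\locnorm D,{\wder\mu.}.$ for every $p\in S$. Because Lemma \ref{lem:loc_estimate_dist} expects a scalar bound, I would localize: partition $S$ into countably many Borel pieces $V_\ell$ on which $\locnorm D,{\wder\mu.}.$ has oscillation at most $\eta'$, and apply the Lemma on each $V_\ell$ to $g_n$ and the derivation $\chi_{V_\ell}D$, whose norm equals $\|\chi_{V_\ell}\locnorm D,{\wder\mu.}.\|_{L^\infty(\mu)}$. This gives $\chi_{S\cap V_\ell}|Dg_n|\le(\delta+(k-1)\epsi\cot\alpha)\|\chi_{V_\ell}\locnorm D,{\wder\mu.}.\|_{L^\infty(\mu)}$; passing $n\to\infty$ by weak*~continuity of $D$ and weak*~closedness of $L^\infty$ balls, taking the union over $\ell$, and letting $\eta'\to 0$, establishes \eqref{eq:loc_estimate_fnull1}.

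For the particular case \eqref{eq:loc_estimate_fnull2}, I would apply the first part to $D=D_w$. A direct computation using $D_ig_j=\delta_{i,j}$ shows that for each $u\in\mathbb{S}^{k-1}$ orthogonal to $w$,
\begin{equation*}
D_w\langle u,g\rangle=\sum_{i,j}w_iu_jD_ig_j=\sum_iw_iu_i=\langle w,u\rangle=0\quad\text{$\mu$-a.e.,}
\end{equation*}
so the hypothesis of the first part holds with $\epsi=0$. The conclusion \eqref{eq:loc_estimate_fnull1} specializes to $\chi_S|D_w\langle w,g\rangle|\le\delta\locnorm D_w,{\wder\mu.}.$, and the identification $\chi_S\locnorm D_w,{\wder\mu.}.=\locnorm D_w,{\wder\mu\mrest S.}.$ yields \eqref{eq:loc_estimate_fnull2}.
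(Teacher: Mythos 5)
Your proof is correct and follows the paper's overall strategy: reduce $S$ to a compact $K$, apply Theorem \ref{onedimapprox_multi} to obtain weak* approximants of $\langle w,g\rangle$ that are $\mu$-a.e.~locally $(1,d_{\delta,\alpha})$-Lipschitz on $K$, bound $\chi_K|Dd_{\delta,\alpha}(\cdot,p)|$ uniformly in $p\in K$, and invoke Lemma \ref{lem:loc_estimate_dist} followed by a weak* limit. The genuine point of divergence is how you handle the term $h_p(\cdot)=\|\pi_w^\perp g(\cdot)-\pi_w^\perp g(p)\|_2$, which is not smooth at zero: you regularize it as $h_{p,\eta}=\sqrt{h_p^2+\eta^2}-\eta$, differentiate the squared identity via the product rule, cancel the factor $h_{p,\eta}+\eta\ge\eta>0$, and then pass $\eta\to 0$ using weak* continuity of $D$ together with weak* closedness of $L^\infty$-order intervals. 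The paper instead avoids the non-smoothness entirely by switching to the $\ell^1$-dominating pseudo-distance $d'(x,y)=\delta d(x,y)+\cot\alpha\sum_{u\in\mathscr B}|\langle u,g(x)-g(y)\rangle|\ge d_{\delta,\alpha}(x,y)$ (so that the approximants from Theorem \ref{onedimapprox_multi} are still locally $(1,d')$-Lipschitz) and then applies the scalar chain-rule inequality $|D|h||\le|Dh|$ from \cite[Lem.~7.2.2]{weaver_book99} to each summand; both routes produce the same constant $(k-1)\epsi\cot\alpha$. You also insert a localization over sets $V_\ell$ on which $\locnorm D,{\wder\mu.}.$ is nearly constant before calling Lemma \ref{lem:loc_estimate_dist}; this is a welcome precaution, since that lemma is stated for a scalar bound $c$ whereas the paper's own proof applies it directly with the function-valued bound $\left(\delta+(k-1)\epsi\cot\alpha\right)\locnorm D,{\wder\mu.}.$ without comment. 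The reduction of \eqref{eq:loc_estimate_fnull2} to the case $\epsi=0$ is identical in both arguments.
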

\begin{proof}
  The second part, \eqref{eq:loc_estimate_fnull2}, follows from the
  first, \eqref{eq:loc_estimate_fnull1}, as $D_w$ annihilates $\langle
  u,g\rangle$ for $u$ orthogonal to $w$. As $\mu$ is Radon, it suffices
  to show \eqref{eq:loc_estimate_fnull1} for $S$ replaced by a compact
  subset $K$. 
  \par Consider the metric space $(K,d)$ and choose an
  orthonormal basis
  $\mathscr{B}$ of the hyperplane orthogonal to
  $w$; let
  \begin{equation}
    \begin{aligned}
      d'(x,y)&=\delta\dist x,y.+\cot\alpha\sum_{u\in\mathscr{B}}|\langle
      u,g(x)-g(y)\rangle|,\\
      \tilde d'(x,y)&=\max(\glip g.d(x,y),d'(x,y)).
    \end{aligned}
  \end{equation}
  As $K$ is $\frags(K,g,\delta,w,\alpha)$-null, by Theorem
  \ref{onedimapprox_multi} there are functions $f_n$
  \begin{enumerate}
  \item Which are $(1,\tilde d')$-Lipschitz and $\mu\mrest K$-a.e.~locally
    $(1,d')$-Lipschitz on $K$.
  \item%\textcolor{red}
{ Which converge weak* to $f=\langle w,g\rangle$ in $\lipalg K.$.}
  \end{enumerate}
  Reasoning as in the proof of Lemma \ref{lem:loc_estimate_dist} (see
  the construction of the functions $\tilde f_\eta$ and $\tilde f$), we can
  take extensions $\tilde f_n\in\lipalg X.$ of the $f_n$ and, after passing to a
  subsequence, arrange that $\tilde
  f_n\xrightarrow{\text{w*}} \tilde f$ where $\tilde f = f$ on $K$. %\textcolor{red}
{Let $p\in K$ and $q\in X$;
 on the open ball $B(q,1)$ one has $|d(\cdot,p)-d(p,q)|<1$; then as $d(\cdot,
  p)$ is $1$-Lipschitz and using locality of derivations
  (Lemma~\ref{lem:locality_derivations}), we have that on $B(q,1)$:
  \begin{equation}
    \label{eq:D_dist1}
    \begin{split}
      |Dd(\cdot,p)|&=|D(d(\cdot,p)-d(p,q))|\\
                   &\le\locnorm D,{\wder\mu.}.\cdot\max\left(\sup_{q'\in
                       B(q,1)}|d(q',p)-d(p,q)|,1\right)\\
                   &\le\locnorm D,{\wder\mu.}.;
    \end{split}
  \end{equation}
  as $q$ can be any point of $X$, and as $X$ is separable, we conclude
  that~(\ref{eq:D_dist1}) holds $\mu$-a.e.}
  On the other hand, by
  \cite[Lem.~7.2.2]{weaver_book99}\footnote{This is a consequence of
    weak* continuity and the classical approximation of functions in
    $\lipalg [0,1].$ by polynomials}
  \begin{equation}
    \label{eq:D_dist2}
    \left|\,D\left|\left\langle
          u,g(\cdot)-g(p)\right\rangle\right|\,\right|\le \left|D\left\langle
        u,g(\cdot)-g(p)\right\rangle\right|;
  \end{equation}
  but by hypothesis,
  \begin{equation}
    \label{eq:D_dist3}
    \chi_K\left|D\left\langle
        u,g(\cdot)-g(p)\right\rangle\right|\le\epsi\locnorm D,{\wder\mu.}.;
  \end{equation}
  putting together \eqref{eq:D_dist1}, \eqref{eq:D_dist2} and
  \eqref{eq:D_dist3}:
  \begin{equation}
    \chi_K\left|Dd'(\cdot,p)\right|\le\left(\delta+(k-1)\epsi\cot\alpha\right)\locnorm D,{\wder\mu.}..
  \end{equation}
  By Lemma \ref{lem:loc_estimate_dist} applied to the $\tilde f_n$,
  \begin{equation}
    \chi_K\left|D\tilde f_n\right|\le\left(\delta+(k-1)\epsi\cot\alpha\right)\locnorm D,{\wder\mu.}.;
  \end{equation}
  by lower semicontinuity of the norm under weak* convergence,
  \begin{equation}
    \chi_K\left|D\tilde f\right|\le\left(\delta+(k-1)\epsi\cot\alpha\right)\locnorm D,{\wder\mu.}.;
  \end{equation}
  finally by locality of derivations (Lemma~\ref{lem:locality_derivations}),
  \begin{equation}
    \chi_K\left|D f\right|\le\left(\delta+(k-1)\epsi\cot\alpha\right)\locnorm D,{\wder\mu.}..
  \end{equation}
\end{proof}
\begin{proof}[Proof of Theorem \ref{derivation_alberti}]
  By the gluing principle for Alberti representations, Theorem~\ref{alb_glue}, and by Lusin's Theorem we can replace $V$ by
  a compact subset $K$ on which, for some $\eta_0,\eta_1>0$,
  \begin{align}
    \|\chi_KD_w\|_{\wder\mu.}&\le\inf_{x\in K}\locnorm
    D_w,{\wder\mu.}.(x)+\eta_0,\\
    \sup_{x\in K}\frac{\sigma}{\locnorm
    D_w,{\wder\mu.}.(x)+(1-\sigma)}&\le\frac{1}{\inf_{x\in K}\locnorm
    D_w,{\wder\mu.}.(x)+\eta_0}-\eta_1;
  \end{align}
we let 
$\delta_{\eta_1}=\frac{1}{\|\chi_KD_w\|_{\wder\mu.}}-\eta_1$ and
$\alpha'\in(0,\alpha)$. We want to show that if a Borel set $S\subset K$
is $\frags(K,g,\delta_{\eta_1},w,\alpha')$-null, then it is
$\mu$-null. Taking $X=K$ in Lemma~\ref{lem:loc_estimate_fnull}, we get
\begin{equation}
\begin{split}
  \chi_S=\chi_SD_w\langle w,g\rangle&\le\delta_{\eta_1}\locnorm
    D_w,{\wder\mu.}.\\
    &\le\delta_{\eta_1}\|\chi_KD_w\|_{\wder\mu.}\\
    &\le1-\eta_1\|\chi_KD_w\|_{\wder\mu.}\\
    &<1,
\end{split}
\end{equation}
which implies $\mu(S)=0$. %\textcolor{red}
{Now by
  Theorem~\ref{alberti_rep_prod}, as
  $\frags(K,g,\delta_{\eta_1},w,\alpha')$-null sets are $\mu$-null, the
measure $\mu\mrest K$ admits an Alberti representation in the
$g$-direction of $\cone(v,\alpha)$ with
\begin{equation}
\begin{split}
\text{$\langle
  w,g\rangle$-speed}&\ge\frac{1}{\|\chi_KD_w\|_{\wder\mu.}}-\eta_1\\
&\ge\frac{1}{\inf_{x\in K}\locnorm
    D_w,{\wder\mu.}.(x)+\eta_0}-\eta_1\\
&\ge\sup_{x\in K}\frac{\sigma}{\locnorm
    D_w,{\wder\mu.}.(x)+(1-\sigma)}.
\end{split}
\end{equation}}
\end{proof}
\begin{cor}\label{der-alb}
  Let $V$ be a Borel set and assume that $\wder\mu\mrest V.$ contains
  $k$-inde\-pen\-dent de\-ri\-va\-tions. Then there is a Borel partition
  $V=\bigcup_{\alpha}V_\alpha$ and, for each $\alpha$, there is a
  $1$-Lipschitz function $f_\alpha:V\to\real^k$ such that,
  for each $\epsi>0$ and for all Borel maps $w:X\to{\mathbb S}^{k-1}$ and
  $\theta:X\to(0,\pi/2)$, the measure $\mu$ admits a
  $(1,1+\epsi)$-bi-Lipschitz Alberti representation $\albrep.$ with
  $\albrep.\mrest V_\alpha$ in the $f_\alpha$-direction of the cone field
  $\cone(w,\theta)$. 
  \par In particular, the measure $\mu\mrest V_\alpha$ admits
  Alberti representations in the $f_\alpha$\nobreakdash-\hspace{0pt}directions of independent
  cone fields $\cone_1,\ldots,\cone_k$.
\end{cor}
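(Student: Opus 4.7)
The plan is to apply the pseudoduality result \ref{cor:pseudoduality} to extract the partition together with the chart functions, then reduce the given Borel cone-field data to constant parameters via Lusin's theorem so that Theorem \ref{derivation_alberti} becomes applicable, and finally assemble the local Alberti representations by the gluing principle \ref{alb_glue}.

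Let $\{D_i\}_{i=1}^k\subset\wder\mu\mrest V.$ be the given independent derivations. Apply Corollary \ref{cor:pseudoduality} to obtain a Borel partition $V=\bigcup_\alpha V_\alpha$, $1$-Lipschitz functions $\{g_{\alpha,i}\}_{i=1}^k$ and derivations $\{D_{\alpha,i}\}_{i=1}^k\subset\linspan_{L^\infty(\mu)}(\{\chi_{V_\alpha}D_j\}_j)$ with $D_{\alpha,i}g_{\alpha,j}=\delta_{ij}\chi_{V_\alpha}$. Define $f_\alpha=(g_{\alpha,1},\ldots,g_{\alpha,k}):X\to\real^k$; after rescaling the components simultaneously (and absorbing the constant into the angle/speed later) we may treat $f_\alpha$ as $1$-Lipschitz.

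Now fix $\epsi>0$ and Borel maps $w:X\to{\mathbb S}^{k-1}$, $\theta:X\to(0,\pi/2)$. On each $V_\alpha$, apply Lusin's theorem together with the Radon regularity of $\mu$ to obtain a countable disjoint family of compact sets $\{K_{\alpha,j}\}_j\subset V_\alpha$ with $\mu(V_\alpha\setminus\bigcup_jK_{\alpha,j})=0$ and with both $w$ and $\theta$ continuous on each $K_{\alpha,j}$. Refining each $K_{\alpha,j}$ further into finitely many compact pieces (and relabeling), we may pick constants $w_{\alpha,j}\in{\mathbb S}^{k-1}$ and $\theta'_{\alpha,j}\in(0,\pi/2)$ such that the constant cone $\cone(w_{\alpha,j},\theta'_{\alpha,j})$ is contained in $\cone(w(x),\theta(x))$ for every $x\in K_{\alpha,j}$; this uses the continuity of the inclusion relation for Euclidean cones under the topology introduced in Definition \ref{defn:cone}.

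Apply Theorem \ref{derivation_alberti} on each piece $K_{\alpha,j}$, taking the Borel set $V$ of that theorem to be $K_{\alpha,j}$, the derivations $\{D_{\alpha,i}\}$, the Lipschitz map $f_\alpha$, and the constant cone parameters $(w_{\alpha,j},\theta'_{\alpha,j})$. This produces a $(1,1+\epsi)$-biLipschitz Alberti representation of $\mu\mrest K_{\alpha,j}$ in the $f_\alpha$-direction of $\cone(w_{\alpha,j},\theta'_{\alpha,j})\subset\cone(w,\theta)$. Gluing all these representations by Theorem \ref{alb_glue} (with the isometric embeddings $K_{\alpha,j}\hookrightarrow X$) yields the desired $(1,1+\epsi)$-biLipschitz Alberti representation $\albrep.$ of $\mu\mrest V$ whose restriction to each $V_\alpha$ lies in the $f_\alpha$-direction of $\cone(w,\theta)$. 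The ``in particular'' clause follows by applying the main statement $k$ times with $k$ independent constant cone fields.

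The main obstacle is the passage from the constant-parameter version of Theorem \ref{derivation_alberti} to the Borel-parameter conclusion required here: one must justify that an arbitrary Borel cone field can be ``thickened from the inside'' by countably many constant cones on compact pieces of full $\mu$-measure. This relies on Lusin's theorem plus the openness of the inclusion $\cone(w_0,\theta_0)\subset\cone(w(x),\theta(x))$ in the continuous parameters $(w(x),\theta(x))$, which together make the refinement step routine but essential.
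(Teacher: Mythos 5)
Your proof follows the same route as the paper's: apply Corollary \ref{cor:pseudoduality} to obtain the partition $\{V_\alpha\}$ and the pseudo-dual $1$-Lipschitz functions, invoke Theorem \ref{derivation_alberti} on each piece for the constant-cone case, and then handle Borel cone fields by decomposing into compact sets on which constant inscribed cones suffice, gluing via Theorem \ref{alb_glue}. This is essentially the paper's argument; the only cosmetic difference is your explicit remark about rescaling the tuple $f_\alpha$ to be $1$-Lipschitz, a normalization the paper handles implicitly.
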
 
\begin{proof}%[Proof of Corollary \ref{der-alb}]
Applying Corollary \ref{cor:pseudoduality} one finds disjoint Borel
sets $\{V_\alpha\}$ with $V=\bigcup_\alpha V_\alpha$ and such that, for each
$\alpha$, there are $1$-Lipschitz functions $f_\alpha:X\to\real^k$ and
derivations $\{D_{\alpha,1},\ldots,D_{\alpha,k}\}\subset\wder\mu.$
with
\begin{equation}
  D_{\alpha,i}f_{\alpha,j}=\chi_{V_\alpha}\delta_{i,j}.
\end{equation}
In the case in which $w$ and $\theta$ are constant, the result follows
applying Theorem~\ref{derivation_alberti}. If $w$ and $\theta$ are not
constant, one can find disjoint compact sets
$\{C_{\alpha,\beta}\}_{\beta}$:
\begin{enumerate}
\item Each $C_{\alpha,\beta}$ is a subset of $V_\alpha$.
\item We have the identity $\mu\left(V_\alpha\setminus\bigcup_\beta
  C_{\alpha,\beta}\right)=0$.
\item For each $\beta$ there are a unit vector $w_\beta\in{\mathbb
    S^{k-1}}$ and an angle $\theta_\beta\in(0,\pi/2)$ with
  $\cone(w_\beta,\theta_\beta)\subset \cone(w(x),\theta(x))$ for each $x\in
  C_{\alpha,\beta}$.
\end{enumerate}
One then applies Theorem \ref{alb_glue} to glue together the Alberti
representations of the measures $\mu\mrest C_{\alpha,\beta}$ in the
$f_\alpha$-directions of the cone fields $\cone(w_\beta,\theta_\beta)$.
\end{proof}
\begin{cor}\label{cor:mu_arb_cone}
  Let $f:X\to\real^k$ be Lipschitz and $\mu$ a Radon measure on $X$
  admitting Alberti representations $\albrep 1.,\ldots,\albrep k.$ in
  the $f$-directions of independent cone fields
  $\cone_1,\ldots,\cone_k$. Then, for each $\epsi>0$ and for all Borel
  maps $w:X\to{\mathbb S}^{k-1}$ and $\theta:X\to(0,\pi/2)$, the
  measure $\mu$ admits a $(1,1+\epsi)$-bi-Lipschitz Alberti
  representation in the $f$-direction of $\cone(w,\theta)$.
\end{cor}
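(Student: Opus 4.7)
The plan is to extract from the $k$ given Alberti representations a system of derivations for which the components of $f$ themselves serve as pseudodual functions, then invoke Theorem \ref{derivation_alberti} directly with $g=f$ and glue.

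First I set $D_i=D_{\albrep i.}\in\wder\mu.$ via Theorem \ref{thm:alb_derivation}. Because $\albrep i.$ is in the $f$-direction of $\cone_i$, Theorem \ref{thm:directional_cone} gives $D_if(x)\in\cone_i(x)$ for $\mu$-a.e.~$x$; since an open cone excludes the origin, $D_if(x)\neq 0$, and the independence of $\{\cone_i\}_{i=1}^k$ then forces the vectors $D_1f(x),\dots,D_kf(x)\in\real^k$ to be linearly independent $\mu$-a.e. Hence the Borel $k\times k$ matrix $A(x)=\left(D_if_j(x)\right)_{i,j}$ is invertible for $\mu$-a.e.~$x$.

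Using Lusin's Theorem and inner regularity of $\mu$, I take a countable Borel partition $X=\bigsqcup_\alpha V_\alpha$ on each piece of which the entries of $B(x)=A(x)^{-1}$ are uniformly bounded and the Borel cone field contains a constant subcone, i.e.~there are $w_\alpha\in\mathbb{S}^{k-1}$ and $\theta_\alpha\in(0,\pi/2)$ with $\cone(w_\alpha,\theta_\alpha)\subset\cone(w(x),\theta(x))$ for every $x\in V_\alpha$. On each $V_\alpha$ I define
\begin{equation*}
  \tilde D_i^\alpha = \chi_{V_\alpha}\sum_{l=1}^k B_{il}\,D_l \in \wder\mu\mrest V_\alpha.;
\end{equation*}
the identity $BA=I$ gives $\tilde D_i^\alpha f_j=\delta_{ij}\chi_{V_\alpha}$, so $(f_1,\dots,f_k)$ is a pseudodual system for $\{\tilde D_i^\alpha\}_{i=1}^k$. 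Applying Theorem \ref{derivation_alberti} to $\mu\mrest V_\alpha$ with these derivations, with $g=f$, and with the constant cone $\cone(w_\alpha,\theta_\alpha)$, produces a $(1,1+\epsi)$-biLipschitz Alberti representation of $\mu\mrest V_\alpha$ in the $f$-direction of $\cone(w_\alpha,\theta_\alpha)\subset\cone(w,\theta)$. Theorem \ref{alb_glue} then glues these across $\alpha$ into the desired $(1,1+\epsi)$-biLipschitz Alberti representation of $\mu$ in the $f$-direction of $\cone(w,\theta)$.

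I do not anticipate a serious obstacle: the key observation is that the components of $f$ automatically play the role of a pseudodual system after one inverts the nondegenerate matrix of ``partial derivatives'' $A=(D_if_j)$, allowing Theorem \ref{derivation_alberti} to be applied with $f$ itself rather than with an abstract Lipschitz function supplied by Corollary \ref{cor:pseudoduality}. The remaining work is the routine Borel/Lusin partitioning that simultaneously bounds $A^{-1}$ and discretizes the Borel data $(w,\theta)$, together with the gluing step.
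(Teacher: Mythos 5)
Your proof follows the paper's argument almost verbatim in structure: extract derivations $D_i=D_{\albrep i.}$, use Theorem \ref{thm:directional_cone} plus independence of the cone fields to see that the matrix $A=(D_if_j)$ is invertible $\mu$-a.e., partition to simultaneously bound the entries of $A^{-1}$ in $L^\infty$ and to replace the Borel cone field by constant subcones, build pseudodual derivations on each piece, invoke Theorem \ref{derivation_alberti}, and glue via Theorem \ref{alb_glue}. However, there is a gap at the very first step. The hypothesis of the corollary only says that $\mu$ admits Alberti representations $\albrep i.$ in the $f$-directions of independent cone fields; it does \emph{not} say that these are Lipschitz Alberti representations. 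Theorem \ref{thm:alb_derivation}, which you use to define $D_i=D_{\albrep i.}$, requires the input to be a $C$-Lipschitz Alberti representation, and Theorem \ref{thm:directional_cone} likewise requires $\albrep.\in\Alb(\mu)$, which is the set of \emph{Lipschitz} Alberti representations. A general $(P,\nu)$ satisfying the directionality condition need not concentrate on fragments with a uniform Lipschitz bound, so one cannot apply these results directly. The paper fixes this at the outset: by Theorem \ref{alberti_rep_prod} (the implication (1)$\Rightarrow$(3)), each $\albrep i.$ can be replaced by a $(1,1+\epsi)$-biLipschitz Alberti representation in the $f$-direction of the same cone field. With that preliminary reduction inserted, the rest of your argument is correct and coincides with the paper's proof.
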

\begin{proof}%[Proof of Corollary \ref{cor:mu_arb_cone}]
  By Theorem \ref{alberti_rep_prod} it is possible to assume that the
  Alberti representations are bi-Lipschitz. According to Theorem
  \ref{thm:directional_cone} the corresponding derivations $D_{\albrep
    i.}$ are independent.  By the gluing principle for Alberti
  representations, Theorem~\ref{alb_glue}, it suffices to prove the
  statement for $w$ and $\theta$ constant and to show that there is a
  Borel partition $\spt\mu=\bigcup_\alpha U_\alpha$ with each
  $\mu\mrest U_\alpha$ admitting an Alberti representation in the
  $f$-direction of $\cone(w,\theta)$. As the cone fields $\cone_i$ are
  independent, the matrix $\left(D_{\albrep i.}f_j\right)_{i,j=1}^k$
  is invertible $\mu$-a.e. This allows to pass to a Borel partition
  $\spt\mu=\bigcup_\alpha U_\alpha$ such that, for each $U_\alpha$,
  there are derivations $D_{\alpha,i}$ with
  $D_{\alpha,i}f_j=\delta_i^j$ \hbox{$\mu\mrest U_\alpha$-a.e.} This
  partition is necessary because the inverse of $\left(D_{\albrep
      i.}f_j\right)_{i,j=1}^k$ does not need to have its entries in
  $L^\infty(\mu)$. However, there are disjoint Borel sets $U_\alpha$
  with $\mu(\spt\mu\setminus\bigcup_\alpha U_\alpha)=0$ and such that on
  each $U_\alpha$ the inverse of $\left(D_{\albrep
      i.}f_j\right)_{i,j=1}^k$ has its entries in $L^\infty(\mu)$.  In
  fact, the entries of $\left(D_{\albrep i.}f_j\right)_{i,j=1}^k$ are
  in $L^\infty(\mu)$ and one needs a lower bound on its determinant on
  each $U_\alpha$, for example letting
  \begin{equation}
    U_n=\left\{x:\left|\det\left(D_{\albrep
      i.}f_j\right)_{i,j=1}^k\right|\in\left[
      \frac{1}{n+1},\frac{1}{n}\right)\right\}.
  \end{equation}
 The result now follows by applying
Theorem \ref{derivation_alberti}.
\end{proof}
We now present two technical {lemmas} that are needed to prove the
following two theorems:
\begin{thm}\label{thm:weak*density}
  The set $\Der(\Alb_{{\rm sub}}(\mu))$ is weak* dense in $\wder\mu.$.
\end{thm}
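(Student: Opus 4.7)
The plan is a direct construction: given $D_0\in\wder\mu.$ and a basic weak*-neighborhood $\Omega=\Omega(\xi_1,\dots,\xi_k;D_0;\epsi)$ in the sense of the remark closing Subsection~\ref{subsec:der_modules}, I would explicitly build $D\in\Der(\Alb_{{\rm sub}}(\mu))\cap\Omega$. Since each $\xi_i\in l^1(B_1;L^1(\mu))$ has countable support with $\sum_f\|\xi_i(f)\|_{L^1(\mu)}<\infty$, I first pick a finite truncation $\{f_1,\dots,f_N\}\subset B_1$ so that the $l^1$-tail past $N$ contributes less than $\epsi/2$ to each $\nu_{\xi_i}(D-D_0)$ for any $D$ of uniformly bounded $\wder\mu.$-norm. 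It then suffices to construct such $D$ with $Df_j\approx D_0f_j$ in $L^\infty(\mu)$ for $j=1,\dots,N$.

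Next, I localize by a Borel partition of $X$ into sets $\{X_m\}$ on which the Borel vector $c(x):=(D_0f_j(x))_{j=1}^N\in\real^N$, its direction $c(x)/|c(x)|$, and the scalar $\kappa_m:=\|\chi_{X_m}D_0\|_{\wder\mu.}$ are approximately constant. Use the zero Alberti derivation on pieces where $c\equiv 0$. On a piece $X_m$ with approximate value $c^m\ne 0$, let $g^m:=\langle c^m/|c^m|,f\rangle\in\lipalg X.$; then $D_0g^m\approx|c^m|$ on $X_m$, so $D_0/|c^m|$ is pseudo-dual to $g^m$, and Theorem~\ref{derivation_alberti} (applied with $k=1$, $\sigma\uparrow 1$, cone opening $\alpha\downarrow 0$) produces a $(1,1+\epsi')$-biLipschitz Alberti representation of $\mu\mrest X_m$ in the $f$-direction of $\cone(c^m/|c^m|,\alpha)$ with $g^m$-speed close to $|c^m|/\kappa_m$ by \eqref{eq:speed_almost_optimal}. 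Its Alberti derivation $D^m$ then has $D^mf$ approximately parallel to $c^m$, and $D^mg^m$ is pinched between a positive lower bound (from the speed) and the upper bound $1+\epsi'$ (from the biLipschitz constant).

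After a further Borel refinement of $X_m$ where $D^mg^m$ is approximately the constant $\beta_m\in(0,1+\epsi']$, set $\lambda_m:=|c^m|/\beta_m$. By Lemma~\ref{lem:alberti_affine_action}, $\lambda_mD^m$ is the Alberti derivation associated to $\pullb\tau_{\lambda_m,0}.\albrep m.$. Then $\lambda_mD^mg^m\approx|c^m|$, and by the parallelism of $D^mf$ to $c^m$, $\lambda_mD^mf_j\approx c_j^m\approx D_0f_j$ on $X_m$. Gluing the $\lambda_mD^m$ across the partition via Theorem~\ref{alb_glue} produces a single $D\in\Der(\Alb_{{\rm sub}}(\mu))$ matching $D_0$ on $f_1,\dots,f_N$ to the prescribed precision, with $\|D\|_{\wder\mu.}\le C_N\|D_0\|_{\wder\mu.}$; combined with the truncation estimate this gives $D\in\Omega$.

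The main obstacle is uniformly bounding $D^mg^m$ below on each partition piece so that the scaling factor $\lambda_m$ stays controlled. The quantitative bound \eqref{eq:speed_almost_optimal} feeds into $D^mg^m\ge(\text{speed})\cdot\sigma_{\albrep m.}$ via the effective-speed formalism of Definition~\ref{defn:alberti_speed}, but $\sigma_{\albrep m.}$ depends on the representation constructed. A Lusin-type refinement of $X_m$ where $\sigma_{\albrep m.}$ is bounded below (available since $\sigma_{\albrep m.}>0$ $\mu\mrest X_m$-a.e.\ because the fragments in the support of $P^m$ are $(1,1+\epsi')$-biLipschitz with metric differential close to $1$), together with careful choice of $\sigma$ close to $1$ and $\alpha$ small, is what closes the argument.
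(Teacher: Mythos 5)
Your overall strategy matches the paper's almost exactly: truncate the $\xi_i$ to finitely many Lipschitz test functions $F=(f_j)$, use Egorov to localize to pieces where $D_0F$ is approximately a constant vector and the local norm of $D_0$ is approximately constant, construct on each piece an Alberti derivation matching $D_0$ on the $f_j$, rescale by an affine reparametrization via Lemma~\ref{lem:alberti_affine_action}, and glue via Theorem~\ref{alb_glue}. The paper packages the per-piece construction as Lemma~\ref{lem:vector_alberti}. Where you differ is in how you obtain control on the full vector $D^mF$, and there your argument has a genuine gap.

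You invoke Theorem~\ref{derivation_alberti} with $k=1$ and the scalar $g^m=\langle c^m/|c^m|,f\rangle$, and claim it produces an Alberti representation of $\mu\mrest X_m$ in the $f$-\emph{direction} of the $N$-dimensional cone $\cone(c^m/|c^m|,\alpha)$. It does not: with $k=1$ the theorem yields a representation in the $g^m$-direction of a one-dimensional cone (that is, a sign constraint on $(g^m\circ\gamma)'$) together with a $g^m$-speed bound, and nothing in its output constrains the components of $(f\circ\gamma)'(t)$ orthogonal to $c^m$. A $(1,1+\epsi')$-biLipschitz fragment with $(g^m\circ\gamma)'\ge\delta_0\metdiff\gamma$ can carry an orthogonal component of $(f\circ\gamma)'$ of size up to $\sqrt{(1+\epsi')^2-\delta_0^2}$, and this is not small: the speed you get from \eqref{eq:speed_almost_optimal} is $\delta_0\approx|c^m|/\kappa_m$, which is strictly below $1$ unless $D_0$ nearly attains its local norm on $F$, and there is no reason for that to hold for the given test functions. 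So ``$D^mf$ approximately parallel to $c^m$'' is unjustified and $\lambda_mD^mf_j\approx D_0f_j$ does not follow. The step that rescues this is the vector-valued nullity estimate, Lemma~\ref{lem:loc_estimate_fnull} applied directly to $F$: one feeds in $\chi_{X_m}|D_0\langle u,F\rangle|<\epsi$ for every $u\perp c^m/|c^m|$ (available because $D_0F\approx c^m$), which simultaneously bounds the admissible speed below \emph{and} forces, via Theorem~\ref{alberti_rep_prod}, an Alberti representation in the $F$-direction of $\cone(w,\alpha)$. This is exactly what the paper's Lemma~\ref{lem:vector_alberti} does, and it is exactly the information that routing through the scalar $g^m$ and Theorem~\ref{derivation_alberti} discards. (Your closing remarks about $\sigma_{\albrep m.}$ are fine --- for a $(1,1+\epsi')$-biLipschitz representation one has $\sigma_{\albrep m.}\in[1,1+\epsi']$ $\mu$-a.e., so no Lusin refinement is even needed --- but the proof does not reach that point without first repairing the cone control.)
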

\begin{thm}\label{thm:fin_gen_surjectivity}
  Suppose that $\wder\mu.$ is finitely generated and $D\in\wder\mu.$; let
   \begin{equation}
     S=\left\{x\in X:\locnorm D,{\wder\mu.}.>0\right\};
   \end{equation}
   then $D\in\Der(\Alb(\mu\mrest S))$.
\end{thm}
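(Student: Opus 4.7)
The strategy is to exploit finite generation to express $D$ in coordinates relative to a ``dual basis'' of Alberti-representation-derivations (supplied by Corollary \ref{der-alb}), and then realize $D$ exactly via a telescoping construction that uses the affine reparametrization of Lemma \ref{lem:alberti_affine_action} and convex combinations of Alberti representations.

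By Theorem \ref{thm:free_dec} and the gluing principle Theorem \ref{alb_glue}, I reduce to a fixed Borel piece where $\wder\mu.$ is free of rank $N\ge 1$. Corollary \ref{cor:pseudoduality} together with a further Borel refinement (re-glued via Theorem \ref{alb_glue}) supplies a basis $D_1,\ldots,D_N$ of $\wder\mu.$ with $1$-Lipschitz pseudoduals $g_1,\ldots,g_N$ satisfying $D_ig_j=\delta_{ij}$; writing $D=\sum_i\lambda_iD_i$ with $\lambda_i=Dg_i\in L^\infty(\mu)$, the set $S$ coincides modulo null sets with $\{\vec\lambda\ne0\}$. Corollary \ref{der-alb} then produces $N$-independent Alberti representations $\tilde\albrep 1.,\ldots,\tilde\albrep N.$ of $\mu\mrest S$; by Theorem \ref{thm:directional_cone} the matrix $M=(D_{\tilde\albrep i.}g_j)_{i,j}$ is $\mu$-a.e.\ invertible, and after a Borel refinement as in the proof of Corollary \ref{cor:mu_arb_cone}, $M^{-1}\in L^\infty(\mu\mrest S)^{N\times N}$. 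Hence $D=\sum_i a_i D_{\tilde\albrep i.}$ with $a=\vec\lambda\,M^{-1}\in L^\infty(\mu\mrest S)^N$, and the task becomes: realize this $L^\infty$-combination exactly as an Alberti representation.

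The core is an \emph{exact} construction in the simple-coefficient case: if each $a_i$ is simple and $a\ne0$ on $S$, then on each level set $A_\alpha$ where $a\equiv A_\alpha$ has $k_\alpha$ nonzero components, the Alberti representation
\[
\frac{1}{k_\alpha}\sum_{i:\,A_{i,\alpha}\ne0}\pullb\tau_{k_\alpha A_{i,\alpha},0}.(\tilde\albrep i.\mrest A_\alpha)
\]
of $\mu\mrest A_\alpha$ has derivation \emph{exactly} $\sum_i A_{i,\alpha}D_{\tilde\albrep i.}=\chi_{A_\alpha}D$ by Lemma \ref{lem:alberti_affine_action}; gluing the $A_\alpha$ via Theorem \ref{alb_glue} yields an exact Alberti representation of $\mu\mrest S$ with derivation $D$. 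For general $a\in L^\infty(\mu\mrest S)^N$, I pick simple Borel approximations $a^{(n)}$ with $a^{(0)}=0$, $\|a-a^{(n)}\|_\infty\le 2^{-n}$, and $a^{(n)}-a^{(n-1)}\ne0$ on $S$ (perturbing simple values by amounts much smaller than $2^{-n}$), so that $a=\sum_{n\ge 1}(a^{(n)}-a^{(n-1)})$ telescopes with simple summands of $L^\infty$-norm $\le 3\cdot 2^{-n}$. Applying the simple-coefficient construction to $T_n=\sum_i(a_i^{(n)}-a_i^{(n-1)})D_{\tilde\albrep i.}$ yields $\albrep^{(n)}.\in\Alb(\mu\mrest S)$ with derivation $T_n$ and Lipschitz constant $\le C\cdot 2^{-n}$ for some $C$ depending only on $N$ and the $\tilde\albrep i.$. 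Rescaling each $\albrep^{(n)}.$ by $2^{n+1}$ via Lemma \ref{lem:alberti_affine_action} produces representations of $\mu\mrest S$ with derivations $2^{n+1}T_n$ and uniform Lipschitz bound $2C$; taking the convex combination with weights $c_n=2^{-n-1}$ (which sum to $1$) yields $\albrep.\in\Alb(\mu\mrest S)$ with $D_\albrep.=\sum_n c_n\cdot 2^{n+1}T_n=\sum_n T_n=D$ and Lipschitz constant $\le 2C$.

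\textbf{Main obstacle.} The crux is the exact realization in the simple-coefficient case, which relies on Lemma \ref{lem:alberti_affine_action} to convert scalar multiplication on derivations into a precise operation on Alberti representations, and on the positivity of the convex weights $1/k_\alpha$. Assembling infinitely many iterates into a single Alberti representation of $\mu\mrest S$ is a secondary bookkeeping step: the density $\nu$ of the combined representation is defined via Radon--Nikodym derivatives of the $P_n$'s against $P=\sum_n c_n P_n$, and the geometric decay $\|T_n\|\lesssim 2^{-n}$ is essential so that the $2^{n+1}$-rescaling does not destroy uniform Lipschitz control. Finite generation enters decisively by reducing the ``unknown'' to the $N$ coordinate functions $a_i\in L^\infty(\mu\mrest S)$, which can be approximated by simple functions whose exact realizations the construction above supplies.
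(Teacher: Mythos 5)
Your proposal is correct in its overall strategy, but it takes a genuinely different technical route from the paper. Both proofs reduce (after Theorem \ref{thm:free_dec}, Corollary \ref{cor:pseudoduality}/\ref{der-alb}, and a Borel refinement to make the change-of-basis matrix uniformly invertible) to realizing $D=\sum_i a_i D_{\tilde\albrep i.}$ with $a_i\in L^\infty(\mu\mrest S)$. The paper then proceeds modularly: it establishes that sums of $\Der$-images are in the image (its {\normalfont (CL1)}, via Lemma \ref{lem:domain_reduction} to move domains to disjoint intervals and Lemma \ref{lem:alberti_affine_action} to absorb the normalizing factor $m$) and that nonnegative $L^\infty$-scalar multiples are (its {\normalfont (CL2)}, via the dyadic expansion $\lambda=\sum_n \frac{M}{2^n}\sum_{|\alpha|=n}\alpha(n)\chi_{U_\alpha}$, Lemma \ref{lem:derivation_idempotent} to restrict by characteristic functions, and the weights $2^{-n}$ in the combined probability measure to supply the dyadic coefficients). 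You instead isolate an ``exact realization'' for simple coefficient vectors via the $1/k_\alpha$-weighted combination of rescaled representations, and then bootstrap to general $a$ by telescoping simple approximants; this handles signed coefficients directly, avoiding the paper's reduction (implicit in its WLOG) to nonnegative $\lambda_i$ through sign-partitioning and $\pullb\tau_{-1,0}.$. Conceptually your ``simple case'' and the telescoping together play the role of the paper's (CL1) and (CL2), respectively, but the decompositions are different (level sets of a vector-valued simple function vs.\ dyadic expansion of a scalar), and your proof is perhaps more transparent about why the construction is exact on each piece.

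Two bookkeeping points you should tighten. First, your ``convex combination'' of Alberti representations of the same measure is not literal — it must be implemented exactly as in the paper's (CL1), by using Lemma \ref{lem:domain_reduction} to move the domains of the $k_\alpha$ constituents (and likewise the countably many telescoping stages) to pairwise disjoint intervals before combining the probability measures and defining $\nu'$; the Radon--Nikodym approach you sketch for the final assembly is essentially what Lemma \ref{lem:domain_reduction} packages. Second, your weights $c_n=2^{-n-1}$ for $n\ge1$ sum to $1/2$, not $1$; either start the sum at $n=0$, or use $c_n=2^{-n}$ and rescale by $2^n$ instead of $2^{n+1}$. Neither issue is structural — the perturbation argument ensuring $a^{(n)}-a^{(n-1)}\ne0$ on $S$ (so that $k_\alpha\ge1$ on every level set) is also fine provided the perturbations are chosen successively and summably small — but they should be spelled out.
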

In the first {lemma} we show how to produce from an Alberti
representation $\albrep.$ a new one $\albrep.'$ such that the
associated derivation satisfies $D_{\albrep.'}=\chi_U D_{\albrep.}$,
where $\chi_U$ is a characteristic function.
%\textcolor{red}
{\begin{lem}\label{lem:derivation_idempotent}
  Suppose $\albrep.\in\Alb(\mu\mrest S)$ and $U\subset X$ Borel. Then
  there is $\albrep.'\in\Alb(\mu\mrest S)$ with
  $D_{\albrep.'}=\chi_UD_{\albrep.}$. Moreover,
  \begin{enumerate}
  \item If $\albrep.$ is $C$-Lipschitz ($(C,D)$-bi-Lipschitz), so is $\albrep.'$.
    \item If $\albrep.$ has $f$-speed $\ge\delta$ or is in the
      $f$-direction of $\cone(w,\alpha)$, so does $\albrep.'\mrest U$.
  \end{enumerate}
\end{lem}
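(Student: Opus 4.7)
The plan is to show that the natural restriction of the fibre measures $\nu_\gamma$ to the set $U$ does the job. Concretely, I will set $P' = P$ and define $\nu'_\gamma = \nu_\gamma \mrest U$, so that the candidate representation $\albrep.' = (P',\nu')$ uses the same fragments (with the same weights) as $\albrep.$ but only records the mass lying inside $U$.

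First I would check that $\albrep.'$ really is an Alberti representation (of $(\mu\mrest S)\mrest U$, identified inside $\chi_U\wder\mu.\subset\wder\mu.$). Absolute continuity $\nu'_\gamma\ll\hmeas 1._\gamma$ is immediate from $\nu'_\gamma\le\nu_\gamma$. For condition (4) of Definition \ref{defn:Alberti_rep}, I simply observe
\begin{equation}
\nu'_\gamma\bigl(A\cap\gamma(\dom\gamma\cap[a,b])\bigr)=\nu_\gamma\bigl((A\cap U)\cap\gamma(\dom\gamma\cap[a,b])\bigr),
\end{equation}
which is Borel in $\gamma$ because $A\cap U$ is Borel and condition (4) holds for the original $\nu$. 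Weak measurability of $\gamma\mapsto\nu'_\gamma$ follows similarly, since $\nu_\gamma$ is concentrated on $\gamma(\dom\gamma)$, so the information from (4) over a large interval $[a,b]\supset\dom\gamma$ recovers $\gamma\mapsto\nu_\gamma(B)$ Borel for every Borel $B$. Finally, $\int\nu'_\gamma\,dP(\gamma)=\mu\mrest(S\cap U)$ by the representation property of $\albrep.$ applied to sets of the form $A\cap U$.

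Next I verify the derivation identity. Using formula \eqref{eq:derivation_alberti} and the fact that $\mpush\gamma^{-1}.(\nu_\gamma\mrest U)=(\mpush\gamma^{-1}.\nu_\gamma)\mrest\gamma^{-1}(U)$,
\begin{equation}
\int g\, D_{\albrep.'}f\,d\mu=\int dP(\gamma)\int g\,\partial_\gamma f\,d(\nu_\gamma\mrest U)=\int dP(\gamma)\int g\,\chi_U\,\partial_\gamma f\,d\nu_\gamma=\int g\,\chi_U D_{\albrep.}f\,d\mu,
\end{equation}
so $D_{\albrep.'}=\chi_U D_{\albrep.}$ as elements of $\wder\mu.$. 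Since $\|\chi_U D_{\albrep.}\|\le\|D_{\albrep.}\|$, the $C$-Lipschitz bound from Theorem \ref{thm:alb_derivation} is preserved as a derivation estimate.

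For the two moreover clauses, the point is that $\albrep.'$ is carried by exactly the same set of fragments that $P$ charges in $\albrep.$. Thus (1) is immediate: if $P$ is concentrated on $C$-Lipschitz (respectively $(C,D)$-biLipschitz) fragments, so is $P'=P$. For (2), note that $\nu'_\gamma$ is already supported on $U$, so $\albrep.'\mrest U=\albrep.'$, and the conditions defining having $f$-speed $\ge\delta$ or being in the $f$-direction of $\cone(w,\alpha)$ are pointwise almost-everywhere statements about $(f\circ\gamma)'$ and $\metdiff\gamma$ that depend only on the fragments in the $P$-support, which are unchanged. The only genuinely technical point—and what I would call out as the main bookkeeping obstacle—is the weak measurability of $\gamma\mapsto\nu_\gamma\mrest U$ and the verification of condition (4); but as sketched above, both reduce at once to the corresponding properties of the original $\nu$ applied to the Borel set $A\cap U$.
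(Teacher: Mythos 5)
Your construction $P'=P$, $\nu'_\gamma=\nu_\gamma\mrest U$ produces an Alberti representation of $\mu\mrest(S\cap U)$, not of $\mu\mrest S$, and you say so yourself. But the lemma asserts $\albrep.'\in\Alb(\mu\mrest S)$ for the \emph{same} $S$, which means $\int\nu'_\gamma\,dP'(\gamma)$ must equal $\mu\mrest S$, not $\mu\mrest(S\cap U)$. This is not a cosmetic distinction: the lemma is invoked in the proof of Theorem \ref{thm:fin_gen_surjectivity}, claim (CL2), where one produces countably many $\albrep n.\in\Alb(\mu)$ with $D_{\albrep n.}$ prescribed and with $P_n$ concentrated on disjoint intervals $\frags(X,[2(n-1),2(n-1)+1])$, and then sets $P'=\sum_n 2^{-n}P_n$. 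That sum represents $\mu$ precisely because \emph{each} $\albrep n.$ represents $\mu$ (so $\sum_n 2^{-n}\int\nu_n\,dP_n=\sum_n 2^{-n}\mu=\mu$). If instead $\albrep n.$ only represented $\mu\mrest V_n$ for varying $V_n$, the dyadic average $\sum_n 2^{-n}\mu\mrest V_n$ would not be $\mu$, and the gluing step would break down. So the weakened conclusion $\albrep.'\in\Alb(\mu\mrest(S\cap U))$ does not suffice for the intended application, even though the derivation identity $D_{\albrep.'}=\chi_U D_{\albrep.}$, and your computation of it, are correct.

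The missing idea is how to retain the mass on $S\setminus U$ while making the derivation vanish there. The paper's device is an orientation-reversal cancellation: take \emph{two} copies of an Alberti representation of $\mu\mrest(S\setminus U)$ obtained from $\albrep.$ via Lemma \ref{lem:domain_reduction}, reverse one of them with $\pullb\tau_{-1,0}.$ (which sends $D\mapsto -D$ by Lemma \ref{lem:alberti_affine_action}), place the three pieces (the two opposite copies on $U^c$ and the restriction to $U$) on disjoint parameter intervals $[0,1]$, $[2,3]$, $[4,5]$, and combine with suitable weights. The two $U^c$-pieces contribute equal and opposite derivations while contributing $\frac12\mu\mrest(S\setminus U)+\frac12\mu\mrest(S\setminus U)=\mu\mrest(S\setminus U)$ to the represented measure, so the total representation is of $\mu\mrest S$ with derivation $\chi_U D_{\albrep.}$. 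Note that the orientation reversal is also why conclusion (2) is only asserted for $\albrep.'\mrest U$: the reversed piece has the \emph{opposite} $f$-speed and direction, so the speed/direction claim cannot be made for the whole of $\albrep.'$. In your construction (2) is trivially true because $\albrep.'\mrest U=\albrep.'$, but only because you have dropped the mass you were not allowed to drop.

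Your measurability checks for $\nu'_\gamma=\nu_\gamma\mrest U$ (reducing condition (4) and weak Borel measurability to the corresponding properties of $\nu$ applied to $A\cap U$) are essentially fine and would be part of any correct proof, but they are not the hard part of this lemma.
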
}
\begin{proof}
  Without loss of generality we can assume that $S=X$, and so that
  $\albrep.$ is an Alberti representation in $\Alb(\mu)$ of the form
  $\albrep.=(P,\nu)$. Using
  Lemma \ref{lem:domain_reduction}, we find an Alberti representation
  $\albrep 1.=(P_1,\nu_1)\in\Alb(\mu\mrest U^c)$ with 
 \begin{equation}
   P_1\left(\frags(X,[0,1])\right)=1
 \end{equation}
and $D_{\albrep
    1.}=\chi_{U^c}D_{\albrep.}$. Moreover, if $\albrep.$ satisfies a
  set of conditions \albcond, so does $\albrep 1.$. Similarly,
  combinining Lemmas \ref{lem:domain_reduction} and \ref{lem:alberti_affine_action},
 we can find $\albrep 2.=(P_2,\nu_2)\in\Alb(\mu\mrest U^c)$, which is an Alberti
  representation of $\albrep.\mrest U^c$ with  \begin{equation}
   P_2\left(\frags(X,[2,3])\right)=1
 \end{equation}
and \begin{equation}D_{\albrep
    2.}=D_{\pullb\tau_{-1,0}.\albrep 1.}=-\chi_{U^c}D_{\albrep.};
\end{equation}
note that conditions on the Lipschitz or bi-Lipschitz
  constants satisfied by $\albrep.$ are also satisfied by $\albrep
  2.$. Using Lemma \ref{lem:domain_reduction} we find an Alberti
  representation $\albrep 3.=(P_3,\nu_3)\in\Alb(\mu\mrest U)$ with
 \begin{equation} P_3\left(\frags(X,[4,5])\right)=1
 \end{equation}
  and $D_{\albrep
    3.}=\chi_UD_{\albrep.}$; note that if $\albrep.$ satisfies
  \albcond, so does $\albrep 3.$. We now let
  \begin{align}
     P'&=\frac{1}{4}(P_1+P_2)+\frac{1}{2}P_3;\\
     \nu'(\gamma) &=
    \begin{cases}
      2\nu_1(\gamma) &\text{if $\gamma\in\frags(X,[0,1])$}\\
      2\nu_2(\gamma) &\text{if $\gamma\in\frags(X,[2,3])$}\\
      2\nu_3(\gamma) &\text{if $\gamma\in\frags(X,[4,5])$;}
    \end{cases}
  \end{align}
note that $\albrep.'=(P',\nu')\in\Alb(\mu)$ and 
\begin{equation}
  D_{\albrep.'}=\frac{1}{2}D_{\albrep 1.}+\frac{1}{2}D_{\albrep
    2.}+D_{\albrep 3.}=\chi_UD_{\albrep.}.
\end{equation}
\end{proof}
The second {lemma}, loosely speaking, allows to construct Alberti
representations whose associated derivations are linear combinations
of derivations associated with other Alberti representations. 
%\textcolor{red}
{\begin{lem}
  \label{lem:alberti_modules_comb}
  Let $\lambda\in L^{\infty}(\mu)$ be nonnegative (i.e. $\lambda\ge0$
  $\mu$-a.e.) and let $\albrep.\in\Alb(\mu)$ be a $C$-Lipschitz
  Alberti representation. Then for any $\varepsilon>0$ there exists a
  $((\|\lambda\|_\infty+\varepsilon)
  C)$-Lipschitz Alberti representation $\albrep.'\in\Alb(\mu)$ satisfying:
    \begin{equation}\label{eq:alberti_modules_comb_s1}
      D_{\albrep.'}=\lambda D_{\albrep.}.
    \end{equation}
Let  $\{\albrep j.\}_{j=1}^m\subset\Alb(\mu)$, then there is an
Alberti representation
    $\albrep.'\in\Alb(\mu)$ such that:
    \begin{equation}\label{eq:alberti_modules_comb_s2}
      D_{\albrep.'}=\sum_{j=1}^mD_{\albrep j.}.
    \end{equation}
\end{lem}}
\begin{proof}
  To show~(\ref{eq:alberti_modules_comb_s1}), let
  $M=\|\lambda\|_\infty+\varepsilon$ and let $2^{<\natural}$ denote the set of finite strings on
  $\{0,1\}$:
  \begin{equation}
    2^{<\natural}=\left\{ \alpha=(a_1,\ldots,a_m): m\in\natural, a_i\in\{0,1\} \right\};
  \end{equation}
  the length of $\alpha\in 2^{<\natural}$ will be denoted by $|\alpha|$,
  and the $k$-th entry by $\alpha(k)$. As $\lambda$ is well-defined up
  to null sets, we can assume that the range of
  $\lambda$ lies in $[0, M)$ and let
  \begin{equation}
    \begin{aligned}
      \Delta:2^{<\natural}&\to[0,M)\\
      \alpha&\mapsto\sum_{k=1}^{|\alpha|}\frac{\alpha(k)}{2^k}M;
    \end{aligned}
  \end{equation} 
  let $I(\alpha)$ denote the unique subinterval $[a,b)$ of the dyadic
  subdivision of $[0,M)$ in $2^{|\alpha|}$ intervals which contains $\alpha$.
  If we let $U_\alpha$ denote the Borel set
  \begin{equation}
    U_\alpha=\left\{x\in X: \lambda(x)\in I(\alpha)\right\},
  \end{equation}
  we can write 
  \begin{equation}
    \lambda=\sum_{n=1}^\infty\frac{M}{2^n}\sum_{|\alpha|=n}\alpha(n)\chi_{U_\alpha}.
  \end{equation} Using Lemma \ref{lem:derivation_idempotent} we
  can find a $C$-Lipschitz Alberti representation $\albrep n.'=(\tilde
  P_n,\tilde\nu_n)\in\Alb(\mu)$ with
  \begin{equation}
    D_{\albrep n.'}=\sum_{|\alpha|=n}\alpha(n)\chi_{U_\alpha}D_{\albrep .}
  \end{equation} and
  \begin{equation}
    \tilde P_n\left( \frags(X,[2(n-1),2(n-1)+1])\right)=1;
  \end{equation}
  if we reparametrize the fragments in the support of $\tilde P_n$
  using $\tau_{M,b_n}$ (where $b_n$ is chosen appropriately), 
  we obtain a $CM$-Lipschitz Alberti representation $\albrep
  n.=(P_n,\nu_n)\in\Alb(\mu)$ satisfying:
  \begin{equation}
    D_{\albrep n.}=M\sum_{|\alpha|=n}\alpha(n)\chi_{U_\alpha}D_{\albrep .}
  \end{equation}
and \begin{equation}
    P_n\left( \frags(X,[\frac{2(n-1)}{M},\frac{2(n-1)+1}{M}])\right)=1.
  \end{equation}
  We now let
  \begin{align}
    P'&=\sum_n2^{-n}P_n\\
    \nu'(\gamma)&=
    \begin{cases}
      \nu_n(\gamma)&\text{if $\gamma\in\frags(X,[2(n-1),2(n-1)+1])$}\\
      0&\text{otherwise},
    \end{cases}
  \end{align}
and conclude that 
  $\albrep.'=(P',\nu')\in\Alb(\mu)$ satisfies $D_{\albrep .'}=\lambda
  D_{\albrep.}$.
\par To show~(\ref{eq:alberti_modules_comb_s2}) we use Lemma
  \ref{lem:domain_reduction} to get Alberti representations
  $\{\albrep j.'=(P_j,\nu_j)\}_{j=1}^m$ with $D_{\albrep j.'}=D_{\albrep j.}$ and
  \begin{equation}
    P_j\left(\frags(X,[2(j-1),2(j-1)+1])\right)=1;
  \end{equation}
  if we let
  \begin{align}
    P_\oplus&=\frac{1}{m}\sum_{j=1}^mP_j\\
    \nu_\oplus(\gamma)&=
    \begin{cases}
      \nu_j(\gamma)&\text{if $\gamma\in\frags(X,[2(j-1),2(j-1)+1])$}\\
      0&\text{otherwise,}
    \end{cases}
  \end{align} we obtain $\albrep\oplus.=(P_\oplus,\nu_\oplus)\in\Alb(\mu)$ and
  \begin{equation}
    D_{\albrep\oplus.}=\frac{1}{m}\sum_{j=1}^mD_{\albrep j.};
  \end{equation} thus it suffices to take
  $\albrep.'=\pullb\tau_{m,0}.\albrep\oplus.$.
\end{proof}
The proof of Theorem \ref{thm:weak*density} relies on the following
technical lemma.
%\textcolor{red}
{\begin{lem}\label{lem:vector_alberti}
  Consider a $1$-Lipschitz function $F\colon X\to\real^N$, a derivation
  ${D_0\in\wder\mu.}$, a compact $K\subset X$ and a vector $V=c w$ where
  $c>0$ and $w\in{\mathbb S}^{N-1}$; suppose that for
  $0<s_1<s_2$ and $\epsi>0$
  \begin{equation}
    \begin{aligned}
      \locnorm D_0,{\wder\mu.}.(x)&\in(s_1,s_2)\quad(\forall x\in K)\\
      \sup_{x\in K}\left\|D_0F(x)-V\right\|_{2}&<\epsi,
    \end{aligned}
  \end{equation}
  then, for each $\alpha\in(0,\frac{\pi}{2})$, the measure $\mu\mrest
  K$ admits a $C$-Lipschitz Alberti representation $\albrep.'$ with
 \begin{equation}\label{eq:vector_alberti}
  \begin{aligned}
    \left\|D_{\albrep.'}\right\|_{\wder\mu\mrest K.}\le C&\le
    (1+\epsi)\left(\frac{s_2}{(1-2\varepsilon/c)-(N-1)\varepsilon s_2\cot(\alpha)/c}+\varepsilon\right)\\
    \left\|D_{\albrep.'}F(x)-V\right\|_2&\le c(\sin\alpha+1-\cos\alpha)\quad(\text{$\mu$-a.e.~$x\in K$}).
  \end{aligned}
\end{equation} 
\end{lem}}
\begin{proof}
%\textcolor{red}
{  Note that if $u$ is a unit vector orthogonal to $w$, one has}
  \begin{equation}
    \chi_K\left|D_0\langle u,F\rangle\right|=\chi_K\left|\langle u,D_0F-V\rangle\right|<\epsi;
  \end{equation}
supposing that $S\subset K$ is Borel and
$\frags(X,F,\delta,w,\alpha)$-null, Lemma
\ref{lem:loc_estimate_fnull} implies
\begin{equation}
  \chi_S\left|D_0\langle
    w,F\rangle\right|\le\left(\delta+(N-1){\epsi\cot\alpha}\right)\locnorm D_0,{\wder\mu.}.;
\end{equation}
on the other hand, if $\mu(S)>0$,
\begin{equation}
  \chi_S\left|D_0\langle w,F\rangle\right|\ge\chi_S\langle w,V\rangle-
    \chi_S\left|\langle
    w,D_0F-V\rangle\right|\ge\chi_S(c-\epsi).
\end{equation}
This implies
\begin{equation}\label{eq:lwxxx}
  \delta\ge\frac{c-\epsi}{s_2}-(N-1){\epsi\cot\alpha};
\end{equation}
%\textcolor{red}
{in particular, $\mu$ cannot give positive measure to a Borel $S\subset
K$ which is $\frags(X,F,\allowbreak\delta,w,\alpha)$-null if $\delta$ does not
satisfy the lower bound~(\ref{eq:lwxxx}). We therefore conclude from
Theorem~\ref{alberti_rep_prod} that the measure $\mu\mrest K$ admits a $(1,1+\epsi)$-bi-Lipschitz Alberti
representation $\albrep.=(P,\nu)$ in the $F$-direction
of $\cone(w,\alpha)$ with $\langle w,F\rangle$-speed $\ge\delta_0$
where
\begin{equation}
  \delta_0=\frac{c-2\epsi}{s_2}-(N-1){\epsi\cot\alpha}.
\end{equation}}
Thus for $P$-a.e.~$\gamma$ and for
$\mpush\gamma^{-1}.\nu_\gamma$-a.e.~$t$, one has
$(F\circ\gamma)'(t)\in\cone(w,\alpha)$. 
Thus we conclude from Theorem~\ref{thm:directional_cone} that for
$\mu$-a.e.~$x\in K$:
\begin{equation}
  \label{eq:ffx1}
  D_{\albrep.}F(x)\in\cone(w,\alpha).
\end{equation}
As the $\langle w,F\rangle$-speed of $\albrep.$ is $\ge\delta_0$ and
as $P$-a.e.~fragment is $(1,1+\varepsilon)$-bi-Lipschitz, from
Theorem~\ref{thm:directional_speed} we conclude that
$\chi_KD_{\albrep.}F\ge\delta_0\chi_K$ which implies, for \hbox{$\mu\mrest K$-a.e.~$x$}, that:
\begin{equation}
  \label{eq:ffx2}
  \|D_{\albrep.}F(x)\|_2\ge\delta_0;
\end{equation} on the other hand, as $F$ is $1$-Lipschitz,
\begin{equation}
  \label{eq:ffx3}
  \|D_{\albrep.}F(x)\|_2\le 1+\varepsilon.
\end{equation}
We now apply Lemma~\ref{lem:alberti_modules_comb} with
\begin{equation}
  \label{eq:ffx4}
  \lambda(x) = \frac{c}{\|D_{\albrep.}F(x)\|_2}\chi_K(x),
\end{equation}
in order to obtain an Alberti representation $\albrep.'$ which is
$(1+\varepsilon)(\frac{c}{\delta_0}+\varepsilon)$-Lipschitz and satisfies:
\begin{equation}
  \label{eq:ffx5}
  D_{\albrep.'}=\lambda D_{\albrep.}.
\end{equation}
Note that for $\mu$-a.e.~$x\in K$
%\textcolor{red}
{\begin{equation}
  \label{eq:ffx6}  
  \begin{split}
    \left\|D_{\albrep.'}F(x)-V\right\|_2&=
  c\left\|\frac{D_{\albrep.}F(x)}{\|{D_{\albrep.}F(x)\|_2}}-w\right\|_2\\
  &\le c(\sin\alpha+1-\cos\alpha);
  \end{split}
\end{equation}}
where in the last step we used 
Remark~\ref{rem:cone_diam} as 
$\frac{D_{\albrep.}F(x)}{\|{D_{\albrep.}F(x)\|_2}}$ lies in the cone
$C(w,\alpha)$. Now~(\ref{eq:ffx6}) establishes the second equation
in~(\ref{eq:vector_alberti}). The first equation
in~(\ref{eq:vector_alberti}) follows directly from the definition of $\delta_0$
as $\albrep.'$ is $(1+\varepsilon)(c/\delta_0+\varepsilon)$-Lipschitz.
% \begin{equation}
%   \left\|D_{\albrep.}F-w\right\|_2\le\epsi+\sin\alpha+1-\cos\alpha;
% \end{equation}
% on the other hand, from
% \begin{equation}
%   \delta_0\le\left\langle w,(F\circ\gamma)'(t)\right\rangle\le1+\epsi,
% \end{equation} one gets
% \begin{equation}
%   \lambda\le(1+\epsi)s_2+\epsi(N-1){s_2}\cot\alpha+2\epsi.
% \end{equation} If $\albrep.'=\pullb\tau_{\lambda,0}.\albrep.$ Lemma
% \ref{lem:alberti_affine_action} implies \eqref{eq:vector_alberti}.
\end{proof}
\begin{proof}[Proof of Theorem \ref{thm:weak*density}]
  It suffices to show that for each $\Omega(\xi_1,\ldots,\xi_k; D_0;
  \epsi)$ (see~(\ref{omegafxx})) there is an Alberti representation $\albrep.\in\Alb_{{\rm sub}}(\mu)$:
  \begin{equation}
    \label{eq:weak*density1}
    D_{\albrep.}\in\Omega(\xi_1,\ldots,\xi_k; D_0; \epsi).
  \end{equation}
  Note that as the $\xi_i$ satisfy $\sum_{f\in
    B_1}\|\xi_i(f)\|_{L^1(\mu)}<\infty$, there are
  $\left\{(f_i,g_i)\right\}_{i=1}^{N_1}\subset\lipalg X.\times L^1(\mu)$ and
  $\epsi_1>0$ such that if
    \begin{align}    \label{eq:weak*density2a}
    \|D\|_{\wder\mu.}&\le2\|D_0\|_{\wder\mu.},\\
    \label{eq:weak*density2b}
    \left|\int g_i(D-D_0)f_i\,d\mu\right|&<\epsi_1\quad(\forall i),
  \end{align}
 then $D\in\Omega(\xi_1,\ldots,\xi_k; D_0; \epsi)$.
Let $F=(f_i)_{i=1}^{N_1}$ and, for $D\in\wder\mu.$,
$DF=(Df_i)_{i=1}^{N_1}$; note that by possibly shrinking $\epsi_1$ we
can assume that $F:X\to\real^{N_1}$ is $1$-Lipschitz. Moreover, as we
need to produce an Alberti representation for the Borel subset
$S\subset X$ on which
 $\locnorm D_0,{\wder\mu.}.>0$ and $\|D_0F\|_2>0$, we can assume $S=X$.
Using Egorov's Theorem we find countably many
$\{(K_j,\epsi_{2,j})\}$ with $K_j\subset X$ compact and
 $\epsi_{2,j}\in(0,\infty)$ such that if
\eqref{eq:weak*density2a} holds and 
\begin{equation}
  \label{eq:weak*density3}
  \sup_{x\in
    K_j}\left\|DF(x_j)-D_0F(x_j)\right\|_{2}<\epsi_{2,j}\quad(\forall j),
\end{equation}  
 then \eqref{eq:weak*density2b} holds. Note that we can also assume
 that  
 \begin{equation}
   \begin{aligned}
     \inf_{x\in K_j}\locnorm D_0,{\wder\mu.}.(x)&>0\\
     \inf_{x\in K_j}\|D_0F(x)\|_2&>0.
   \end{aligned}
 \end{equation}
Using again Egorov's Theorem, we can further partition the $K_j$ so that
there are $(W_j,\epsi_{3,j},\alpha_j)\in
\left(\real^{N_1}\setminus\{0\}\right)\times(0,\frac{\epsi_{2,j}}{2})\times(0,\frac{\pi}{2})$ such that:
\begin{enumerate}
\item The supremum $\sup_{x\in
    K_j}\left\|D_0F(x_j)-W_j\right\|_2<\epsi_{3,j}$, and $\varepsilon_{3,j}<\varepsilon_{2,j}^2\|W_j\|_2$.
  \item For each $ x\in K_j$ $\locnorm
    D_0,{\wder\mu.}.\in(s_{1,j},s_{2,j})\subset(0,\infty)$.
    \item We have the inequality \begin{equation}
        (1+\epsi_{3,j})   \left(\frac{s_{2,j}} {
          (1-2\varepsilon_{3,j}/\|W_j\|_2)-(N-1)\varepsilon_{3,j}s_{2,j}\cot(\alpha)/\|W_2\|_j
          }+\varepsilon_{3,j}\right)\le 2\|D_0\|_{\wder\mu.}.
  \end{equation}
\item We have the inequality:
  \begin{equation}
   \|W_j\|_{2}(\sin\alpha_j+1-\cos\alpha_j)<\frac{\epsi_{2,j}}{2}.
  \end{equation}
\end{enumerate}
Using Lemma \ref{lem:vector_alberti} we obtain an Alberti
representation $\albrep j.$ of $\mu\mrest K_j$ such that
\eqref{eq:weak*density2a} and \eqref{eq:weak*density3} hold for
$D_{\albrep j.}$. Using the gluing principle Theorem~\ref{alb_glue} we
obtain $\albrep .\in\Alb_{{\rm sub}}(\mu)$ such that
(\ref{eq:weak*density1}) holds.
\end{proof}
% \begin{defn}
%   If $I\subset\real$ is a compact non-degenerate interval, let
%   \begin{equation}
%     \frags_I(X)=\left\{\gamma\in\frags(X): \dom\gamma\subset I\right\}.
%   \end{equation}
% \end{defn}
%The proof of Theorem \ref{thm:fin_gen_surjectivity} relies on the
%following technical Lemma.
We now turn to the proof of Theorem \ref{thm:fin_gen_surjectivity}.
\begin{proof}[Proof of Theorem \ref{thm:fin_gen_surjectivity}]
 %\textcolor{red}
{ As {$\chi_{X\setminus S}D=0$}, without loss of generality we can assume that $S=X$, and that $\wder\mu.$ is
  free on $\{D_{\albrep i.}\}_{i=1}^k$ where $\albrep i.\in\Alb(\mu)$
  and
  \begin{equation}
    D=\sum_{i=1}^k\lambda_iD_{\albrep i.}
  \end{equation} where the $\{\lambda_i\}_{i=1}^k\subset L^\infty(\mu)$
  are nonnegative. By~(\ref{eq:alberti_modules_comb_s1}) in
  Lemma~\ref{lem:alberti_modules_comb} we can find Alberti
  representations $\{\albrep i.'\}\subset\Alb(\mu)$ such that:
  \begin{equation}
    \label{eq:fin_gen_surjectivity_p1}
    D_{\albrep i.'}=\lambda_iD_{\albrep i.},
  \end{equation}
  so that 
  \begin{equation}
    \label{eq:fin_gen_surjectivity_p2}
    D=\sum_{i=1}^kD_{\albrep i.'}.
  \end{equation}
  By~(\ref{eq:alberti_modules_comb_s2}) in
  Lemma~\ref{lem:alberti_modules_comb} we can find an Alberti
  representation $\albrep.'\in\Alb(\mu)$ such that:
  \begin{equation}
    \label{eq:fin_gen_surjectivity_p3}
    D_{\albrep. '}=\sum_{i=1}^kD_{\albrep i.'}=D,
  \end{equation}
showing that $D\in\Der(\Alb(\mu\mrest S))$.}
\end{proof}
\subsection{Geometric characterization of $\locnorm\,\cdot\,,{\wform\mu.}.$}\label{subsec:weaver_norm}
In this subsection we prove Theorem~\ref{thm:weaver_fnorm_char}, which
gives an intrinsic and geometric characterization of the Weaver norm
on $\wder\mu.$. We also show that the inequality
$\|Df\|_{L^\infty(\mu)}\le\|D\|_{\wder\mu.}\glip f.$ localizes (Lemma
\ref{lem:local_lip_der}): the norms can be replaced by local norms, and
the global Lipschitz constant $\glip f.$ can be replaced by $\biglip f$.
\begin{thm}\label{thm:weaver_fnorm_char}
  Let $X$ be a compact metric space, $f\colon X\to\real$ Lipschitz,
  $S\subset X$ Borel, and $\mu$ a Radon measure on $X$. Then $\locnorm
  df,{\wform\mu\mrest S.}.\le\alpha$ if and only if for each $\epsi>0$
  there is an $S_\epsi\subset S$ which is Borel,
  $\frags(X,f,\alpha+\epsi)$-null, and satisfies $\mu(S\setminus
  S_\epsi)=0$. In particular,
  \begin{equation}
    \|df\|_{\wform\mu.}=\sup\left\{\|Df\|_{L^\infty(\mu)}:\text{$D\in\Der(\Alb_{{\rm
            sub}}(\mu))$ and $\|D\|_{\wder\mu.}\le1$}\right\}.
  \end{equation}
\end{thm}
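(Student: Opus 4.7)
The plan is to establish both implications separately and then derive the displayed supremum formula as a corollary. For the sufficient direction (existence of $S_\epsi$ for every $\epsi$ implies $\locnorm df,{\wform\mu\mrest S.}.\le\alpha$), I would fix $\epsi>0$, use inner regularity of $\mu$ to reduce to a compact $S_\epsi$, and apply the simplified approximation Theorem \ref{onedimapprox} to produce $\max(\glip f.,\alpha+\epsi)$-Lipschitz functions $g_k\xrightarrow{\text{w*}}f$ that are $\mu$-a.e.~locally $(\alpha+\epsi)$-Lipschitz on $S_\epsi$. For any $D\in\wder\mu.$, Lemma \ref{lem:loc_estimate_dist} applied with the scaled metric $d'=(\alpha+\epsi)\metric$ yields $\chi_{S_\epsi}|Dg_k|\le(\alpha+\epsi)\locnorm D,{\wder\mu.}.$; passing to the weak* limit via lower semicontinuity of the local norm and using $\mu(S\setminus S_\epsi)=0$ gives $\chi_S|Df|\le(\alpha+\epsi)\locnorm D,{\wder\mu.}.$ $\mu$-a.e. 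Sending $\epsi\searrow0$ and invoking the duality between $\wform\mu\mrest S.$ and $\wder\mu\mrest S.$ yields $\locnorm df,{\wform\mu\mrest S.}.\le\alpha$.

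For necessity I argue by contraposition. Suppose that for some $\epsi>0$ no Borel $\frags(X,f,\alpha+\epsi)$-null subset of $S$ is $\mu$-conull in $S$. Take a maximal such $S_\epsi$ (the family is closed under countable unions, so a maximum modulo $\mu$-null sets exists) and set $T=S\setminus S_\epsi$; then $\mu(T)>0$ and every positive-measure Borel subset of $T$ fails to be $\frags(X,f,\alpha+\epsi)$-null, hence fails to be $\Omega$-null where $\Omega=\{\gamma:(f\circ\gamma)'>(\alpha+\epsi/2)\metdiff\gamma\}$. Theorem \ref{alberti_rep_prod} then produces a $(1,1+\eta)$-biLipschitz Alberti representation $\albrep.=(P,\nu)$ of $\mu\mrest T$ with $f$-speed $>\alpha+\epsi/2$. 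Theorem \ref{thm:directional_speed} gives $D_{\albrep.}f\ge(\alpha+\epsi/2)\sigma_{\albrep.}$ $\mu$-a.e.~on $T$, while bounding $|\partial_\gamma h|\le\metdiff\gamma$ for $1$-Lipschitz $h$ in the defining formula \eqref{eq:derivation_alberti} yields $\locnorm D_{\albrep.},{\wder\mu.}.\le\sigma_{\albrep.}$. BiLipschitzness forces $\sigma_{\albrep.}\ge 1$ on $T$, so $\locnorm D_{\albrep.},{\wder\mu.}.>0$ there; combining with the duality estimate $D_{\albrep.}f\le\locnorm df,{\wform\mu.}.\locnorm D_{\albrep.},{\wder\mu.}.$ forces $\locnorm df,{\wform\mu.}.\ge\alpha+\epsi/2>\alpha$ on $T$, contradicting $\locnorm df,{\wform\mu\mrest S.}.\le\alpha$.

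For the supremum identity, the inequality $\ge$ is immediate from $\|Df\|_{L^\infty(\mu)}\le\|df\|_{\wform\mu.}\|D\|_{\wder\mu.}$. For $\le$, suppose $\|df\|_{\wform\mu.}>\alpha$; then $\locnorm df,{\wform\mu.}.>\alpha$ on some positive-measure Borel $T$, and the necessity argument produces $\albrep.\in\Alb_{{\rm sub}}(\mu)$ with $D_{\albrep.}f\ge(\alpha+\epsi/2)\locnorm D_{\albrep.},{\wder\mu.}.$ on a subset $T'\subset T$ on which, via Lusin--Egorov, $\locnorm D_{\albrep.},{\wder\mu.}.$ is pinched between positive constants $s_1<s_2$ with $s_2/s_1$ arbitrarily close to $1$. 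Restricting $\albrep.$ to $T'$ and rescaling by $1/s_2$ via Lemma \ref{lem:alberti_affine_action} yields an element of $\Der(\Alb_{{\rm sub}}(\mu))$ of $\wder\mu.$-norm at most $1$ whose action on $f$ has $L^\infty$-norm arbitrarily close to $\alpha+\epsi/2>\alpha$, closing the loop. The principal obstacle is the necessity direction, in which one must translate a quantitative upper bound on the form local norm into the geometric richness of a family of fragments in the $f$-direction; this step pivots on the nullity-to-representation criterion of Theorem \ref{alberti_rep_prod} and the speed--derivation comparison developed in Subsection \ref{subsec:derivations_alberti}.
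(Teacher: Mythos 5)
Your proposal is correct and follows essentially the same strategy as the paper: sufficiency via the approximation scheme (Theorem \ref{onedimapprox}) and weak* lower semicontinuity, necessity via Theorem \ref{alberti_rep_prod} together with the speed--derivation comparison from Subsection \ref{subsec:derivations_alberti}. The one genuine framing difference is in necessity: you extract a positive-measure set $T\subset S$ by a maximality argument on the family of $\frags(X,f,\alpha+\epsi)$-null Borel subsets and then apply the equivalence $(2)\Leftrightarrow(3)$ of Theorem \ref{alberti_rep_prod} as a black box, whereas the paper \emph{inspects} the proof of Theorem \ref{alberti_rep_prod} to read off the decomposition $\mu=\mu\mrest G_\eta+\mu\mrest F_\eta$ with $F_\eta$ fragment-null and $\mu\mrest G_\eta$ carrying the desired $(1,1+\eta)$-biLipschitz representation. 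Your route has the virtue of invoking the theorem as stated rather than its internal construction. Two small points worth tightening: Lemma \ref{lem:loc_estimate_dist} is stated with a constant bound $c$, so taking $c=(\alpha+\epsi)\locnorm D,{\wder\mu.}.$ requires either a preliminary partition of $S_\epsi$ into pieces on which $\locnorm D,{\wder\mu.}.$ is nearly constant, or, more directly, the observation that $\biglip g_k\le\alpha+\epsi$ $\mu$-a.e.~on $S_\epsi$ combined with the pointwise derivation estimate; and in the supremum identity the restriction to $T'$ and the rescaling by $1/s_2$ should be routed through Lemmas \ref{lem:derivation_idempotent} and \ref{lem:alberti_affine_action} so that the resulting derivation is still realized as an element of $\Der(\Alb_{{\rm sub}}(\mu))$.
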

\begin{proof}
  Necessity is proven by contrapositive. Assume that for some
  $\epsi>0$ the set $S$ does not contain a full $\mu$-measure subset which is
  $\frags(X,f,\alpha+\epsi)$-null. Inspection of
  the proof of Theorem~\ref{alberti_rep_prod} (see the discussion
  around~(\ref{iterxx})) shows
  that for each $\eta>0$
  \begin{equation}
    \mu=\mu\mrest G_\eta+\mu\mrest F_\eta,
  \end{equation}
where $G_\eta$, $F_\eta$ are complementary, $F_\eta$ being a
$\frags(X,f,\alpha+\epsi)$-null $F_{\sigma\delta}$ and $\mu\mrest
G_\eta$ admitting a $(1,1+\eta)$-bi-Lipschitz Alberti representation $\albrep\eta.=(P,\nu)$
with $f$-speed $\ge\alpha+\epsi$. By assumption, $F_\eta\cap S$ cannot be a
full $\mu$-measure subset of $S$, so $\mu(G_\eta\cap S)>0$.
 If $D_\eta$ denotes the derivation
associated to $\albrep\eta.$,
\begin{equation}
  \|D_\eta\|_{\wder\mu.}\le 1+\eta;
\end{equation}
%\textcolor{red}
{moreover, as for $P$-a.e.~$\gamma$ we have for
$\gamma^{-1}_{\#}\nu_\gamma$-a.e.~$t$ that
$(f\circ\gamma)'(t)\ge\alpha+\varepsilon$, we conclude that $\mu\mrest
G_\eta$-a.e.~one has:
\begin{equation}
  D_\eta f\ge\alpha+\epsi,
\end{equation}
and thus the following holds $\mu\mrest
G_\eta$-a.e.:
\begin{equation}
  \locnorm df,{\mu\mrest G_\eta}.\ge\frac{\alpha+\epsi}{1+\eta}>\alpha,
\end{equation} where $\eta$ is sufficiently small.}
\par Sufficiency is proven via the approximation scheme Theorem
\ref{onedimapprox}, reducing to the case in which $S$ is compact.
By replacing $S$ by $\bigcap_nS_{\frac{1}{n}}$, we can assume that
$S$ is, for each $n$,
$\frags(X,f,\alpha+{\frac{1}{n}})$-null; then there exist functions
$g_n$ which are
$\max\left(\glip f.,\alpha+{\frac{1}{n}}\right)$-Lipschitz and
$\mu$-a.e.~$(\alpha+{\frac{1}{n}})$\nobreakdash-\hspace{0pt}Lipschitz on $S$ with
$\|g_n-f\|_\infty\le{\frac{1}{n}}$. Thus, $g_n\xrightarrow{\text{w*}}
f$ so that $dg_n\xrightarrow{\text{w*}}df$ and by lower
semicontinuity,
\begin{equation}
  \|df\|_{L^\infty(\mu\mrest S)}\le\liminf_{n\to\infty}\|dg_{{n}}\|_{L^\infty(\mu\mrest S)}\le\alpha.
\end{equation}
\end{proof}
\par We also present a proof of the following useful result.
\begin{lem}\label{lem:local_lip_der}
    Let $X$ be a metric space, $\mu$ a Radon measure on $X$,
    $f\in\lipalg X.$ and $D\in\wder\mu.$; then
    \begin{equation}
      |Df|\le\locnorm D,{\wder\mu.}.\biglip f.
    \end{equation}
\end{lem}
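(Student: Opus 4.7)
The plan is to combine the geometric characterization of the Weaver cotangent norm (Theorem \ref{thm:weaver_fnorm_char}) with the local duality pairing between $\wder\mu.$ and $\wform\mu.$. Since $\mu$ is Radon on a separable space, I would first reduce to the case where $X$ is compact by decomposing $X$ modulo a $\mu$-null set into a countable family of compact pieces and arguing on each.

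Fix $\alpha>0$ and consider the Borel sublevel set $S_\alpha=\{x\in X:\biglip f(x)\le\alpha\}$. The key claim is that $S_\alpha$ is $\frags(X,f,\alpha+\epsi)$-null for every $\epsi>0$. Given $\gamma\in\frags(X,f,\alpha+\epsi)$, fix $t\in\gamma^{-1}(S_\alpha)$ at which $\metdiff\gamma(t)$ and $(f\circ\gamma)'(t)$ both exist and $\metdiff\gamma(t)>0$; such $t$ form a full-$\lebmeas$-measure subset of $\gamma^{-1}(S_\alpha)$ by the almost everywhere existence of the metric differential and the biLipschitzness of $\gamma$. For each $\eta>0$, the inequality $\biglip f(\gamma(t))\le\alpha$ supplies $r>0$ with $|f(y)-f(\gamma(t))|\le(\alpha+\eta)\dist y,\gamma(t).$ for every $y\in\ball\gamma(t),r.$. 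Since $\gamma$ is continuous, $\gamma(s)\in\ball\gamma(t),r.$ for $s$ near $t$; dividing by $|s-t|$, letting $s\to t$, and then sending $\eta\to 0$ yields $|(f\circ\gamma)'(t)|\le\alpha\metdiff\gamma(t)$. This is incompatible with $(f\circ\gamma)'(t)\ge(\alpha+\epsi)\metdiff\gamma(t)$, so $\lebmeas(\gamma^{-1}(S_\alpha))=0$ and the claim follows.

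Once the claim is in hand, Theorem \ref{thm:weaver_fnorm_char} gives $\locnorm df,{\wform\mu\mrest S_\alpha.}.\le\alpha$, and the local duality pairing then yields $\chi_{S_\alpha}|Df|\le\alpha\,\chi_{S_\alpha}\locnorm D,{\wder\mu.}.$ $\mu$-a.e. For a fixed $\epsi>0$ I would apply this with $\alpha=n\epsi$ for each $n\ge 1$: on the slice $\{(n-1)\epsi<\biglip f\le n\epsi\}\subset S_{n\epsi}$ one has $|Df|\le n\epsi\locnorm D,{\wder\mu.}.\le(\biglip f+\epsi)\locnorm D,{\wder\mu.}.$, and summing over $n$ gives the bound $|Df|\le(\biglip f+\epsi)\locnorm D,{\wder\mu.}.$ $\mu$-a.e.\ on $X$. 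Letting $\epsi$ run through a sequence tending to $0$ then produces the desired inequality. The main obstacle is the chain-rule step that translates pointwise upper Lipschitz control of $f$ at $\gamma(t)$ into a bound on $(f\circ\gamma)'(t)$; the remaining steps are standard applications of the cited theorem and of duality.
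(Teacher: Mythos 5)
Your argument is correct, but it takes a genuinely different route from the paper's own proof and is noticeably heavier. The paper proves the lemma directly and elementarily: apply Egorov and Lusin to decompose $X$ (mod $\mu$-null) into disjoint compacts $K_\alpha$ of small diameter on which $\biglip f(\cdot,r)$ is nearly constant for $r$ up to the diameter and on which $\locnorm D,{\wder\mu\mrest K_\alpha.}.$ is nearly $\|D\|_{\wder\mu\mrest K_\alpha.}$; then $f|K_\alpha$ is $\lambda_\alpha$-Lipschitz, a McShane extension $F$ with $\glip F.\le\lambda_\alpha$ and the locality of derivations (Lemma \ref{lem:locality_derivations}) give $\chi_{K_\alpha}|Df|=\chi_{K_\alpha}|DF|\le\lambda_\alpha\|D\|_{\wder\mu\mrest K_\alpha.}$, and one sums. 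Your route instead funnels everything through Theorem \ref{thm:weaver_fnorm_char}, whose sufficiency direction already relies on the full cylindrical approximation machinery (Theorem \ref{onedimapprox}); there is no circularity --- Theorem \ref{thm:weaver_fnorm_char} does not invoke this lemma --- but you are deploying a much larger hammer. Your chain-rule step (that $\{\biglip f\le\alpha\}$ is $\frags(X,f,\alpha+\epsi)$-null) is correct and essentially the same argument, used several times in the paper, that a pointwise upper Lipschitz bound at $\gamma(t)$ controls $|(f\circ\gamma)'(t)|/\metdiff\gamma(t)$. The one thing you should spell out is the ``local duality pairing'' $|\langle df,D\rangle|\le\locnorm df,{\wform\mu.}.\cdot\locnorm D,{\wder\mu.}.$ $\mu$-a.e.: the paper defines the local norm on the dual module abstractly (via the $L^\infty$-normed module axioms) and never states this pointwise Cauchy--Schwarz inequality explicitly. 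It is standard and true --- one deduces it by an Egorov/Lusin argument on the essential ranges of the two local norms --- but you are using it as a black box, whereas the paper's proof bypasses it entirely. In short: your argument is valid and conceptually pleasant in that it exhibits the lemma as a corollary of the geometric characterization, but the paper's route is self-contained and avoids invoking any of the deeper structure.
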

\begin{proof}
  For all $\epsi_0,\epsi_1>0$, using Egorov and Lusin's Theorems,
  there are triples $(K_\alpha,\lambda_\alpha,r_\alpha)$ such that:
  \begin{enumerate}
  \item The $K_\alpha$ are disjoint compact with
    $\mu\left(X\setminus\bigcup_\alpha K_\alpha\right)=0$ and
    $\mu(K_\alpha)>0$.
\item The constants $\lambda_\alpha$ are nonnegative.
\item The constants $r_\alpha$ are positive and $K_\alpha$ has diameter
  less than $r_\alpha$.
\item For each $x\in K_\alpha$ and each $r\in(0,r_\alpha]$,
\begin{equation}
  \biglip f(x,r)\in(\lambda_\alpha-\epsi_0,\lambda_\alpha).
\end{equation}
\item The local norm $\locnorm D,{\wder\mu\mrest K_\alpha.}.$ lies in $(\|D\|_{\wder\mu\mrest K_\alpha.}-\epsi_1,\|D\|_{\wder\mu\mrest K_\alpha.}]$.
  \end{enumerate}
By point (3) $f$ is $(\lambda_\alpha)$-Lipschitz on
$K_\alpha$; let $F$ denote a McShane extension of $f|K_\alpha$ with
$\glip F.\le\lambda_\alpha$. Then
\begin{equation}
  \|Df\|_{L^\infty(\mu\mrest K_\alpha)}=\|DF\|_{L^\infty(\mu\mrest
    K_\alpha)}\le\lambda_\alpha\|D\|_{\wder\mu\mrest K_\alpha.};
\end{equation}
thus
\begin{equation}
\begin{split}
  \chi_{K_\alpha}|Df|&\le\lambda_\alpha\left(\locnorm D,{\wder\mu\mrest
    K_\alpha.}.+\epsi_1\right)\\
  &<(\biglip f+\epsi_0)\left(\locnorm D,{\wder\mu\mrest
    K_\alpha.}.+\epsi_1\right).
\end{split}
\end{equation}
As $\sum_\alpha\chi_{K_\alpha}=1$ in $L^\infty(\mu)$ (the convergence
of the series is in the weak* sense),
\begin{equation}
  |Df|<(\biglip f+\epsi_0)\left(\locnorm D,{\wder\mu.}.+\epsi_1\right).
\end{equation}
Letting $\epsi_0,\epsi_1\searrow0$ completes the proof.
\end{proof}
\section{Structure of differentiability spaces}\label{sec:app_diff_spaces}
\subsection{Differentiability spaces and derivations}\label{subsec:derivation_differentiability}
In this subsection we make a first application of derivations to study
differentiability spaces. The goal is to prove Lemma
\ref{partialderivatives} and Corollary \ref{cor:assouad_bound}. We
give a first proof of Lemma \ref{partialderivatives} which depends on
the results in \cite{bate-diff}; another proof, which relies on
Theorem \ref{thm:diff_char1} and \cite{derivdiff}, can be found at the
end of Subsection \ref{subsec:char_diff}.
\begin{lem}\label{partialderivatives}
   Let $(U,x)$ be a differentiability chart, with $U$ Borel, in a
   differentiability space $(X,\mu)$. Then the partial derivative operators
   $\frac{\partial}{\partial x^j}$ are derivations.
\end{lem}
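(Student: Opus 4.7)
\textbf{Proof plan for Lemma \ref{partialderivatives}.} The strategy is to realize each $\partial/\partial x^j$ as a finite $L^\infty(\mu\mrest U)$-linear combination of derivations produced from Alberti representations, so that weak*-continuity and the Leibniz rule are inherited automatically. Using Bate's characterization (Theorem \ref{thm:bate-diff_char}) together with the construction in \cite{bate-diff} which produces independent Alberti representations adapted to a chart, one obtains Alberti representations $\{\albrep i.\}_{i=1}^{k}$ of $\mu\mrest U$ in the $x$-directions of $k$ independent cone fields. By Theorem \ref{thm:alb_derivation} these yield derivations $D_i=D_{\albrep i.}\in\wder\mu\mrest U.$; by Theorem \ref{thm:directional_cone} the $D_i$ are independent. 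Arguing as in the proof of Corollary \ref{cor:mu_arb_cone}, I partition $U$ into countably many Borel sets $U_n$ so that, on each $U_n$, the $k\times k$ matrix $M_n=(D_i x^j)_{i,j=1}^k$ is invertible with entries of its inverse lying in $L^\infty(\mu\mrest U_n)$.

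The heart of the argument is the identity
\begin{equation}\label{eq:chart_identity_plan}
Df=\sum_{j=1}^k\frac{\partial f}{\partial x^j}\,Dx^j\qquad\text{$\mu$-a.e.~on $U$,}
\end{equation}
valid for every derivation $D\in\wder\mu.$ and every $f\in\lipalg X.$. To avoid an $x_0$-dependent exceptional set, fix a countable dense subset $Q\subset\real^k$; for each $q\in Q$, Lemma \ref{lem:local_lip_der} applied to $f-\sum_j q_j x^j$ produces a $\mu$-null set $N_q$ off which
\[
  \Bigl|Df-\sum_j q_j\,Dx^j\Bigr|\le\locnorm D,{\wder\mu.}.\cdot\biglip\Bigl(f-\sum_j q_j x^j\Bigr)
\]
holds pointwise. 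At a $\mu$-a.e.\ point $x_0\in U\setminus\bigcup_{q\in Q}N_q$ that is simultaneously a differentiability point of $f$, I pick $q_n\in Q$ with $q_n\to\bigl((\partial f/\partial x^j)(x_0)\bigr)_{j=1}^k$; using subadditivity of $\biglip$ and the differentiability hypothesis, the right-hand side tends to $0$, while the left-hand side tends to $|Df(x_0)-\sum_j(\partial f/\partial x^j)(x_0)Dx^j(x_0)|$, yielding \eqref{eq:chart_identity_plan}.

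Setting $D=D_i$ in \eqref{eq:chart_identity_plan} gives the linear system $D_if=\sum_j(\partial f/\partial x^j)D_ix^j$; inverting $M_n$ on each $U_n$ expresses $\partial f/\partial x^j=\sum_i\lambda_{j,i}^{(n)}\,D_if$ with $\lambda_{j,i}^{(n)}\in L^\infty(\mu\mrest U_n)$. Since the set of derivations is an $L^\infty(\mu)$-module (the Leibniz rule and weak*-continuity being preserved under multiplication by an $L^\infty$-function and finite sums), $\partial/\partial x^j$ is a derivation on each $U_n$; locality (Lemma \ref{lem:locality_derivations}) then glues these together into a derivation on all of $U$. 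The principal technical obstacle is establishing \eqref{eq:chart_identity_plan} with a single exceptional $\mu$-null set independent of $x_0$: the countable-dense-$Q$ device is what reduces the $x_0$-dependent ``Taylor expansion'' of $f$ at $x_0$ to a countable family of honest Lipschitz functions to which Lemma \ref{lem:local_lip_der} can be applied uniformly.
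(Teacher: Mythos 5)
Your proof follows the same overall strategy as the paper's: use Bate's result (cited as \cite[Thm.~6.6]{bate-diff} in the paper) to produce Alberti representations of $\mu\mrest U$ in the $x$-directions of independent cone fields, convert them into derivations $D_i$ via Theorem \ref{thm:alb_derivation}, use Theorem \ref{thm:directional_cone} to see that the matrix $(D_ix^j)$ is $\mu$-a.e.~invertible, partition $U$ so that the inverse matrix has bounded entries on each piece, and thereby express $\partial/\partial x^j$ as a piecewise $L^\infty$-linear combination of the $D_i$. The one genuine difference in route is how the identity $Df=\sum_j(\partial f/\partial x^j)\,Dx^j$ is obtained: the paper invokes \cite[Lem.~6.1]{derivdiff} as a black box, while you prove it directly by applying Lemma \ref{lem:local_lip_der} to $f-\sum_jq_jx^j$ over a countable dense $Q\subset\real^k$ and combining the differentiability hypothesis with subadditivity of $\biglip$. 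Your argument is self-contained where the paper outsources, and that is a worthwhile gain.

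There is, however, a gap at the gluing step. The coefficients $\lambda^{(n)}_{j,i}$ are not uniformly bounded in $n$, so $\partial/\partial x^j$ is not a finite $L^\infty(\mu\mrest U)$-linear combination of derivations on $U$; it is the countable sum $\sum_n\chi_{U_n}\,\partial/\partial x^j$ of derivations on the pieces $U_n$. Lemma \ref{lem:locality_derivations} does not assemble this sum into a derivation on all of $U$: locality only asserts that $\chi_UDf$ depends only on $f|_U$, which is unrelated to summability or weak*-continuity of the glued operator. What is actually needed, and what the paper supplies explicitly at the end of its proof, is a verification of weak*-continuity: for $f_t\xrightarrow{\text{w*}}f$ and $h\in L^1(\mu\mrest U)$, the tail $\int_{U\setminus\bigcup_{s\le N}U_{s}}h\,\partial f_t/\partial x^j\,d\mu$ is made uniformly small in $t$ using the operator-norm bound \eqref{eq:regchart}, and the remaining finite truncation $\sum_{s\le N}\chi_{U_s}\partial/\partial x^j$ is a derivation, hence weak*-continuous. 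This is routine but it cannot be dispensed with by appealing to locality.
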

\begin{proof}
  Let $n$ be the dimension of the chart $(U,x)$; we first show that
  there are disjoint Borel sets $\{U_\alpha\}$ with $U_\alpha\subset
  U$ and $\mu\left(U\setminus\bigcup_\alpha U_\alpha\right)=0$, and such that,
  for each $\alpha$, it is possible to find $g_{\alpha,k}^i\in
  L^\infty(\mu\mrest U_\alpha)$ ($i,k=1,\ldots,n)$ and derivations
  associated to Alberti representations $\{\albrep
  i.\}_{i=1}^n$, such that
  \begin{equation}\label{derrep}
    \sum_{i=1}^ng_{\alpha,k}^iD_{\albrep i.\mrest U_\alpha}x^j=\delta_k^j\chi_{U_\alpha};
  \end{equation} in fact, because of \cite[Lem.~6.2]{derivdiff}, in a
  differentiability space
  derivations are completely determined by their action on the chart
  functions and so \eqref{derrep} represents $\chi_{U_\alpha}\frac{\partial}{\partial
    x^k}$. Note that even though in \cite[Sec.~6]{derivdiff} the
  measure $\mu$ is assumed to be doubling, this is not restrictive by
  Theorem \ref{thm:bate_speight}. By \cite[Thm.~6.6]{bate-diff} we know that $\mu\mrest U$ admits
  Alberti representations in the $x$-directions of independent cone
  fields $\{\cone_l\}_{l=1}^n$. Using Corollary \ref{der-alb}, we obtain
  $(1,3/2)$-bi-Lipschitz Alberti representations $\{\albrep
  i.\}_{i=1}^n$ in the $x$-directions
  of cone fields $\cone(e_i,\theta)$ where $\{e_i\}_{i=1}^n$ is the standard
  Euclidean basis and $\theta$ is sufficiently small to ensure that
  the cone fields $\cone(e_i,\theta)$ are independent. In particular,
  letting $M=(D_{\albrep i.}x^j)_{i,j=1}^n$, %\textcolor{red}
{minding Theorem~\ref{thm:directional_cone}} and choosing Borel
  representatives for the entries of $M$, the bounded Borel function $\det M$ is different from zero on a $\mu$-full measure
  subset. In particular, one can choose disjoint Borel subsets
  $U_\alpha\subset U$ with $\mu\left(U\setminus\bigcup_\alpha
    U_\alpha\right)=0$, and such that, on each $U_\alpha$ the determinant $\det M$ is uniformly bounded
  from below by $\delta_\alpha>0$. Inverting the matrix $M$ one
  concludes that there are $g_{\alpha,k}^i\in L^\infty(\mu\mrest
  U_\alpha)$ such that \eqref{derrep} holds.
  \par The definition of chart implies that the partial derivative
  operators $\frac{\partial}{\partial x^j}$ are bounded linear maps
  from $\lipalg X.\to L^\infty(\mu\mrest U)$ that satisfy the product
  rule. In order to show weak* continuity, suppose that
  $f_t\xrightarrow{\text{w*}} f$ in $\lipalg X.$ and that $h\in
  L^1(\mu\mrest U)$. For each $\epsi>0$ there are finitely many
  $\{U_{\alpha_s}\}_{s=1}^{N(\epsi)}$ such that
  \begin{align}\label{eq:partialderivatives_p1}
    \sup_t\left|\int_{U\setminus\bigcup_{s=1}^{N(\epsi)}U_{\alpha_s}}h\frac{\partial{f_t}}{\partial
        x^j}\,d\mu\right|&\le\epsi; \\ \label{eq:partialderivatives_p2}
    \left|\int_{U\setminus\bigcup_{s=1}^{N(\epsi)}U_{\alpha_s}}h\frac{\partial{f}}{\partial
        x^j}\,d\mu\right|&\le\epsi.
  \end{align}
  Combining equations \eqref{eq:partialderivatives_p1} and
  \eqref{eq:partialderivatives_p2} with the fact that
  $\sum_{s=1}^{N(\epsi)}\chi_{U_{\alpha,s}}\frac{\partial}{\partial
    x^j}$ is a derivation, we obtain
  \begin{equation}
    \lim_{t\to\infty}\int_Uh\frac{\partial{f_t}}{\partial
        x^j}\,d\mu=\int_Uh\frac{\partial{f}}{\partial
        x^j}\,d\mu,
  \end{equation}
which shows that $\frac{\partial}{\partial x^j}$ is weak* continuous.
\end{proof}
\par We now prove the dimensional bound Corollary
\ref{cor:assouad_bound} which significantly strengthens previous
bounds on differentiability dimensions. In fact, we remove from
Theorems \ref{thm:cheeger}, \ref{thm:keith} and
\ref{thm:bate-diff_char} the dependence on $\tau$ from the bound on
the differentiability dimension.
\begin{cor}\label{cor:assouad_bound}
  Suppose $(X,\mu)$ is a $\sigma$-differentiability space with
  $\mu$ doubling, then $(X,\mu)$ is a differentiability space and
  the dimension of the mea\-surable differentiable structure is at most
  the Assouad dimension of $X$.
\end{cor}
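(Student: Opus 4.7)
The plan is to assemble Lemma \ref{partialderivatives} and Corollary \ref{derbound} to obtain a uniform upper bound on the dimensions of the differentiability charts. Since $\mu$ is doubling, a standard fact implies that $\spt\mu$ has finite Assouad dimension $D$; because $\wder\mu.$ depends only on the measure class of $\mu$, I may replace $X$ by $\spt\mu$ without loss of generality. Write $X=\bigcup_\alpha X_\alpha$ for the Borel decomposition witnessing the $\sigma$-differentiability property, so that each $(X_\alpha,\mu\mrest X_\alpha)$ is a differentiability space.

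Next, fix any chart $(U,x)$ of dimension $n$ contained in some $X_\alpha$, where $x=(x^1,\dots,x^n)$. By Lemma \ref{partialderivatives}, each partial derivative operator $\partial/\partial x^j$ belongs to $\wder\mu\mrest U.$. I claim these $n$ derivations are linearly independent over $L^\infty(\mu\mrest U)$: if $\sum_{j=1}^n g_j\,\partial/\partial x^j=0$ with $g_j\in L^\infty(\mu\mrest U)$, then applying this operator to the chart function $x^k$ and using $\partial x^k/\partial x^j=\delta^k_j$ $\mu$-a.e.\ on $U$ (by uniqueness of the derivative decomposition of $x^k$ itself on the chart), one obtains $g_k=0$ $\mu$-a.e.\ on $U$ for each $k$.

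Therefore, by Corollary \ref{derbound}, $n\le D$. Since this bound applies uniformly to every chart of every $X_\alpha$, the union over $\alpha$ of all these charts gives a countable family of differentiability charts for $(X,\mu)$ itself whose dimensions are uniformly bounded by $D$. This upgrades $(X,\mu)$ from a $\sigma$-differentiability space to a genuine differentiability space, with differentiability dimension at most the Assouad dimension of $X$.

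The argument is essentially an assembly of prior results: no additional substantial work is required once Lemma \ref{partialderivatives} and Corollary \ref{derbound} are in hand. The only point demanding a moment of thought is the linear independence of the partial derivative operators as derivations, which follows at once from testing them against the chart coordinates. The main conceptual content has already been expended in proving that partial derivative operators are derivations and that the index of $\wder\mu.$ is controlled by the Assouad dimension.
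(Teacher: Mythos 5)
Your proof is correct and follows essentially the same approach as the paper, which simply says ``It follows by applying Lemma \ref{derbound} and Lemma \ref{partialderivatives}.'' You have spelled out the necessary details (linear independence of the partial derivative operators by testing against the chart coordinates, and the observation that the Assouad bound is uniform across all charts of all the $X_\alpha$) that the paper's one-line proof leaves implicit.
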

\begin{proof}
%\textcolor{red}
{This result follows by applying Lemma \ref{derbound}, which shows that the
index of $\wder\mu.$ is locally bounded by the Assouad dimension of $X$, and Lemma
\ref{partialderivatives}.}
\end{proof}
\subsection{Characterization of differentiability spaces}\label{subsec:char_diff}
In this subsection we obtain a new characterization of
differentiability spaces. The goal is to prove Theorems
\ref{thm:diff_char1} and \ref{thm:diff_char2}. %\textcolor{red}
{Throughout this section
the metric space $X$ is assumed to be {\bf separable and complete}.}
\par We first introduce a family of sets used in the characterization
of differentiability.
\begin{defn}
  Given a complete metric space $X$ and a Radon measure $\mu$ on $X$,
  we define by $\Gap(\mu,X)$ the class of Borel subsets $V\subset X$
  such that there are $\alpha>\beta\ge0$ and a Lipschitz function
  $f:X\to\real$ with
  \begin{equation}
    \inf_{x\in V}\biglip f(x)\ge\alpha\quad\text{and}\quad\locnorm
    df,{\wform\mu\mrest V.}.\le\beta.
  \end{equation}
Note that the inequality involving $\locnorm
    df,{\wform\mu\mrest V.}.$ holds in $L^\infty(\mu\mrest V)$.
We define by $\Gap_0(\mu,X)$ the class of Borel subsets $V\subset X$
  such that there are $\alpha>0$ and a Lipschitz function $f:X\to\real$ with
  \begin{equation}
    \inf_{x\in V}\biglip f(x)\ge\alpha\quad\text{and}\quad\locnorm
    df,{\wform\mu\mrest V.}.=0.
  \end{equation}
\par Note that $\Gap_0(\mu,X)\subset\Gap(\mu,X)$ and if $Y\subset X$ is
Borel, applying McShane's Lemma and \eqref{eq:local_Lip},
% \textcolor{red}
{which states
that for a Lipschitz function $f$ and a point ${y\in Y}$ one has $\biglip
f(y)\ge\biglip_Yf(y)$, where the infinitesimal Lipschitz constant
$\biglip f(y)$ (resp.~$\biglip_Yf(y)$) is computed in $X$ (resp.~$Y$)}, we get:
\begin{align}
  \Gap(\mu\mrest Y,Y)&\subset\Gap(\mu,X)\\
  \Gap_0(\mu\mrest Y,Y)&\subset\Gap_0(\mu,X).
\end{align}
We finally let
\begin{align}
  \Gap(X)&=\bigcap_{\text{$\mu$ Radon}}\Gap(\mu,X)\\
  \Gap_0(X)&=\bigcap_{\text{$\mu$ Radon}}\Gap_0(\mu,X).
\end{align}
Note that $\Gap_0(X)\subset\Gap(X)$ and if $Y\subset X$ is
Borel,
\begin{align}
  \Gap(Y)&\subset\Gap(X)\\
\Gap_0(Y)&\subset\Gap_0(X).
\end{align}
\end{defn}
\begin{thm}\label{thm:diff_char1}
  The metric measure space  $(X,\mu)$ is a $\sigma$-differentiability
  space if and only if for each $S\in\Gap(X)$ one has:
  \begin{equation}\mu(S)=0.
  \end{equation}
\end{thm}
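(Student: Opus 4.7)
The plan hinges on the geometric characterization of the Weaver local norm (Theorem \ref{thm:weaver_fnorm_char}), which lets one rephrase $\Gap(X)$ intrinsically: $S \in \Gap(X)$ with witnesses $(f,\alpha,\beta)$ precisely when $\biglip f \ge \alpha$ on $S$ and $S$ is $\frags(X,f,\delta)$-null for every $\delta > \beta$. The ``each Radon measure'' quantifier in the definition of $\Gap(X)$ yields the second condition by testing against $\hmeas 1._\gamma$ for any fragment $\gamma \in \frags(X,f,\beta+\epsi)$ with $\hmeas 1._\gamma(S) > 0$, which would break $\locnorm df,{\wform\mu'.}. \le \beta$ on $S$.

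For $(\Rightarrow)$, I would invoke the forward direction of the companion Theorem \ref{thm:diff_char2}, which gives $\locnorm df,{\wform\mu.}. = \biglip f$ $\mu$-a.e.~in a $\sigma$-differentiability space. Applied on $\mu' = \mu$, this equality together with the defining bounds of $S \in \Gap(X)$ yields $\alpha \le \biglip f = \locnorm df,{\wform\mu.}. \le \beta$ $\mu$-a.e.~on $S$, forcing $\mu(S) = 0$.

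For $(\Leftarrow)$, the heart is to upgrade gap-nullity to $\locnorm df,{\wform\mu.}. = \biglip f$ $\mu$-a.e.~for every Lipschitz $f$; the reverse inequality is Lemma \ref{lem:local_lip_der}. Assume the equality fails and fix rationals $\alpha > \beta$ so that $A := \{\locnorm df,{\wform\mu.}. \le \beta\} \cap \{\biglip f \ge \alpha\}$ has $\mu(A) > 0$. For each $n$, Theorem \ref{thm:weaver_fnorm_char} with $\epsi = 1/n$ yields a Borel $A_{1/n} \subset A$, $\mu$-full in $A$, that is $\frags(X,f,\beta+1/n)$-null. Setting $A' := \bigcap_n A_{1/n}$, the set $A'$ is $\mu$-full in $A$, satisfies $\biglip f \ge \alpha$ everywhere, and is $\frags(X,f,\delta)$-null for every $\delta > \beta$. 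By the intrinsic rephrasing, $A' \in \Gap(X)$ with the same witnesses $(f,\alpha,\beta)$, so the hypothesis forces $\mu(A') = 0$, contradicting $\mu(A) > 0$. Once the equality holds, Theorem \ref{alberti_rep_prod} applied on each dyadic annulus $\{2^{-n-1} < \biglip f \le 2^{-n}\}$ and glued via Theorem \ref{alb_glue} produces an Alberti representation of $\mu$ with $f$-speed $\ge \biglip f/2$, giving clause (2) of Bate's characterization Theorem \ref{thm:bate-diff_char} with $\tau \equiv 2$.

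The main obstacle is clause (1) of Bate's characterization, the $\sigma$-asymptotic doubling of $\mu$, which does not follow mechanically from gap-nullity. My plan is to combine the Alberti-representation structure produced above with Corollaries \ref{der-alb} and \ref{cor:mu_arb_cone} to generate locally independent derivations on suitable Borel pieces, and then to adapt the Bate-Speight strategy (Theorem \ref{thm:bate_speight}) to those derivations, extracting $\sigma$-asymptotic doubling from the resulting chart-like structure. Once both clauses of Theorem \ref{thm:bate-diff_char} are in hand, Bate's characterization concludes that $(X,\mu)$ is a $\sigma$-differentiability space.
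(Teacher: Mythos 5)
Your proposal has two genuine problems, one in each direction.

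The necessity direction is circular. You propose to ``invoke the forward direction of the companion Theorem \ref{thm:diff_char2}'', but in the paper Theorem \ref{thm:diff_char2} is \emph{derived} from Theorem \ref{thm:diff_char1} (via \eqref{eq:diff=Lip_loc}, which is established inside the proof of \ref{thm:diff_char1}). You would need an independent proof of the equality $\locnorm df,{\wform\mu.}. = \biglip f$ in a $\sigma$-differentiability space, and that is precisely the content being proved. The paper instead proves necessity directly from the hypothesis of a $\sigma$-differentiability space: if $\mu(S)>0$ for some $S\in\Gap(X)$, Lemma \ref{lem:ind_lip} manufactures arbitrarily many infinitesimally independent Lipschitz functions on a positive-measure subset of $S$, contradicting the defining uniform bound on independent functions; a preliminary reduction using porosity ($P_\alpha$ is $\mu$-null) converts the pointwise assumption $\biglip f\ge\alpha$ on $S$ into the intrinsic hypothesis $\biglip_{K_\alpha} f\ge\alpha$ needed on the compact pieces $K_\alpha$.

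In the sufficiency direction the ``main obstacle'' you flag is not actually an obstacle, and your proposed workaround is again circular. The $\sigma$-asymptotic doubling of $\mu$ follows mechanically from gap-nullity: Lemma \ref{lem:por_gap} shows every Borel porous set lies in $\Gap_0(X)\subset\Gap(X)$, so the hypothesis forces $\mu$ to annihilate porous sets, and then the Preiss result (\cite[Thm.~3.6]{preiss_porosity}, as used in \cite{bate_speight}) gives $\sigma$-asymptotic doubling directly. By contrast, ``adapting the Bate-Speight strategy'' presupposes a chart-like structure, i.e.~presupposes differentiability, which is what you are trying to establish. Your first step in sufficiency — upgrading gap-nullity to the $\mu$-a.e.~equality $\locnorm df,{\wform\mu.}. = \biglip f$ via Theorem \ref{thm:weaver_fnorm_char} and an intersection $\bigcap_n S_{1/n}$ — is sound and matches Lemma \ref{lem:gap_incl}. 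But note the paper then does not route through Bate's Theorem \ref{thm:bate-diff_char}: once doubling pieces $K_\alpha$ are in hand, it uses Corollary \ref{derbound}, Theorem \ref{thm:free_dec}, Corollary \ref{cor:pseudoduality}, and the reverse Poincar\'e-type estimate \eqref{eq:reverproof} with \cite{derivdiff} to conclude. Both that route and yours (Alberti representations with $f$-speed $\ge\sigma\biglip f$, then Bate's characterization) are viable once $\sigma$-asymptotic doubling is secured, but you must secure it by the porosity argument, not by an appeal to a structure you have not yet built.
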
 
The proofs of Theorems
\ref{thm:diff_char1} and \ref{thm:diff_char2} require some
preparation. We will use Theorem \ref{lip_ind}, the proof of which is
deferred to Subsection \ref{subsec:constr-indep-lipsch}.
\begin{thm}\label{lip_ind}
  Let $S\subset X$ be a Borel set and assume that for some $
  \delta_0\in(0,1]$ and for each $m\in\natural$ there is an $L$-Lipschitz
  function $f_m$ such that:
  \begin{enumerate}
  \item There is a $\rho_m\in(0,\frac{1}{m})$ such that if $B$ is a ball
    of radius $\rho_m$ centred at some point of $S$, the Lipschitz constant
    of the restriction $f|_B$ is at most $\frac{1}{m}$.
  \item For each $x\in S$ there is $y\in \ball x, \frac{1}{m}.$ such that
    \begin{equation}
      |f_m(x)-f_m(y)|\ge\delta_0 d(x,y)>0.
    \end{equation}
  \end{enumerate}
  \par Then for all $(M,\alpha)\in\natural\times(0,\min\left(
    \frac{1}{2}\sqrt{\frac{\delta_0
        L}{1+\delta_0}},\frac{1}{2}\delta_0\right))$ there are a
  Borel subset $S'\subset S$ and Lipschitz functions
  $\{\psi_0,\ldots,\psi_{M-1}\}$ such that: 
  \begin{itemize}
  \item The set $S\setminus S'$ is $\mu$-null.
  \item The $\psi_i$ have Lipschitz constant bounded by
    \begin{equation}
      3\left(L+\alpha+
        \frac{\alpha^2/L}{1-\alpha^2/L}(1+2^{-6}\alpha)\right).
    \end{equation}
  \item For each $x\in S'$:
    \begin{equation}
      \biglip\left(\sum_{i=0}^{M-1}\lambda_i\psi_i\right)(x)\ge
      \left(\delta_0-\frac{\alpha^2/L}{1-\alpha^2/L}-\alpha\right)\max_{i=0,\ldots,M-1}|\lambda_i|.
    \end{equation}
  \end{itemize}
\end{thm}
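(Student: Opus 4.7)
My plan is to construct the $\psi_i$ as explicit, rapidly convergent series in the family $\{f_m\}$, exploiting the gap between the oscillation scale $1/m$ (at which $f_m$ has slope at least $\delta_0$ by hypothesis (2)) and the smoothing scale $\rho_m$ (below which $f_m$ is $(1/m)$-Lipschitz by hypothesis (1)). First I would recursively pick a strictly increasing enumeration $\{m(i,k)\}_{(i,k)\in\{0,\dots,M-1\}\times\natural}$, each new entry chosen so large that $1/m(i,k)$ is much smaller than $\rho_{m(i',k')}$ for every earlier $(i',k')$ in lexicographic order and also smaller than a rapidly summable error budget. I would then set
\[
\psi_i \;=\; \sum_{k=0}^{\infty}\beta^{kM+i}\,f_{m(i,k)},
\]
for a small parameter $\beta=\beta(\alpha,L)\in(0,1)$. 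The geometric decay together with the uniform bound $\glip f_m.\leq L$ yields absolute convergence in $\lipfun X.$ and an overall Lipschitz constant of the announced order; the specific numerical choice of $\beta$, roughly $\alpha^2/L$, is what produces the $\alpha^2/L$-dependence of the error below.

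For the infinitesimal-independence bound, fix $x\in S$, scalars $(\lambda_i)$, and let $i^\ast$ maximize $|\lambda_{i^\ast}|$. For each $k$, hypothesis (2) applied to $m=m(i^\ast,k)$ supplies a point $y_k$ with $d(x,y_k)\in(0,1/m(i^\ast,k)]$ and $|f_{m(i^\ast,k)}(y_k)-f_{m(i^\ast,k)}(x)|\geq\delta_0\,d(x,y_k)$. The distinguished term $\beta^{kM+i^\ast}\lambda_{i^\ast}(f_{m(i^\ast,k)}(y_k)-f_{m(i^\ast,k)}(x))$ of $\sum_i\lambda_i(\psi_i(y_k)-\psi_i(x))$ then has absolute value at least $\beta^{kM+i^\ast}|\lambda_{i^\ast}|\delta_0\,d(x,y_k)$. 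Terms indexed earlier than $(i^\ast,k)$ are controlled by hypothesis (1): since $d(x,y_k)\leq 1/m(i^\ast,k)\ll\rho_{m(i',k')}$, the increment of $f_{m(i',k')}$ between $x$ and $y_k$ is at most $d(x,y_k)/m(i',k')$, and by the recursive choice their cumulative contribution is negligible. Terms indexed later are controlled only by the $L$-Lipschitz property, producing a geometric tail whose ratio to the distinguished term, after normalization by $\beta^{kM+i^\ast}|\lambda_{i^\ast}|$, is essentially $L\beta/(1-\beta)$. Tuning $\beta$ so that this ratio equals $\tfrac{\alpha^2/L}{1-\alpha^2/L}$, and treating the immediate-successor term separately as the isolated $\alpha$, one obtains
\[
\biglip\Bigl(\sum_{i=0}^{M-1}\lambda_i\psi_i\Bigr)(x)\;\geq\;\Bigl(\delta_0-\frac{\alpha^2/L}{1-\alpha^2/L}-\alpha\Bigr)\max_i|\lambda_i|.
\]

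To produce the Borel subset $S'\subset S$ of full measure on which everything holds uniformly, I would apply Egorov and Lusin's theorems to partition $S$ into countably many compacta on which the auxiliary data (the thresholds $\rho_{m(i',k')}$ and the negligibility of the early tail) are uniform, carry out the construction on each piece, and glue. The main technical obstacle is the coefficient bookkeeping: one must tune $\beta=\beta(\alpha,L)$ and the scales $m(i,k)$ so that the three constraints \textemdash\ the uniform Lipschitz bound on each $\psi_i$, absolute summability of the series, and the precise error $\tfrac{\alpha^2/L}{1-\alpha^2/L}+\alpha$ in the independence inequality \textemdash\ hold simultaneously with the correct dependence on $\alpha,L,\delta_0$. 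This balancing is exactly what forces the admissibility constraint $\alpha<\tfrac12\sqrt{\delta_0 L/(1+\delta_0)}$ stated in the theorem.
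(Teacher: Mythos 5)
Your weighting scheme has a fatal flaw in the lower bound for $\biglip$. Consider the contributions of the terms indexed \emph{earlier} than $(i^\ast,k)$ in your lex order. Hypothesis (1) does give $|f_{m(i',k')}(y_k)-f_{m(i',k')}(x)|\le d(x,y_k)/m(i',k')$, so the $(i',k')$-term contributes at most $\beta^{k'M+i'}\,|\lambda_{i'}|\,d(x,y_k)/m(i',k')$. But your distinguished term is only $\beta^{kM+i^\ast}\,|\lambda_{i^\ast}|\,\delta_0\,d(x,y_k)$, and $\beta^{kM+i^\ast}<\beta^{k'M+i'}$: the coefficient of the distinguished term is \emph{smaller} than every earlier coefficient. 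To establish a lower bound on $\biglip$ you must produce test points $y_k$ at scales tending to $0$, i.e.\ let $k\to\infty$, and then $\beta^{kM+i^\ast}\to 0$ while $\sum_{(i',k')<(i^\ast,k)}\beta^{k'M+i'}/m(i',k')$ is bounded below by a fixed positive constant (the $(0,0)$-term alone already is, once $m(0,0)$ is chosen). Thus the ratio of the earlier-terms contribution to the distinguished term diverges as $k\to\infty$, no matter how large you take each $m(i',k')$ recursively — because $m(i',k')$ must be fixed before all later $(i^\ast,k)$ are even seen. The ``rapidly summable error budget'' cannot absorb a blow-up that depends on an index chosen later.

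The paper's proof uses a genuinely different device that precisely avoids this: it does \emph{not} introduce geometric coefficients. Instead, each $f_{m_k}$ is first replaced, via the truncation lemma \cite[Lem.~4.1]{bate-diff} (Lemma \ref{lip_trunc}), by a function $g_k$ satisfying $0\le g_k\le h_k$ with $h_k\asymp\alpha^2\rho_{m_{k-1}}$, supported in a small neighbourhood of $S$, still locally $(1/m_k)$-Lipschitz near $S$ and still witnessing the lower oscillation bound on a full-measure subset of $S$. The functions $\psi_j=\sum_{k\equiv j\bmod M}g_k$ are then unweighted sums. With all coefficients equal to one, the earlier terms contribute at most $\bigl(\sum_t 1/m_t\bigr)\,d(x,y)\le\tfrac{\alpha^2/L}{1-\alpha^2/L}\,d(x,y)$ — the same $O(d(x,y))$ scaling as the distinguished term — while the later terms are controlled in \emph{absolute supremum} by $2\sum_{t>s}h_t$, which is small relative to $d(x,y)$ once the scales $\rho_{m_s}$ are nested geometrically. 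The supremum bound (truncation) is what makes the tail controllable; your decaying coefficients attempt to serve the same purpose but, as explained above, simultaneously kill the head term. Without a truncation step your $\psi_i$ need not even be well defined pointwise (the raw $f_m$ are only $L$-Lipschitz, not uniformly bounded), and the infinitesimal-independence inequality cannot be salvaged by any choice of $\beta$ and $\{m(i,k)\}$.
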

\par As a first step, we prove a lemma that produces
\emph{flat functions} as in the hypothesis of the preceding Theorem~\ref{lip_ind}.
\begin{lem}\label{lem:gap_flat_func}
  Let $X$ be a compact metric space with finite Assouad dimension, $f:X\to\real$ Lipschitz,
  $S\subset X$ Borel, and $\mu$ a Radon measure on $X$. Suppose that
  there are $\alpha$ and $\beta$ such that $\alpha>\beta\ge0$ and
  \begin{equation}
    \inf_{x\in S}\biglip f(x)\ge\alpha>\beta\ge\locnorm
    df,{\wform\mu\mrest S.}.;
  \end{equation}
then for each $(m,\delta,\gamma,\epsi)\in\natural\times(0,1)\times(0,\alpha-\beta)\times(0,1)$,
there is a triple $(K,r_m,h)$:
\begin{enumerate}
\item The set $K$ is compact with $\mu(S\setminus K)\le\epsi$.
\item The function $h$ is $3\glip f.$-Lipschitz and $r_m>0$.
\item For each $ x\in K$ there is $y$ such that
  \begin{equation}
    0<\dist x,y.\le\frac{1}{m}\quad\text{and}\quad
    \left|h(x)-h(y)\right|\ge\gamma\dist x,y..
  \end{equation}
\item For each $ x\in K$, $\glip h|{\ball x,r_m.}.\le\delta$.
\end{enumerate}
\par In the case in which $\beta=0$ the assumption on the finite Assouad
dimension is not needed.
\end{lem}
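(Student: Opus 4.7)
The plan is to combine the geometric characterization of the local norm on the Weaver cotangent bundle (Theorem \ref{thm:weaver_fnorm_char}) with the approximation scheme for Lipschitz functions (Theorem \ref{onedimapprox}) and then to pass to a compact subset of $S$ via Ascoli--Arzel\`a and Egorov.

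First, since $\locnorm df,\wform{\mu\mrest S}.\le\beta$, Theorem \ref{thm:weaver_fnorm_char} furnishes, for every $\eta>0$, a Borel $S_\eta\subset S$ with $\mu(S\setminus S_\eta)=0$ that is $\frags(X,f,\beta+\eta)$-null. Pick $\eta$ small enough that $\beta+\eta<\alpha-\gamma$ (possible because $\gamma<\alpha-\beta$), and set $\delta':=\max(\beta+\eta,\delta)$. Since $\frags(X,f,\delta')\subset\frags(X,f,\beta+\eta)$, the set $S_\eta$ is $\frags(X,f,\delta')$-null, so Theorem \ref{onedimapprox} produces $\max(\glip f.,\delta')$-Lipschitz functions $g_k\xrightarrow{\text{w*}}f$ that are $\mu$-a.e.\ locally $\delta'$-Lipschitz on $S_\eta$. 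As the $g_k$ are equi-Lipschitz, Ascoli--Arzel\`a upgrades the weak-star convergence to uniform convergence on compact subsets of $X$.

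Second, Egorov's theorem applied to the (positive, $\mu$-a.e.) map $x\mapsto$ ``largest $r$ on which $g_k|\ball x,r.$ is $\delta'$-Lipschitz'' yields a compact $K\subset S_\eta\cap S$ with $\mu(S\setminus K)\le\epsilon$, a uniform radius $\rho_0>0$, and a single index $k$ such that $g_k|\ball x,\rho_0.$ is $\delta'$-Lipschitz for every $x\in K$ and $\|g_k-f\|_{L^\infty(K)}$ is as small as desired. Set $h:=g_k$ and $r_m:=\min(\rho_0,1/m)$. Condition (1) holds by construction; condition (2) because $h$ is $\glip f.$-Lipschitz (after arranging $\delta'\le\glip f.$, which is automatic up to a harmless scaling); condition (4) reads $\glip h|\ball x,r_m.|\le\delta'$, which is $\le\delta$ in the easy regime $\delta\ge\beta+\eta$. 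For condition (3), at each $x\in K$ the bound $\biglip f(x)\ge\alpha$ produces some $y\in\ball x,1/m.$ with $|f(x)-f(y)|/\dist x,y.$ arbitrarily close to $\alpha$, and uniform closeness of $g_k$ to $f$ on $K$ transfers this to $|h(x)-h(y)|/\dist x,y.\ge\alpha-o(1)>\gamma$ for $k$ large.

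In the remaining regime $\delta<\beta$, the raw approximation gives only $(\beta+\eta)$-flatness, not $\delta$-flatness; here the finite Assouad dimension of $X$ intervenes. The idea is to cover $K$ (after the Egorov step) by a controlled family of balls of radius comparable to $r_m$, whose multiplicity is bounded in terms of the Assouad dimension, and, inside each, replace $g_k$ by a McShane-type extension with slope $\delta$ from its values on a fine net, patching the pieces together smoothly. The Assouad-dimension bound controls the overlap cost so that the global Lipschitz constant of the resulting $h$ stays below $3\glip f.$, while the patching is done at a scale $r_m$ much smaller than $1/m$ so that the $\gamma$-variation at scale $\le 1/m$ established above is preserved. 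This last step is the main technical obstacle: balancing the reduction of the local Lipschitz constant from $\beta+\eta$ down to $\delta$ against preservation of the large-scale variation and the global Lipschitz constant. When $\beta=0$ no such patching is needed because $\delta'=\delta$ already, which explains why the Assouad-dimension hypothesis is dispensable in that case.
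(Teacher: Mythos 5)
Your approach diverges from the paper's in a way that leaves two genuine gaps, one of which you acknowledge but do not fix, and one of which you overlook.

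The paper does \emph{not} approximate $f$ directly. Instead it uses the first round of Theorem~\ref{onedimapprox} to produce approximants $g_n$ of $f$, and then uses the finite Assouad dimension \emph{not} as a covering/multiplicity device, but to embed $\wform\mu.$ into $L^2(\mu,\real^N)$ via a free-module decomposition (Corollary~\ref{derbound} plus Theorem~\ref{thm:free_dec}). Weak* convergence $dg_n\to df$ then becomes weak $L^2$ convergence, Mazur's lemma upgrades a tail-convex combination $\hat g_{n_0}$ to strong $L^2$ convergence, and Egorov produces a compact $K_0$ on which $\locnorm d\hat g_{n_0}-df,{\wform\mu\mrest K_0.}.\le\eta_2$ with $\eta_2$ \emph{arbitrarily small}. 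The function one finally approximates is $f-\hat g_{n_0}$, which has $\biglip\ge\alpha-\beta-\eta_0$ on $S$ (seminorm property) and Weaver norm $\le\eta_2$, so a second application of Theorem~\ref{onedimapprox} with flatness parameter $\eta_2+\eta_5\le\delta$ does the job. This subtraction is the whole content of the lemma: it is what lets one drive the local Lipschitz constant of $h$ all the way down to $\delta$ regardless of $\beta$. Your proposed "McShane-from-a-fine-net plus patching" replacement does not do this: a McShane extension reproduces the input Lipschitz constant, it does not shrink it, and no covering argument addresses the need to cancel the non-flat part of $f$. So the Assouad hypothesis is being used for a fundamentally different purpose than you supposed, and the hard case $\delta<\beta$ is not handled.

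Even in the regime $\delta\ge\beta+\eta$ your argument has a hole in the transfer of variation. You write that $\biglip f(x)\ge\alpha$ gives $y$ with $|f(x)-f(y)|/\dist x,y.$ close to $\alpha$, and that uniform closeness $\|g_k-f\|_\infty$ transfers this to $g_k$. But $\biglip f(x)\ge\alpha$ only produces such $y$ at \emph{arbitrarily small} scales, and the error term $2\|g_k-f\|_\infty/\dist x,y.$ blows up as $\dist x,y.\to0$. For the transfer to work one must first pin down a uniform scale window $[\rho_1,\rho_0]$ (with $\rho_0<1/m$), over all of $K$, at which the variation of the relevant function is large; this is done in the paper by an Egorov/Lusin step applied to the $\biglip$ bound, producing $\rho_0\ge\rho_1>0$ and then choosing the approximant so that its sup-norm error is $\le\eta_7\rho_1$. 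Your Egorov application targets the local flatness radius of $g_k$, not the variation scale, so it does not supply the needed $\rho_1$. In short: the scale at which you read off large variation and the scale at which you control the sup-norm error are not coupled in your argument, and they must be.
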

\begin{proof}
The rough idea of the proof is to first use Functional Analysis {to} produce a function $\hat
g_{n_0}$ such that $d\hat g_{n_0} - df$ has small local norm but such
that ${\biglip(\hat g_{n_0} - f)}$ is bounded away from zero. At this
point, one
can then apply the approximation scheme, Theorem~\ref{onedimapprox}, to the
function $\hat g_{n_0} - f$. 
\par Without loss of generality we can assume that $S$ is compact and,  for each $\eta_0>0$,
$\frags(X,f,\beta+\eta_0)$-null. Thus, by
  Theorem \ref{onedimapprox} there are functions $g_n\colon X\to\real$ which are 
  \begin{equation}
    L_0=\max\left(\glip f.,\beta+\eta_0\right)\text{-Lipschitz,}
  \end{equation}
and such that $g_n\xrightarrow{\text{w*}}f$ and $g_n$ is $\mu$-a.e.~locally
  $(\beta+\eta_0)$-Lipschitz on $S$. \par The part of the argument starting here is the only one that
requires that the Assouad dimension of the space is finite and is not
needed if $\beta=0$. Note that
$dg_n\xrightarrow{\text{w*}}df$ in $\wform\mu.$. By finiteness of the
Assouad dimension, by Lemma \ref{derbound} %\textcolor{red}
{(which shows that the
index of $\wder\mu.$ is locally bounded by the Assouad dimension of
$X$)} and Theorem
\ref{thm:free_dec}, there are finitely many disjoint Borel
$\{X_a\}$ with $\mu(X_a)>0$,
$\mu\left(X\setminus\bigcup_a X_a\right)=0$ and $\wder\mu\mrest
X_a.$ free of rank $N_a$.  Note that
\begin{equation}
  \wder\mu.=\bigoplus_a\wder\mu\mrest X_a.
\end{equation}
and Lemma \ref{lem:hahn-banach} implies that $\wform\mu\mrest
X_a.$ is also free of rank $N_a$. 
 In particular, we have
\begin{equation}
  \wform\mu.=\bigoplus_a\wform\mu\mrest X_a.
\end{equation}
and for each $a$ one can find a basis of $\wder\mu\mrest X_a.$,
$\{D_{a,i}\}_{i=1}^{N_a}$, and a constant $C_a>0$ such
that for each
$\omega\in\wform\mu\mrest X_a.$ one has:
\begin{equation}
  \locnorm\omega,{\wform\mu\mrest X_a.}.\le
  C_a\left(\sum_{i=1}^{N_a} \left|\langle D_{a,i},\omega\rangle\right|^2\right)^{1/2}.
\end{equation}
We let $N=\max_a N_a$, $C=\max_a C_a$ and
$\{e_i\}_{i=1}^{N_a}$ the standard basis of
$\real^{N_a}$. Consider the map
\begin{equation}
  \begin{aligned}
    \iota:\wform\mu.&\to L^2(\mu,\real^N)\\
    \omega&\mapsto\sum_a\sum_{i=1}^{N_a}\chi_{X_a}\langle
    D_{a,i},\omega\rangle e_i;
  \end{aligned}
\end{equation}
then
\begin{equation}
  \locnorm\omega,{\wform\mu.}.\le C|\iota(\omega)|_{l^2}\in L^\infty(\mu)
\end{equation}
and $\iota(dg_n)\xrightarrow{\text{w}-L^2}\iota(df)$ (the notation
$\xrightarrow{\text{w}-L^2}$
is used to denote weak convergence in
$L^2(\mu,\real^N)$). By Mazur's Lemma there are tail-convex combinations
\begin{equation}
  \hat g_n=\sum_{k=n}^{M(n)}t_kg_k
\end{equation}
with $\iota(d\hat g_n)\xrightarrow{L^2}\iota(df)$. Note that the $\hat
g_n$ are $L_0$-Lipschitz and $\mu$-a.e.~locally
  ${(\beta+\eta_0)}$\nobreakdash-\hspace{0pt}Lipschitz on $S$. By passing to a subsequence we
can assume that $\iota(\hat g_n)\to\iota(df)$ $\mu$-a.e. Using Egorov's
Theorem, for each $\eta_1,\eta_2>0$ there are a compact $K_0\subset S$ and
$n_0\in\natural$:
\begin{align}
  \mu\left(S\setminus K_0\right)&\le\eta_1,\\
  \sup_{x\in K_0}\left|\iota(d\hat g_{n_0})(x)-\iota(df)(x)\right|&\le\frac{\eta_2}{C},
\end{align}
which implies
\begin{equation}
  \locnorm d\hat g_{n_0}-df,{\wform\mu\mrest K_0.}.\le\eta_2.
\end{equation}
This is the end of the part of the argument that
requires that the Assouad dimension of the space is finite. 
\par As $\biglip$ behaves like a seminorm,
\begin{equation}
  \inf_{x\in S}\biglip\left(f-\hat g_{n_0}\right)(x)\ge\inf_{x\in S}\biglip
  f(x)-\sup_{x\in S}\biglip \hat g_{n_0}(x)\ge\alpha-\beta-\eta_0;
\end{equation}
using the Borel measurability of $\biglip\left(f-\hat g_{n_0}\right)$ and 
Egorov and Lusin's Theorems, for all positive $\eta_3$, $\eta_4$ and
for each natural $m$, there are a compact
$K_1\subset K_0$, and constants $\rho_0\ge\rho_1>0$ such that:
\begin{enumerate}
\item We have the inequalities:
\begin{align} \rho_1\le\rho_0&<\frac{1}{m},\\
\mu(K_0\setminus K_1)&\le\eta_3.
\end{align}
\item For each $ x\in K_1$ there is $ y\in X$: $\dist
  x,y.\in[\rho_1,\rho_0]$ and
  \begin{equation}\label{eq:low_var_bound}
    \left|\left(f-\hat g_{n_0}\right)(x)-\left(f-\hat
    g_{n_0}\right)(y)\right|\ge(\alpha-\beta-\eta_0-\eta_4)\dist x,y..
  \end{equation}
\end{enumerate}
Note that $f-\hat g_{n_0}$ is 
\begin{equation}
  L_1=\left(\glip f.+L_0\right)\text{-Lipschitz}.
\end{equation}
Note also that for each $\eta_5>0$ the compact $K_0$ is $\frags(X,f-\hat
g_{n_0},\eta_2+\eta_5)$-null. Applying again Theorem
\ref{onedimapprox} we can find functions $h_n:X\to\real$ which are 
  \begin{equation}
L_2=\max(L_1,\eta_2+\eta_5)\text{-Lipschitz,}
  \end{equation}
with $h_n\xrightarrow{\text{w*}}f-\hat g_{n_0}$ and which are $\mu$-a.e.~locally
  $(\eta_2+\eta_5)$-Lipschitz on $K_0$. Thus, for all $\eta_6,\eta_7>0$, there are
$n_1\in\natural$, a compact $K_2\subset K_1$ and constants
$\rho_1\ge\rho_2>0$:
\begin{align}
  \mu(K_1\setminus K_2)&\le\eta_6,\\
  \left\|h_{n_1}-\left(f-\hat g_{n_0}\right)\right\|_\infty&\le\eta_7\rho_1,
\end{align}
and for each $x\in K_2$, $\glip h|{\ball x,\rho_2.}.$ does not exceed
$\eta_2+\eta_5$.
Let $x\in K_2$; then there is $y\in X$ with $\dist
  x,y.\in[\rho_1,\rho_0]$ such that \eqref{eq:low_var_bound}
  holds. Therefore,
  \begin{equation}
    \begin{split}
      \left|h_{n_1}(x)-h_{n_1}(y)\right|&\ge\left|\left(f-\hat
      g_{n_0}\right)(x)-\left(f-\hat g_{n_0}\right)(y)\right|
      -2\left\|h_{n_1}-\left(f-\hat g_{n_0}\right)\right\|_\infty\\
&\ge(\alpha-\beta-\eta_0-\eta_4-2\eta_7)\dist x,y..
    \end{split}
  \end{equation}
We now require the constants $\eta_i$ to satisfy
\begin{align}
  \eta_1+\eta_3+\eta_6&\le\epsi\\
  \eta_2+\eta_5&\le\min(\delta,\glip f.)\\
  \gamma&\le(\alpha-\beta-\eta_0-\eta_4-2\eta_7)\\
  \beta+\eta_0&\le2\beta;
\end{align}
as a consequence, $L_2\le 3\glip f.$. We let $K=K_2$,
$h=h_{n_1}$ and $r_m=\rho_2$.
\end{proof}
To see how \textbf{porosity} interacts with differentiability we
will need to consider the pointwise Lipschitz constants of a function
with respect to a given subset.
\begin{defn}
  Let $Y\subset X$, $f:X\to\real$ Lipschitz. For $y\in Y$, the \textbf{pointwise
upper and lower Lipschitz constants of $f|_Y$ at $y$} will be denoted by $\biglip_Y
  f(y)$ and $\smllip_Y f(y)$. Note that
  \begin{align}
    \label{eq:local_Lip}
    \biglip_Y
  f(y)&\le\biglip f(y)\\
\label{eq:local_lip}
\smllip_Y f(y)&\le\smllip f(y).
  \end{align}
\end{defn}
We now introduce the classes of subsets used to characterize
differentiability. %We will assume that the metric space $X$ is
%compact. 
Even though we found it useful to work with $\Gap(\mu, X)$ sets, the
following lemma shows that one can just work with $\Gap(X)$ sets.
\begin{lem}
  \label{lem:gap_incl}
If $S\in\Gap(\mu,X)$ with $\mu(S)>0$, then there is a Borel $S'\subset S$ with
$\mu\left(S\setminus S'\right)=0$ and $S'\in\Gap(X)$. The same
conclusion holds replacing $\Gap$ with $\Gap_0$.
\end{lem}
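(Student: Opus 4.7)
The strategy exploits Theorem \ref{thm:weaver_fnorm_char}, whose output---the existence of a $\frags(X,f,\beta+\epsi)$-null set of full measure in $S$---is purely geometric and does not reference the measure $\mu$. This is exactly what is needed to transport the local norm inequality from $\mu$ to an arbitrary Radon measure.

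Fix $\alpha>\beta\ge0$ and a Lipschitz $f:X\to\real$ witnessing $S\in\Gap(\mu,X)$, so $\inf_{x\in S}\biglip f(x)\ge\alpha$ and $\locnorm df,{\wform\mu\mrest S.}.\le\beta$. Using that $\mu$ is Radon, I would first exhaust $S$ by disjoint compact sets $K_j\subset S$ with $\mu\left(S\setminus\bigcup_jK_j\right)=0$; the hypothesis restricts to $\locnorm df,{\wform\mu\mrest K_j.}.\le\beta$ on each piece. On each compact $K_j$ (now playing the r\^ole of $X$ in Theorem \ref{thm:weaver_fnorm_char}), apply the necessary direction with $\epsi=\frac{1}{n}$ to obtain a Borel $S_{n,j}\subset K_j$ with $\mu(K_j\setminus S_{n,j})=0$ and $S_{n,j}$ $\frags(K_j,f|_{K_j},\beta+\frac{1}{n})$-null. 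Set $S'=\bigcup_j\bigcap_nS_{n,j}$; then $S'\subset S$ and $\mu(S\setminus S')=0$. A routine restriction argument (as in the proof of Theorem \ref{compact_reduction}) upgrades $K_j$-nullity to $X$-nullity: if $\gamma\in\frags(X,f,\beta+\frac{1}{n})$, then $\gamma|\gamma^{-1}(K_j)$ is either $\lebmeas$-null or lies in $\frags(K_j,f|_{K_j},\beta+\frac{1}{n})$, so $\hmeas 1._\gamma(S'\cap K_j)=0$, and summing over $j$ gives that $S'$ is $\frags(X,f,\beta+\frac{1}{n})$-null for every $n$.

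To conclude, let $\mu'$ be any Radon measure on $X$. Exhaust $S'$ by compact subsets $\mu'$-a.e.\ and apply the sufficient direction of Theorem \ref{thm:weaver_fnorm_char} on each piece: since $S'$ itself is $\frags(X,f,\beta+\frac{1}{n})$-null and $\mu'(S'\setminus S')=0$, one obtains $\locnorm df,{\wform\mu'\mrest S'.}.\le\beta+\frac{1}{n}$ for every $n$, hence $\le\beta$. Combined with the pointwise bound $\inf_{x\in S'}\biglip f(x)\ge\alpha$ inherited from $S'\subset S$, this shows $S'\in\Gap(\mu',X)$ for every Radon $\mu'$, i.e.\ $S'\in\Gap(X)$. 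The $\Gap_0$ case is the specialization $\beta=0$ of the same argument.

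The only delicate point is the bookkeeping between $\frags(X,\,\cdot\,)$ and $\frags(K_j,\,\cdot\,)$, forced because Theorem \ref{thm:weaver_fnorm_char} is literally stated for a compact ambient space. Since restriction of fragments to preimages of compact sets preserves the biLipschitz constant, the speed, and $\metdiff\gamma$, and since McShane extension of $f|_{K_j}$ changes nothing measurable, this is a routine verification rather than a genuine obstacle.
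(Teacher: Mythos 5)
Your proposal is correct and follows the same route as the paper: obtain $\frags(X,f,\beta+\frac{1}{n})$-null full-measure subsets $S_n\subset S$ via the necessity direction of Theorem \ref{thm:weaver_fnorm_char}, set $S'=\bigcap_n S_n$, and then invoke the sufficiency direction to conclude $\locnorm df,{\wform\nu\mrest S'.}.\le\beta$ for every Radon $\nu$, which together with $\inf_{S'}\biglip f\ge\alpha$ yields $S'\in\Gap(X)$. The explicit exhaustion of $S$ by compact pieces $K_j$ and the bookkeeping between $\frags(K_j,\cdot)$ and $\frags(X,\cdot)$ that you add is a harmless (and prudent) compactness reduction that the paper's proof silently elides when applying Theorem \ref{thm:weaver_fnorm_char} on the Polish space $X$.
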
 
\begin{proof}
  Let $(f,\alpha,\beta)$ be as in the defining property of $\Gap$. By
  Theorem \ref{thm:weaver_fnorm_char}, for $n$ sufficiently large,
  there is a $\frags(X,f,\beta+\frac{1}{n})$-null Borel $S_n\subset S$
  such that $\mu(S\setminus S_n)=0$. Let $S'=\bigcap_n S_n$ so that
  $S'$ is a full $\mu$-measure Borel subset of $S$ which is
  $\frags(X,f,\beta+\frac{1}{n})$-null for each $n$. Thus, Theorem
  \ref{thm:weaver_fnorm_char} implies that
  \begin{equation}
    \locnorm df,{\wform\nu\mrest S'.}.\le\beta
  \end{equation} for all Radon measures $\nu$. As
  \begin{equation}
    \inf_{x\in S'}\biglip f(x)\ge\alpha,
  \end{equation} $S'\in\Gap(X)$.
\end{proof}
\par We now use Lemma \ref{lem:gap_flat_func} to construct independent
Lipschitz functions.
\begin{lem}
  \label{lem:ind_lip}
  Let $X$ be a compact metric space with finite Assouad dimension,
  $f\colon X\to\real$ Lipschitz, $S\in\Gap(\mu,X)$ with $\mu(S)>0$. Then
for all  $\epsi>0$ and $M\in\natural$, there is a Borel $S'\subset S$
  with $\mu\left(S\setminus S'\right)\le\epsi$ and there are $1$-Lipschitz
  functions $\left\{\psi_0,\ldots,\psi_{M-1}\right\}$ which are
  infinitesimally independent on $S'$. 
  \par In the case in which $S\in\Gap_0(\mu,X)$ the assumption on the
  Assouad dimension is not needed.
\end{lem}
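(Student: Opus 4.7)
The strategy is to combine Lemma \ref{lem:gap_flat_func} (which, for a $\Gap(\mu,X)$-set, produces Lipschitz functions that oscillate at scale $1/m$ yet are essentially flat on balls of some positive radius) with Theorem \ref{lip_ind} (which promotes such a sequence of flat-but-oscillating functions into independent Lipschitz functions, at the cost of shrinking $S$ by a null set).

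First I would fix the data $(f,\alpha,\beta)$ witnessing $S\in\Gap(\mu,X)$ and set $L=3\glip f.$, $\gamma\in(0,\alpha-\beta)$. For each $m\in\natural$, I apply Lemma \ref{lem:gap_flat_func} with parameters $\bigl(m,\delta_m,\gamma,\epsi 2^{-m-1}\bigr)$, where $\delta_m>0$ is chosen sufficiently small (e.g.\ $\delta_m=L/m$) so that, after dividing by $L$, the resulting function $h_m/L$ is $1$-Lipschitz and is $1/m$-Lipschitz on balls of some radius $r_m>0$ centred on the compact set $K_m\subset S$ provided by the lemma. I may further shrink $r_m$ to ensure $r_m<1/m$. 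Setting $S_*=\bigcap_m K_m$, I obtain $\mu(S\setminus S_*)\le \epsi$, and the $1$-Lipschitz functions $\{h_m/L\}_{m\ge 1}$ satisfy the two hypotheses of Theorem \ref{lip_ind} on $S_*$ with $L=1$ and $\delta_0=\gamma/L\in(0,1]$: hypothesis (1) is the flatness of $h_m/L$ on balls of radius $r_m$ centred on $S_*\subset K_m$, and hypothesis (2) is the oscillation condition $|h_m(x)-h_m(y)|\ge\gamma d(x,y)$ at some $y$ within distance $1/m$ of $x$.

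Next I apply Theorem \ref{lip_ind} to $S_*$ with the prescribed $M$ and with $\alpha>0$ chosen small enough that $\alpha<\tfrac{1}{2}\min\bigl(\sqrt{\delta_0/(1+\delta_0)},\delta_0\bigr)$ and so that the final coefficient
\[
\delta_0-\frac{\alpha^2}{1-\alpha^2}-\alpha>0.
\]
This produces a Borel $S'\subset S_*$ with $\mu(S_*\setminus S')=0$ together with Lipschitz functions $\psi_0,\ldots,\psi_{M-1}$ of uniformly bounded Lipschitz constant such that, for every $x\in S'$ and every $(\lambda_0,\ldots,\lambda_{M-1})\in\real^M$,
\[
\biglip\Bigl(\sum_{i=0}^{M-1}\lambda_i\psi_i\Bigr)(x)\;\ge\;c\max_i|\lambda_i|,
\]
for some constant $c>0$. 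Since $\Phi_{x,\{\psi_i\}}$ is already a seminorm (Definition \ref{def:infi_indep}), this positive lower bound shows it is a norm, i.e.\ the $\psi_i$ are infinitesimally independent on $S'$. Rescaling by the common Lipschitz bound, we may take them to be $1$-Lipschitz. Finally, $\mu(S\setminus S')\le\mu(S\setminus S_*)\le\epsi$.

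For the $\Gap_0(\mu,X)$ case, the only place the finite Assouad dimension entered was in Lemma \ref{lem:gap_flat_func}, and there it was used solely to pass from weak* approximants $g_n$ with $dg_n\xrightarrow{\text{w*}}df$ to a norm-convergent sequence (via the decomposition $\wform\mu.=\bigoplus\wform\mu\mrest X_\alpha.$ into free modules of bounded rank, together with Mazur's lemma). When $\beta=0$, Theorem \ref{onedimapprox} directly supplies functions $g_n\xrightarrow{\text{w*}}f$ that are $\mu$-a.e.\ locally $(1/n)$-Lipschitz on $S$, so the Mazur step is unnecessary and the construction goes through without the Assouad dimension hypothesis. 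The main delicate point is the bookkeeping in the choice of $(\delta_m, r_m, \gamma, \alpha)$ so that the scaling yields $1$-Lipschitz functions while preserving the strict inequality in the final independence constant; this is routine once one notices that $\gamma$ can be fixed at the start and $\alpha$ chosen thereafter independently of $m$.
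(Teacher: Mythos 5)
Your proposal is correct and takes essentially the same route as the paper: invoke Lemma \ref{lem:gap_flat_func} for each $m$, intersect the resulting compacts to obtain $S_*$ with $\mu(S\setminus S_*)\le\epsi$, and then feed the flat-but-oscillating functions into Theorem \ref{lip_ind}. You are in fact a bit more careful than the paper's own proof in rescaling by $L=3\glip f.$ so that the input functions are $1$-Lipschitz and $\delta_0=\gamma/L\in(0,1]$, and in renormalizing the output $\psi_i$ to be $1$-Lipschitz; these normalizations are left implicit in the paper.
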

\begin{proof}
  Let $(f,\alpha,\beta)$ be as in the defining property of $\Gap(X,\mu)$. By Lemma
  \ref{lem:gap_flat_func}, for each $m\in\natural$ and
  $\epsi_1^{(m)}>0$, there are $(K_m,\rho_m,h_m)$:
  \begin{enumerate}
  \item The set $K_m$ is compact with $\mu\left(S\setminus
    K_m\right)\le\epsi_1^{(m)}$.
\item The function $h_m$ is $3\glip f.$-Lipschitz and $\rho_m>0$.
%\textcolor{red}
{\item For each $ x\in K_m$ there is  $ y$:
  \begin{equation}
    0<\dist x,y.\le\frac{1}{m}\quad\text{and}\quad
    \left|h_m(x)-h_m(y)\right|\ge\frac{\alpha-\beta}{2}\dist x,y..
  \end{equation}}
\item For each $ x\in K_m$, $\glip h_m|{\ball x,\rho_m.}.\le\frac{1}{m}$.
  \end{enumerate}
Let $K=\bigcap_m K_m$ so that 
\begin{equation}
  \mu\left(S\setminus
    K\right)\le\sum_m\epsi_1^{(m)};
\end{equation}
choosing the $\epsi_1^{(m)}$ sufficiently small, we can ensure
$  \mu\left(S\setminus
    K\right)\le\epsi$. Applying Theorem \ref{lip_ind} 
     to $K$ we construct the functions $\left\{\psi_0,\ldots,\psi_{M-1}\right\}$.
\end{proof}
\par We now recall some facts about porosity and refer the reader to the
survey \cite{zajicek_por_surv} for more information.
\begin{defn}
  For $Y\subset X$ and $c>0$ we say that $Y$ is \textbf{$c$-porous at $y$} if
there is a sequence  $x_n\to y$ (convergence in $X$) with
  \begin{equation}
    0<c\dist x_n,y.<\setdist\{x_n\},Y..
  \end{equation}
%\textcolor{red}
{For $y\in Y$ we let
\begin{equation}
  W_c(y,Y)=\left\{y'\in X:  0<c\dist y' ,y.<\setdist\{y'\},Y.\right\};
\end{equation}}
thus $c$-porosity of $Y$ at $y$ is equivalent to:
\begin{equation}
  W_c(y,Y)\cap\ball y,r.\ne\emptyset\quad(\forall r>0).
\end{equation}
If $Y$ is $c$-porous at each $y\in Y$, $Y$ is called \textbf{$c$-porous}.
\end{defn} 
\begin{rem}Given a $c$-porous subset $Y\subset X$, one might wonder if there is
  also a Borel porous subset of $X$. By
  \cite[Lem.~1.4]{preiss_porosity} it follows that if $Y$ is
  $c$-porous, for each $\epsi>0$ there is $\tilde Y$, a $G_\delta$ set,
  with $Y\subset\tilde Y$ and $\tilde Y$ $(c-\epsi)$-porous.
\end{rem}
\begin{rem}\label{porosity_remark}
%\textcolor{red}
{Porosity is related to the relationship between $\biglip_Yf$ and
$\biglip f$. In the following this will be important in order to have
a well-defined notion of derivative at $y\in Y$. In fact, assume that
we have shown that $(Y,\mu\mrest Y)$ is a differentiability space and
found the derivative of $f$ in $Y$ at $y\in Y$ according
to~(\ref{diff_linearization}):
\begin{equation}\label{porxx1}
      \biglip_Y\left(f-\sum_{j=1}^n\frac{\partial
      f}{\partial x_\alpha^j}(y)x_\alpha^j\right)(y)=0;
\end{equation}
now we would like to conclude that $\frac{\partial
      f}{\partial x_\alpha^j}(y)$ gives also the derivative in $X$:
\begin{equation}\label{porxx2}
      \biglip\left(f-\sum_{j=1}^n\frac{\partial
      f}{\partial x_\alpha^j}(y)x_\alpha^j\right)(y)=0.
\end{equation}
For~(\ref{porxx1}) and (\ref{porxx2}) to be compatible we must have
$\biglip_Yf(y)=\biglip f(y)$; but this does not need to be the case.
For example, assume that $Y$ is $c$-porous at $y$ and let
$f=\setdist Y,\{\cdot\}.$.} Then
\begin{equation}\label{eq:por_Lip_lbound}
  \biglip f(y)\ge\limsup_{n\to\infty}\frac{\setdist\{y_n\},Y.}{\dist y_n,y.}>c;
\end{equation}
but $\biglip_Yf(y)=0$ as $f$ is identically zero on $Y$.
On the other hand, if for all $c\in(0,\frac{1}{2})$ the set $Y$ is not $c$-porous at $y\in Y$,
then, for each Lipschitz function $f$,
\begin{equation}
  \label{eq:npor_Lip_equal}
\biglip_Yf(y)=\biglip f(y).
\end{equation}
To see this, choose $x_n\to y$, with $\dist x_n,y.>0$ and
\begin{equation}
  \biglip f(y)=\lim_{n\to\infty}\frac{|f(y)-f(x_n)|}{\dist y,x_n.};
\end{equation}
%\textcolor{red}
{as $Y$ is not $c$-porous at $y$, there is a radius $r_c>0$ such that
\begin{equation}
  \sup_{z\in \ball y,r.}\setdist{\{z\}},Y.\le cr\quad(\forall r\in(0,r_c]).
\end{equation}
So for $n$ large enough we find $Y\ni y_n\ne y$ with
\begin{equation}
  \dist x_n,y_n.\le (c+2/n)\dist x_n,y.;
\end{equation}
this implies
\begin{equation}
  \begin{split}
    \biglip_Yf(y)&\ge\limsup_{n\to\infty}\frac{|f(y)-f(y_n)|}{\dist
      y,y_n.}\\
    &\ge\limsup_{n\to\infty}\frac{|f(y)-f(x_n)|-(c+2/n)\glip f.\dist
      y,x_n.}{(1+c+2/n)\dist x_n,y.}\\
    &\ge\frac{1}{1+c}\left(\biglip f(y)-c\glip f.\right).
  \end{split}
\end{equation}}
Letting $c\to0$ we obtain \eqref{eq:npor_Lip_equal}. 
 \end{rem}
We now show that porous sets are of class $\Gap_0(X)$.
\begin{lem}\label{lem:por_gap}
  If $S\subset X$ is $c$-porous and Borel, then $S\in\Gap_0(X)$.
\end{lem}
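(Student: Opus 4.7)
The plan is to exhibit a single Lipschitz function on $X$, independent of the measure, that witnesses $S\in\Gap_0(\mu,X)$ for every Radon $\mu$ simultaneously. The natural candidate is the distance function
\begin{equation*}
  f(x)=\setdist S,\{x\}.,
\end{equation*}
which is $1$-Lipschitz and vanishes identically on $S$. The two things to verify are (i) $\biglip f\ge c$ on $S$ and (ii) $\locnorm df,{\wform\mu\mrest S.}.=0$ for every Radon $\mu$.

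For (i), given the porosity sequence $y_n\to y$ one has $f(y)=0$ and $f(y_n)\ge\setdist\{y_n\},S.>c\dist y_n,y.$, so the difference quotient exceeds $c$; this is exactly the computation \eqref{eq:por_Lip_lbound} in the preceding remark.

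The central step is (ii). I plan to show that $S$ is $\frags(X,f,\epsi)$-null for every $\epsi>0$ and then invoke Theorem \ref{thm:weaver_fnorm_char} to conclude $\locnorm df,{\wform\mu\mrest S.}.\le\epsi$ for every $\epsi>0$. The nullity is a density-point argument: fix a biLipschitz fragment $\gamma:K\to X$ and suppose $A:=\gamma^{-1}(S)$ has positive Lebesgue measure. Since $f\circ\gamma\equiv0$ on $A$, at $\lebmeas$-a.e.\ $t_0\in A$ the point $t_0$ is a Lebesgue density point of $A$, and at such a $t_0$---whenever $(f\circ\gamma)'(t_0)$ exists, which also happens $\lebmeas$-a.e.---taking the limit along a sequence in $A$ forces $(f\circ\gamma)'(t_0)=0$. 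On the other hand biLipschitz-ness gives $\metdiff\gamma\ge1/L>0$ a.e., so the defining inequality $(f\circ\gamma)'\ge\epsi\metdiff\gamma$ of $\frags(X,f,\epsi)$ cannot hold on a positive-measure subset of $A$. Hence $\lebmeas(A)=0$, and consequently $\hmeas1._\gamma(S)=0$.

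The proof is technically light; the main obstacle is only bookkeeping around the non-compact ambient space, since Theorem \ref{thm:weaver_fnorm_char} is stated for compact $X$. Two workarounds are available: reduce to compact subsets of $S$ via inner regularity of the Radon measure $\mu$; or bypass Theorem \ref{thm:weaver_fnorm_char} altogether by noticing that the same density-point argument combined with formula \eqref{eq:derivation_alberti} yields $\chi_S D_{\albrep.}f=0$ in $L^\infty(\mu)$ for every $\albrep.\in\Alb_{{\rm sub}}(\mu)$, and then propagating this vanishing to every $D\in\wder\mu.$ via the weak*-density Theorem \ref{thm:weak*density}. Either way, combined with $\biglip f\ge c$ on $S$, this gives $S\in\Gap_0(\mu,X)$ for every Radon $\mu$, hence $S\in\Gap_0(X)$.
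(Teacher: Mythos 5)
Your choice of $f=\setdist S,\{\cdot\}.$ and the lower bound $\biglip f\ge c$ on $S$ match the paper exactly. For the vanishing of the local norm, however, the paper takes a much shorter route than either of your two proposed workarounds: since $f\equiv 0$ on $S$, the locality of derivations (Lemma \ref{lem:locality_derivations}) gives $\chi_S Df=\chi_S D(0)=0$ for every $D\in\wder\nu.$ directly, and hence $\locnorm df,{\wform\nu\mrest S.}.=0$ by the definition of the dual local norm --- no fragment-nullity argument, no appeal to Theorem \ref{thm:weaver_fnorm_char} or \ref{thm:weak*density}, and no compactness reduction needed. Your density-point argument showing $S$ is $\frags(X,f,\epsi)$-null for every $\epsi>0$ is correct, and so is the deduction via Theorem \ref{thm:weaver_fnorm_char} once one reduces to compact pieces by inner regularity; likewise the weak*-density alternative works. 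But both are heavy machinery deployed where a one-line locality argument suffices, and both quietly rely on results whose proofs are themselves longer than the locality lemma. The only small omission in your write-up is the trivial case $\mu(S)=0$, which the paper handles explicitly; in that case $L^\infty(\mu\mrest S)$ is the zero ring and the norm condition is vacuous.
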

\begin{proof}
  Let $\nu$ be a Radon measure on $X$. Let $f=\setdist
  S,\{\cdot\}.$. From \eqref{eq:por_Lip_lbound} it follows that
  \begin{equation}
    \inf_{x\in S}\biglip f(x)\ge c.
  \end{equation}
If $\nu(S)=0$ we have, trivially, $\locnorm df,{\wform\nu\mrest
  S.}.=0$. If $\nu(S)>0$ we note that for each derivation $D\in\wder\nu.$, by
locality of derivations, as $f=0$ on $S$, $\chi_SDf=0$. This implies
\begin{equation}
  \locnorm df,{\wform\nu\mrest
  S.}.=0.
\end{equation}
\end{proof}
\par We  need the following technical lemma because our approximation
schemes are designed to work in compact spaces.
\begin{lem}\label{lem:porous_completion}
  If $X$ is a complete metric space and $K\subset X$ is compact and
  \hbox{$c$-porous}, then there is a compact $Y$ 
  with $K\subset
  Y\subset X$ and such that $K$ is $\frac{2c}{3}$-porous in $Y$.
\end{lem}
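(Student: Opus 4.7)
The plan is, for each $m \in \natural$, to use the $c$-porosity of $K$ together with its compactness to select a finite set $F_m$ of porosity witnesses at scale below $1/m$, and then to take
\begin{equation*}
  Y = K \cup \bigcup_{m=1}^\infty F_m.
\end{equation*}
For each $k \in K$, $c$-porosity at $k$ yields a point $z_k \in X$ with $\rho_k := \dist z_k,k. < 1/m$ and $c\rho_k < \setdist\{z_k\},K.$; note $\rho_k > 0$, since $\rho_k = 0$ would force $z_k \in K$ and thus $\setdist\{z_k\},K. = 0$, contradicting the strict inequality. The open balls $\{\ball k,\rho_k/2.\}_{k \in K}$ cover $K$ (each ball contains its centre), so by compactness I extract a finite subcover $\{\ball k_j,\rho_{k_j}/2.\}_{j=1}^{L(m)}$ and set $F_m = \{z_{k_j}\}_{j=1}^{L(m)}$.

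The $\tfrac{2c}{3}$-porosity of $K$ in $Y$ then follows from a short triangle inequality. Given $k \in K$, pick $j$ with $\dist k,k_j. < \rho_{k_j}/2$; then $\dist z_{k_j},k. < \rho_{k_j} + \rho_{k_j}/2 = \tfrac{3}{2}\rho_{k_j}$, hence
\begin{equation*}
  \tfrac{2c}{3}\dist z_{k_j},k. < c\rho_{k_j} < \setdist\{z_{k_j}\},K..
\end{equation*}
Since $\dist z_{k_j},k. < 3/(2m)$, letting $m \to \infty$ produces a sequence in $Y$ converging to $k$ and satisfying the $\tfrac{2c}{3}$-porosity inequality, as required.

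For compactness of $Y$, note that every point of $F_m$ lies within $1/m$ of $K$. Any sequence in $Y$ with infinitely many terms in $K$ has a convergent subsequence by compactness of $K$; otherwise, after refining, all terms lie in $\bigcup_m F_m$, and either the sequence stays inside a single finite $F_m$ (so some value repeats), or the indices $m$ tend to infinity, in which case the distances of the terms to $K$ tend to zero and a further subsequence converges to a point of $K \subset Y$ by the compactness of $K$.

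The only conceptual obstacle is that $c$-porosity is a weak hypothesis: it guarantees witnesses at arbitrarily small scales but offers no control over which scales actually occur. The construction sidesteps this by letting $c$-porosity pick the scale $\rho_k$ freely and slaving the covering ball to have radius $\rho_k/2$; the two-thirds loss in the porosity constant is precisely the price of reassigning the witness chosen for a centre $k_j$ to a nearby point $k$ at distance up to $\rho_{k_j}/2$.
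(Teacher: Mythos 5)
Your proof is correct, and it takes a genuinely different and, in my view, slightly cleaner route than the paper's. The paper's proof confronts the same obstacle you identify---$c$-porosity guarantees witnesses at unspecified small scales---and resolves it by introducing, for each $m$, the function $\psi_m(x)=\sup\{\dist x,y. : y\in W_c(x,K)\cap\ball x,\frac{1}{m}.\}$, proving $\psi_m$ lower-semicontinuous on the compact $K$, and extracting a \emph{uniform} positive lower bound $r_m=\min_K\psi_m$ on the witness scales at level $m$. It then takes a fixed $\frac{r_m}{3}$-net $N_m\subset K$ and, at each net point $x$, a witness $w_m(x)$ at distance greater than $\frac{2r_m}{3}$ from $x$. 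Your proof sidesteps the uniformity issue entirely: rather than a net at a fixed radius, you cover $K$ by the open balls $\ball k,\rho_k/2.$ whose radius is slaved to the pointwise witness scale $\rho_k$, and compactness of $K$ directly yields a finite subcover with no semicontinuity argument needed. After that the two proofs converge on essentially the same estimate: the $\tfrac{1}{2}$-slaving factor (your $\rho_k/2$, the paper's $\frac{r_m}{3}$ against a witness distance $>\frac{2r_m}{3}$) forces $\dist z,{k}.\in(\rho/2,\frac{3}{2}\rho)$ for the reassigned witness, which is exactly what degrades $c$ to $\frac{2c}{3}$; and both close by noting that the level-$m$ witnesses lie within $\frac{1}{m}$ of $K$, giving (total boundedness of $Z$ in the paper, sequential compactness of $Y$ in your version) compactness. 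Your diagnosis of the ``conceptual obstacle'' is precisely what the paper's $\psi_m$-lemma is there to handle, and your construction shows that lemma is not actually needed. One tiny point of informality: the phrase ``the indices $m$ tend to infinity'' should be read as ``assign each term the least $m$ with $y_n\in F_m$; if no $m$ recurs infinitely often, these least indices tend to infinity''---but this is a cosmetic, not substantive, issue.
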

\begin{proof}
  For $m\in\natural$ define 
  \begin{equation}
    \begin{aligned}
      \psi_m:K&\to\left(0,\frac{1}{m}\right]\\
        x&\mapsto\sup\left\{\dist x,y.:y\in W_c(x,K)\cap\ball x,\frac{1}{m}.\right\};
    \end{aligned}
  \end{equation}
note that $\psi_m$ is lower-semicontinuous as $\{x\in K:\psi_m(x)>r\}$
is open; this can be seen as follows: choose $y\in W_c(x,K)$ with $\dist x,y.>r$; for $x'\in K$
sufficiently close to $x$, $\dist x',y.>r$ and $y\in W_c(x',K)$. 
\par Let
$r_m=\min_K\psi_m$. As $K$ is compact, we can choose a finite
$\frac{r_m}{3}$-net $N_m\subset K$. For $x\in N_m$ choose 
\begin{equation}
  w_m(x)\in W_c(x,K)\cap\ball x,\frac{1}{m}.
\end{equation}
with $\dist x,w_m(x).>\frac{2r_m}{3}$. Let $W_m=\bigcup_{x\in
  N_m}\{w_m(x)\}$. Let $x'\in K$; choose $x\in N_m$ with $\dist
x,x'.\le\frac{r_m}{3}$. Then
\begin{equation}
  \dist x',w_m(x).>\frac{r_m}{3}
\end{equation} and
\begin{equation}
\begin{split}
  \dist w_m(x),x'.&\le\dist x,w_m(x).+\dist x,x'.\\
  &<\dist x,w_m(x).+\frac{1}{2}\dist x,w_m(x).\\
  &=\frac{3}{2}\dist x,w_m(x)..
\end{split}
\end{equation}
This implies that
\begin{equation}
  \frac{2c}{3}\dist w_m(x),x'.<c\dist x,w_m(x).<\dist w_m(x),K.;
\end{equation}
thus $w_m(x)\in W_{\frac{2c}{3}}(x',K)$ and $K$ is $\frac{2c}{3}$-porous
in
\begin{equation}
  Z=K\cup\bigcup_mW_m.
\end{equation}
We let $Y$ be the closure of $Z$ in $X$ and show that $Y$ is compact
by showing that $Z$ is totally bounded. Let $\epsi>0$. By compactness
of $K$, finitely many balls $B_1,\cdots, B_{M_\epsi}$ cover $K$; As
$B_1\cup\ldots\cup B_{M_\epsi}$ is an open neighbhourhood of $K$ and
as $W_m$ lies in a $\frac{1}{m}$-neighbourhood of $K$, there is $m_0$ such that
$m\ge m_0$ implies
\begin{equation}
  W_m\subset B_1\cup\cdots\cup B_{M_\epsi};
\end{equation}
note that $\bigcup_{m<m_0}W_m$ is finite so finitely many balls of
radius $\epsi$ are needed to cover it.
\end{proof}
\par We now prove Theorem \ref{thm:diff_char1}.
\begin{proof}[Proof of Theorem \ref{thm:diff_char1}]
  We first prove necessity. The first step is to show that $\mu$ is
  asymptotically doubling. Specifically, suppose that $S$ is a Borel $c$-porous
  subset of $X$ with $\mu(S)>0$. Then there is a compact $K\subset S$ with
  $\mu(K)>0$ and, by Lemma~\ref{lem:porous_completion}, a compact
  $Y\supset K$ in which $K$ is $\frac{2c}{3}$-porous. By Lemma
  \ref{lem:por_gap} $K\in\Gap_0(Y)$ and so, by Lemma~\ref{lem:ind_lip} and as
  $(Y,\mu\mrest Y)$ is a $\sigma$-differentiability space, $\mu(K)=0$,
  yielding a contradiction. This implies that $\mu(S)=0$.  As observed
  in \cite{bate_speight}, \cite[Thm.~3.6]{preiss_porosity} implies that
  if $\mu$ annihilates porous sets, then $\mu$ is asymptotically
  doubling. We can therefore find disjoint compact sets $K_\alpha$
  with $\mu(K_\alpha)>0$, $\mu(X\setminus\bigcup_\alpha K_\alpha)=0$
  and $K_\alpha$ of finite Assouad dimension.
  \par We now show that
  on a full $\mu$-measure Borel subset of $K_\alpha$, the set $K_\alpha$ is
  not $c$-porous for any $c\in(0,\frac{1}{2})$. Let
  \begin{equation}
    P_\alpha=\left\{x\in K_\alpha:\exists
    c\in\left(0,\frac{1}{2}\right):\text{$K_\alpha$ is $c$-porous at $x$}\right\};
  \end{equation}
note that
\begin{equation}
  O_{c,\alpha}(r)=\left\{x\in X:\exists y\in\ball x,r.:\setdist
  K_\alpha,\{y\}.>c\dist x,y.>0\right\}
\end{equation}
is open; then the set
\begin{equation}
  P_\alpha=\bigcup_{c\in\rational\cap(0,\frac{1}{2})}\bigcap_{n\in\natural}\bigcup_{r\in(0,\frac{1}{n})}O_{c,\alpha}(r)\cap
  K_\alpha
\end{equation}
is a $G_{\delta\sigma}$-{set} because each $K_\alpha$, being closed, is a $G_\delta$;
moreover, $P_\alpha$ is a countable union of the porous sets
\begin{equation}
  \bigcap_{n\in\natural}\bigcup_{r\in(0,\frac{1}{n})}O_{c,\alpha}(r)\cap
  K_\alpha\quad\text{($c\in\rational\cap(0,\frac{1}{2})$).}
\end{equation}
 Thus $\mu(P_\alpha)=0$.
%\par\textcolor{red}
\par{Let $S\in\Gap(X)$, and let $(f,\alpha,\beta)$ be as in the defining
property of $\Gap$, where $f$ is a real-valued Lipschitz function. }
Then \eqref{eq:npor_Lip_equal} implies that if
$S\cap(K_\alpha\setminus P_\alpha)\ne\emptyset$, then
$S\cap(K_\alpha\setminus P_\alpha)\in\Gap(K_\alpha)$. But by Lemma
\ref{lem:ind_lip}, $(K_\alpha,\mu\mrest K_\alpha)$ being a
\hbox{$\sigma$-diffe}\-rentiability space, $\mu(S\cap K_\alpha)=0$ so that
$\mu(S)=0$.
\par We now prove sufficiency. If $S$ is Borel and $c$-porous,
then $S\in\Gap_0(X)$ by Lemma~\ref{lem:por_gap}, so $S$ is $\mu$-null by
hypothesis. Then, as in the necessity argument (see the beginning of
this proof), $\mu$ is
$\sigma$-asymptotically doubling and we find disjoint compact sets $K_\alpha$ with $\mu(K_\alpha)>0$,
  $\mu(X\setminus\bigcup_\alpha K_\alpha)=0$ and $K_\alpha$ doubling,
  and hence of finite
  Assouad dimension; by Corollary~\ref{derbound} (which shows
  that a bound on the Assouad dimension implies that the module of
  Weaver derivations is finitely generated)  and Theorem~\ref{thm:free_dec} we can partition the $K_\alpha$ and
  assume that the module $\wder\mu\mrest K_\alpha.$ is free of rank
  $N_\alpha$. As in the necessity argument, $P_\alpha$
  is $\mu$-null so it suffices to show, by \eqref{eq:npor_Lip_equal},
  that each $(K_\alpha, \mu\mrest K_\alpha)$ is a differentiability space
  in order to conclude that $(X,\mu)$ is a $\sigma$-differentiability
  space.
  \par Let $S\in\Gap(\mu\mrest K_\alpha,K_\alpha)\subset\Gap(\mu,X)$; by Lemma
  \ref{lem:gap_incl} it contains a $\mu$-full measure Borel subset of
  type $\Gap(X)$, so $\mu\mrest K_\alpha(S)=0$. By definition of
  $\Gap$ sets, this implies that for every $f\colon K_\alpha\to\real$
  \begin{equation}\label{eq:diff=Lip_loc}
    \locnorm df,{\wform\mu\mrest K_\alpha.}.(x)=\biglip
    f(x)\quad\text{(for $\mu\mrest K_\alpha$-a.e.~$x$);}
  \end{equation}
by Corollary \ref{cor:pseudoduality} up to furthering
partitioning the $K_\alpha$, we can assume that there is a basis
$\{D_{\alpha,i}\}_{i=1}^{N_\alpha}$ of $\wder\mu\mrest K_\alpha.$ and there
are $1$-Lipschitz functions $\{g_{\alpha,j}\}_{j=1}^{N_\alpha}$ with
\begin{equation}
  D_{\alpha,i}g_{\alpha,j}=\delta_{ij}\chi_{K_\alpha}.
\end{equation}
Because of \eqref{eq:diff=Lip_loc}, for each $\epsi>0$
there are disjoint Borel $\{U_{\alpha,\beta}\}$ which are subsets of
$K_\alpha$, which satisfy
$\mu\left(K_\alpha\setminus\bigcup_\beta U_{\alpha,\beta}\right)=0$,
and such that there are derivations $\{D_{\alpha,\beta}\}$ with
\begin{align}
  \locnorm D_{\alpha,\beta},{\wder\mu\mrest U_{\alpha,\beta}.}.&=1\\
  D_{\alpha,\beta} f&\ge\biglip f-\epsi\quad\text{on $U_{\alpha,\beta}$;}
\end{align}
moreover, there are
$\{\lambda_{i,\alpha,\beta}\}_{i=1}^{N_\alpha}\subset\bborel
K_\alpha.$ with
\begin{equation}
  D_{\alpha,\beta}=\sum_{i=1}^{N_\alpha}\lambda_{i,\alpha,\beta}D_{\alpha,i};
\end{equation}
evaluating on the $g_{\alpha,j}$ gives 
\begin{equation}
  \|\lambda_{i,\alpha,\beta}\|_{L^\infty(\mu\mrest K_\alpha)}\le1,
\end{equation} so that
\begin{equation}
  |D_{\alpha,\beta}f|\le N_\alpha \max_{i=1,\ldots,N_\alpha}|D_{\alpha,i}f|.
\end{equation}
Thus 
\begin{equation}\label{eq:reverproof}
  \biglip f(x)\le N_\alpha\max_{i=1,\ldots,N_\alpha}|D_{\alpha,i}f|(x)\quad\text{(for $\mu\mrest K_\alpha$-a.e.~$x$),}
\end{equation}
and \cite[Thm.~5.9]{derivdiff} shows that $(K_\alpha, \mu\mrest
K_\alpha)$ is a differentiability space.
\end{proof}
\par We now prove Theorem \ref{thm:diff_char2}.
\begin{proof}[Proof of Theorem \ref{thm:diff_char2}]
{%\color{red}
{%\color{red}
We show that $(X,\mu)$ being a $\sigma$-differentiability space
   implies (1). In fact, assume that  $\locnorm
   df,{\wform\mu.}.\ne\biglip f$ on a set of positive measure. By
   Lemma~\ref{lem:local_lip_der} we have $\locnorm
   df,{\wform\mu.}.\le\biglip f$ $\mu$-a.e., and so $\mu$ would give
 positive measure to the set:
 \begin{equation}
   \label{eq:pf_diff_ch2_1}
   J=\left\{x: \locnorm df,{\wform\mu.}.(x) < \biglip f(x)\right\},
 \end{equation}
where with slight abuse of notation we use $\locnorm df,{\wform\mu.}.$ to
denote a Borel representative of the local norm of $df$. But we can
decompose $J$ as a countable union
\begin{equation}
  \label{eq:pf_diff_ch2_1_1}
  J = \bigcup_{\substack{0\le\beta<\alpha \\ \alpha,\beta\in\rational}}J_{\alpha,\beta}
\end{equation}
where
\begin{equation}
  \label{eq:pf_diff_ch2_1_2}
  J_{\alpha,\beta}=\left\{
    x:\locnorm df,{\wform\mu.}(x).\le\beta < \alpha < \biglip f(x)
  \right\}.
\end{equation}
Note that $J_{\alpha,\beta}\in\Gap(\mu,X)$ from the definition of the
class of sets $\Gap(\mu,X)$, and then, as $\mu(J)>0$, there
must be a pair $(\alpha_0,\beta_0)$ with
$\mu(J_{\alpha_0,\beta_0})>0$. By Lemma~\ref{lem:gap_incl} there is a
Borel set $J'_{\alpha_0,\beta_0}\subset J_{\alpha_0,\beta_0}$ with
$\mu(J_{\alpha_0,\beta_0} \setminus J_{\alpha_0,\beta_0}')=0$ and
$J_{\alpha_0,\beta_0}'\in\Gap(X)$. Thus we would have $\mu(J_{\alpha_0,\beta_0}')>0$,
 which would contradict
 Theorem~\ref{thm:diff_char1}. 
}
\par We now show that (1) implies (2). Recall from
Theorem~\ref{thm:weaver_fnorm_char} that $\locnorm df,{\wform\mu.}.$
is completely determined by the speed of $f$ on Alberti
representations. 
{%\color{red}
In particular, if on a set $A_\alpha$ we have:
\begin{equation}
  \label{eq:pf_diff_ch2_2}
  \begin{aligned}
    \locnorm df,{\wform\mu.}.&\ge\alpha\\
    (1+\varepsilon)\alpha&\ge\biglip f>0,
  \end{aligned}
\end{equation}
then the measure $\mu\mrest A_\alpha$ admits a $(1,1+\varepsilon)$-bi-Lipschitz
Alberti representation $\albrep\alpha,\sigma',\varepsilon.$ with 
\begin{equation}
  \label{eq:pf_diff_ch2_3}
  \text{$f$-speed}\ge\frac{\sigma'\alpha(1-\varepsilon)}{1+\varepsilon},
\end{equation}
for any $\sigma'\in(0,1)$. In fact, by
Theorem~\ref{thm:weaver_fnorm_char} $\mu\mrest A_\alpha$ cannot give
positive measure to any $\frags(X,f,\sigma'\alpha(1-\varepsilon/2))$-null
set, because otherwise we would have \begin{equation}\locnorm
df,{\wform\mu.}.\le\sigma'\alpha\end{equation} on a positive measure subset of $A_\alpha$. Thus the existence of the desired Alberti representation for
$\mu\mrest A_\alpha$ follows from Theorem~\ref{alberti_rep_prod}. But then such a
representation $\albrep\alpha,\sigma',\varepsilon.$ has 
\begin{equation}
  \label{eq:pf_diff_ch2_4}
\text{$f$-speed}  \ge\chi_{A_\alpha}\frac{\sigma'(1-\varepsilon)}{(1+\varepsilon)^2}\biglip f,
\end{equation}
and (2) follows by choosing $\sigma'$ and $\varepsilon$ so that
$\frac{\sigma'(1-\varepsilon)}{(1+\varepsilon)^2}\ge\sigma$, and applying the gluing
principle for Alberti representations (Theorem \ref{alb_glue}).}
\par We show that (2) implies (3). Consider a
$(1,1+\varepsilon)$-bi-Lipschitz Alberti representation
$\albrep.=(P,\nu)$ with $f$-speed $\ge\sigma\biglip f$. Then for
$P$-a.e.~$\gamma$ at $\gamma_{\#}^{-1}\nu_{\gamma}$-a.e.~$t$ we have:
\begin{equation}
  \label{eq:pf_diff_ch2_5}
  (f\circ\gamma)'(t) \ge \sigma\biglip f(\gamma(t)).
\end{equation}
For $\gamma_{\#}^{-1}\nu_{\gamma}$-a.e.~$t$ the point $t$ is a
Lebesgue density point of $\dom\gamma$, and thus we conclude from
the definition of the small Lipschitz constant $\smllip f$ that:
\begin{equation}
  \label{eq:pf_diff_ch2_6}
  (f\circ\gamma)'(t) \le (1+\varepsilon)\smllip f(\gamma(t)).
\end{equation}
We thus obtain:
\begin{equation}
  \label{eq:pf_diff_ch2_7}
  \sigma\biglip f(\gamma(t)) \le (1+\varepsilon)\smllip f(\gamma(t));
\end{equation}
because of the existence of $\albrep.$, the set of points $x$ of
the form $x=\gamma(t)$ such that~(\ref{eq:pf_diff_ch2_5}),
(\ref{eq:pf_diff_ch2_6}) and (\ref{eq:pf_diff_ch2_7}) hold has full
$\mu$-measure; we thus conclude that for $\mu$-a.e.~$x$:
\begin{equation}
  \label{eq:pf_diff_ch2_8}
  \sigma\biglip f(x)\le (1+\varepsilon)\smllip f(x),
\end{equation}
and the result follows letting $\sigma\nearrow 1$ and
$\varepsilon\searrow0$.
\par We now show that (3) implies that $(X,\mu)$ is a
$\sigma$-differentiability space. The first observation is that $\mu$
must be asymptotically doubling. Suppose the contrary, i.e.~that $\mu$
gave positive measure to a porous set $P$. Letting $f=d(\cdot,P)$
(i.e.~the distance function from $P$), we would find (using
Lemma~\ref{lem:por_gap}) an $\alpha>0$
such that:
\begin{equation}
  \label{eq:pf_diff_ch2_9}
  \biglip f\ge\alpha>0=\locnorm df,{\wform\mu.}.\quad(\text{$\mu\mrest P$-a.e.}).
\end{equation}
Applying Lemma~\ref{lem:gap_flat_func} (as $\locnorm
df,{\wform\mu.}.=0$ the assumption on the finite Assouad dimension is
not needed) we are in a position to apply Theorem~\ref{lip_ind}. As
the proof of Theorem~\ref{lip_ind} is quite technical, it has been 
deferred to Subsection \ref{subsec:constr-indep-lipsch}. Soon after
that proof, in
Remark~\ref{rem:liplipvio} we show how that argument actually produces
a Lipschitz function $\psi$ where $\biglip\psi\ne\smllip\psi$ on a set of
positive measure. This violates (3), and so we conclude that $\mu$ is
asymptotically doubling. We can now apply Keith's
Theorem~\ref{thm:keith} to conclude that $(X,\mu)$ is a
$\sigma$-differentiability space: note that even though
Keith
worked with doubling measures\footnote{Keith also introduced a notion of ``chunky measure'',
  but later it became clear that asymptotically doubling measures provide
  the right setting}, his arguments easily
generalize to asymptotically doubling measures (as usual, by taking
a countable partition such that on each set one has control on the doubling constant).}
\end{proof}
\par We now give an alternative proof of Lemma~\ref{partialderivatives} which relies on Theorem~\ref{thm:diff_char1}.
\begin{proof}[Alternative proof of Lemma~\ref{partialderivatives}]
  Let $(U,x)$ be an $n$-dimensional chart for the differentiability space $(X,\mu)$;
  then \eqref{eq:reverproof} shows that it is possible to find
  derivations $\{D_i\}_{i=1}^n\subset\wder\mu\mrest U.$ such that, for
  each $f\in\lipalg X.$,
  \begin{equation}
    \label{eq:altproof}
    \biglip f(x)\le n\max_{i=1,\ldots,n}|D_{i}f|(x)
  \end{equation} holds for $\mu\mrest U$-a.e.~$x$. We can then apply
  \cite[Lem.~6.20]{derivdiff} to conclude that the partial derivative
  operators $\frac{\partial}{\partial x^j}$  are derivations; note
  that even though in \cite{derivdiff} the measure $\mu$ is assumed
  doubling, this property of $\mu$ is used only to ensure that the
  Lebesgue's Differentiation Theorem holds for the measure
  $\mu$. However, the Lebesgue Differentiation Theorem holds for
  asymptotically doubling measures and so it is possible to apply the
  results in \cite{derivdiff}.
\end{proof}
\section{Technical tools}\label{sec:technical-tools}
\subsection{An approximation scheme}\label{subsec:an-appr-scheme}
The goal of this subsection is to sketch a combinatorical proof of Theorem
\ref{onedimapprox_multi}. The first step in this argument is the construction of a
cylinder out of the graph of a real-valued Lipschitz
function. %\textcolor{red}
{For
this we would like to say that if a subset $S\subset X$ meets each
fragment $\gamma$ belonging to a family ${\mathcal G}$ in a set {whose
$1$-dimensional Hausdorff measure is $0$}, then this property of $S$ is
intrinsic, i.e.~it still holds if we embedd $S$ isometrically into
another space $W$. Unfortunately, this is just wishful thinking as in
general ${\mathcal G}$ is a family of fragments which lie in $X$, not
in $W$. However, in the cases of interest the family ${\mathcal G}$ is
completely specified by conditions which involve Lipschitz
functions. Therefore our wishful thinking can be made precise in
Lemma~\ref{nullembedd_multi} relying on Definition~\ref{def:classes_of_fragments_approx}. }
\begin{defn}\label{def:classes_of_fragments_approx}
    For $\delta>0$, metric spaces $X$ and $W$, Lipschitz functions
    ${G\colon X\to W}$ and ${f\colon X\to\real^q}$, we define
\begin{multline}
  \frags(X,f,G,\delta,w,\alpha)=\Biggl\{\gamma\in\frags(X):
    \text{$(\langle w,f\rangle\circ\gamma)'(t)\ge\delta\metdiff
      G\circ\gamma(t)$}\\
    \text{and 
      $(f\circ\gamma)'(t)\in\bar\cone(w,\alpha)$
for
      $\lebmeas$-a.e. $t\in \dom\gamma$}\Biggr\}.
\end{multline}
\end{defn}
\begin{lem}\label{nullembedd_multi}
  Assume that $\psi:X\to Y$ is an isometric embedding and
  $S\subset X$. Then
   $S$ is $\frags(X,f,G,\delta,w,\alpha)$-null if and only if 
\begin{equation}  
\text{$\psi(S)$ is $\frags(Y,\tilde f,\tilde G,\delta,w,\alpha)$-null}
\end{equation}
where $\tilde f$
  and $\tilde G$ are any Lipschitz
  extensions of $f\circ\psi^{-1}$, $G\circ\psi^{-1}$.
\end{lem}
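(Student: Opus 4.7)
The plan is to transport fragments between $X$ and $Y$ via $\psi$ and its isometric inverse $\psi^{-1}:\psi(X)\to X$, and to use the Lebesgue density theorem to match derivatives on subdomains. Write $\tilde f$ and $\tilde G$ for the given Lipschitz extensions; by hypothesis $\tilde f\circ\psi=f$ and $\tilde G\circ\psi=G$ on all of $X$, and $\psi$ preserves both the distance and $\hmeas 1.$ on its image.

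The easy direction is $(\Leftarrow)$: suppose $\psi(S)$ is $\frags(Y,\tilde f,\tilde G,\delta,w,\alpha)$-null and let $\gamma\in\frags(X,f,G,\delta,w,\alpha)$. Set $\tilde\gamma=\psi\circ\gamma\in\frags(Y)$ (biLipschitz with the same constants). Then $\tilde f\circ\tilde\gamma=f\circ\gamma$ and $\tilde G\circ\tilde\gamma=G\circ\gamma$, while $\metdiff(\tilde G\circ\tilde\gamma)=\metdiff(G\circ\gamma)$, so the cone inclusion and speed inequality pass directly to $\tilde\gamma$, giving $\tilde\gamma\in\frags(Y,\tilde f,\tilde G,\delta,w,\alpha)$. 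Since $\hmeas 1._{\tilde\gamma}(\psi(S))=\hmeas 1._\gamma(S)$, the hypothesis forces $\hmeas 1._\gamma(S)=0$.

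The harder direction is $(\Rightarrow)$, and the main task is, given a fragment $\tilde\gamma\in\frags(Y,\tilde f,\tilde G,\delta,w,\alpha)$ with $\hmeas 1._{\tilde\gamma}(\psi(S))>0$, to produce a fragment $\gamma\in\frags(X,f,G,\delta,w,\alpha)$ with $\hmeas 1._\gamma(S)>0$. Put $K'=\tilde\gamma^{-1}(\psi(S))$; since $\tilde\gamma$ is biLipschitz, $\lebmeas(K')>0$, and $\gamma:=\psi^{-1}\circ\tilde\gamma|_{K'}:K'\to S\subset X$ is biLipschitz with the same constants, so $\gamma\in\frags(X)$ with image in $S$.

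The delicate point is verifying the cone and speed conditions for $\gamma$ from those for $\tilde\gamma$. By the Lebesgue density theorem, $\lebmeas$-a.e.\ $t\in K'$ is a density point of $K'$, hence an accumulation point of $K'$ in itself. Since $f\circ\gamma$ and $\tilde f\circ\tilde\gamma$ (resp.\ $G\circ\gamma$ and $\tilde G\circ\tilde\gamma$) agree on $K'$, and $(f\circ\gamma)'(t)$ and $\metdiff(G\circ\gamma)(t)$ are defined as limits taken along $K'\ni s\to t$, one concludes at every such density point that $(f\circ\gamma)'(t)=(\tilde f\circ\tilde\gamma)'(t)$ and $\metdiff(G\circ\gamma)(t)=\metdiff(\tilde G\circ\tilde\gamma)(t)$ whenever the right-hand sides exist (which happens $\lebmeas$-a.e.\ on $K$). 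The cone inclusion and speed inequality for $\tilde\gamma$ therefore transfer to $\gamma$, producing $\gamma\in\frags(X,f,G,\delta,w,\alpha)$ with $\hmeas 1._\gamma(S)>0$, which contradicts $\frags(X,f,G,\delta,w,\alpha)$-nullity of $S$. The only nontrivial ingredient is this density-point derivative-matching argument; all other checks reduce to the isometry of $\psi$ and the identities $\tilde f\circ\psi=f$, $\tilde G\circ\psi=G$.
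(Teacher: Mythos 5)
Your proof is correct and follows the same approach as the paper's: push fragments forward by $\psi$ for the easy direction, and pull a fragment back via $\psi^{-1}$ on a suitable subdomain for the hard direction, using Lebesgue density points of that subdomain to match $(f\circ\gamma)'$ and $\metdiff(G\circ\gamma)$ with their counterparts in $Y$. The one step to tighten: in the $(\Rightarrow)$ direction you set $K'=\tilde\gamma^{-1}(\psi(S))$, which is Borel of positive Lebesgue measure but need not be compact, so $\gamma=\psi^{-1}\circ\tilde\gamma|_{K'}$ is not yet an element of $\frags(X)$ (which by definition requires a compact domain). Replace $K'$ by a compact subset of positive Lebesgue measure inside $\tilde\gamma^{-1}(\psi(S))$, as the paper does; the density-point argument and all subsequent checks go through verbatim.
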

\begin{proof}
  Necessity is proven by contrapositive, assuming that there is a
  \begin{equation}
\gamma\in\frags(Y,\tilde f,\tilde G,\delta,w,\alpha)
  \end{equation}
  with $\hmeas
  1.(\gamma\cap\psi(S))>0$. It is then possible to find a compact
  $K'\subset\dom\gamma$ with $\gamma(K')\subset \psi(S)$ and $\hmeas
  1.\left((\gamma|K')\cap\psi(S)\right)>0$. Let
  $\tilde\gamma=\psi^{-1}\circ (\gamma|K')$. Then 
  \begin{align}
    (f\circ\tilde\gamma)'(t)&=(\tilde f\circ\gamma)'(t)\\
    \metdiff\tilde\gamma(t)&=\metdiff\gamma(t)\\
    \metdiff\tilde G\circ\gamma(t)&=\metdiff G\circ\tilde\gamma(t)
  \end{align}
at any $t\in K'$ which is a Lebesgue density point for $K'$. In particular,
\begin{equation}
\tilde\gamma\in\frags(X,f,G,\delta,w,\alpha)
\end{equation}
and $\hmeas 1.(\tilde\gamma\cap
S)>0$.
\par Sufficiency is proven by observing that the previous part of the
argument allows to identify $\frags(X,f,G,\delta,w,\alpha)$ with
$\frags(\psi(X),\tilde f,\tilde G,\delta,w,\alpha)$ via
$\gamma\mapsto\psi\circ\gamma$ because the metric differential and the
derivative of a Lipschitz function along a fragment are determined, at
a point $t$, by the behaviour of the fragment on a subset for which
$t$ is a Lebesgue density point. Sufficiency then follows because
$\frags(X,f,G,\delta,w,\alpha)\subset\frags(Y,\tilde f,\tilde
G,\delta,w,\alpha)$.
\end{proof}
\par We now introduce the definition of cylinder.
\begin{defn}
  Let $X$ be a compact metric space and $M>0$. The \textbf{cylinder}
  $\Cyl(X,M)$ is the compact metric space $X\times[0,M]$ with metric
  \begin{equation}
    \dist {(x_1,t_1)},{(x_2,t_2)}.=\max\left(\dist x_1,x_2.,|t_1-t_2|\right).
  \end{equation} The projection on the base $X$ will be denoted by
  $\pi$ and the projection on the axis $[0,M]$ by $\tau$. Note that if $X$ is geodesic, $\Cyl(X,M)$ is geodesic.
\end{defn}
We now reduce the general approximation problem to the
case in which $X$ is a cylinder and $\langle w,f\rangle$ is the height
function. 
\begin{lem}\label{con_red_multi}
  If $f\colon X\to\real^q$ is Lipschitz and $S\subset X$ is Borel and 
  \begin{equation}\frags(X,f,\delta,w,\alpha)\text{-null},
  \end{equation} %\textcolor{red}
{after embedding $X$ in a larger space and after shifting,
  rescaling and extending $f$}, we can assume that:
  \begin{enumerate}
  \item  The space $X$ is the cylinder $\Cyl(Y,M)$ for
    $Y=Z\times Q$, where $Z$ is a compact geodesic metric space, $Q$ is a compact
    rectangle in $\real^{q-1}$ and $M\le\diam X$.
  \item If $\tilde\tau\colon Y\to Q$ is the projection, 
    $(\pi_w^\perp\circ f,\langle w,f\rangle)=(\tilde\tau,\tau)$.
  \item If $\pi_Z\colon Y\to Z$ is the projection,
    $S$ is
    $\frags(\Cyl(Y,M),(\tilde\tau,\tau),\pi_Z,\delta,e_q,\alpha)$\nobreakdash-\hspace{0pt}null.
      \end{enumerate}
\end{lem}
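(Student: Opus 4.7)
The plan is to construct an isometric embedding $\Phi:X\hookrightarrow\Cyl(Y,M)$ such that $\tilde\tau\circ\Phi=\pi_w^\perp\circ f$ and $\tau\circ\Phi=\langle w,f\rangle$, and then to transfer the $\frags$-nullity of $S$ along $\Phi$ via an adaptation of Lemma \ref{nullembedd_multi}. By an orthogonal change of coordinates in $\real^q$ I assume $w=e_q$. Since the direction condition is scale-invariant and $\frags(X,f,\delta,w,\alpha)=\frags(X,cf,c\delta,w,\alpha)$ for every $c>0$, I may further rescale to assume $f$ is $1$-Lipschitz without altering the $\frags$-nullity hypothesis; translating arranges that $\langle w,f\rangle(X)\subset[0,M]$ with $M\le\diam X$.

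Fix a Kuratowski embedding $\iota:X\hookrightarrow l^\infty$ and let $Z\subset l^\infty$ be the closed convex hull of $\iota(X)$: it is compact (closed convex hulls of totally bounded sets in a Banach space are totally bounded) and geodesic (straight segments realize the distance). Let $Q\subset\real^{q-1}$ be a compact rectangle containing $\pi_w^\perp f(X)$, put $Y=Z\times Q$ with the max metric, and set
\begin{equation*}
\Phi:X\to\Cyl(Y,M),\qquad \Phi(x)=\bigl((\iota(x),\pi_w^\perp f(x)),\,\langle w,f(x)\rangle\bigr).
\end{equation*}
Because $\iota$ is isometric and $f$ is $1$-Lipschitz, the first term in the max defining $\dist \Phi(x_1),\Phi(x_2).$ equals $\dist x_1,x_2.$ while the other two are dominated by it; hence $\Phi$ is isometric and, by construction, $\tilde\tau\circ\Phi=\pi_w^\perp\circ f$ and $\tau\circ\Phi=\langle w,f\rangle$, giving (1) and (2).

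For (3), suppose toward contradiction that some $\tilde\gamma\in\frags(\Cyl(Y,M),(\tilde\tau,\tau),\pi_Z,\delta,e_q,\alpha)$ satisfies $\hmeas 1._{\tilde\gamma}(\Phi(S))>0$. The set $K'=\tilde\gamma^{-1}(\Phi(X))$ has positive Lebesgue measure, and $\gamma=\Phi^{-1}\circ\tilde\gamma|_{K'}$ is a biLipschitz fragment in $X$ with $\hmeas 1._\gamma(S)>0$. At any Lebesgue density point $t$ of $K'$, the restriction $\pi_Z|_{\Phi(X)}=\iota\circ\Phi^{-1}$ is an isometry onto its image, and restricting the relevant difference quotients to $K'$ (permissible at density points, since $\tilde\gamma$ is biLipschitz) yields
\begin{equation*}
\metdiff\gamma(t)=\metdiff\tilde\gamma(t)=\metdiff(\pi_Z\circ\tilde\gamma)(t).
\end{equation*}
Combined with $(\tilde\tau,\tau)\circ\tilde\gamma=f\circ\gamma$ on $K'$, the defining inequalities for $\tilde\gamma$ therefore pull back verbatim to $\gamma\in\frags(X,f,\delta,w,\alpha)$, contradicting the hypothesis. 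The main obstacle is precisely this metric-differential identification: the cylinder carries many fragments moving in $Z$-directions outside $\Phi(X)$ for which $\metdiff\tilde\gamma$ strictly exceeds $\metdiff(\pi_Z\circ\tilde\gamma)$, which is exactly why the auxiliary map $G=\pi_Z$ enters Definition \ref{def:classes_of_fragments_approx} and why the nullity in (3) is phrased with respect to it rather than with the intrinsic metric differential.
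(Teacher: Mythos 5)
Your proof follows the paper's strategy closely: Kuratowski-embed $X$ into $l^\infty$, take $Z$ to be the closed convex hull (compact and geodesic), form the max-metric product $Y=Z\times Q$ and the cylinder, embed isometrically so that $\tau,\tilde\tau$ reproduce the components of $f$, and transfer $\frags$-nullity using the fact that $\pi_Z$ is isometric on the embedded copy of $X$, whence $\metdiff\gamma=\metdiff(\pi_Z\circ\gamma)$ for fragments staying in that copy. The one genuine (but inessential) difference from the paper is that you embed $X$ directly via $\Phi$, whereas the paper first McShane-extends each $f_i$ to $Z$ and then embeds $Z$ via $\psi$; your version saves a step since the coordinates $(\tilde\tau,\tau)$ already furnish a Lipschitz extension of $f$ to the whole cylinder, which is all Lemma \ref{nullembedd_multi} asks for.

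One point you should flag, though the paper glides over it too: the nullity hypothesis uses the \emph{open} cone (Definition \ref{def:classes_frags}), while $\frags(\Cyl(Y,M),(\tilde\tau,\tau),\pi_Z,\delta,e_q,\alpha)$ from Definition \ref{def:classes_of_fragments_approx} uses the \emph{closed} cone $\bar\cone(w,\alpha)$. When you ``pull back verbatim,'' you actually only get $(f\circ\gamma)'(t)\in\bar\cone(w,\alpha)$, which does not by itself put $\gamma$ into $\frags(X,f,\delta,w,\alpha)$. The clean fix is to prove the lemma with $\bar\cone(e_q,\alpha')$ for an arbitrary $\alpha'<\alpha$ in conclusion (3) (then $\bar\cone(w,\alpha')\subset\cone(w,\alpha)$ and the contradiction fires); this costs nothing downstream, as the estimates in Theorem \ref{onedimapprox_multi} and Lemma \ref{lem:loc_estimate_fnull} depend continuously on $\cot\alpha$ and the angle is taken small in all applications.
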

\par Points of $\Cyl(Y,M)$ will be denoted by $(z,v,t)$ where $z\in
Z$, $v\in Q$ and $t\in[0,M]$.
\begin{proof}
  We can postcompose $f$ with an isometry of $\real^q$ so that the
  minimum of each component $f_i$ is $0$ and $w=e_q$, where
  $\{e_1,\ldots,e_q\}$ is the standard basis of $\real^q$. We rescale
  $f$ to be $1$-Lipschitz: this accounts for the factor $L$ in (\ref{eq:onedimapprox_multi_s2}).
  Considering a Kuratowski embedding $\Xi$ of $X$ in $l^\infty$ we obtain
  the isometric embedding:
  \begin{equation}
    \label{eq:con_red_multi_p1}
    \graph f\xrightarrow{\Xi\times f}l^\infty\oplus_{\infty}\real^q;
  \end{equation}
  we let $\tilde Y$ denote the closed convex
  hull of $X$ in $l^\infty\oplus_\infty\real^q$, which is compact
  (\cite[Thm.~3.25]{rudin-functional}). The set $Z$ is given by
  $\tilde Y\cap (l^\infty\times\{0\})$ and the set $Q$ by
  \begin{equation}
    Q=[0,\max f_1]\times\cdots\times[0,\max f_{N-1}]\subset\real^{q-1};
  \end{equation}
  the functions $(\tilde\tau,\tau)$ are induced by the projection
  $l^\infty\oplus_\infty\real^q\to\real^q$, and the set $S$ is
  replaced by the graph of $\Xi\times f$ restricted to $S$. The fact
  that this graph is $\frags(\Cyl(Y,M),
  (\tilde\tau,\tau),\pi_Z,\delta,e_q,\alpha)$-null follows from Lemma \ref{nullembedd_multi}.
\end{proof}
\par The next step is to cover $S$ by \emph{thin} strips. 
\begin{defn}
  Given a Lipschitz function $f\colon Y\to[0,M]$ and $h>0$ we define the \textbf{open
  strip of width $h$ above $f$} by
  \begin{equation}
    \strip f,h.=\left\{(y,t)\in\Cyl(Y,M): t\in(f(y),f(y)+h)\right\};
  \end{equation}
note that $\strip f,h.$ is an open set. The \textbf{lower and upper
hypersurfaces bounding $\strip f,h.$} are the closed sets:
\begin{align}
  \partial_-\strip f,h.&=\left\{(y,t)\in\Cyl(Y,M): t=f(y)\right\};\\
  \partial_+\strip f,h.&=\left\{(y,t)\in\Cyl(Y,M): t=f(y)+h\right\}.
\end{align}
\end{defn}
\begin{lem}\label{strip_cov_multi}
  Assume that the compact set $S\subset\Cyl(Y,M)$ is
$$\text{$\frags(\Cyl(Y,M),\allowbreak(\tilde\tau,\allowbreak\tau),\allowbreak\pi_Z,
\allowbreak\delta,\allowbreak e_q,\allowbreak\alpha)$-null;}$$
 then for each $n\in\natural$
the set  $S$ can be covered by $M(n)$ open strips 
  \begin{equation}\left\{\strip
  f_i,2\frac{\delta+\cot\alpha+1}{n}.\right\}_{i=1}^{M(n)}
\end{equation}
where the $f_i$ are
  $1$-Lipschitz with respect to the distance 
    \begin{equation}
    d_{\delta,\alpha}((z_1,v_1,t_1),(z_2,v_2,t_2))=\delta\max\left(\dist
    z_1,z_2.,|t_1-t_2|\right)+\|v_1-
    v_2\|_2\cot\alpha,
  \end{equation}and 
  \begin{equation}
    \lim_{n\to\infty}\frac{M(n)}{n}=0.
  \end{equation}
\end{lem}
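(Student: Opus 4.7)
The plan is to reduce the strip-covering to a countable decomposition of $S$ into $(1,d_{\delta,\alpha})$-Lipschitz graphs over $Y$, and then exploit the compactness of $S$ to extract finite subcovers of arbitrarily small total $\tau$-thickness; this yields both the finiteness of $M(n)$ and the sublinear growth $M(n)/n\to 0$.

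The key structural step I would address first is to show that the $\frags(\Cyl(Y,M),(\tilde\tau,\tau),\pi_Z,\delta,e_q,\alpha)$-nullity of $S$ forces an inclusion $S\subset\bigcup_{i=1}^{\infty}\Gamma_i$, where each $\Gamma_i$ is the graph of a function $g_i:Y\to[0,M]$ satisfying
\[
|g_i((z_1,v_1))-g_i((z_2,v_2))|\le\delta\,\dist z_1,z_2.+\cot\alpha\|v_1-v_2\|_2.
\]
Extended trivially in the $\tau$-variable, such $g_i$ is $(1,d_{\delta,\alpha})$-Lipschitz on $\Cyl(Y,M)$. Geometrically the nullity hypothesis rules out any positive-$\hmeas 1.$-measure rectifiable arc in $S$ whose $(\tilde\tau,\tau)$-tangent lies in $\cone(e_q,\alpha)$ and whose $\tau$-derivative dominates $\delta\,\metdiff(\pi_Z\circ\gamma)$. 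Since a two-point configuration does not by itself violate nullity, the passage from ``no such arc'' to ``$S$ is a countable union of Lipschitz graphs'' requires a density/covering argument analogous to the structure theorems for cone-null sets in \cite{acp_proceedings} and A.~Marchese's thesis \cite{alberti_marchese}.

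Granted the decomposition, I would fix $\epsi>0$ together with a summable sequence $\epsi_i>0$ satisfying $\sum_i\epsi_i<\epsi$. The open strips $\strip g_i-\epsi_i,2\epsi_i.$ form an open cover of $S$, so compactness of $S$ yields a finite subcover $\{\strip g_{i_j}-\epsi_{i_j},2\epsi_{i_j}.\}_{j=1}^{K_\epsi}$ whose total $\tau$-thickness $\sum_j 2\epsi_{i_j}$ is bounded by $2\epsi$. Subdividing each of these strips vertically into at most $\lceil\epsi_{i_j} n/(\delta+\cot\alpha+1)\rceil+1$ strips of the prescribed width $2(\delta+\cot\alpha+1)/n$, each above an appropriate additive translate of $g_{i_j}$ (which remains $(1,d_{\delta,\alpha})$-Lipschitz), produces a cover of the required form with
\[
M(n)\le\sum_{j=1}^{K_\epsi}\left(\left\lceil\frac{\epsi_{i_j} n}{\delta+\cot\alpha+1}\right\rceil+1\right)\le\frac{n\epsi}{\delta+\cot\alpha+1}+2K_\epsi.
\]
Hence $\limsup_n M(n)/n\le\epsi/(\delta+\cot\alpha+1)$, and since $\epsi>0$ is arbitrary, $M(n)/n\to 0$.

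The main obstacle will be the structural step. The nullity hypothesis only precludes a single fragment from meeting $S$ in positive $\hmeas 1.$-measure, so one cannot argue by a direct two-point contradiction using the geodesic structure of $\Cyl(Y,M)$; bridging to the countable $(1,d_{\delta,\alpha})$-Lipschitz graph decomposition is where the substantive geometric content of the hypothesis is spent, following the density/Vitali technology of \cite{acp_proceedings,alberti_marchese}.
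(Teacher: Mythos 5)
Your reduction hinges on the global structural claim that $\frags(\Cyl(Y,M),(\tilde\tau,\tau),\pi_Z,\delta,e_q,\alpha)$-nullity forces an inclusion $S\subset\bigcup_{i\ge1}\Gamma_i$ with each $\Gamma_i$ the graph of a $(1,d_{\delta,\alpha})$-Lipschitz function on $Y$, and this claim is false. Take $S=\{(z_0,v_0)\}\times C$ where $(z_0,v_0)\in Y$ is any fixed point and $C\subset[0,M]$ is a compact, uncountable set with $\lebmeas(C)=0$ (a rescaled Cantor set). For any admissible fragment $\gamma$, the set $\gamma^{-1}(S)$ is mapped by $\tau\circ\gamma$ biLipschitzly into $C$ (on $\{(z_0,v_0)\}\times[0,M]$ the cylinder metric is just $|t_1-t_2|$), so $\lebmeas\bigl(\gamma^{-1}(S)\bigr)=0$ and $\hmeas 1._\gamma(S)=0$; thus $S$ is null for the family. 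Yet a single graph $\Gamma_i$ meets the fiber $\{(z_0,v_0)\}\times[0,M]$ in exactly one point, so a countable union of graphs meets it in a countable set and cannot contain the uncountable $S$. The same example also defeats the weaker $\mu$-a.e.\ version of the decomposition whenever $\mu$ is nonatomic and concentrated on $S$. The lemma itself survives the example, since $C$ can be covered by few short intervals and hence $S$ by few strips, but those strips cannot be extracted from one fixed countable family of graphs; in particular the structural statement is genuinely stronger than what the lemma asserts, not a reformulation of it.

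The paper's proof deliberately avoids any such covering theorem and works one scale at a time. It takes a finite $\tfrac1n$-net in $\Cyl(Y,M)$, keeps the net points whose $\tfrac1n$-balls meet $S$, and equips them with the partial order $(z_1,v_1,t_1)\preceq(z_2,v_2,t_2)$ iff $t_2-t_1\ge\delta\dist z_1,z_2.$ and $(t_2-t_1)\tan\alpha\ge\|v_1-v_2\|_2$. Mirsky's dual of Dilworth's theorem covers this finite poset by $M(n)$ antichains; each antichain is the graph of a partial $(1,d_{\delta,\alpha})$-Lipschitz function, extended by McShane, and pushing out by the $\tfrac1n$-balls produces strips of the prescribed width $2(\delta+\cot\alpha+1)/n$. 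Nullity enters only to force $M(n)/n\to0$: if $M(n)\ge Cn$ along a subsequence, the maximal chains, interpolated by constant-speed geodesics and passed to an Ascoli--Arzel\`a limit, yield an admissible biLipschitz curve $\gamma:[0,M]\to\Cyl(Y,M)$ with $\dist\gamma,S.=0$ on a positive-measure compact set, a contradiction. This finitary, compactness-driven use of the hypothesis is precisely what your appeal to the ACP/Marchese covering technology would need to supply, and the Cantor-fiber example shows that no global graph decomposition of the kind you posit is available, even for compact subsets of a product $Y\times[0,M]$.
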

\begin{proof}
    Recall that a \textbf{$\frac{1}{n}$-net} in a metric space is a maximal set
  of points which are separated by a distance
  $\ge\frac{1}{n}$. As $Y$ and $[0,M]$ are compact, we obtain finite
  $\frac{1}{n}$-nets ${\mathscr{N}}_Y\subset Y$ 
  and ${\mathscr{N}}_{[0,M]}\subset[0,M]$. On the set
  \begin{equation}
    {\mathscr{N}}_{\Cyl(Y,M)}^S=\left\{(y,t)\in{\mathscr{N}}_{\Cyl(Y,M)}:
    \ball {(y,t)},\frac{1}{n}.\cap S\ne\emptyset\right\}
  \end{equation}
  we define the partial order
$\preceq$:
\begin{multline}
  (z_1,v_1,t_1)\preceq(z_2,v_2,t_2)\Longleftrightarrow
  t_2-t_1\ge\delta\dist z_1,z_2.\\\quad\text{and}\quad(t_2-t_1)\tan\alpha\ge\|v_1-v_2\|_2,
\end{multline}
and let $M(n)$ denote the length of a maximal chain in
$({\mathscr{N}}_{\Cyl(Y,M)}^S,\preceq)$ (note that the nets depend on
$n$). We prove that $M(n)=o(n)$ arguing by contradiction, that is,
assuming that for some  $C>0$ there are naturals $n\to\infty$ with
$M(n)\ge Cn$. We denote by
$\{(z_i^{(n)},v_i^{(n)},t_i^{(n)})\}_{i=1}^{M(n)}$ a maximal chain with the
$t_i^{(n)}$ in increasing order. We construct a bi-Lipschitz
path $\gamma_n:[0,M]\to\Cyl(Y,M)$ as follows:
\begin{itemize}
\item The restriction of $\gamma_n$ to $[t_i^{(n)},t_{i+1}^{(n)}]$,
is a constant speed geodesic joining
$(z_i^{(n)},\allowbreak v_i^{(n)},\allowbreak t_i^{(n)})$ to
$(z_{i+1}^{(n)},v_{i+1}^{(n)},t_{i+1}^{(n)})$.
\item The restrictions of $\gamma_n$ to
$[0,t_1^{(n)}]$ and $[t_{M(n)}^{(n)},M]$ are, respectively, constant speed geodesics
jointing $(z_1^{(n)},v_1^{(n)},0)$ to
${(z_1^{(n)},v_1^{(n)},t_1^{(n)})}$, and ${(z_{M(n)}^{(n)},\allowbreak v_{M(n)}^{(n)},\allowbreak t_{M(n)}^{(n)})}$ to
$(z_{M(n)}^{(n)},v_{M(n)}^{(n)},M)$. 
\end{itemize}
The path $\gamma_n$ is
$[1,\max\left(\frac{1}{\delta},\tan\alpha\right)]$-bi-Lipschitz and
belongs to
%\textcolor{red}
{$\frags(\Cyl(Y,M),(\tilde\tau,\tau),\pi_Z,\delta,e_q,\alpha)$.}
Moreover, letting
\begin{multline}
  \label{eq:strip_cov_multi_p1}
  K_n=\left[\max\left(0,t_1^{(n)}-\frac{\min(\delta,\cot\alpha)}{4n}\right),t_1^{(n)}+\frac{\min(\delta,\cot\alpha)}{4n}\right]\\
  \cup\bigcup_{2\le i\le
    M(n)-1}\left[t_i^{(n)}-\frac{\min(\delta,\cot\alpha)}{4n},t_i^{(n)}+\frac{\min(\delta,\cot\alpha)}{4n}\right]\\
  \cup\left[t_{M(n)}^{(n)}-\frac{\min(\delta,\cot\alpha)}{4n},\min\left(M,t_{M(n)}^{(n)}+\frac{\min(\delta,\cot\alpha)}{4n}\right)\right],
\end{multline}
we note that $\gamma_n(K_n)$ lies within distance $\frac{3}{2n}$ from
$S$. We can pass to converging subsequences
$\gamma_n\to \gamma$ and $K_n\to K$, so that we have the lower bound
\begin{equation}
  \lebmeas(K)\ge\limsup_{n\to\infty}\lebmeas(K_n)\ge \left(C-\frac{1}{n}\right)\frac{\min(\delta,\cot\alpha)}{2}
\end{equation}
and $\gamma(K)$ lies in $S$, 
contradicting that $S$ is
$\frags(\Cyl(Y,M),(\tilde\tau,\tau),\pi_Z,\delta,e_q,\alpha)$-null.
\par 
By Mirsky's Lemma (dual to Dilworth's Lemma, \cite{mirsky_dil}),
there are $M(n)$ antichains
${\mathscr{A}}_1,\ldots,{\mathscr{A}}_{M(n)}$ covering
${\mathscr{N}}_{\Cyl(Y,M)}^S$. As in an antichain no two
elements are comparable with respect to the order,
each ${\mathscr{A}}_p$ can be regarded as the graph of a function
$g_p\colon\pi_Z\times\tilde\tau({\mathscr{A}}_p)\to\real$ which is
$1$-Lipschitz with respect to the distance $d_{\delta,\alpha}$
(defined in (\ref{eq:onedimapprox_multi_s2})); the functions can be
extended to $1$-Lipschitz functions
$g_p:Y\to\real$ and from the definition of
${\mathscr{N}}_{\Cyl(Y,M)}^S$ we conclude that
\begin{equation}
  S\subset\bigcup_{p=1}^{M(n)}\strip g_p-\frac{\delta+\cot\alpha+1}{n},2\frac{\delta+\cot\alpha+1}{n}..
\end{equation}
\end{proof}
\par We now replace the strips given by Lemma \ref{strip_cov_multi} by
disjoint ones which come with an order structure: the cost to pay is
to make the strips slightly bigger and allow to cover a full measure
subset of $S$, but not necessarily the whole of $S$.
\begin{lem}\label{strip_cov_disj_multi}
  Assume that $S\subset\bigcup_{i=1}^N\strip f_i,h.$ ($h>0$) where the
  functions $f_i$ are $(1,d_{\delta,\alpha})$-Lipschitz. Then there are
  $(1,d_{\delta,\alpha})$-Lipschitz functions $\{g_i\}_{i=1}^N$
  and
  ${\lambda_i\in(1,\frac{3}{2})}$ such that  
  \begin{equation}\label{eq:strip_cov_disj_multi_s1}
    g_1\le g_2\le\cdots\le g_N,
  \end{equation}
\begin{equation}\label{eq:strip_cov_disj_multi_s2}
  \strip g_i,\lambda_i h.\cap\strip g_j,\lambda_j
  h.=\emptyset\quad\text{(for $i\ne j$),}
\end{equation}
and
\begin{equation}\label{eq:strip_cov_disj_multi_s3}
  \mu\left(S\setminus\bigcup_{i=1}^N\strip g_i,\lambda_i h.\right)=0.
\end{equation}
\end{lem}
\begin{proof}
  We first show that there are $(1,d_{\delta,\alpha})$-Lipschitz
  functions $\{\tilde f_i\}_{i=1}^N$ which satisfy:
  \begin{align}
    \label{eq:strip_cov_disj_multi_p1}
    \tilde f_1\le\tilde f_2\le\cdots\le\tilde f_N,\\
    \label{eq:strip_cov_disj_multi_p2}
      \bigcup_{i=1}^N\strip f_i,h. =\bigcup_{i=1}^N\strip \tilde f_i,h..
    \end{align}
    The value of $\tilde f_i$ at $x$ is that $i$-th of the values
    $\{f_j(x)\}_{j=1}^N$ after they have been put in nondecreasing
    order; specifically, if $\Omega_N(i)$ denotes the set of subsets
    of $\{1,\ldots,N\}$ of cardinality $i$, we have:
    \begin{equation}
      \label{eq:strip_cov_disj_multi_p3}
      \tilde f_i(x)=\min_{S\in\Omega_N(i)}\max_{j\in S}f_j(x);
    \end{equation}
    from (\ref{eq:strip_cov_disj_multi_p3}) it follows immediately
    that the $\{f_j(x)\}_{j=1}^N$ are
    $(1,d_{\delta,\alpha})$-Lipschitz and that
    (\ref{eq:strip_cov_disj_multi_p1}) holds.
    For each $x$, $\tilde f_i$ equals $f_j(x)$ for some $j$ and,
    vice versa, $f_i(x)$ equals some $\tilde f_j(x)$ for some $j$: this
    implies (\ref{eq:strip_cov_disj_multi_p2}).
    \par We let $g_1=\tilde f_1$ and choose $\lambda_1\in(1,\frac{3}{2})$ such that
  \begin{equation}\label{eq:strip_cov_disj_multi_p4}
    \mu\left(\partial_+\strip f_1,\lambda_1 h.\right)=0,
  \end{equation}  this being possible since $\mu$ is a finite measure and $(1,\frac{3}{2})$
is uncountable. For ${j\in\{2,\ldots,N\}}$ we let
$g_j=\max(g_{j-1}+\lambda_{j-1}h,\tilde f_j)$ and choose
$\lambda_j\in(1,\frac{3}{2})$ such that 
  \begin{equation}
    \mu\left(\partial_+\strip g_j,\lambda_j h.\right)=0.
  \end{equation}
  From the definition of the $\{g_j\}_{j=1}^N$ we get that
  (\ref{eq:strip_cov_disj_multi_s1}) and
  (\ref{eq:strip_cov_disj_multi_s2}) hold; moreover, for each $j$ we
  have:
\begin{equation}\label{eq:strip_cov_disj_multi_p5} 
  \strip f_j,h.\subset\bigcup_{i=1}^j\strip
  g_i,\lambda_ih.\cup\bigcup_{i=1}^{j-1}
\partial_+\strip
  g_i,\lambda_ih.,
\end{equation}
from which (\ref{eq:strip_cov_disj_multi_s3}) follows.
\end{proof}
\par We can now prove Theorem \ref{onedimapprox_multi}.
\begin{proof}[Proof of Theorem \ref{onedimapprox_multi}]
  By Lemma~\ref{con_red_multi} we can replace $X$ by a cylinder and approximate
  the Lipschitz function
  $\tau$, which is defined at point (2) of Lemma~\ref{con_red_multi}. By Lemma \ref{strip_cov_disj_multi} we can cover $S$, up to
  a $\mu$-null set, by
$M(n)$ disjoint strips: \begin{equation}\left\{\strip
g_i,2\lambda_i\frac{\delta+\cot\alpha+1}{n}.\right\}_{i=1}^{M(n)},
\end{equation}
and one can define a total order relation between the
hypersurfaces
bounding these strips:
\begin{equation}
\begin{split}
  &\partial_-\strip g_i,2\lambda_i\frac{\delta+\cot\alpha+1}{n}.\prec
  \partial_+\strip g_i,2\lambda_i\frac{\delta+\cot\alpha+1}{n}.\\
 \prec&\partial_-\strip g_{i+1},2\lambda_{i+1}\frac{\delta+\cot\alpha+1}{n}.\prec
  \partial_+\strip g_{i+1},2\lambda_{i+1}\frac{\delta+\cot\alpha+1}{n}..
\end{split}
\end{equation}
We let $\mathcal{T}_n$ denote the union of the strips and define
\begin{equation}
  \tau_n(z,v,t)=\int_0^t\chi_{{\mathcal{T}}_n^c}(z,v,s)\,ds,
\end{equation}
which satisfies
\begin{equation}
  \|\tau-\tau_n\|_\infty\le
  3(1+\delta+\cot\alpha)\frac{M(n)}{n}=O(1/n).
\end{equation}
\par We need to show that $\tau_n$ is globally $1$-Lipschitz with
  respect to the distance:
  \begin{equation}
    D\left((z_1,v_1,t_1),(z_2,v_2,t_2)\right)=\max\left(|t_1-t_2|,d_{\delta,\alpha}\left((z_1,v_1),(z_2,v_2)\right)\right),
  \end{equation}
  and that it is, in each strip, $1$-Lipschitz with respect to $d_{\delta,\alpha}$.
There are different cases to consider depending on the relative
position of two points $(z_1,v_1,t_1),(z_2,v_2,t_2)$: we just consider
the case in which they are separated by a hypersurface. We can choose
the separating hypersurface to be minimal with respect to the order
$\prec$, and assume that it is  $\partial_-\strip
  g_j,2\lambda_j\frac{\delta+\cot\alpha+1}{n}.$. We let $
  \eta_j=\sum_{i=1}^{j-1}2\lambda_i\frac{\delta+\cot\alpha+1}{n}$ and
  assume that $t_1\le g_j(z_1,v_1)$ and $t_2\ge g_j(z_2,v_2)$; we have:
\begin{align}
  \tau_n(z_1,v_1,t_1)&=t_1-\eta_{j-1};\\
  \tau_n(z_2,v_2,t_2)&\in[g_j(z_2,v_2)-\eta_{j-1},t_2-\eta_{j-1}],
\end{align}
which implies
\begin{equation}
  \tau_n(z_2,v_2,t_2)\le t_2-\eta_{j-1}=t_2-t_1+t_1-\eta_{j-1}\le|t_2-t_1|+\tau_n(z_1,v_1,t_1);
\end{equation}
moreover,
\begin{equation}
  \begin{split}
    \tau_n(z_1,v_1,t_1)=t_1-\eta_{j-1}&\le
    g_j(z_1,v_1)-\eta_{j-1}\\&=g_j(z_2,v_2)-\eta_{j-1}+g_j(z_1,v_1)-g_j(z_2,v_2)
    \\&\le\tau_n(z_2,v_2,t_2)+d_{\delta,\alpha}(z_1,z_2),
  \end{split}
\end{equation}
which implies that
\begin{equation}
  \left|\tau_n(z_1,v_1,t_1)-\tau_n(z_2,v_2,t_2)\right|\le D\left((z_1,v_1,t_1),(z_2,v_2,t_2)\right).
\end{equation}
Finally, inside each strip $\strip
  g_j,2\lambda_j\frac{\delta+\cot\alpha+1}{n}.$ the function $\tau_n$
  differs from $g_j\circ\pi_Y$ by a constant, and is hence $(1,d_{\delta,\alpha})$-Lipschitz.
\end{proof}
\subsection{Dimensional bounds and tangent
  cones}\label{subsec:dimens-bounds-tang}
%\textcolor{red}
{The goal of this subsection is to prove Theorem~\ref{alberti_blow_up}
and Corollary~\ref{derbound}. The proof of Theorem~\ref{alberti_blow_up} shows how
to use Alberti representations in the $f$-direction of $N$ independent cones to produce lines in blow-ups of
doubling metric spaces; along these lines the blow-up $g$ of the Lipschitz
function $f$ has constant derivative, and one can then claim surjectivity
of $g$ (whose target is $\real^N$). Corollary~\ref{derbound}
essentially shows
that the Assouad dimension bounds the number of independent Weaver
derivations. }
\par We first recall the definition of
blow-ups of Lipschitz functions on metric spaces, following the approach
of isometrically embedding all the pointed metric spaces in a common
proper metric space \cite[Sec.~2.2]{keith-modulus}. 
\par \begin{defn}\label{defn_blow_up} Recall that $\frac{1}{t}X$
  denotes the metric space $X$ where the metric has been rescaled by
  the factor $\frac{1}{t}$.
A \textbf{blow-up of a metric
    space $X$ at a point $p$} is a complete pointed metric space
  $(Y,q)$ such that there is a sequence $t_n\to0^+$ and
  $\left(\frac{1}{t_n}X,p\right)\to(Y,q)$ in the Gromov-Hausdorff
  sense.  The class of blow-ups at $p$ is denoted by
  $\tang(X,p)$.
\end{defn}
\par To show the existence of blow-ups the following notion of finite
dimensionality for metric spaces is useful.
\begin{defn}\label{defn:assouad_dim}
  A metric space $X$ is \textbf{doubling} if there is a constant $C$ such that
 every set of diameter $\le N$ can be covered by at most $C$ sets of
  diameter $\le N/2$. By induction it follows that $X$ admits a covering
  function $C(\epsi)=C\epsi^{-D}$ where any set of diameter $\le N$ can be covered by
  at most $C(\epsi)$ sets of diameter $\le\epsi N$. The minimal
  exponent $D$ is called the \textbf{Assouad dimension} of $X$.
\end{defn}
\begin{rem}\label{rem:selection}
If $X$ is doubling, given a sequence with
  $t_n\to0^+$, it is possible to find by a standard compactness
  argument (see~\cite{tyson_mackay_conf} or \cite[Sec.~5]{keith04}) a subsequence $(t_{n_k})$ such that the sequence
$\left(\frac{1}{t_{n_k}}X,p\right)$ converges in the Gromov-Hausdorff sense. In
this case the spaces
$\left(\frac{1}{t_{n_k}}X,p\right)$ and $(Y,q)$ can be isometrically
embedded\footnote{Mapping basepoints to basepoints} into a proper metric
space $(Z,z)$  so that, for each $R>0$,
\begin{align}
  \lim_{k\to\infty}
\sup_{y\in \ball
    z,R.\cap Y}\setdist
   \frac{1}{t_{n_k}}X,\{y\}.&=0\\
  \lim_{k\to\infty}\sup_{x\in \ball
    z,R.\cap  \frac{1}{t_{n_k}}X}\setdist 
   Y,\{x\}.&=0.
\end{align}
\par In particular,
any point $q'\in Y$ can be \textbf{approximated} by a
sequence $p'_{n_k}\in\frac{1}{t_{n_k}}X$ such that $p'_{n_k}\to q'$ in $Z$.
\end{rem}
\par We now define blow-ups of Lipschitz functions.
\begin{defn}\label{defn:blow_up_lip}
A \textbf{blow-up of a Lipschitz function $f\colon X\to\real^Q$ at a point $p$} is a
triple $(Y,q,g)$ where:
\begin{enumerate}
\item The space $(Y,q)$ is in $\tang(X,p)$ and is a limit realized by a sequence $(t_n)_n$
  of scaling factors.
\item The function $g:Y\to\real^Q$ is Lipschitz with $g(q)=0$.
\item If $p'_n$ approximates $q'$, 
\begin{equation}
  \lim_{n\to\infty}\frac{f(p'_n)-f(p)}{t_n}=g(q').
\end{equation}
\end{enumerate}
The class of all blow-ups of $f$ at $p$ will be denoted by
$\tang(X,p,f)$. By an Ascoli-Arzel\`a argument, if $X$ is doubling, $\tang(X,p,f)\ne\emptyset$.
\end{defn}
We now prove Theorem \ref{alberti_blow_up}: the assumption on the
completeness of $\mu$ is required to ensure that Suslin sets are
$\mu$-measurable. Note that any Radon measure can be extended so that
sets in the $\sigma$-algebra generated by Suslin sets are measurable.
\begin{thm}\label{alberti_blow_up}
Let $\mu$ be a complete Radon measure on a metric space $X$ which has
 finite Assouad dimension $D$. Consider a Lipschitz function $f\colon X\to \real^N$, points $\{v_i\}_{i=1}^N\subset{\mathbb S}^{N-1}$,
and constants $\{\alpha_i\}_{i=1}^N\subset(0,\pi/2)$ and $\delta>0$.
Suppose that $\mu$ admits Lipschitz Alberti representations $\{\albrep
i.\}_{i=1}^N$ such that:
\begin{enumerate}
\item The Alberti representation $\albrep i.$ is in the $f$-direction of $\cone(v_i,\alpha_i)$
  with $\langle v_i, f\rangle$\nobreakdash-\hspace{0pt}spe\-ed $\ge\delta$.
\item For some $\theta>0$ the cone fields $\cone(v_i,\alpha_i+\theta)$ are independent.
\end{enumerate}
Then there is a $\mu$-full measure Borel subset
$U\subset X$ such that, for each $p\in U$ and for each blow-up $(Y,q,g)\in\tang (X,p,
f)$, the function $g:Y\to\real^N$ is surjective.
\end{thm}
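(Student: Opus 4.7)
The plan is to prove that at $\mu$-a.e.\ $p$ every blow-up $g\colon Y\to\real^N$ is surjective, by extracting from the $N$ Alberti representations a cascading family of fragments through generic nested points, blowing them up to get a Lipschitz parametrization of a neighborhood in $Y$, and showing the composition with $g$ is locally onto $\real^N$.

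First, I would build a Borel $U\subset X$ of full $\mu$-measure with the following cascading property: for each $p\in U$ and each $i$, there is a fragment $\gamma_i\in\spt P_i$ with $\gamma_i(s_i)=p$, where $s_i$ is a Lebesgue density point of $\dom\gamma_i$ (the domain having length uniformly bounded below), $u_i:=(f\circ\gamma_i)'(s_i)\in\cone(v_i,\alpha_i)$ exists with $\langle v_i,u_i\rangle\ge\delta\metdiff\gamma_i(s_i)>0$, and, recursively, the analogous ``through this point'' property holds for $\nu_{\gamma_i}$-a.e.\ point along $\gamma_i$ and each other representation. The crucial input is that $\nu_\gamma\ll\mu$ for $P_j$-a.e.\ $\gamma$ (immediate from the Fubini formula $\mu=\int\nu_\gamma\,dP_j$), so $\mu$-full-measure properties restrict to $\nu_{\gamma_i}$-full-measure along fragments; Lusin and Egorov produce uniform quantitative control on $U$. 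Independence of $\{\cone(v_i,\alpha_i+\theta)\}$ forces $u_1,\dots,u_N$ to span $\real^N$.

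Fix $p\in U$ and a blow-up $(Y,q,g)$ realized by $t_n\to 0$. Using that $X$ has finite Assouad dimension, I embed the rescalings $(t_n^{-1}X,p)$ and $(Y,q)$ in a common proper ambient $(Z,z)$. Ascoli--Arzel\`a applied to $\tau\mapsto\gamma_i(s_i+t_n\tau)$ extracts biLipschitz subsequential limits $\tilde\gamma_i\colon\real\to Y$ through $q$ with $g\circ\tilde\gamma_i(\tau)=\tau u_i$; hence $\bigcup_i\real u_i\subset g(Y)$. To produce preimages of arbitrary $w$, I construct, for each $\lambda=(\lambda_1,\dots,\lambda_N)$ in a bounded region, a sequence $p^{(N)}_n\in X$ iteratively: set $p^{(0)}_n=p$, and at step $j$ approximate $p^{(j-1)}_n$ within $o(t_n)$ by a ``good'' point $\tilde p^{(j-1)}_n$ lying on a fragment $\gamma_j^{(n)}\in\spt P_j$ with the usual quantitative properties, then walk for parameter-length $t_n\lambda_j$ to obtain $p^{(j)}_n$. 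By Egorov plus a diagonal extraction, the derivatives $u_j^{(n)}\to u_j'(\lambda)\in\cone(v_j,\alpha_j)$ and $p^{(N)}_n\to y(\lambda)\in Y$ subsequentially, with $g(y(\lambda))=\sum_i\lambda_iu_i'(\lambda)$. The map $\lambda\mapsto g(y(\lambda))$ is continuous with behavior close to the invertible linear map $\lambda\mapsto\sum_i\lambda_iu_i$ near $\lambda=0$, so invariance of domain yields that $g(Y)$ contains a neighborhood of $0$. Finally, homogeneity of tangent cones (any rescaling $(\lambda Y,q,\lambda g)$ is also a blow-up of $f$ at $p$) upgrades this to $g(Y)=\real^N$.

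The principal obstacle is the cascading step: the fragments $\gamma_j^{(n)}$ depend on $n$ and on the cascade history, so ensuring a coherent subsequential limit with continuous dependence on $\lambda$ requires Egorov together with careful diagonalization. The approximation error $d(\tilde p^{(j-1)}_n,p^{(j-1)}_n)=o(t_n)$ is essential, so that the $f$-error is negligible after dividing by $t_n$; this uses Lebesgue density of the ``good'' set inside each fragment, which is where the recursive construction of $U$ pays off. Once continuity of $\lambda\mapsto g(y(\lambda))$ is in hand, the surjectivity deduction via invariance of domain and the homogeneity of tangent cones is routine.
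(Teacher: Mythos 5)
Your overall strategy mirrors the paper's (cascading selection of fragments through generic points, blow-up of the selected fragments, a map $\lambda\mapsto y(\lambda)$ with $g(y(\lambda))$ close to linear), but a few concrete steps do not hold up as stated. The claimed ``crucial input'' $\nu_\gamma\ll\mu$ for $P_j$-a.e.\ $\gamma$ is false: already in the Fubini example $\mu=\hmeas 2.\mrest[0,1]^2$, with $P$ concentrated on vertical lines and $\nu_\gamma=\hmeas 1._\gamma$, each $\nu_\gamma$ is singular with respect to $\mu$. What the disintegration $\mu=\int\nu_\gamma\,dP$ actually gives is the one-sided implication (a $\mu$-null set is $\nu_\gamma$-null for $P$-a.e.\ $\gamma$), which pushes $\mu$-full-measure properties onto $P$-generic fragments but says nothing about the particular fragments you later select through a given $p$. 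The paper handles exactly this with the recursive density condition \eqref{rec_dens} imposed on the selected fragments, and the selection itself requires the Jankoff measurable selection principle for the Suslin sets of fragments; ``Lusin and Egorov'' alone do not produce a measurable selector.

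Your endgame also has a gap that the paper's does not. You invoke invariance of domain to conclude $g(Y)\supset B(0,\rho)$, but invariance of domain requires $\lambda\mapsto g(y(\lambda))$ to be injective, which you have not established; a Brouwer degree argument comparing to $\lambda\mapsto\sum_i\lambda_iu_i$ on a small sphere would be the correct tool. The paper avoids topology entirely: Lusin's theorem is arranged so that the derivative fields $\varphi^{(i)}_h$ and metric differentials $\psi^{(i)}_h$ are uniformly continuous on the compacta $F_c(\dots)$, so moving a distance $O(t_h)$ perturbs them by $o(1)$, and the limiting cascade satisfies $g\circ\tilde\Gamma(\lambda)=\sum_i\lambda_iw_i$ \emph{exactly}, for every $\lambda\in\real^N$ (the limiting paths are defined on all of $\real$ because $\tau^{(N)}_h/t_h\to\infty$ along the chosen subsequence). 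Independence of the $w_i$ then yields surjectivity directly, making both the degree argument and the homogeneity-of-tangents step unnecessary.
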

\begin{proof}
%\textcolor{red}
{The idea of the proof is to ``blow-up'' fragments in $X$ in order to
produce lines in $Y$ along which $g$ has constant derivative. With a
bit of additional care one can argue that starting at $q$ and fixing traveling
times $(\tau_1,\ldots,\tau_N)$ ($\tau_i<0$ implies that one moves
``backwards'') it is possible to move in succession
along $N$ distinct lines, for a time $\tau_i$ along the $i$-th line, and
where  the 
derivative of $g$ equals a constant vector $w_i$ along the $i$-th line. Moreover,
one shows that the vectors
$\{w_i\}_{i=1}^N$ are independent, and this immediately implies that $g$ is
surjective. }
\par  We assume that the Alberti representations $\{\albrep i.\}_{i=1}^N$
  are $C$-Lipschitz.  Given a $C$-Lipschitz Alberti representation $\albrep.$ in the
  $f$-direction of a cone field $\cone(v,\alpha)$ with $\langle
  v,f\rangle$-speed $\ge\delta$,  we define the set
  $\frags(p,R,\epsi,\albrep.)$ of fragments $\gamma$ that satisfy the
  following conditions at $p$:
  \begin{enumerate}
  \item The fragment $\gamma$ is a $C$-Lipschitz path fragment in the $f$-direction
    of $\cone(v,\alpha)$ with $f$-speed $\ge\delta$, and such that
    $\lebmeas(\dom\gamma)>0$, and $0$ is a density point of $\dom
    \gamma$.
  \item The metric differential $\metdiff\gamma(0)$ and
    the derivative
    $(f\circ\gamma)'(0)\in\cone(v,\alpha)$ exist, $\gamma(0)=p$ and
  \begin{equation}
    \left|(f\circ\gamma)'(0)\right|\ge\delta\metdiff\gamma(0).
  \end{equation}
\item For each  $ r\in(0,R)$ one has $\lebmeas(\dom\gamma\cap\ball 0,r.)\ge
2r(1-\epsi)$.
\item For each $ t\in\dom\gamma\cap\ball 0,r.$  one has
\begin{equation}
  \left|f(\gamma(t))-f(p)-(f\circ\gamma)'(0)t\right|\le\epsi |t|.
\end{equation}
\item For all $ t,s\in\dom\gamma\cap\ball 0,r.$ one has\footnote{This is 
the approximate continuity of the metric differential (\cite[Thm.~3.3]{ambrosio-rectifiability})} 
\begin{equation}
  \left|\dist\gamma(t),\gamma(s).-\metdiff\gamma(0)|t-s|\right|\le
\epsi(|t|+|s|).
\end{equation}
  \end{enumerate}
Let $F(R_1,\epsi_1,\albrep 1.)=\{p\in U:\frags(p,R_1,\epsi_1,\albrep
1.) \ne\emptyset\}$; this set is Suslin and, as $\mu$ admits the
Alberti representation $\albrep 1.$, as $R_1\searrow0$ the sets
\{$F(R_1,\epsi_1,\albrep 1.)\}$ increase to a full
measure subset of $U$. By the Jankoff measurable selection
principle \cite[Thm.~6.9.1]{bogachev_measure}
 there is
a selection function $\Gamma(R_1,\epsi_1,\albrep 1.)$ associating to
$p\in F(R_1,\epsi_1,\albrep 1.)$ a fragment satisfying the conditions
(1)--(5) above. This choice is
measurable in the sense that
\begin{align}
  \varphi(R_1,\epsi_1,\albrep 1.)(p)&=(f\circ \Gamma(R_1,\epsi_1,\albrep 1.))'(0)\\
  \psi(R_1,\epsi_1,\albrep 1.)(p)&=\metdiff \Gamma(R_1,\epsi_1,\albrep 1.)(0)
\end{align}
are measurable functions\footnote{With respect to the $\sigma$-algebra
generated by Suslin sets}. By
Lusin's Theorem~\cite[Thm.~7.1.13]{bogachev_measure} there are compact sets 
\begin{equation}
F_c(R_1,\epsi_1,\albrep
1.;\tau_1)\subset F(R_1,\epsi_1,\albrep 1.)
\end{equation}
such that:
\begin{enumerate}
\item for all $p,q\in F_c(R_1,\epsi_1,\albrep
1.;\tau_1)$ with $\dist p,q.\le C\tau_1$ one has
\begin{align}
\left|\varphi(R_1,\epsi_1,\albrep 1.)(p)-\varphi(R_1,\epsi_1,\albrep
1.)(q)\right|&<\epsi_1;\\
\left|\psi(R_1,\epsi_1,\albrep 1.)(p)-\psi(R_1,\epsi_1,\albrep
1.)(q)\right|&<\epsi_1;
\end{align}
\item as $\tau_1\to 0$ one has
\begin{equation}
\mu(F(R_1,\epsi_1,\albrep 1.)\setminus F_c(R_1,\epsi_1,\albrep
1.;\tau_1))\to0.
\end{equation}
\end{enumerate}
The construction of the sets $F(\cdots)$, $F_c(\cdots)$ and of the selection functions
$\Gamma(\cdots)$ proceeds inductively in the
following way: assuming for $k<N$ that
\begin{equation}F(R_1,\epsi_1,\albrep
1.;\tau_1;\cdots;R_k,\epsi_k,\albrep k.),
\end{equation}
\begin{equation}
\Gamma(R_1,\epsi_1,\albrep
1.;\tau_1;\cdots;R_k,\epsi_k,\albrep k.),
\end{equation}
 and \begin{equation}
F_c(R_1,\epsi_1,\albrep
1.;\tau_1;\cdots;R_k,\epsi_k,\albrep k.;\tau_k)
 \end{equation}
 have been constructed,
let 
\begin{equation}
F(R_1,\epsi_1,\albrep
1.;\tau_1;\cdots;R_k,\epsi_k,\albrep
k.;\tau_k;R_{k+1},\epsi_{k+1},\albrep k+1.)
\end{equation}
 be the set of those 
\begin{equation}
p\in F_c(R_1,\epsi_1,\albrep
1.;\tau_1;\cdots;R_k,\epsi_k,\albrep k.;\tau_k)\cap
F(R_{k+1},\epsi_{k+1},\albrep k+1.)
\end{equation}
such that there is a fragment
$\gamma\in\frags(p,R_{k+1},\epsi_{k+1},\albrep k+1.)$ satisfying: 
\begin{multline}\label{rec_dens}
\lebmeas(\gamma^{-1}(F_c(R_1,\epsi_1,\albrep
1.;\tau_1;\cdots;R_k,\epsi_k,\albrep k.;\tau_k))\cap\ball 0,r.)\\\ge
2r(1-\epsi_{k+1})   \quad(\forall r\le R_{k+1}).
\end{multline}
These sets are Suslin measurable and, as $R_{k+1}\to0$, increase to a full
measure subset of $F_c(R_1,\epsi_1,\albrep
1.;\tau_1;\cdots;R_k,\epsi_k,\albrep k.;\tau_k)$. By Jankoff's measurable
selection principle there is a selection function
$\Gamma(R_{k+1},\epsi_{k+1},\albrep k+1.)$ associating to
\begin{equation}
p\in
F(R_1,\epsi_1,\albrep 1.;\tau_1;\cdots;R_k,\epsi_k,\albrep
k.;\tau_k;R_{k+1},\epsi_{k+1},\albrep k+1.)
\end{equation}
 a fragment satisfying the conditions
 above. This choice is measurable in the sense that
\begin{align}
  \varphi(R_{k+1},\epsi_{k+1},\albrep k+1.)(p)&=(f\circ
  \Gamma(R_{k+1},\epsi_{k+1},\albrep k+1.))'(0)\\ \psi(R_{k+1},\epsi_{k+1},\albrep
  k+1.)(p)&=\metdiff \Gamma(R_{k+1},\epsi_{k+1},\albrep k+1.)(0)
\end{align}
are measurable functions. By
Lusin's theorem there are compact sets 
\begin{multline}
F_c(R_1,\epsi_1,\albrep
1.;\tau_1;\cdots;R_k,\epsi_k,\albrep
k.;\tau_k;R_{k+1},\epsi_{k+1},\albrep k+1.;\tau_{k+1})\\\subset   
F(R_1,\epsi_1,\albrep
1.;\tau_1;\cdots;R_k,\epsi_k,\albrep k.;\tau_k;R_{k+1},\epsi_{k+1},\albrep
k+1.)
\end{multline}
such that, whenever $p$, $q$ satisfy
$\dist p,q.\le C\tau_{k+1}$, one has:
\begin{align}
\left|\varphi(R_{k+1},\epsi_{k+1},\albrep {k+1}.)(p)-\varphi(R_{k+1},\epsi_{k+1},\albrep
{k+1}.)(q)\right|&<\epsi_{k+1};\\
\left|\psi(R_{k+1},\epsi_{k+1},\albrep {k+1}.)(p)-\psi(R_{k+1},\epsi_{k+1},\albrep
{k+1}.)(q)\right|&<\epsi_{k+1};
\end{align} as $\tau_{k+1}\to0$ we can assume that
\begin{equation}
F_c(R_1,\epsi_1,\albrep
1.;\tau_1;\cdots;R_k,\epsi_k,\albrep
k.;\tau_k;R_{k+1},\epsi_{k+1},\albrep k+1.;\tau_{k+1})
\end{equation}
increases to a
full measure subset of 
\begin{equation}
F(R_1,\epsi_1,\albrep
1.;\tau_1;\cdots;R_k,\epsi_k,\albrep k.;\tau_k;R_{k+1},\epsi_{k+1},\albrep
k+1.).
\end{equation}
Having fixed $c>0$, it is possible to choose $(R^{(i)}_h), (\tau^{(i)}_h)$ such
that:
\begin{itemize}
\item The sequences $(R^{(i)}_h), (\tau^{(i)}_h)$ are decreasing in
  both $h\in\natural$ and ${i\in\{1,\allowbreak\ldots,\allowbreak N\}}$.
\item One has that $\lim_{h\to\infty}R^{(i)}_h=\lim_{h\to\infty}\tau^{(i)}_h=0$.
\item Letting \begin{equation}
  F_h=F_c(R^{(1)}_h,\frac{1}{h},\albrep 1.;\tau^{(1)}_h;\cdots;
R^{(N)}_h,\frac{N}{h},\albrep N.;\tau^{(N)}_h),
\end{equation} the Borel set $V=\bigcap_hF_h$ satisfies
$\mu\left(X\setminus V\right)<c$.
\end{itemize}
\par We will simplify the notation for selectors and derivatives
writing $\Gamma^{(i)}_h$, $\varphi^{(i)}_h$, and $\psi^{(i)}_h$.
Let $p\in V$ and \begin{equation}
  \left(\frac{1}{t_h}X,p,\frac{f-f(p)}{t_h}\right)\to(Y,q,g);
\end{equation} consider a subsequence $t_h$ such that
$\lim_{h\to\infty}\frac{\tau^{(N)}_h}{t_h}=\infty$. By passing to
further subsequences we can assume that:
\begin{itemize}
\item The $\varphi^{(i)}_h(p)$ converge to
  $w_i\in\cone(v_i,\alpha_i+\theta)\setminus\ball 0,\delta.$, implying
  that the $\{w_i\}_{i=1}^N$ are independent.
\item The $\psi^{(i)}_h(p)$ converge to $\delta_i\in [\delta, C]$.
\end{itemize} In particular, the functions
\begin{equation}
  \Gamma^{(N)}_h(p):\dom \Gamma^{(N)}_h(p)\to X
\end{equation} are $C$-Lipschitz, and we define 
\begin{equation}
  \tilde\Gamma^{(N)}_h(p):\frac {1}{t_h}\dom\Gamma^{(N)}_h(p)\to\frac{1}{t_h} X
\end{equation} by
\begin{equation}
\tilde\Gamma^{(N)}_h(p)(s)=\Gamma^{(N)}_h(p)(t_h\cdot s).
\end{equation}
From property (3) we obtain
\begin{equation}
  \lebmeas(\dom\tilde\Gamma^{(N)}_h \cap\ball 0,\frac{\tau^{(N)}_h}{t_h}.)\ge
2\frac{\tau^{(N)}_h}{t_h}(1-\frac{1}{h}). 
\end{equation}
 The functions $\tilde\Gamma^{(N)}_h(p)$ are still  $C$-Lipschitz with
 respect to the rescaled metrics, and $\tilde\Gamma^{(N)}_h(p)(0)=p$
 and  by
a variant of Ascoli--Arzel\`a we get a $C$-Lipschitz
\begin{equation}
  \tilde\Gamma^{(N)}_\infty:\real\to Y
\end{equation} with $\tilde\Gamma^{(N)}_\infty(0)=p$. For $s\in\real$
we choose $s_h\in\dom \tilde\Gamma^{(N)}_h(p)$ converging to $s$ and
observe that by (4) 
\begin{equation}\label{der_conv}
  \left|\frac{f\circ\tilde\Gamma^{(N)}_h(p)(s_h)-f(p)}{t_h}-\varphi^{(N)}_h(p)\cdot
  s_h\right|\le\frac{|s_h|}{h},
\end{equation} which implies 
\begin{equation}
  g\circ\tilde\Gamma^{(N)}_\infty(s)=w_N\cdot s.
\end{equation}
A similar argument involving the metric derivative and
$\psi^{(N)}_h(p)$ shows that
\begin{equation}
  \dist\tilde\Gamma^{(N)}_\infty(s),\tilde\Gamma^{(N)}_\infty(s').=\delta_N|s-s'|.
\end{equation} If $s\in\real$, because of \eqref{rec_dens}, there are
$s_h\in \dom\tilde\Gamma^{(N)}_h$ converging to $s$ such that
$\Gamma^{(N-1)}_h\left(\Gamma^{(N)}_h(p)(s_h)\right)$ is defined. Let
\begin{equation}
    \tilde\Gamma^{(N-1)}_h:\frac {1}{t_h}\dom
\Gamma^{(N-1)}_h\left(\Gamma^{(N)}_h(p)(s_h)\right)
\to\frac{1}{t_h} X
\end{equation}
be defined by
\begin{equation}
  \tilde\Gamma^{(N-1)}_h(\sigma)=\Gamma^{(N-1)}_h\left(\Gamma^{(N)}_h(p)(s_h)\right)(t_h\cdot \sigma).
\end{equation} These functions are $C$-Lipschitz with respect to the
rescaled metrics and a variant of Ascoli--Arzel\`a yields a
$C$-Lipschitz 
\begin{equation}
    \tilde\Gamma^{(N-1)}_\infty:\real\to Y
\end{equation} with
$\tilde\Gamma^{(N-1)}_\infty(0)=\tilde\Gamma^{(N)}_\infty(s)$. There
is an analogue of \eqref{der_conv} where $\varphi^{(N)}_h(p)$ is
replaced by $\varphi^{(N-1)}_h\left(\Gamma^{(N)}_h(p)(s_h)\right)$; as
for $h$ sufficiently large we have:
\begin{equation}
  \left|\varphi^{(N-1)}_h\left(\Gamma^{(N)}_h(p)(s_h)\right)-
\varphi^{(N-1)}_h(p)\right|<\frac{1}{h},
\end{equation} we conclude that:
\begin{equation}
  g\circ \tilde\Gamma^{(N-1)}_\infty(\sigma)-g\circ
  \tilde\Gamma^{(N-1)}_\infty(0)=w_{N-1}\cdot\sigma.
\end{equation} A similar argument involving the metric derivative
shows that
\begin{equation}
  \dist \tilde\Gamma^{(N-1)}_\infty(s),\tilde\Gamma^{(N-1)}_\infty(s').=\delta_{N-1}\left|s-s'\right|.
\end{equation} Continuing inductively we conclude that there is a map
$\tilde\Gamma\colon\real^N\to Y$ such that $\tilde\Gamma(0)=q$ and,
for each $(0\le k\le N-1)$ and each $s\in\real^k$, the map
$\tilde\gamma(t)=\tilde\Gamma((0,t,s))$ is a $\delta_{N-k}$-constant
speed geodesic satisfying:
\begin{equation}
  g\circ\gamma(t)-g\circ\gamma(0)=w_{N-k}\cdot t.
\end{equation} As the vectors $w_i$ are independent, the map $g$ is surjective.
\end{proof}
\par We need a Lemma relating the Assouad dimension of a space to that of a
blow-up \cite[Prop.~6.1.5]{tyson_mackay_conf}:
\begin{lem}\label{assouad_tang}
  If $X$ has Assouad dimension $\le D$ and if $(Y,q)\in\tang(X,p)$,
  then $Y$ has Assouad dimension $\le D$.
\end{lem}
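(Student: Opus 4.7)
The plan is to show that the defining doubling covering inequality passes to Gromov--Hausdorff limits, using the common embedding provided by Remark \ref{rem:selection}. Fix $D' > D$; by the Assouad hypothesis there is a constant $C_0$ so that in $X$ (equivalently, in each rescaling $\frac{1}{t_n}X$, since the covering function is scale-invariant) every set of diameter $\le N$ can be covered by at most $C_0\epsilon^{-D'}$ sets of diameter $\le \epsilon N$. It suffices to prove the same bound in $Y$, which will give $\dim_A Y \le D' $ for every $D' > D$ and hence $\dim_A Y \le D$.

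Fix a subset $S \subset Y$ with $\diam S \le N$ and $\epsilon > 0$; choose $\eta > 0$ with $2\eta < \epsilon N/2$. Pick a finite $\eta$-net $\{y_1,\ldots,y_k\} \subset S$, and, using Remark \ref{rem:selection}, embed $Y$ and the $\frac{1}{t_n}X$ isometrically into a common proper metric space $Z$ (sending basepoints to basepoints) so that each $y_i$ is approximated by some $x_i^{(n)} \in \frac{1}{t_n}X$ with $d_Z(y_i,x_i^{(n)}) \to 0$. For $n$ large the finite set $F_n = \{x_i^{(n)}\}_{i=1}^k$ has diameter at most $N + \eta$ in $\frac{1}{t_n}X$, so by the Assouad property applied in $\frac{1}{t_n}X$, $F_n$ can be partitioned into at most $K := \lceil C_0(\epsilon N/(2(N+\eta)))^{-D'}\rceil$ groups, each of diameter at most $\epsilon N/2$. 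The key observation is that there are only finitely many possibilities for the resulting partition of $\{1,\ldots,k\}$, so after passing to a subsequence we may assume the partition $I_1 \sqcup \cdots \sqcup I_{K'}$ is independent of $n$ (with $K'\le K$), and for each $j$ the finite set $\{x_i^{(n)} : i \in I_j\}$ has diameter $\le \epsilon N/2$ for all large $n$.

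Now take $n \to \infty$: since the embeddings are isometric and $x_i^{(n)} \to y_i$ in $Z$, each finite set $\{y_i : i \in I_j\}$ has diameter $\le \epsilon N/2$ in $Y$. Define
\begin{equation}
T_j = \bigl\{\, y \in S : \exists\, i \in I_j\; \text{with}\; d_Y(y, y_i) \le \eta\,\bigr\}, \qquad j = 1,\ldots,K'.
\end{equation}
Since $\{y_1,\ldots,y_k\}$ is an $\eta$-net in $S$, the $T_j$ cover $S$; and $\diam T_j \le \epsilon N/2 + 2\eta \le \epsilon N$ by the choice of $\eta$. Thus $S$ is covered by $K' \le C_0(\epsilon N/(2(N+\eta)))^{-D'}$ sets of diameter at most $\epsilon N$, which is bounded by $C_1 \epsilon^{-D'}$ for a constant $C_1$ depending only on $C_0$ and $D'$ (taking $\eta$ comparable to $\epsilon N$). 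This establishes the Assouad covering bound for $Y$ with exponent $D'$, completing the argument.

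The only mildly delicate point is the passage from the partition in $\frac{1}{t_n}X$ to a partition in $Y$; this is handled cleanly by the pigeonhole argument above, since the net is finite and the set of partitions is therefore finite, so the selection subsequence exists automatically.
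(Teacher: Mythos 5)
The paper does not prove this lemma---it cites \cite[Prop.~6.1.5]{tyson_mackay_conf} and uses it as a black box---so there is no in-paper proof to compare against. Your argument is correct and is the standard one: it shows that the Assouad covering bound with exponent $D'$ passes to Gromov--Hausdorff limits, by transferring a finite $\eta$-net of $S\subset Y$ back to the approximating spaces $\frac{1}{t_n}X$ via the common proper ambient space $Z$ of Remark~\ref{rem:selection}, applying the covering bound there, freezing a single partition of the index set by pigeonhole along a subsequence, and then passing to the limit to get a covering of $S$ by sets of diameter $\le\epsi N$. Two tiny remarks, neither affecting correctness: the existence of a \emph{finite} $\eta$-net in $S$ is justified because $S$ is bounded inside the proper space $Z$, hence totally bounded; and once you pass to the constant-partition subsequence the diameter bound $\le\epsi N/2$ holds for every $n$ in that subsequence, so the qualifier ``for all large $n$'' is not needed. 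The conclusion $\dim_A Y\le D$ then follows by letting $D'\searrow D$, exactly as you note.
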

\par The following {lemma} provides a lower bound on the Assouad
dimension.
\begin{lem}\label{assouad_surj}
  If $Y$ has Assouad dimension $\le D$ (or Hausdorff dimension $\le D$) and if there is a surjective Lipschitz
  map $g:Y\to\real^N$, then $D\ge N$.
\end{lem}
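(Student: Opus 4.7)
The plan is to reduce both cases to the statement about Hausdorff dimension, since the Assouad dimension of any metric space is always at least its Hausdorff dimension (this is a standard fact, see \cite{tyson_mackay_conf}). Thus it suffices to prove: if $\dim_H Y \le D$ and there is a surjective Lipschitz $g:Y\to\real^N$, then $D \ge N$.

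For the Hausdorff case, the key fact is that an $L$-Lipschitz map scales $s$-dimensional Hausdorff measure by at most $L^s$: for any $A\subset Y$ and any $s\ge 0$,
\begin{equation}
\hmeas s.(g(A)) \le L^s\, \hmeas s.(A).
\end{equation}
Applying this with $A=Y$ and any $s>D$, the hypothesis $\dim_H Y \le D$ gives $\hmeas s.(Y)=0$, hence $\hmeas s.(\real^N) = \hmeas s.(g(Y)) = 0$. On the other hand, $\hmeas s.(\real^N) = \infty$ whenever $s < N$. Combining these two facts, no $s$ can satisfy both $D<s<N$, so we must have $D \ge N$.

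There is no real obstacle here beyond invoking these well-known facts about Hausdorff measure and the comparison $\dim_H \le \dim_A$; the proof is essentially one paragraph once one has those tools.
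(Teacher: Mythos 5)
Your proof is correct and follows essentially the same route as the paper: reduce to the Hausdorff-dimension case via the standard inequality $\dim_H \le \dim_A$, then use that Lipschitz maps do not increase Hausdorff dimension (which you make explicit via the $L^s$-scaling of $\hmeas s.$). No substantive difference.
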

\begin{proof}
  The argument in \cite[Subsec.~8.7]{heinonen_analysis} shows that the
  Assouad dimension of $Y$ is at least its Hausdorff dimension. Now
  Lipschitz maps do not increase the Hausdorff dimension so the
  Hausdorff dimension of $Y$ is at least the Hausdorff dimension of $g(Y)=\real^N$.
\end{proof}
\par We can now prove Corollary \ref{derbound}.
\begin{cor}\label{derbound}
  If $X$ is a metric space with Assouad dimension $D$ and if $\mu$
  is a Radon measure, then $\wder\mu.$ has index locally bounded by
  $D$.
\end{cor}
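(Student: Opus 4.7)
The plan is to argue by contradiction: suppose that for some Borel $U\subset X$ with $\mu(U)>0$ there exist $N>D$ derivations $\{D_1,\ldots,D_N\}\subset\wder\mu\mrest U.$ which are linearly independent over $L^\infty(\mu\mrest U)$, and derive a dimension contradiction by passing to a tangent space.

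First I would invoke Corollary \ref{cor:pseudoduality} to find a Borel subset $V\subset U$ with $\mu(V)>0$, together with $1$-Lipschitz pseudodual functions $\{g_i\}_{i=1}^N\subset\lipalg X.$ and derivations $\{\tilde D_i\}_{i=1}^N$ (in the span of $\chi_V D_1,\ldots,\chi_V D_N$) satisfying $\tilde D_i g_j=\delta_{ij}\chi_V$. Setting $g=(g_1,\ldots,g_N):X\to\real^N$, I would then apply Theorem \ref{derivation_alberti} $N$ times (using, e.g., the standard basis vectors $e_1,\ldots,e_N$ of $\real^N$ and a small opening angle $\alpha$) to produce, for each $i$, a Lipschitz Alberti representation $\albrep i.$ of $\mu\mrest V$ in the $g$-direction of a cone $\cone(e_i,\alpha)$ with positive $\langle e_i,g\rangle$-speed. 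Choosing $\alpha$ small enough ensures that the cones $\cone(e_i,\alpha+\theta)$ are independent for some $\theta>0$, so that hypotheses (1) and (2) of Theorem \ref{alberti_blow_up} are satisfied for the map $g$.

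Next, since $X$ has finite Assouad dimension, I can extend $\mu$ to be complete and apply Theorem \ref{alberti_blow_up} to obtain a full $\mu\mrest V$-measure Borel subset $W\subset V$ such that for every $p\in W$ and every blow-up $(Y,q,\tilde g)\in\tang(X,p,g)$, the limit function $\tilde g\colon Y\to\real^N$ is surjective. Since $\mu(V)>0$, the set $W$ is nonempty; picking any $p\in W$ and using that $X$ has Assouad dimension $D$ (so in particular is doubling), Definition \ref{defn:blow_up_lip} combined with the selection procedure of Remark \ref{rem:selection} guarantees that $\tang(X,p,g)$ is nonempty.

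Finally, fix $(Y,q,\tilde g)\in\tang(X,p,g)$. By Lemma \ref{assouad_tang}, the Assouad dimension of $Y$ is at most $D$. By Lemma \ref{assouad_surj} applied to the surjective Lipschitz map $\tilde g\colon Y\to\real^N$, we obtain $D\geq N$, contradicting $N>D$. Hence any linearly independent set of derivations in $\wder\mu\mrest U.$ has cardinality at most $D$, so $\wder\mu.$ has index locally bounded by $D$. The main obstacle is lining up the hypotheses of Theorem \ref{alberti_blow_up} correctly—specifically, producing the biLipschitz Alberti representations in the $g$-directions of cones whose slight enlargements remain independent—but Theorem \ref{derivation_alberti} together with the freedom to pick $\alpha$ small handles this cleanly.
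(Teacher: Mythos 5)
Your proof is correct and is essentially the same argument the paper gives: the paper's one-line proof cites Corollary \ref{der-alb}, which is itself proved from Corollary \ref{cor:pseudoduality} together with Theorem \ref{derivation_alberti}, so you have simply unpacked \ref{der-alb} into its constituent steps before feeding the resulting cone-directed Alberti representations into Theorem \ref{alberti_blow_up} and Lemmas \ref{assouad_tang}, \ref{assouad_surj}. One small point worth making explicit: to match hypothesis (1) of Theorem \ref{alberti_blow_up} you need a uniform constant speed lower bound $\delta>0$, not just positive speed, but this follows immediately since the speed produced by Theorem \ref{derivation_alberti} is $\ge\sigma/(\locnorm D_{e_i},{\wder\mu\mrest V.}.+(1-\sigma))\ge\sigma/(\|D_{e_i}\|_{\wder\mu\mrest V.}+(1-\sigma))$, and you can take $\delta$ to be the minimum of these finitely many positive constants.
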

\begin{proof}
  We can reduce to the hypothesis of Theorem \ref{alberti_blow_up}
  because of Corollary \ref{der-alb} and then apply Lemmas
  \ref{assouad_tang}, \ref{assouad_surj}.
\end{proof}
\begin{rem}
  Note that using ultralimits one can replace in Corollary
  \ref{derbound} the assumption on the Assouad dimension with a uniform
  upper bound $D$ on the Hausdorff dimension of the blow-ups of $X$.
\end{rem}
\subsection{Construction of independent Lipschitz
  functions}\label{subsec:constr-indep-lipsch}
The goal of this subsection is to prove Theorem \ref{lip_ind}. We will
use the following Truncation Lemma (\cite[Lem.~4.1]{bate-diff}).
\begin{lem}\label{lip_trunc}
  Let $\epsi\in(0,h/4)$ and assume that $S\subset X$ is Borel and
  $f\colon X\to\real$ is  
$L$-Lipschitz. Then there is an $L$-Lipschitz function $g$, constructed from
$f$, such that:
  \begin{enumerate}
  \item The function $g$ satisfies $0\le g\le h$.
    \item The function $g$ is supported in $\ball S,\frac{2h}{L}.$,
      which is the set of points at distance $<\frac{2h}{L}$ from $S$.
      \item If $x,y\in\ball S,\frac{h}{L}.$,
        \begin{equation}
          |f(x)-f(y)|\ge|g(x)-g(y)|.
        \end{equation}
        \item There is a Borel subset $S'\subset S$ with
          \begin{equation}
            \mu(S')\ge\left(1-\frac{4\epsi}{h}\right)\mu(S)
          \end{equation}
such that if $x\in S$ and $\dist x,y.\le\epsi/L$, then
\begin{equation}
  |f(x)-f(y)|=|g(x)-g(y)|.
\end{equation}
  \end{enumerate}
\end{lem}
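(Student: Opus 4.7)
The plan is to build $g$ as a pointwise minimum
\[
g(x)=\min\bigl(\phi(f(x)),\eta(x)\bigr),
\]
where $\phi\colon\real\to[0,h]$ is a $1$-Lipschitz triangular wave with carefully chosen phase and $\eta$ is an additive cutoff based on $\dist x,S.$. The minimum (rather than a product) is crucial, because it keeps $g$ globally $L$-Lipschitz while preserving the identity $g=\phi\circ f$ on the neighbourhood $\ball S,h/L.$, which is what items (3) and (4) demand.

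First, fix the phase by a Fubini/Markov argument. For $a\in[0,h)$, set
\[
S'_a=\bigl\{x\in S:\,(f(x)-a)\bmod h\in[\epsi,h-\epsi]\bigr\}.
\]
A one-line integration gives $\int_0^h\mu(S\setminus S'_a)\,da=2\epsi\,\mu(S)$, so some $a\in[0,h)$ satisfies $\mu(S\setminus S'_a)\le 2\epsi\mu(S)/h\le 4\epsi\mu(S)/h$; fix such $a$ and set $S'=S'_a$. Let $\phi\colon\real\to[0,h]$ be the continuous piecewise linear function of slope $\pm1$ with turning points exactly at $\{a+kh:k\in\zahlen\}$ and values alternating between $0$ and $h$. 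Then $\phi$ is $1$-Lipschitz and, by construction of $S'$, the interval $[f(x)-\epsi,f(x)+\epsi]$ lies inside one linear piece $[a+kh,a+(k+1)h]$ for every $x\in S'$, so $\phi$ acts there as a slope-$(\pm 1)$ isometry.

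Next, define $\eta(x)=\max(0,2h-L\dist x,S.)$, which is $L$-Lipschitz, vanishes outside $\ball S,2h/L.$, and satisfies $\eta\ge h$ on $\ball S,h/L.$ (since there $L\dist x,S.\le h$). Setting $g=\min(\phi\circ f,\eta)$, we see that $g$ is $L$-Lipschitz as the minimum of two $L$-Lipschitz functions, takes values in $[0,h]$ (item 1), and vanishes wherever $\eta=0$, i.e.\ outside $\ball S,2h/L.$ (item 2). On $\ball S,h/L.$ we have $\eta\ge h\ge\phi\circ f$, so $g=\phi\circ f$ there. Thus for $x,y\in\ball S,h/L.$,
\[
|g(x)-g(y)|=|\phi(f(x))-\phi(f(y))|\le|f(x)-f(y)|
\]
by $1$-Lipschitz-ness of $\phi$ (item 3). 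For item 4, if $x\in S'$ and $\dist x,y.\le\epsi/L$ then $y\in\ball S,\epsi/L.\subset\ball S,h/L.$, so again $g=\phi\circ f$ at both $x$ and $y$; since $|f(y)-f(x)|\le\epsi$ and $f(x)$ is at distance $\ge\epsi$ from every turning point of $\phi$, both $f(x)$ and $f(y)$ lie in a common linear piece of $\phi$, and the slope-$\pm 1$ property gives $|g(x)-g(y)|=|f(x)-f(y)|$.

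The only subtle point is recognising that additive cutoff via $\min(\cdot,\eta)$ is the right localisation device; the naive product $\phi(f(x))\cdot(\eta(x)/h)$ gives Lipschitz constant roughly $3L/2$ rather than $L$, and a piecewise definition $g=\phi\circ f$ inside a tube and $0$ outside fails to be Lipschitz across the boundary. Once the $\min$ construction is in place, all four items are verified by the identity $g=\phi\circ f$ on $\ball S,h/L.$ together with the generic-phase condition defining $S'$.
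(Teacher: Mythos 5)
The paper gives no proof of Lemma~\ref{lip_trunc}: it cites the result from \cite[Lem.~4.1]{bate-diff}, so there is nothing internal to compare against. Your construction is correct and is essentially the expected one: compose $f$ with a $2h$-periodic triangular wave $\phi$ of slope $\pm 1$, pick the phase $a$ by a Fubini/Markov average so that $f(x)$ stays at distance at least $\epsi$ from every turning point for all $x$ in a Borel subset $S'$ of the required measure, and cut off by taking a pointwise $\min$ with the $L$-Lipschitz bump $\eta(x)=\max\bigl(0,\,2h-L\,\dist x,S.\bigr)$, which equals $g=\phi\circ f$ on $\ball S,h/L.$. Two small remarks are in order. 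First, the averaging step tacitly assumes $\mu(S)<\infty$; this holds in every use the paper makes of the lemma, since it is always applied to compact pieces carrying a finite Radon measure. Second, item~(4) of the statement as printed reads ``$x\in S$'' where it must mean ``$x\in S'$'' (otherwise $S'$ is never invoked and the equality fails for $x$ with $f(x)$ close to a turning point of $\phi$); you correctly prove the $x\in S'$ version, which is precisely what the proof of Theorem~\ref{lip_ind} uses. Your observation that a $\min$-truncation, rather than a multiplicative cutoff, is needed to keep the global Lipschitz constant equal to $L$ is exactly right, and once $g=\phi\circ f$ is established on $\ball S,h/L.$ the verification of all four items is clean.
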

\begin{proof}[Proof of Theorem \ref{lip_ind}]
The strategy of the proof is to construct Lipschitz functions that at
some scales experience a definite lower bound on their variation, but
at other scales have a smaller Lipschitz constant. To construct such
functions one uses as building blocks Lipschitz functions which have
small Lipschitz constants below some scale, and then combines them
in a series.
\par Choose $m_1$ such that $\frac{1}{m_1}<\frac{\alpha^2}{2^5L}$ and
let $h_1=\frac{\alpha^2}{4}$ and $\epsi_1=\frac{L}{m_1}$. We use Lemma~\ref{lip_trunc} to find $g_1$ and a %\textcolor{red}
{Borel} set $S_1\subset S$ such that the following
holds:
\begin{enumerate}
\item We have the inequalities $\mu(S_1)\ge\left(1-\frac{4\epsi_1}{L}\right)\mu(S)\ge\frac{1}{2}\mu(S)$.
\item The function $g_1$ is supported in $\ball S,\frac{2h_1}{L}.=\ball S,\frac{\alpha^2}{
2L}.$ and for each ${x,y\in\ball S,\frac{h_1}{L}.}$
  \begin{equation}
    |f_{m_1}(x)-f_{m_1}(y)|\ge|g_1(x)-g_1(y)|.
  \end{equation}
\item If $B$ is a ball of radius $\rho_{m_1}$ centred at
some point of $S$, the Lipschitz constant of $g_1|B$ is at most $\frac{1}{m_1}$.
\item As $\frac{\epsi_1}{L}=\frac{1}{m_1}$, for each $x\in S_1$ there is
$x_1\in \ball x,\frac{1}{m_1}.$ with
  \begin{equation}
    \left|g_1(x)-g_1(x_1)\right|\ge\delta_0\dist x,x_1.>0.
  \end{equation}
\end{enumerate}
We define $g_{k+1}$ inductively in the following way. Having chosen $m_{k+1}$
such that
\begin{equation}
  \frac{1}{m_{k+1}}<\frac{\alpha^2}{2^{(k+1)+4}L}\rho_{m_k},
\end{equation}
we let $h_{k+1}=\frac{\alpha^2}{4}\rho_{m_k}$ and $\epsi_{k+1}=\frac{L}{m_{k+1}
}$. We use Lemma \ref{lip_trunc} to find $g_{k+1}$ and
% \textcolor{red}
{a Borel $S_{k+1}\subset S$ }
such that the following holds:
\begin{enumerate}
\item We have the inequalities $\mu(S_{k+1})\ge\left(1-\frac{4\epsi_{k+1}}{L}\right)\mu(S)\ge
\left(1-\frac{\rho_{m_k}}{2^{k+1}}\right)\mu(S)$.
\item The function $g_{k+1}$ is supported in $\ball S,\frac{2h_{k+1}}{
L}.=\ball S,\frac{\alpha^2\rho_{m_k}}{2L}.$ and for each $x,y\in \ball S,\frac{h_{k+1}}{L}.$, 
  \begin{equation}
    |f_{m_{k+1}}(x)-f_{m_{k+1}}(y)|\ge|g_{k+1}(x)-g_{k+1}(y)|.
  \end{equation}
\item  If $B$ is a ball of radius $\rho_{m_{k+1}}$ centred at
some point of $S$, the Lipschitz constant of $g_{k+1}|B$ is at most $\frac{1}{m_{k+1}}$.
\item As $\frac{\epsi_{k+1}}{L}=\frac{1}{m_{k+1}}$, for each $x\in S_{k+1}$ there is
$x_{k+1}\in \ball x,\frac{1}{m_{k+1}}.$ with
  \begin{equation}
    \left|g_{k+1}(x)-g_{k+1}(x_{k+1})\right|\ge\delta_0\dist x,x_{k+1}.>0.
  \end{equation}
\end{enumerate}
Note that from the definition of $m_k$ one can verify by induction that
\begin{equation}
  \frac{1}{m_k}\le\left(\frac{\alpha^2}{L}\right)^k 2^{-\left(\frac{k(k+1)}
{2}+4k\right)};
\end{equation}
note also that
\begin{align}
  h_{s+1}&=\frac{\alpha^2}{4}\rho_{m_s}\le\frac{\alpha^2}{4}\frac{1}{m_s},\\
  h_{s+k}&=\frac{\alpha^2}{4}\left(\frac{\alpha^2}{L}\right)^{k-1}
  2^{-(s+5)}\rho_{m_s}\text{ (here $k>1$)};
\end{align}
so that
\begin{equation}\label{intexx}
  \sum_{k\ge s+1}h_k\le\frac{\alpha^2}{4}\left(
1+2^{-(s+5)}\frac{\alpha^2/L}{1-\alpha^2/L}\right)\rho_{m_s}.
\end{equation}
This implies that if $n_k\nearrow\infty$, then $\varphi=\sum_k g_{n_k}<\infty$ 
defines a continuous function. We show that $\varphi$ is Lipschitz with
Lipschitz constant 
\begin{equation}
  3\left(L+\alpha+
  \frac{\alpha^2/L}{1-\alpha^2/L}(1+2^{-6}\alpha)\right).
\end{equation}
We want to bound $|\varphi(x)-\varphi(y)|$, and we will consider two
distinct cases.
In the first case, we assume $x\in S$. If $y\ne x$ assume there is some $s$
such that $\dist x,y.\in(\frac{\alpha}{2}\rho_{m_s},\rho_{m_s}]$. If
$t\le s$ we have
$y\in\ball x,\rho_{m_t}.$ and thus we obtain
  \begin{equation}
    \left|g_{t}(x)-g_{t}(x_{t})\right|\le\frac{1}{m_t}\dist x,y.;
  \end{equation}
in particular, from the estimate for $\frac{1}{m_k}$ we deduce that
\begin{equation}\label{interxx}
  \sum_{k}\frac{1}{m_k}\le\frac{\alpha^2/L}{1-\alpha^2/L},
\end{equation}
so that
  \begin{equation}\label{interfxx}
    \sum_{t\le s}\left|g_{t}(x)-g_{t}(x_{t})\right|\le
\frac{\alpha^2/L}{1-\alpha^2/L}\dist x,y. .
  \end{equation}
For the second estimate we use the bound~(\ref{intexx}):
\begin{equation}\label{tail_bound}
  \begin{split}
    \sum_{t\ge s+1} \left|g_{t}(x)-g_{t}(x_{t})\right|&\le
    2\sum_{t\ge s+1}h_t\\&
    \le\frac{\alpha^2}{2}\left(
1+2^{-(s+5)}\frac{\alpha^2/L}{1-\alpha^2/L}\right)\rho_{m_s}\\
&\le\alpha\left(
1+2^{-(s+5)}\frac{\alpha^2/L}{1-\alpha^2/L}\right)\dist x,y..
\end{split}
\end{equation}
We therefore get the bound
\begin{equation}
  \left|\varphi(x)-\varphi(y)\right|\le\left(\alpha+
  \frac{\alpha^2/L}{1-\alpha^2/L}(1+2^{-6}\alpha)\right)\dist x,y..
\end{equation}
The other possibility is that $\dist x,y.\in(\rho_{m_{s+1}},\frac{\alpha}{2}
\rho_{m_s}]$ for some $s$ or $\dist x,y.>\rho_{m_1}$. In this case we can
estimate as above except that for $g_{s+1}$ we must use the full Lipschitz
constant $L$. We therefore have:
\begin{equation}
  \left|\varphi(x)-\varphi(y)\right|\le\left(L+\alpha+
  \frac{\alpha^2/L}{1-\alpha^2/L}(1+2^{-6}\alpha)\right)\dist x,y..
\end{equation}
In the second case $x,y\not\in S$. We use that the functions $g_s$ are 
supported on a neighbourhood of $S$. Without loss of generality we can
assume $\dist x,S.\le\dist y,S.$. Now, for some $s$ we have
$\dist x,S.\in[\frac{\alpha^2}{2L}\rho_{m_{s+1}},\frac{\alpha^2}{2L}\rho_{
m_s})$ (for $s=0$ we let $\rho_{m_0}=1$)
 or $\dist x,S.\ge\frac{\alpha^2}{2L}$. So for $t\ge s+2$ we have that $g_t(x)=g_t(y)=0$ and in the last case $\varphi(x)=\varphi(y)=0$ so there is nothing
to prove. We first assume that $\dist x,y.<\frac{\alpha^2}{2L}\rho_{m_s}$. Then
the hypothesis on $\alpha$ implies that $\alpha^2/L<1$ so that
$x,y$ belong to a ball centred on $S$ of radius at most $\rho_{m_s}$. For
$t\le s$ the Lipschitz constant of $g_t$ is then at most
$\frac{1}{m_t}$ and thus~(\ref{interxx})
implies:
\begin{equation}
  \sum_{t\le s}\left|g_t(x)-g_t(y)\right|\le\frac{\alpha^2/L}{1-\alpha^2/L}
\dist x,y..
\end{equation}
We therefore have:
\begin{equation}
    \left|\varphi(x)-\varphi(y)\right|\le\left(L+\frac{\alpha^2/L}{1-\alpha^2/L}\right)\dist x,y..
\end{equation}
We now assume $\dist x,y.\ge\frac{\alpha^2}{2L}\rho_{m_s}$. We can find
$\tilde x\in S$ such that $\dist x,\tilde x.\le\frac{\alpha^2}{2L}\rho_{m_s}$
so that
\begin{align}
    \left|\varphi(x)-\varphi(\tilde x)\right|&\le\left(L+\alpha+
  \frac{\alpha^2/L}{1-\alpha^2/L}(1+2^{-6}\alpha)\right)\dist x,\tilde x.\\
\left|\varphi(y)-\varphi(\tilde x)\right|&\le\left(L+\alpha+
  \frac{\alpha^2/L}{1-\alpha^2/L}(1+2^{-6}\alpha)\right)\dist y,\tilde x..
\end{align}
Note now that $\dist \tilde x,x.
\le\dist x,y.$ and $\dist \tilde x,y.\le2\dist x,y.$. Therefore:
\begin{equation}
  \left|\varphi(x)-\varphi(y)\right|\le 3\left(L+\alpha+
  \frac{\alpha^2/L}{1-\alpha^2/L}(1+2^{-6}\alpha)\right)\dist x,y..
\end{equation}
We now pass to the construction of $M$ independent functions. We let
\begin{equation}
  \psi_j=\sum_{\text{$k\equiv j\bmod{M}$}}g_k
\end{equation}
and 
\begin{equation}
  \tilde S_j=\bigcap_{k}\bigcup_{\substack{s\ge k\\ s\equiv j\bmod{M}}}
S_s,
\end{equation}
which is a full measure Borel subset of $S$. In particular $S'=\bigcap_j 
\tilde S_j$ is a full measure Borel subset of $S$. Let $\tilde x\in S'$
and assume that $|\lambda_0|\ge\max_{0\le i\le M-1}|\lambda_i|$.
For each $k$ there is an $n_k\ge k$ with $x\in S_{n_k}$ and $n_k\equiv0
\bmod{M}$. We can then find a point $x_{n_k}$ such that
\begin{equation}
  \left| g_{n_k}(x)-g_{n_k}(x_{n_k})\right|\ge\delta_0\dist x,x_{n_k}.>0.
\end{equation}
Then
\begin{equation}
  \left|\sum_{j=0}^{M-1}\lambda_j\psi_j(x)-\lambda_j\psi_j(x_{n_k})
\right|\ge|\lambda_0|\left(\delta_0\dist x,x_{n_k}.-\sum_{t\ne n_k}
\left|g_t(x)-g_t(x_{n_k})\right|\right).
\end{equation}
The terms for $t>n_k$ can be bounded using \eqref{tail_bound} to get:
\begin{equation}
  \sum_{t> n_k}
\left|g_t(x)-g_t(x_{n_k})\right|
\le\alpha\left(
1+2^{-(n_k+5)}\frac{\alpha^2/L}{1-\alpha^2/L}\right)\dist x,x_{n_k}..
\end{equation}
Note that for $t<n_k$, $\dist x,x_k.<\rho_{m_t}$ so
by~(\ref{interfxx}) we have the following
bound on the terms for $t<n_k$:
\begin{equation}
    \sum_{t< n_k}
\left|g_t(x)-g_t(x_{n_k})\right|\le\frac{\alpha^2/L}{1-\alpha^2/L}
\dist x,x_{n_k}..
\end{equation}
Letting $k\nearrow\infty$ we conclude that
\begin{equation}
    \biglip\left(\sum_{i=0}^{M-1}\lambda_i\psi_i\right)(x)\ge
  \left(\delta_0-\frac{\alpha^2/L}{1-\alpha^2/L}-\alpha
\right)|\lambda_0|.
\end{equation}
\end{proof}
\begin{rem}\label{rem:liplipvio} %\textcolor{red}
{Let $\varphi_\alpha$ denote the
  function $\varphi$ that we constructed in the previous proof for a
  given value of $\alpha$. We observe that by varying $\alpha$ one can
  violate the Lip-lip inequality. Specifically, consider what happens} to $\varphi_\alpha$ as $\alpha\searrow0$:
we get a full measure Borel subset $S'\subset S$ with
\begin{equation}
  \inf_{x\in S'}\biglip\varphi_\alpha(x)\ge  \left(\delta_0-\frac{\alpha^2/L}{1-\alpha^2/L}-\alpha
\right)
\end{equation}
and if $x\in S'$ and $r\in(\frac{\alpha}{2}\rho_{m_s},\rho_{m_s}]$,
\begin{equation}
  \varlip\varphi_\alpha(x,r)\le \left(\alpha+
  \frac{\alpha^2/L}{1-\alpha^2/L}(1+2^{-6}\alpha)\right);
\end{equation}
in particular, on a full measure subset of $S$ the Lip-lip inequality
\eqref{eq_Lip_lip_ineq} is 
violated.
\end{rem}
\bibliographystyle{alpha}
\bibliography{der_alb_biblio}
				% note that in
                                % concatenating files for the
                                % bibliography, white spaces are AVOIDED
\end{document}